\newtheorem{thm}{Theorem}[section]
 \newtheorem{cor}{Corollary}[section]
 \newtheorem{lem}{Lemma}[section]
 \newtheorem{prop}{Proposition}[section]
 \theoremstyle{definition}
 \newtheorem{defn}{Definition}[section]
 \theoremstyle{remark}
 \newtheorem{rem}{Remark}
 \newtheorem{example}{Example}[section]
 \numberwithin{equation}{section}
 \theoremstyle{plain}
\newtheorem*{theoremA}{Theorem A}
\newtheorem*{theoremB}{Theorem B}
\begin{document}

\newcommand{\auths}[1]{\textrm{#1},}
\newcommand{\artTitle}[1]{\textsl{#1},}
\newcommand{\jTitle}[1]{\textrm{#1}}
\newcommand{\Vol}[1]{\textbf{#1}}
\newcommand{\Year}[1]{\textrm{(#1)}}
\newcommand{\Pages}[1]{\textrm{#1}}
\newcommand{\RNum}[1]{\uppercase\expandafter{\romannumeral #1\relax}}

%\title{ Flagness and $\square$-belts of simple orbifolds and asphericality and nonpositive cuverture of manifolds}
\title{Topology and geometry of flagness and beltness of simple orbifolds}
%\title{Topology, geometry and combinatorics  of simple orbifolds}

%%% ----------------------------------------------------------------------
\author{Zhi L\"u and Lisu Wu}

\address{School of Mathematical Sciences, Fudan University\\ Shanghai\\ 200433\\ China}
\email{zlu@fudan.edu.cn}
\address{School of Mathematical Sciences, Fudan University\\ Shanghai\\ 200433\\ China}
\email{wulisuwulisu@qq.com}
\thanks{Partially supported by the grant from NSFC (No. 11971112).
}
\keywords{Simple orbifold, flagness, belt,  orbifold fundamental group, asphericity, hyperbolic structure, nonpositive curvature}

\begin{abstract}
We consider a class of  right-angled Coxeter orbifolds, named as simple orbifolds, which are a generalization of simple polytopes. Similarly to  manifolds over simple polytopes, the topology and geometry of manifolds over simple orbifolds
are closely related to the  combinatorics and orbifold structure of simple orbifolds. We generalize the notions of flag and belt in the setting of simple polytopes into the setting of simple orbifolds.
 To describe the topology and geometry of a simple orbifold in terms of its
combinatorics, we focus on  {\em simple handlebodies} (that is,  simple orbifolds which can be obtained from  simple polytopes by gluing  some disjoint specific codimension-one faces). We prove the following two  main results in terms of combinatorics, which can be understood as ``Combinatorial Sphere Theorem" and ``Combinatorial Flat Torus Theorem" on  simple handlebodies:
\begin{itemize}
\item [(A)]  A simple handlebody  is orbifold-aspherical if and only if it is flag.

\item [(B)] There exists a rank-two free abelian subgroup in $\pi_1^{orb}(Q)$ of an orbifold-aspherical simple handlebody $Q$ if and only if  it contains an $\square$-belt.
\end{itemize}
Furthermore,  based on such two results and some results of geometry,  it is shown that the existence of  some  curvatures on a certain manifold cover (manifold double) over a simple handlebody $Q$ can be characterized  in terms of  the combinatorics of $Q$. In 3-dimensional case, together with the theory of hyperbolic 3-manifolds, we can induce a pure combinatorial equivalent description for a simple $3$-handlebody to admit a right-angled hyperbolic structure, which is a natural generalization of Pogorelov Theorem.
\end{abstract}

%%% ----------------------------------------------------------------------
\maketitle
\section{Introduction} \label{Section 1}

In this paper, we consider a class of right-angled Coxeter $n$-orbifolds, named {\em simple orbifolds}, each $Q$ of which  satisfies the three conditions:
\begin{itemize}
\item [(a)]  $|Q|$ is  compact and connected with $\partial |Q|\not=\emptyset$ where $|Q|$ denotes the underlying space of $Q$;

\item [(b)] The {\em nerve} of $Q$, denoted by $\mathcal{N}(Q)$,  is  a triangulation of the boundary $\partial |Q|$, where $\mathcal{N}(Q)$ is the abstract simplicial complex with a vertex for each facet of $Q$  and a $(k-1)$-simplex for each nonempty $k$-fold intersection;

\item [(c)]  Each facet  in $Q$ is a simple polytope (note that when $n\leq 3$, this condition will be automatically omitted).
\end{itemize}
 Simple polytopes together with the natural structure of right-angled Coxeter orbifold provide the canonical examples of simple orbifolds, and  their nature has given rise to many interesting and beautiful connections among  topology, geometry,  combinatorics and so on.
 For example,  we can see these from Pogorelov Theorem, the theory of toric varieties, toric geometry, toric topology etc (see, e.g. \cite{Fu93, Dan78, Dan82, TT}).
 Although simple orbifolds form a  much wider class than simple polytopes, we can still expect such these connections in the setting of simple orbifolds. In particular, similar to  the case of simple polytopes, we wish that the  combinatorial  structure  of a simple orbifold still plays an important role in its own ways. This suggests us to pay more attention on the case of simple handlebodies.

 \vskip .2cm
A {\em simple $n$-handlebody} $Q$ is  a simple $n$-orbifold $Q$ such that its underlying space $|Q|$ is an $n$-dimensional handlebody and  it can be cut into a simple polytope $P_Q$ along some
codimension-one B-belts (named {\em cutting belts}, for the notion of $B$-belts, see \autoref{Def-B}), where an $n$-dimensional handlebody of genus $\mathfrak{g}\geq 0$ is a tubular neighborhood of the wedge sum of $\mathfrak{g}$ circles in $\mathbb{R}^n$ (of course, an $n$-dimensional handlebody of genus $0$ is exactly an $n$-ball). In other words, a simple $n$-handlebody can be obtained from a simple $n$-polytope by gluing its some specific facets.
 In  3-dimensional case, we can cancel  the restriction condition that  ``it can be cut into a simple polytope along some
codimension-one B-belts" in the above definition. This is because we can show that for a simple 3-orbifold $Q$ such that  $|Q|$ is a 3-handlebody of genus $\mathfrak{g}$, if $\mathfrak{g}>0$ then $Q$ can always be cut into a simple polytope along some
codimension-one B-belts; and if $\mathfrak{g}=0$ then $Q$ must be a simple 3-polytope (see \autoref{lemma-1}).
 An example is shown in \hyperref[special simple 3-handlebody]{Figure 1}.
       \begin{figure}[h]\label{special simple 3-handlebody}
\centering
\def\svgwidth{0.6\textwidth}
\includegraphics[width=0.75\textwidth]{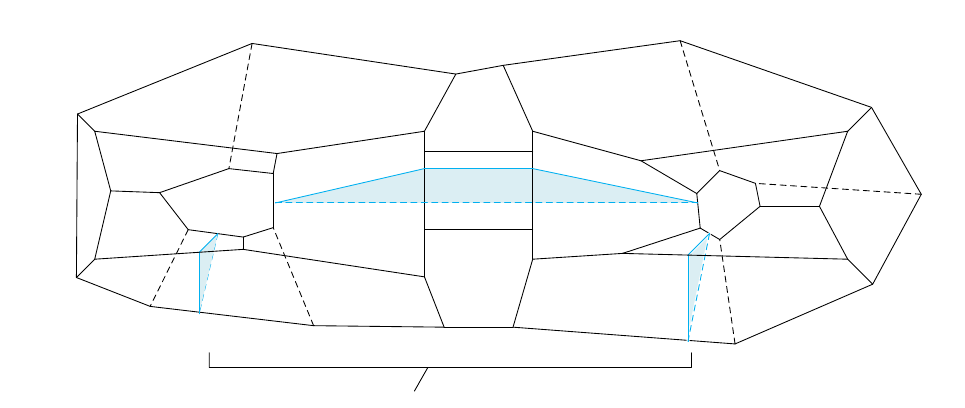}
 \put(-225,-3.5){cutting belts}
\caption{A  simple $3$-handlebody of genus $2$.}
\end{figure}

\vskip .2cm

We shall  carry out our work from the following some aspects:
\begin{itemize}
\item [(I)] %Our first motivation is to see how the  implicit combinatorial  structure of a simple orbifold influences on  (algebraic) topology and geometry over it. For this,
We   generalize the notions of belt and flag in the setting of simple polytopes into the setting of simple orbifolds (see \autoref{Def-B} and  \autoref{Def-T}).
Indeed, there is a quite difference because the underlying space of a simple orbifold is not contractible in general. As we shall see, the flagness of a simple orbifold $Q$ can not be defined by  the flagness of its nerve $\mathcal{N}(Q)$ in general. Actually, its definition is given in such a different way that  $|Q|$ is aspherical and  $Q$ contains no $\Delta^k$-belt for any $k\geq 2$.
%We shall show that a  simple handlebody $Q$ is flag if and only if it is orbifold-aspherical.
\vskip.1cm
\item [(II)] To understand the implicit   structure of a simple orbifold, we introduce the notions of ``right-angled Coxeter cells"  and `` right-angled Coxeter cellular  complex". Then we see that for a  right-angled Coxeter cellular complex $X$, its orbifold fundamental group $\pi_1^{\text{orb}}(X)$ is isomorphic to $\pi_1^{\text{orb}}(X^2)$ where $X^2$ is the 2-skeleton of $X$ (\autoref{p1}).
     For  a  simple handlebody $Q$, we can
    give an explicit right-angled Coxeter cellular  decomposition of  $Q$, so that we can
    obtain an explicit presentation of  $\pi_1^{\text{orb}}(Q)$, which is just an iterative HNN-extension over some right-angled Coxeter group. Indeed, generally $\pi_1^{\text{orb}}(Q)$ will not be the Coxeter group of $Q$, given by only reflections on facets of $Q$, and it actually contains torsion-free generators.
    In addition, we can also give an explicit right-angled Coxeter cubical cellular  decomposition of  $Q$. See \hyperref[section-61]{Subsection 7.1}.

    %Furthermore, we show that for  a flag special  simple handlebody $Q$ of dimension $n\geq 3$. there is no rank two free abelian  subgroup $\mathbb{Z}\oplus \mathbb{Z}$ in $\pi_1^{\text{orb}}(Q)$ if and only if $Q$ contains no any $\square$-belt.
    \vskip.1cm
\item [(III)] We will use a  ``basic construction"  from Davis~\cite[Chapter 5]{D2}, which plays an important role on our work.  This basic construction  tells us  that
each simple $n$-orbifold $Q$ can be finitely covered  by a  closed $n$-manifold $M_Q$ with an action of some 2-torus group $G$, which is called a {\em manifold double} over $Q$.
%Then we can combine the work of Chen~\cite{C} to define the {\em orbifold homotopy group}
By \cite[Proposition 1.51]{ALR}, we can define the {\em orbifold homotopy group}
$$\pi_k^{orb}(Q):=\pi_k(EG\times_GM_Q).$$
By associating the Borel fibration $EG\times_GM_Q\longrightarrow BG$,
%so that for $k\geq 2$,
$\pi_k^{orb}(Q)\cong \pi_k(M_Q)$ for $k\geq 2$. Thus, $Q$ is orbifold-aspherical if and only if $M_Q$ is aspherical. It follows from \cite[Theorem 2.2.5]{DJS2} that a simple polytope $P$ is orbifold-aspherical if and only if $P$ is flag, so in general, a  simple $n$-handlebody $Q$ may not be orbifold-aspherical yet although $|Q|$ is aspherical. In addition, using the basic construction,
 we can also use $\pi_1^{\text{orb}}(Q)$ and $P_Q$ to construct  the orbifold universal cover $\widetilde{Q}$ of $Q$.
\vskip.1cm
\item [(IV)] Based upon  (I), (II) and (III), together with Cartan-Hadamard Theorem and the work of Gromov on nonpositive curvature~\cite[Section 4.2]{G}, we obtain the Combinatorial  Sphere Theorem on  simple handlebodies (see~\hyperref[DJS-small]{Theorem A}), which is essentially a generalization of \cite[Theorem 2.2.5]{DJS2} of Davis, Januszkiewicz and Scott for simple polytopes. Making use of Tits Theorem~\cite[Theorem 3.4.2]{D2} of Coxeter groups and  the normal form theorem of HNN-extensions~\cite[Theorem 2.1, Page 182]{LS}, we also obtain the Combinatorial  Flat Torus Theorem on  simple handlebodies (see~\hyperref[FTT]{Theorem B}).
\vskip.1cm
As further applications of our two main results,  it is shown with  some results of geometry that the existence of  some  curvatures on  manifold double  over a simple handlebody $Q$ can be characterized by the $B$-belts in $Q$. In 3-dimensional case, combining the theory of hyperbolic 3-manifolds again, we give a pure combinatorial equivalent description for a simple $3$-handlebody to admit a right-angled hyperbolic structure, which is a generalization of Pogorelov Theorem, and in particular, it  also agrees with the pure combinatorial description of Pogorelov Theorem.
\end{itemize}

Now let us state our main results as follows.

\begin{theoremA} \label{DJS-small}
Let $Q$ be a simple handlebody of dimension $n\geq 3$. Then
 $Q$ is orbifold-aspherical if and only if it is flag.
% $\mathcal{U}(Q, (\mathbb{Z}_2)^m)$ is non-positively curved.
\end{theoremA}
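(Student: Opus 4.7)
The plan is to move the question from $Q$ up to the manifold double $M_Q$ and then analyze $M_Q$ via a cube complex structure together with Gromov's nonpositive curvature criterion. By the Borel fibration argument recalled in item (III), $Q$ is orbifold-aspherical if and only if $M_Q$ is aspherical. Moreover $|Q|$ is a handlebody, hence already aspherical (its universal cover deformation retracts onto a tree), so the flagness hypothesis on $Q$ reduces, by \autoref{Def-T}, to the absence of $\Delta^k$-belts for all $k \geq 2$. The right-angled Coxeter cubical decomposition of $Q$ furnished in item (II) pulls back through the orbifold cover $M_Q \to Q$ to a cube complex structure on $M_Q$ whose vertex links are the ``local nerves'' recording which facets of $Q$ pairwise meet near each vertex.

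For the direction \emph{flag $\Rightarrow$ orbifold-aspherical}, the absence of $\Delta^k$-belts in $Q$ translates, after a short combinatorial check, into flagness of every vertex link of the cube complex on $M_Q$. Gromov's criterion \cite[Section 4.2]{G} then makes $M_Q$ locally CAT(0), and by Cartan--Hadamard the universal cover $\widetilde{M_Q}$ is CAT(0) and hence contractible. Therefore $M_Q$ is aspherical and $Q$ is orbifold-aspherical.

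For the direction \emph{orbifold-aspherical $\Rightarrow$ flag}, I would argue by contrapositive. If $Q$ carries a $\Delta^k$-belt with $k \geq 2$, pick a minimal such belt. Its boundary is a combinatorial $(k-1)$-sphere inside a local nerve, and through the basic construction the $k+1$ facets of the belt generate a right-angled Coxeter subgroup whose Davis complex embeds in $\widetilde{M_Q}$ as a PL $k$-sphere $\Sigma$ (essentially $2^{k+1}$ reflected copies of the codimension-$(k+1)$ ``missing'' stratum glued as the boundary of a cross-polytope). Minimality of the belt prevents $\Sigma$ from bounding a $(k+1)$-ball in $\widetilde{M_Q}$, so $\pi_k(\widetilde{M_Q}) \neq 0$, contradicting asphericity. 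This is the direct analogue in the handlebody setting of the obstruction used in \cite[Theorem 2.2.5]{DJS2}.

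The hard part is the precise dictionary between $\Delta^k$-belts in $Q$ and empty simplices in the vertex links of the cube complex structure on $M_Q$. The cutting belts amalgamate pairs of facets of $P_Q$ into single facets of $Q$, and can create new pairwise intersections in $\mathcal{N}(Q)$ that were invisible in $\mathcal{N}(P_Q)$. One must check that the definition in \autoref{Def-B} of a $\Delta^k$-belt captures exactly these newly created empty simplices, and that the iterative HNN-extension structure of $\pi_1^{orb}(Q)$ from (II) is compatible with the CAT(0) gluing along cutting belts, so that the local link computation really certifies the global CAT(0) geometry of $\widetilde{M_Q}$. This is where the extra content beyond the Davis--Januszkiewicz--Scott result for simple polytopes genuinely lies.
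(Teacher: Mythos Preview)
Your forward direction (flag $\Rightarrow$ orbifold-aspherical) is essentially correct and close to what the paper does, though the paper executes it more cleanly by cubulating the orbifold universal cover $\widetilde{Q}=P_Q\times\pi_1^{orb}(Q)/\!\sim$ directly rather than $M_Q$. The point is that with $P_Q$ as fundamental domain, \emph{every} vertex link in $\mathcal{C}(\widetilde{Q})$ is a copy of $\mathcal{N}(P_Q)$; there is no need to track several ``local nerves''. The dictionary you worry about in your last paragraph is then a one-line lemma (Lemma~3.1): $Q$ is flag iff $P_Q$ is flag iff $\mathcal{N}(P_Q)$ is a flag complex. No empty simplices of $\mathcal{N}(P_Q)$ are created or destroyed by the gluing, because a $\Delta^k$-belt of $Q$ that is not already visible in $P_Q$ would have to wrap around a handle, and the belt condition in \autoref{Def-B} rules that out.

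Your converse direction has a genuine gap. From a $\Delta^k$-belt you correctly build a $k$-sphere $\Sigma$ in $\widetilde{M_Q}=\widetilde{Q}$, but ``minimality of the belt prevents $\Sigma$ from bounding a $(k+1)$-ball'' does not imply $\pi_k(\widetilde{Q})\neq 0$: a sphere can be null-homotopic without bounding a ball, and you have not produced a retraction or any invariant detecting $[\Sigma]$. In the polytope case one can retract onto the sub-Davis complex of the $(\mathbb{Z}_2)^{k+1}$ subgroup, but in the handlebody case the HNN extensions obstruct such a naive retraction. The paper closes this gap by a homology computation: it extends Davis' decomposition theorem to $\pi_1^{orb}(Q)$ (\autoref{thm4}), proving
\[
H_*(\widetilde{Q})\cong\bigoplus_{w\in\pi_1^{orb}(Q)}H_*(P_Q,P_Q^{S(w)}),
\]
and then observes (\autoref{nonempty}) that an empty $k$-simplex in $\mathcal{N}(P_Q)$ contributes a nonzero summand $H_k(P_Q,P_Q^T)\cong H_{k-1}(S^{k-1})$ at the word $w=s_1\cdots s_{k+1}$. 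This is where the iterative HNN structure is actually used: one must verify that $S(w)$ behaves correctly for words in the extended group, not just in the Coxeter group $W(P_Q,\mathcal{F}_B)$. That verification, not the belt/link dictionary, is the real extra content beyond \cite[Theorem 2.2.5]{DJS2}.
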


The proof of \hyperref[DJS-small]{Theorem A} actually  involves  Cartan-Hadamard Theorem, the work of Gromov on nonpositively curved cubical complex and Davis's method in~\cite[Chapter 8]{D2}. Specifically speaking, similar to hyperbolization of polyhedra~\cite{D1}, we construct a cubical complex structure of the orbifold
universal cover $\widetilde{Q}$ such that the link of each vertex is a simplicial complex which is related to  the combinatorial structure of $Q$. Then applying a result of Gromov (\autoref{LMG}) induces that $\widetilde{Q}$ is non-positively curved if  $Q$ is flag. Now Cartan-Hadamard Theorem implies that $\widetilde{Q}$ is aspherical  if  $Q$ is flag. Moreover, by Davis's method \cite[Chapter 8]{D2}, we calculate the homology groups of  $\widetilde{Q}$ which are related to $\pi_1^{orb}(Q)$ and the combinatorial structure of $Q$.

\begin{rem}\label{Thm2-rem}\
\begin{itemize}
\item[(a)] \hyperref[DJS-small]{Theorem A} is essentially a generalization of the result of Davis, Januszkiewicz and Scott   for small covers in \cite[Theorem 2.2.5]{DJS2}.
\item[(b)] If $Q$ is a simple handlebody, then by \autoref{Def-T},   $Q$ is flag if and only if it contains no $\Delta^k$-belt for any $k\geq 2$. Furthermore, \hyperref[DJS-small]{Theorem A} tells us that if $Q$ is not flag, then there must exist an $\Delta^k$-belt for some $k\geq 2$ in $Q$, so that the pullback of the embedding $\Delta^k\hookrightarrow Q$ via the projection
    $M_Q\longrightarrow Q$ gives an (equivariant) embedding $S^k\hookrightarrow M_Q$ which represents a nontrivial element in $\pi_k(M_Q)$, as shown in the following diagram:
    $$
\xymatrix{\ar @{} [dr] |{}
S^k~ \ar@{^{(}->}[r]\ar[d] & M_Q \ar[d] \\
\Delta^k ~ \ar@{^{(}->}[r]& Q.
}
$$
\end{itemize}
\end{rem}

Our another main result is the Combinatorial  Flat Torus Theorem on  simple handlebodies.
  After a tedious proof with the use of Tits Theorem~\cite[Theorem 3.4.2]{D2} of Coxeter groups and  the normal form theorem of HNN-extensions~\cite[Theorem 2.1]{LS},
we  characterize  the rank two free abelian  subgroup $\mathbb{Z}\oplus \mathbb{Z}$ in $\pi_1^{orb}(Q)$ in terms of combinatorics of $Q$.
\begin{theoremB}\label{FTT}
Let $Q$ be a flag  simple handlebody of dimension $n\geq 3$. Then there is a rank two free abelian  subgroup $\mathbb{Z}\oplus \mathbb{Z}$ in $\pi_1^{\text{orb}}(Q)$ if and only if $Q$ contains an $\square$-belt.
\end{theoremB}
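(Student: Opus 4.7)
The plan is to prove each direction separately; the reverse implication carries the technical weight.

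For the \emph{if} direction, suppose $Q$ contains an $\square$-belt, so there are four facets $F_1,F_2,F_3,F_4$ intersecting in a 4-cycle that does not bound any 2-simplex in $\mathcal{N}(Q)$. From the right-angled Coxeter cellular decomposition of item~(II), the four reflections $s_1,s_2,s_3,s_4 \in \pi_1^{\text{orb}}(Q)$ attached to these facets satisfy $s_i s_{i+1} = s_{i+1} s_i$ (indices mod $4$), while $\{s_1,s_3\}$ and $\{s_2,s_4\}$ generate infinite dihedral subgroups. A short word calculation shows that $a := s_1 s_3$ and $b := s_2 s_4$ commute, both have infinite order, and are $\mathbb{Z}$-linearly independent, so $\langle a,b\rangle \cong \mathbb{Z}\oplus\mathbb{Z}$, and since the right-angled Coxeter subgroup embeds into the HNN-tower of item~(II), this subgroup persists in $\pi_1^{\text{orb}}(Q)$.

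For the \emph{only if} direction, assume $A \leq \pi_1^{\text{orb}}(Q)$ is a rank two free abelian subgroup. Item~(II) presents $\pi_1^{\text{orb}}(Q)$ as an iterative HNN-extension with base the right-angled Coxeter group $W = W_{P_Q}$ of the cut-open polytope $P_Q$, with stable letters $t_1,\dots,t_{\mathfrak{g}}$ indexed by the cutting belts and edge groups the right-angled Coxeter groups of those belts. The plan is to show that a conjugate of $A$ lies in $W$, and then to invoke Tits' theorem to extract the desired combinatorial square. Reducing to the base amounts to a Bass--Serre analysis of the action on the tree of the HNN-structure: if a generator of $A$ is hyperbolic on this tree, any element commuting with it must either share its axis (and hence translate along it) or lie in the pointwise stabilizer of that axis, which is conjugate into an edge group. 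A syllable-by-syllable application of the normal form theorem \cite[Theorem 2.1]{LS} together with Britton's lemma rules out two independent translations along a common axis and forces $A$ to be conjugate into a single vertex stabilizer, i.e.\ into $W$. Iterating this reduction across the several HNN-extensions completes the push-down.

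Once $A \leq W$, Tits' theorem \cite[Theorem 3.4.2]{D2} applied to the right-angled Coxeter group $W$ yields an ``empty square,'' i.e.\ four vertices in $\mathcal{N}(P_Q)$ forming a 4-cycle that is not the boundary of a 2-simplex; this is an $\square$-belt of $P_Q$. Since $Q$ is flag (and $P_Q$ inherits flagness, no $\Delta^k$-belt being destroyed or created by gluing cutting belts), this 4-cycle descends to a genuine $\square$-belt of $Q$: filling or collapse of the 4-cycle after gluing would produce either a $\Delta^2$-belt or a change in $\mathcal{N}(Q)$ incompatible with flagness. The main obstacle is the Bass--Serre/normal-form step: controlling how the commutator relation interacts with Britton reductions through an \emph{iterative} HNN-tower over $W$ is delicate, and tracking the right-angled Coxeter edge-group structure carefully through these reductions is where most of the technical care of the proof must be concentrated.
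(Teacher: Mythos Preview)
Your proposal has two genuine gaps, one in each direction.

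\textbf{If direction.} You implicitly assume the $\square$-belt sits inside $P_Q$ so that the four reflections $s_1,\dots,s_4$ lie in the Coxeter base $W(P_Q,\mathcal{F}_B)$. But an $\square$-belt of $Q$ may cross cutting belts: by \autoref{Lemma-34} this happens precisely when some cutting belts meet one pair of opposite edges, say $f_1,f_3$. In that situation the facets $F_1,F_3$ of $Q$ split into several facets of $P_Q$, and the correct second generator of $\mathbb{Z}\oplus\mathbb{Z}$ is not $s_2s_4$ but $s_2\,t_1\cdots t_k\,s_4\,t_k^{-1}\cdots t_1^{-1}$, with the stable letters recording the crossings (see \autoref{Example-51}). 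Your sentence ``the right-angled Coxeter subgroup embeds into the HNN-tower'' does not repair this, because the four reflections you name are indexed by facets of $Q$, not of $P_Q$, and do not all lie in $W(P_Q,\mathcal{F}_B)$.

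\textbf{Only if direction.} The step ``forces $A$ to be conjugate into a single vertex stabilizer'' is false as stated. In the Bass--Serre dichotomy you set up, the case where one generator $g$ is hyperbolic and the other $h$ lies in the pointwise stabilizer of its axis genuinely occurs here (exactly when the corresponding $\square$-belt crosses a cutting belt), and in that case $A=\langle g,h\rangle$ contains a hyperbolic element, hence fixes no vertex and is \emph{not} conjugate into any vertex group. Ruling out ``two independent translations'' is the easy half; it does not deliver the conclusion you claim. The paper does not attempt to conjugate $A$ into $W(P_Q,\mathcal{F}_B)$. Instead it first shows (\autoref{Lemma-52}) that one generator may be taken to be a cyclically reduced word $h\in W(P_Q,\mathcal{F}_B)$, then analyzes the reduced normal form of the other generator $x$ directly: the commutation $xh=hx$ forces each $t$-syllable of $x$ to push $h$ through the belt isomorphisms $\phi_i$, and the intermediate coset representatives $g_i$ supply the facets $F_2,F_4$ while $h$ supplies $F_1,F_3$. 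The resulting $\square$-belt is found in $Q$, not in $P_Q$. The auxiliary \autoref{Lemma-54} (which uses simplicity of $Q$, i.e.\ contractibility of facets, in an essential way) disposes of the degenerate shape $x=g_0t_1\cdots t_k$; this is where the geometry enters and is the ingredient your sketch is missing.
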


\begin{rem}
Similar to the pullback way in Remark~\autoref{Thm2-rem}~(b), the existence of an $\square$-belt in $Q$ actually means that there exists an essential embedding of 2-dimensional torus $T^2$ in $M_Q$, which is an obstacle of the existence of hyperbolic structure or negative curvature on $M_Q$.
For the Flat Torus Theorem of non-positively curved spaces, one can refer to \cite[Charter 7, Part II]{BH} or \cite{LY}.
\end{rem}

Next, as further consequences of our two main results, we discuss the topology and geometry of covering spaces over a simple handlebody. Together with some important results in geometry from \cite[Theorem 1.2]{SMY},
\cite[Proposition 4.9]{WY}, \cite[Chapter 7]{O98}, \cite[Proposition I.6.8]{D2} and \cite{KAP}, for a simple handlebody $Q$,
we  obtain some relations between the existence of some curvatures on $M_Q$ and the combinatorics  of $Q$.
\begin{cor}\label{cct}
Let $Q$ be a  simple handlebody of dimension $n\geq 2$,  $M_Q$ be the manifold double over $Q$, and $\widetilde{Q}$ be the orbifold universal  cover of $Q$. Then  we have that
\begin{enumerate}[label=(\roman*)]
\item The following statements  are equivalent.
\begin{itemize}
\item[$(1)$] $M_Q$ is non-positively curved;
\item[$(2)$] $\widetilde{Q}$ is CAT(0);
\item[$(3)$]  $Q$ is flag (this is equivalent to that $Q$ is orbifold-aspherical);
\item[$(4)$] $M_Q$ is aspherical.
\end{itemize}
\vskip.1cm
\item If $M_Q$ admits a strictly negative curvature then $Q$ is flag and contains no any $\square$-belt. In particular, if $Q$ is a simple polytope $P$, then $M_P$ admits a strictly negative curvature if and only if $P$ is flag and contains no  any $\square$-belt.
 % \vskip.1cm
%\item $M_Q$ is  flat if and only if all $B$-belts in $Q$ are of form $I^k$. In this case,  $\widetilde{Q}\approx \mathbb{R}^n$.
%\vskip.1cm
%\item $M_Q$ admits a Riemannian metric with positive constant sectional curvature if and only if  $Q$ is just a simplex $\Delta^n$. In this case, $\widetilde{Q}\approx S^n$.
\end{enumerate}
In 3-dimensional case,
\begin{itemize}
\item[(iii)] $M_Q$ is hyperbolic if and only if it is flag and contains no any $\square$-belt. Moreover, $Q$ admits a right-angled hyperbolic structure if and only if it is flag and contains no any $\square$-belt. In this case,  $\widetilde{Q}\approx \mathbb{H}^3$.
    \vskip.1cm
\item[(iv)] When $Q$ is a simple 3-polytope,  $M_Q$ admits a positive scalar curvature if and only if  every $2$-dimensional belt in $Q$ is $\Delta^2$, or $Q$ is just a tetrahedron.
\end{itemize}
\end{cor}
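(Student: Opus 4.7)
The plan is to extract all four parts from Theorems A and B together with the cited geometric results, exploiting that $\widetilde{Q}$ serves simultaneously as the orbifold universal cover of $Q$ and as the ordinary universal cover of the finite manifold cover $M_Q$, and that $\pi_k^{\text{orb}}(Q)\cong\pi_k(M_Q)$ for $k\geq 2$ as recorded in (III) of the introduction.

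For (i), the implication $(3)\Leftrightarrow(4)$ is immediate from \hyperref[DJS-small]{Theorem A} combined with the above isomorphism. For $(3)\Rightarrow(2)$, I would invoke the cubical structure on $\widetilde{Q}$ built during the proof of \hyperref[DJS-small]{Theorem A}: flagness of $Q$ forces the link of every vertex to be flag, so Gromov's lemma yields a locally CAT(0) cubical metric on $\widetilde{Q}$, and since $\widetilde{Q}$ is simply connected, Cartan--Hadamard upgrades this to globally CAT(0). For $(2)\Rightarrow(1)$, the CAT(0) metric descends to a piecewise-Euclidean nonpositively curved metric on the quotient $M_Q=\widetilde{Q}/\pi_1(M_Q)$ because $\pi_1(M_Q)$ acts freely, properly and cocompactly by isometries; the standard smoothing from \cite[Proposition I.6.8]{D2} then produces an honest Riemannian metric of nonpositive curvature. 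Finally $(1)\Rightarrow(4)$ is Cartan--Hadamard applied directly to $M_Q$.

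For (ii) and (iii), the forward directions both route through \hyperref[FTT]{Theorem B}. A strictly negatively curved (respectively hyperbolic) metric on $M_Q$ makes $\pi_1(M_Q)$ Gromov-hyperbolic, hence free of $\mathbb{Z}\oplus\mathbb{Z}$ subgroups; since $\pi_1(M_Q)$ has finite index in $\pi_1^{\text{orb}}(Q)$, any $\mathbb{Z}\oplus\mathbb{Z}$ in the orbifold fundamental group would intersect it in a rank-two subgroup, a contradiction. \hyperref[FTT]{Theorem B} then forbids $\square$-belts, and (i) supplies flagness. For the converse in (ii) restricted to simple polytopes I would cite \cite[Proposition 4.9]{WY} (or the piecewise-hyperbolic construction in \cite{D2}) to produce the required strictly negatively curved metric. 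For the converse in (iii), flagness together with \hyperref[DJS-small]{Theorem A} yields an aspherical irreducible 3-manifold $M_Q$; an essential embedded torus in $M_Q$ would, by the equivariant pullback device of Remark~\autoref{Thm2-rem}(b), descend to an $\square$-belt in $Q$, so absence of $\square$-belts forces atoroidality, and Thurston's hyperbolization (\cite[Chapter 7]{O98}, \cite{KAP}) supplies the hyperbolic structure. Equivariance under the reflection group action then upgrades this to a right-angled hyperbolic structure on $Q$ itself and the identification $\widetilde{Q}\approx\mathbb{H}^3$.

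For (iv), the Schoen--Yau-type result cited as \cite[Theorem 1.2]{SMY} reduces positive scalar curvature on the closed 3-manifold $M_P$ to a statement about its Kneser--Milnor prime decomposition: $M_P$ admits a metric of positive scalar curvature iff no prime summand is aspherical. Through the reflection-cover structure of $M_P$ over $P$, aspherical prime summands correspond combinatorially to flag subpolytopes cut out by non-triangular $2$-belts of $P$, so the positive scalar curvature condition translates to every $2$-belt being $\Delta^2$, with the tetrahedron as the degenerate base case. The main obstacle across the corollary is the clean combinatorial-to-topological translation in parts (iii) and (iv); in each case it rests on the pullback of a combinatorial belt to a characteristic essential submanifold of $M_Q$, which in turn depends on the explicit right-angled Coxeter cellular decomposition recalled in (II) of the introduction.
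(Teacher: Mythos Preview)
Your overall architecture matches the paper's, but several citations are misattributed and one step is conceptually backwards.

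For part (i), the paper does not descend the cubical metric from $\widetilde{Q}$ to $M_Q$. Instead it builds a cubical decomposition directly on $M_Q$ (\autoref{npc}): one first decomposes $Q$ into $P_Q$ together with half-collars $N^{\pm}(B_i)=B_i\times[0,\pm 1]$ of the cutting belts, takes the standard cubical subdivision of each piece, and pulls this back along $M_Q\to Q$. The link of every vertex is then one of the nerves $\mathcal{N}(P_Q)$, $\mathcal{N}(N^{\pm}(B_i))$, all of which are flag exactly when $Q$ is flag. Your route through $\widetilde{Q}$ is fine (and in fact the paper uses it in the proof of \hyperref[DJS-small]{Theorem~A} for $(3)\Rightarrow(2)$), but the smoothing you invoke via \cite[Proposition~I.6.8]{D2} is unnecessary: throughout \autoref{cct}(i)--(ii) ``non-positively curved'' is meant in the Alexandrov sense, and the piecewise-Euclidean cubical metric already witnesses $(1)$.

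For part (ii), your citation for the polytope converse is wrong. \cite[Proposition~4.9]{WY} concerns positive scalar curvature, not negative curvature. The paper instead uses Gromov's piecewise-hyperbolic result (\autoref{proposition-64}): when $Q=P$ is a polytope the standard cubical decomposition of $M_P$ has every vertex link equal to $\mathcal{N}(P)$, and if that nerve is flag with no empty square one can put a small-curvature hyperbolic metric on each cube. Your alternative ``piecewise-hyperbolic construction in \cite{D2}'' is essentially this, so the argument survives, but the Wu--Yu reference should be dropped. The paper also uses Preissmann's theorem rather than Gromov-hyperbolicity for the forward direction; either works.

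For part (iii), your invocation of Remark~\ref{Thm2-rem}(b) is misplaced. That remark pulls a $\Delta^k$-belt \emph{up} to an essential $S^k$ in $M_Q$; it does not push an essential torus \emph{down} to an $\square$-belt. The correct route (which the paper takes in \autoref{hc}) is the contrapositive of \hyperref[FTT]{Theorem~B} together with \autoref{lemma-64}: no $\square$-belt $\Rightarrow$ no $\mathbb{Z}\oplus\mathbb{Z}$ in $\pi_1^{\text{orb}}(Q)$ $\Rightarrow$ no $\mathbb{Z}\oplus\mathbb{Z}$ in $\pi_1(M_Q)$, hence $M_Q$ is atoroidal. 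Orientability of $M_Q$ (\autoref{or}) and Thurston--Perelman then give hyperbolicity.

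For part (iv), the paper does not argue via the Kneser--Milnor decomposition at all; it simply quotes \cite[Corollary~4.10]{WY} (\autoref{PSC}), which already characterises positive scalar curvature on $M_P$ by ``$P$ is obtained from $\Delta^3$ by iterated vertex cuts'', and then observes this is equivalent to every $2$-belt being $\Delta^2$ (or $P=\Delta^3$). Your citation of \cite[Theorem~1.2]{SMY} is off-target (that reference is about nonpositive curvature, not positive scalar curvature), and the claim that ``aspherical prime summands correspond to flag subpolytopes cut out by non-triangular $2$-belts'' would need its own proof.
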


\begin{rem} The proof of  \autoref{cct} will mainly be finished in \hyperref[section7]{Section 7}.
\begin{itemize}
\item
\autoref{cct}~(i)--(ii) are based on Gromov's results. See \cite{G}.
A metric space is said to be of curvature $k$  if it is locally a CAT($k$) space. A comparison theorem in \cite[Theorem 1A.6, Page-173]{BH} tells us that
a smooth manifold has curvature $\leq k$ if and only if it admits a Riemannian metric with sectional curvature $\leq k$. Hence  the curvature in the statements of \autoref{cct}~(i)--(ii)  can be replaced by  sectional curvature. See \cite[Theorem 1.2]{SMY} for simple polytopes.
\vskip.1cm
\item There are examples of closed orientable 3-manifolds that are aspherical but do not support a Riemannian metric with non-positively sectional curvature (see \cite{Leeb}).
 \vskip.1cm
\item \autoref{cct}~(iii) is the Hyperbolization Theorem on simple $3$-handlebodies. %A oriented closed $3$-manifold is hyperbolic if and only if it is aspherical and atoroidal ($\pi_1(M)$ contains no subgroup $\mathbb{Z}\oplus \mathbb{Z}$).
%In general, that a closed $3$-manifold admits a strict negative curvature is equivalent to that it admits a hyperbolic structure. Moreover, by applying Equivalent Sphere Theorem and Equivalent Torus Theorem,
The Hyperbolization Theorem on general right-angled Coxeter 3-orbifolds was considered by
Otal in \cite{O98}.
 An {\em irreducible and atoroidal} $3$-manifold $Q$ with corners defined by Otal in \cite[P168]{O98} implies essentially that all involved $\Delta^2$ and $\square$ suborbifolds in $Q$ are not belts. This  is actually equivalent to saying that $Q$ is flag and  contains no any $\square$-belt. Here our statement is of more combinatorial.
\vskip.1cm
%\item $M_Q$ is  flat if and only if all $B$-belts in $Q$ are of form $I^k$. In this case,  $\widetilde{Q}\approx \mathbb{R}^n$. $M_Q$ admits a Riemannian metric with positive constant sectional curvature if and only if  $Q$ is just a simplex $\Delta^n$. In this case, $\widetilde{Q}\approx S^n$.
%\vskip.1cm
\item \autoref{cct}~(iv) is also a restatement  of a result of \cite{WY}.
A vc($k$) in \cite{WY} is equivalent to the simple $3$-polytope in \autoref{cct}~(iv).
%In fact, the origin is a theorem of Schoen-Yau \cite{SY}. That is, for a compact Riemanian manifold $M$, if  $\pi_1(M)$  contains a subgroup abstractly isomorphic to the fundamental group of a surface with nonpositive Euler characteristic, then $M$ admits no metric of positive scalar curvature.
\vskip.1cm
\item All $2$-dimensional right-angled Coxeter orbifold can be classified by their orbifold Euler numbers, see \cite{TH}. %So the dimensions in \autoref{proposition-61} are greater than $2$.
%\vskip.1cm
%\item A simple handlebody of  dimension three is
%only needed to satisfy (a) and (b) in the previous definition. We omit the proof.
%\vskip.1cm
%\item There exists $M$ of positive scalar curvarture when $Q$ is not simple. Such as, $Q=S^3/\mathbb{Z}_2,  S^1\times S^2/\mathbb{Z}_2$.
\vskip.1cm
\item
With a bit additional argument,   the ``simple'' condition in 3-dimensional case can be generalized to the case of a  right-angled Coxeter $3$-handlebody whose nerve is  an ideal triangulation of its boundary, where the concept of ideal triangulation can be referred to \cite[Section 2]{FST08}. In this case,  there may exist  bad $3$-handlebodies, that is, as  right-angled Coxeter orbifolds, they cannot be covered by  $3$-manifolds. So these bad orbifolds cannot admit any hyperbolic metric. See \autoref{Lemma-62}.
Although so, we can obtain that a  right-angled Coxeter $3$-handlebody with  ideal nerve is hyperbolic if and only if it is very good, flag and contains no any $\square$-belt, see
\hyperref[s-63]{Subsubsection 7.3.2} for details.
More generally, only the flag condition and no $\square$-belt condition cannot characterize the (right-angled) hyperbolicity of a ``non-simple''  right-angled Coxeter $3$-handlebody. Here a right-angled Coxeter $3$-handlebody $Q$ is ``non-simple''  if its nerve is not a triangulation or an ideal triangulation of $\partial |Q|$.
%, or equivalently, there is a non-contractible face in $Q$.
 An example is given in \hyperref[s-64]{Subsubsection 7.3.2}.
\end{itemize}
\end{rem}
This paper is organized as follows.
In \hyperref[section2]{Section 2},  we review the notions of (right-angled Coxeter) orbifolds and  manifolds with corners. We introduce the right-angled Coxeter cellular decomposition of  right-angled Coxeter orbifolds, and discuss their orbifold fundamental groups. In addition, we also introduce the theory of fundamental domain.
In \hyperref[section3]{Section 3} we generalize the notions of $B$-belts and flag from simple polytopes to simple orbifolds.
In \hyperref[section4]{Section 4}, we give a right-angled Coxeter cellular  decomposition of a  simple $n$-handlebody $Q$, so that we can explicitly give a presentation of  orbifold fundamental group $\pi^{orb}_1(Q)$. We show that this presentation of  orbifold fundamental group $\pi^{orb}_1(Q)$ is an iterative HNN-extension of some right-angled Coxeter group. Moreover, the orbifold universal cover of $Q$ is constructed by using $\pi^{orb}_1(Q)$ and the simple polytope $P_Q$ associated to $Q$.
In \hyperref[section5]{Section 5}, we review the work of Gromov, and  compute the homology groups of the manifold double and  universal cover of a simple handlebody $Q$ by  Davis' method, which are useful in the proof of \hyperref[DJS-small]{Theorem A}.
Then we prove \hyperref[DJS-small]{Theorem A}.
 In \hyperref[section6]{Section 6}, we show that the existence of a rank-two free abelian subgroup in the orbifold fundamental group of a flag simple handlebody $Q$ is characterized by an $\square$-belt in $Q$ (\hyperref[FTT]{Theorem B}).
  In \hyperref[section7]{Section 7},  applying  \hyperref[DJS-small]{Theorem A}, \hyperref[FTT]{Theorem B} and some  results of geometry,  we discuss the existence of  some  curvatures on  manifold double  over a simple handlebody $Q$ in terms of  the combinatorics of $Q$.

%%%%%%%%
\section{Preliminaries}
\label{section2}

\subsection{Orbifold}
As a generalization of manifolds, an $n$-dimensional {\em orbifold} $\mathcal{O}$ is a singular space which is locally modelled on the quotient of  a finite group acting on an open subset of $\mathbb{R}^n$. For any $p\in \mathcal{O}$, there is  an {\em orbifold chart} $(U,G,\psi)$ satisfying that $U$ is an $n$-ball centered
at origin $o$ and $\psi^{-1}(p)=o$, where $\psi:U\rightarrow U/G$ is the projection map.
In particular, the origin $o$ is fixed by $G$. We called $G$ the local group at $p$.
%such that $\psi(U)$ is an open set in $\mathcal{O}$ that contains $p$, where $U$ is a connected open set in $\mathbb{R}^n$, $G$ is a finite group of linear automorphisms of $U$, and $\psi$ is the quotient map induced by the action of $G$ on $U$.
%The isotropy group of $p'\in \psi^{-1}(p)$ in $U$ is called the {\em local group} at $p$.

\vskip.2cm

\begin{defn}[Thurston {\cite[Definition 13.2.2]{TH}}]
A {\em covering orbifold} of an orbifold $\mathcal{O}$ is an orbifold $\widetilde{\mathcal{O}}$ with a projection $\pi: \widetilde{\mathcal{O}}\rightarrow \mathcal{O}$, satisfying that:
\begin{itemize}
\item $\forall~x\in \mathcal{O}$ has a neighborhood $V$ which is identified with an open subset $U$ of $\mathbb{R}^n$ module a finite group $G_x$, such that each component $V_i$ of $\pi^{-1}(V)$  is homeomorphic to $U/\Gamma_i$, where $\Gamma_i<G_x$ is some subgroup;
\item $\pi|_{V_i}: V_i\rightarrow V$ corresponds to the natural projection $U/\Gamma_i\rightarrow U/G_x$.
\end{itemize}
\end{defn}
An orbifold is {\em good} (resp. {\em very good})
 if it can be   covered (resp. finitely) by a manifold. Otherwise it is {\em bad}.
Any orbifold $\mathcal{O}$ has an universal cover $\widetilde{\mathcal{O}}$, see \cite[Proposition 13.2.4]{TH}.
\vskip.2cm

In general, the {\em orbifold fundamental group} of an orbifold is defined as the deck transformation group of its universal cover, see \cite[Definition 13.2.5]{TH}. Another  equivalent definition is the use of the notion of based orbifold loops, that is, the orbifold fundamental group is defined as the homotopy classes of based orbifold loops.
 For more details, see~\cite[Section 3]{C}.
\begin{example}
Let $D^2$ be the unit disk in $\mathbb{R}^2$. A transformation $r$ on $D^2$ via $r(x,y)=(x,-y)$ gives a reflective $\mathbb{Z}_2$-action on $D^2$. The orbit space $D^2/\mathbb{Z}_2$ has a natural orbifold structure.  Any $(x,0)\in D^2/\mathbb{Z}_2$ is a singular point with local group $\mathbb{Z}_2$. Since $D^2$ is contractible,  $\pi_1^{orb}(D^2/\mathbb{Z}_2)\cong \mathbb{Z}_2$ is generated by the transformation $r$.
\vskip.1 cm
In the viewpoint of orbifold loops, any path between $(x_1,0)$ and $(x_2, y_2)$ with $y_2>0$ can be viewed as a non-trivial orbifold loop.
It is clear that $D^2/\mathbb{Z}_2\cong D^1\times D^1/\mathbb{Z}_2\simeq D^1/\mathbb{Z}_2$. Hence, $\pi_1^{orb}(D^2/\mathbb{Z}_2)\cong \mathbb{Z}_2$ is generated by a based orbifold loop $D^1/\mathbb{Z}_2$,  see \hyperref[OL]{Figure 2}.

 \begin{figure}[h]\label{OL}
\centering
\def\svgwidth{0.65\textwidth}
%% Creator: Inkscape inkscape 0.92.3, www.inkscape.org
%% PDF/EPS/PS + LaTeX output extension by Johan Engelen, 2010
%% Accompanies image file '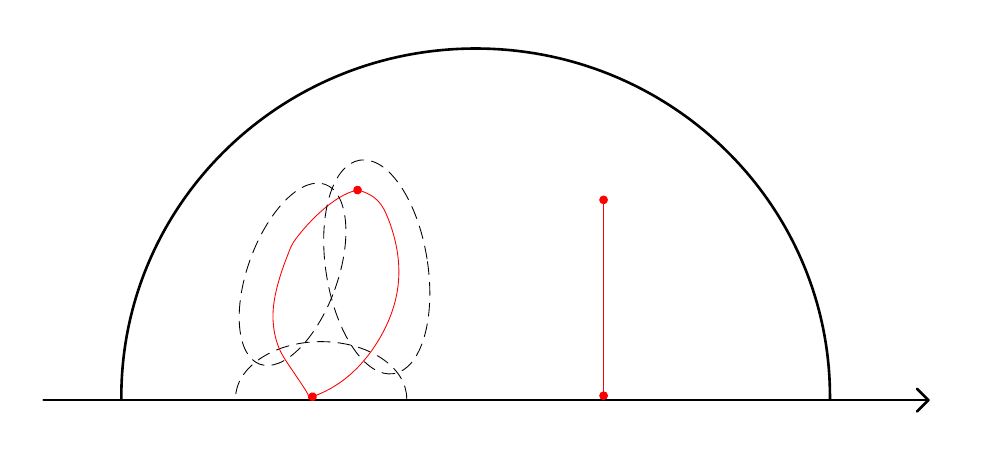' (pdf, eps, ps)
%%
%% To include the image in your LaTeX document, write
%%   \input{<filename>.pdf_tex}
%%  instead of
%%   \includegraphics{<filename>.pdf}
%% To scale the image, write
%%   \def\svgwidth{<desired width>}
%%   \input{<filename>.pdf_tex}
%%  instead of
%%   \includegraphics[width=<desired width>]{<filename>.pdf}
%%
%% Images with a different path to the parent latex file can
%% be accessed with the `import' package (which may need to be
%% installed) using
%%   \usepackage{import}
%% in the preamble, and then including the image with
%%   \import{<path to file>}{<filename>.pdf_tex}
%% Alternatively, one can specify
%%   \graphicspath{{<path to file>/}}
%% 
%% For more information, please see info/svg-inkscape on CTAN:
%%   http://tug.ctan.org/tex-archive/info/svg-inkscape
%%
\begingroup%
  \makeatletter%
  \providecommand\color[2][]{%
    \errmessage{(Inkscape) Color is used for the text in Inkscape, but the package 'color.sty' is not loaded}%
    \renewcommand\color[2][]{}%
  }%
  \providecommand\transparent[1]{%
    \errmessage{(Inkscape) Transparency is used (non-zero) for the text in Inkscape, but the package 'transparent.sty' is not loaded}%
    \renewcommand\transparent[1]{}%
  }%
  \providecommand\rotatebox[2]{#2}%
  \newcommand*\fsize{\dimexpr\f@size pt\relax}%
  \newcommand*\lineheight[1]{\fontsize{\fsize}{#1\fsize}\selectfont}%
  \ifx\svgwidth\undefined%
    \setlength{\unitlength}{283.14349365bp}%
    \ifx\svgscale\undefined%
      \relax%
    \else%
      \setlength{\unitlength}{\unitlength * \real{\svgscale}}%
    \fi%
  \else%
    \setlength{\unitlength}{\svgwidth}%
  \fi%
  \global\let\svgwidth\undefined%
  \global\let\svgscale\undefined%
  \makeatother%
  \begin{picture}(1,0.46384962)%
    \lineheight{1}%
    \setlength\tabcolsep{0pt}%
    \put(0,0){\includegraphics[width=\unitlength,page=1]{G1208.pdf}}%
    \put(0.50823299,0.18230096){\color[rgb]{0,0,0}\makebox(0,0)[lt]{\lineheight{1.25}\smash{\begin{tabular}[t]{l}{\Large{$\simeq$}}\end{tabular}}}}%
    \put(0.18966666,0.18782524){\color[rgb]{0,0,0}\makebox(0,0)[lt]{\lineheight{1.25}\smash{\begin{tabular}[t]{l}$U_1$\end{tabular}}}}%
    \put(0.29186567,0.01288992){\color[rgb]{0,0,0}\makebox(0,0)[lt]{\lineheight{1.25}\smash{\begin{tabular}[t]{l}$U_2$\end{tabular}}}}%
    \put(0.44102105,0.27437212){\color[rgb]{0,0,0}\makebox(0,0)[lt]{\lineheight{1.25}\smash{\begin{tabular}[t]{l}$U_3$\end{tabular}}}}%
  \end{picture}%
\endgroup%

\caption{Orbifold loop}
\end{figure}
\end{example}
\begin{example}[\cite{ALR}]
If a compact Lie group $G$ acts smoothly and almost freely on a manifold $M$, or a discrete group $G$ acts properly discontinuously on a  manifold $M$, then the orbit space $M/G$ canonically inherits an orbifold structure. Here $M/G$ is called the {\em  quotient orbifold } by  $G$ acting on $M$.
\end{example}

\begin{defn}[{\cite[Proposition 1.51]{ALR}}]\label{HHG}
For a quotient orbifold $\mathcal{O}$ by a group $G$ acting on $M$, its {\em $k$-th orbifold homotopy group} is defined as the $k$-th  homotopy group of its Borel construction
$EG\times_G M$
$$
\pi_k^{orb}(\mathcal{O},x ):=\pi_k(EG\times_G M, \widetilde{x})
$$
\end{defn}

An orbifold $\mathcal{O}$ is said to be {\em orbifold-aspherical} if $\pi_k^{orb}(\mathcal{O})=0$ for all $k\geq 2$.
\begin{rem} It is well-known that for an arbitrary orbifold,  there are also several equivalent definitions of  orbifold homotopy group $\pi_k^{orb}(\mathcal{O},x)$ of $\mathcal{O}$, which can be given in the following ways:
\begin{itemize}
\item The $k$-th homotopy group of the classifying space of orbifold groupoid of $\mathcal{O}$ (see \cite[Definition 1.50]{ALR})
$$
\pi_k^{orb}(\mathcal{O},x):=\pi_k(B\mathcal{G}, \widetilde{x}).
$$
\item The $(k-1)$-th homotopy groups of  the based loop space $(\Omega(\mathcal{O},x), \widetilde{x})$ (see \cite[Definition 3.2.1]{C})
$$
\pi_k^{orb}(\mathcal{O},x):=\pi_{k-1}(\Omega(\mathcal{O},x), \widetilde{x}).
$$
\end{itemize}
\end{rem}
Let $p:M\rightarrow \mathcal{O}$ be a regular orbifold cover over a good orbifold $\mathcal{O}$, where $M$ is a manifold.  Then by orbifold covering theory~\cite{C}, %we know that
$$
\pi_k^{orb}(\mathcal{O},x)\cong \pi_k(M,\widetilde{x})
$$
for $k\geq 2$. %If $\pi_k^{orb}(\mathcal{O})$ vanishes for $k\geq 2$, then Here  $\mathcal{O}$ is said to be {\em orbifold-aspherical}.
Thus we have that
\begin{cor}\label{asph}
An good orbifold is orbifold-aspherical if and only if its covering manifold is aspherical.
\end{cor}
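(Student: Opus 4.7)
The plan is to derive \autoref{asph} directly from the isomorphism $\pi_k^{orb}(\mathcal{O},x)\cong \pi_k(M,\widetilde{x})$ for $k\geq 2$ that has just been recorded from orbifold covering theory. By definition, orbifold-asphericity of $\mathcal{O}$ means $\pi_k^{orb}(\mathcal{O})=0$ for every $k\geq 2$, and asphericity of the manifold $M$ means $\pi_k(M)=0$ for every $k\geq 2$, so the two conditions become tautologically equivalent once this degree-wise isomorphism is in hand.

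Concretely, in the forward direction I would assume $\pi_k^{orb}(\mathcal{O})=0$ for all $k\geq 2$ and transfer vanishing to $\pi_k(M)$ through the isomorphism; in the reverse direction I would do the opposite. Each implication takes a single line and requires no separate argument.

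The only point worth stressing is why the isomorphism in degrees $k\geq 2$ holds. Since $\mathcal{O}$ is good with regular manifold cover $M$, we can present $\mathcal{O}$ as a quotient orbifold $M/G$, where $G$ is the discrete deck transformation group acting freely and properly discontinuously on $M$. In this setting \autoref{HHG} gives $\pi_k^{orb}(\mathcal{O})\cong \pi_k(EG\times_G M)$, and the Borel fibration
$$M\longrightarrow EG\times_G M\longrightarrow BG$$
combined with $\pi_k(BG)=0$ for $k\geq 2$ (as $G$ is discrete) collapses its long exact sequence in homotopy to $\pi_k(EG\times_G M)\cong \pi_k(M)$ for $k\geq 2$. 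There is thus no substantive obstacle: the corollary is essentially a reformulation of the isomorphism displayed immediately above its statement, and the main ``hard part'' has already been absorbed into the preceding appeal to orbifold covering theory.
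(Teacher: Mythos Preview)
Your proposal is correct and matches the paper's approach: the corollary is stated without a separate proof, as an immediate consequence of the isomorphism $\pi_k^{orb}(\mathcal{O})\cong \pi_k(M)$ for $k\geq 2$ displayed just before it. One small slip: the deck transformation group $G$ acts properly discontinuously on $M$ but not \emph{freely} in general (otherwise $\mathcal{O}=M/G$ would be a manifold with no singular points); this does not affect your Borel-fibration justification, since $\pi_k(BG)=0$ for $k\geq 2$ holds for any discrete $G$ regardless of whether the action is free.
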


 Assume $G$ is the deck transformation group for $p:M\rightarrow \mathcal{O}$. Then there is a probably non-split group extension
$$
1\rightarrow \pi_1(M)\rightarrow \pi_1^{orb}(\mathcal{O})\rightarrow G\rightarrow 1.
$$
\vskip.2cm
%There are several equivalent definitions of orbifold homotopy groups, and topology of orbifolds.
 See \cite{ALR,C,CR,Sa57} for more discussions  of orbifolds.
\subsection{Right-angled Coxeter orbifolds, manifolds with corners and their manifold covers}
Following \cite{DJ, D5}, a {\em right-angled Coxeter $n$-orbifold} $Q$ is a special $n$-orbifold locally modelled on the quotient $\mathbb{R}^n/(\mathbb{Z}_2)^n$ of the standard $(\mathbb{Z}_2)^n$-action on $\mathbb{R}^n$ by reflections across the coordinate hyperplanes.
  A {\em stratum} of codimension $k$ is the closure of a component of the subspace of $|Q|$ consisting of all points with local group $(\mathbb{Z}_2)^k$, where $|Q|$ denotes the underlying space of $Q$.
It is easy to see that  $\mathbb{R}^n/(\mathbb{Z}_2)^n$ possesses the following properties:
\begin{itemize}
\item  Topologically and combinatorially,  $\mathbb{R}^n/(\mathbb{Z}_2)^n$ is
the standard simplicial cone $\mathcal{C}^n=\{(x_1, ..., x_n)\in \mathbb{R}^n|x_i\geq 0, 1\leq i\leq n\}$ in $\mathbb{R}^n$;
\item The local group  at $x=(x_1, ..., x_n)\in \mathbb{R}^n/(\mathbb{Z}_2)^n$ is the subgroup
$(\mathbb{Z}_2)^{c(x)}$,
where  $c(x)$ is the number of those coordinates $x_i=0$ in $x$, called the {\em codimension} of $x$;
\item For $0\leq k\leq n$,  $(\mathbb{Z}_2)^k$ as a local group determines ${n\choose k}$ strata of codimension $k$, each of which is isomorphic to
  $\mathbb{R}^{n-k}/(\mathbb{Z}_2)^{n-k}$.
\end{itemize}

\vskip .2cm

Davis in \cite[Section 6]{D3} (or \cite[Chapter 10, Page 180]{D2}) defined {\em $n$-manifolds with corners}, each of which is a Hausdorff space $X$ together with a maximal atlas of local charts onto open subsets of the standard simplicial cone $\mathcal{C}^n$ such that the overlap maps are homeomorphisms of preserving codimension,  where for any chart $\varphi: U\longrightarrow \mathcal{C}^n$,
the codimension  of any $x\in U$ is defined as $c(\varphi(x))$, also denoted by $c(x)$, and it is independent of the chart. An {\em open face}  of codimension $k$ is a component of $\{x\in X| c(x)=k\}$. A {\em face}  is the closure of such a component.

\vskip .2cm
A right-angled Coxeter orbifold $Q$ naturally inherits the structure of a manifold with corners. On the other hand, since the topological and combinatorial structure of $\mathcal{C}^n$ is compatible with that of  right-angled Coxeter orbifold on $\mathbb{R}^n/(\mathbb{Z}_2)^n$, an  $n$-manifold with corners naturally admits a right-angled Coxeter orbifold structure.  Furthermore, all  strata in  a right-angled Coxeter orbifold $Q$ bijectively correspond to all faces in $Q$ as a manifold with corners. A stratum or face of codimension one is called a {\em facet}.

\vskip.2cm
In this paper we are mainly concerned with a special class of right-angled Coxeter orbifolds, i.e., simple orbifolds, as defined in \hyperref[Section 1]{Section 1}.
Let $Q$ be a simple  $n$-orbifold with facet set  $\mathcal{F}(Q)=\{F_1, ..., F_m\}$. For some $n\leq k\leq m$, the map
$$\lambda: \mathcal{F}(Q)\longrightarrow (\mathbb{Z}_2)^k$$
is called  a {\em coloring} if for each   $l$-face $f^l$ in $Q$ (so there are exactly $n-l$ facets, say $F_{i_1}, ..., F_{i_{n-l}}$, whose intersection is $f^l$ since $Q$ is simple),
$\lambda(F_{i_1}), ..., \lambda(F_{i_{n-l}})$ are independent in $(\mathbb{Z}_2)^k$. Clearly, $l$-face $f^l$ in $Q$ determines  a subgroup $G_{f^l}$ generated by $\lambda(F_{i_1}), ..., \lambda(F_{i_{n-l}})$ via $\lambda$.
%defined by $\lambda(F_i)=e_i$, where $\{e_1, ..., e_m\}$ is the standard basis of $(\mathbb{Z}_2)^m$.  This coloring determines a coloring on all faces of $Q$ in such a way that: For a $l$-face $f^l$, it is the intersection of $n-l$ facets, say $F_{i_1}, ..., F_{i_{n-l}}$,  and then  $f^l$ is colored by a subgroup $G_{f^l}$ generated by $\lambda(F_{i_1}), ..., \lambda(F_{i_{n-l}})$.
Note that  each $x\in \partial |Q|$ always lies in the relative interior of a unique face $f$.
%If $G_{f^l}\cong( \mathbb{Z}_2)^{n-l}$ for each $l$-face $f^l$,
Then  there is a {\em manifold cover} of $Q$  defined as follows:
\begin{equation}\label{E1}
\mathcal{U}(Q, (\mathbb{Z}_2)^k)=Q\times (\mathbb{Z}_2)^k/\sim
\end{equation}
where
$$(x,g)\sim (y,h)
\Longleftrightarrow
\begin{cases} x=y \text{ and } g=h & \text{ if } x\in \text{Int}(|Q|)\\
x=y  \text{ and } gh^{-1}\in G_f & \text{ if } x\in f\subset \partial |Q|.
\end{cases}
$$
Essentially this is a special case of ``basic construction" of Davis~\cite[Chapter 5]{D2}.
It follows from~\cite[Proposition 10.1.10]{D2} that $\mathcal{U}(Q, (\mathbb{Z}_2)^k)$ is an $n$-dimensional closed manifold and
  naturally admits an action of $(\mathbb{Z}_2)^k$ with quotient orbifold $Q$. So a simple orbifold is a very good orbifold.
  \begin{lem}
  A simple orbifold $Q$ is the quotient orbifold of $(\mathbb{Z}_2)^k$ acting on $\mathcal{U}(Q, (\mathbb{Z}_2)^k)$.
  \end{lem}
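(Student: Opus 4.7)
The plan is to verify that the natural $(\mathbb{Z}_2)^k$-action on $\mathcal{U}(Q,(\mathbb{Z}_2)^k)$ recovers $Q$ not only as a topological space but also with its prescribed right-angled Coxeter orbifold structure. First I would fix the obvious action
$$h\cdot [(x,g)] := [(x,hg)],$$
check it is well-defined on the equivalence classes defining $\mathcal{U}(Q,(\mathbb{Z}_2)^k)$ in \eqref{E1} (immediate, since the relation only modifies the second coordinate by an element of $G_f\leq(\mathbb{Z}_2)^k$, which commutes with left multiplication), and verify continuity. The projection $\pi:\mathcal{U}(Q,(\mathbb{Z}_2)^k)\to |Q|$, $[(x,g)]\mapsto x$, is constant on $(\mathbb{Z}_2)^k$-orbits and induces a continuous bijection from the orbit space to $|Q|$; compactness of $Q$ (assumption (a) of a simple orbifold) together with the fact that $\mathcal{U}(Q,(\mathbb{Z}_2)^k)$ is a closed manifold make this a homeomorphism of the underlying spaces.

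Next I would compute the stabilizers. For $x\in\mathrm{Int}(|Q|)$, the defining relation forces the action to be free, so the stabilizer is trivial, matching the smooth part of $Q$. For $x$ in the relative interior of a face $f$ of codimension $l$, the relation $(x,g)\sim(x,h)\iff gh^{-1}\in G_f$ shows that the stabilizer of any point in the fiber over $x$ is exactly the subgroup $G_f\cong(\mathbb{Z}_2)^l$, generated by the images under $\lambda$ of the $l$ facets containing $x$ (which are linearly independent by the coloring hypothesis). Thus $G_f$ has the correct order $2^l$.

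The heart of the argument is identifying the local orbifold chart. Choose coordinates on a neighborhood $U$ of $x$ in $|Q|$ compatible with the manifold-with-corners structure, so that $U$ is identified with a neighborhood of the origin in $\mathcal{C}^l\times\mathbb{R}^{n-l}$. Lifting via the basic construction, a component of $\pi^{-1}(U)$ is obtained by gluing $2^l$ copies of $U$ indexed by $G_f$, and this gluing is exactly the one that unfolds $\mathcal{C}^l\times\mathbb{R}^{n-l}$ into $\mathbb{R}^l\times\mathbb{R}^{n-l}=\mathbb{R}^n$; the elements of $G_f$ act on the lifted neighborhood as reflections across the coordinate hyperplanes coming from the facets through $x$. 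This is exactly the local model $\mathbb{R}^n/(\mathbb{Z}_2)^l$ defining the right-angled Coxeter orbifold structure on $Q$ at $x$. Hence the orbifold charts of the quotient coincide with the given orbifold charts of $Q$.

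I expect the main obstacle to be bookkeeping rather than conceptual: one must carefully verify that the homeomorphism of a lifted neighborhood onto $\mathbb{R}^n$ intertwines the two $G_f$-actions (the one by deck-type permutation of the $2^l$ copies built in \eqref{E1}, and the one by coordinate reflections), and that independent choices of chart yield compatible orbifold structures. Once these compatibilities are checked, the result that $Q\cong \mathcal{U}(Q,(\mathbb{Z}_2)^k)/(\mathbb{Z}_2)^k$ as orbifolds, and in particular that $Q$ is very good, follows directly, with \cite[Proposition 10.1.10]{D2} already supplying the manifold property of $\mathcal{U}(Q,(\mathbb{Z}_2)^k)$.
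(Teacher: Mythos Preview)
Your proposal is correct and is essentially an explicit unpacking of the argument the paper leaves implicit: in the paper the lemma is stated without proof, as a direct restatement of the sentence preceding it, which in turn defers the work to \cite[Proposition 10.1.10]{D2}. Your verification of the action, the stabilizer computation, and the local chart identification is exactly the content of that reference specialized to the present setting, so there is no genuine difference in approach.
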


 If $k=n$, then $\mathcal{U}(Q, (\mathbb{Z}_2)^n)$ is called {\em small manifold cover} over $Q$ which is a generalization of small covers over simple polytopes (\cite{DJ}), but it may not exist even if $Q$ is a simple polytope (also
  see~\cite[Nonexamples 1.22]{DJ}).
  However, if $k=m$,
  we can take $\lambda(F_i)=e_i$ for each facet $F_i$ of $Q$ where $\{e_1, ..., e_m\}$ is the standard basis of $(\mathbb{Z}_2)^m$, such that there always exists such $\mathcal{U}(Q, (\mathbb{Z}_2)^m)$,  called the {\em manifold double}
  (\cite[Proposition 2.4]{D5}) over $Q$,  which is a generalization of real moment angled manifolds over simple polytopes (\cite{TT}).  In this case,  for simplicity, we use $M_Q$ to replace $\mathcal{U}(Q, (\mathbb{Z}_2)^m)$ here.

\vskip .2cm
 Many  works have been carried out with respect to the  topology and geometry of $\mathcal{U}(Q, (\mathbb{Z}_2)^k)$  by associating with the topology, geometry  and combinatorics of $Q$, especially for $Q$ to be a simple polytope (e.g., see~\cite{BLA, TT, D2, D1, DJ, DJS2, GS, SMY, LM, N, WY}). In addition, there are also various relative works with other topics and  viewpoints (e.g., see~\cite{BBCG, BM, CL,  CLY, CP,  GLs,  LT}).

 \vskip .2cm
Clearly we may define the orbifold homotopy group of a simple orbifold $Q$ with $m$ facets  as follows:
  $$\pi_k^{orb}(Q):=\pi_k(E(\mathbb{Z}_2)^m\times_{(\mathbb{Z}_2)^m}M_Q)$$
where  $E(\mathbb{Z}_2)^m\times_{(\mathbb{Z}_2)^m}M_Q\longrightarrow B(\mathbb{Z}_2)^m$ is the Borel fibration with fiber $M_Q$.
Since $B(\mathbb{Z}_2)^m$ is a $K(\pi, 1)$-space, we see that if $k\geq 2$, then $\pi_k^{orb}(Q)\cong \pi_k(M_Q)$, and if $k=1$, then there is a short exact sequence $$1\rightarrow \pi_1(M_Q)\rightarrow\pi_1^{orb}(Q)\rightarrow (\mathbb{Z}_2)^m\rightarrow 1.$$
Thus $Q$ is  orbifold-aspherical if and only if $M_Q$ is aspherical (see also~\autoref{asph}).

%Given a simple $n$-orbifold $Q$ with $m$ facets, we have seen in \autoref{Section 1} that
%$Q$ is finitely covered by a closed manifold $\mathcal{U}(Q, (\mathbb{Z}_2)^m)$ with an action of $(\mathbb{Z}_2)^m$, so
%$Q$ is a very good orbifold.

\subsection{The right-angled Coxeter cellular  decomposition}
Now let us introduce the right-angled Coxeter orbifold cellular decomposition for right-angled Coxeter orbifolds, which will play an important role on the calculation of  the orbifold fundamental groups  of right-angled Coxeter orbifolds. The more general notion of cellular decomposition of certain orbifolds was considered as q-cellular complex (or, q-CW complex) in  \cite{BNSS17, PS10}.

\vskip .2cm
  Let $r_i: \mathbb{R}^n\rightarrow \mathbb{R}^n$ be the $i$-th standard reflection defined by  $$r_i(x_1,\cdots,x_i,\cdots,x_n)=(x_1,\cdots,-x_i,\cdots,x_n).$$
 All standard reflections in $\mathbb{R}^n$ induce a standard $(\mathbb{Z}_2)^n$-action on the closed unit $n$-ball $B^n$ with a right-angled corner $B^n/(\mathbb{Z}_2)^n$ as its orbit space. Of course, $\text{Int}B^n$ is $(\mathbb{Z}_2)^n$-equivariantly homeomorphic to $\mathbb{R}^n$.

\begin{defn}[Right-angled Coxeter cells] Let $\Gamma$ be a group generated by some standard reflections in $\mathbb{R}^n$.
Then  the quotient $B^n/\Gamma$ is called a {\em right-angled Coxeter $n$-ball}, and the quotient $\text{Int}B^n/\Gamma$ is called an {\em open  right-angled Coxeter $n$-ball}.
Note that if $\Gamma$ is not a trivial group, then the right-angled Coxeter $n$-ball $B^n/\Gamma$ is an $n$-orbifold with boundary $\partial B^n/\Gamma$.

\vskip .2cm
If $e^n$ is $\Gamma$-equivariantly homeomorphic to $\text{Int}B^n$, then the quotient $e^n/\Gamma$ is called a {\em right-angled Coxeter $n$-cell}, and its closure is call a {\em closed right-angled Coxeter $n$-cell}.
 \end{defn}

  For example, a right-angled Coxeter $1$-cell is either a connected open interval  or a semi-open and semi-closed interval whose closed endpoint gives a local group $\mathbb{Z}_2$.
  A right-angled Coxeter  $2$-cell has three kinds of possible types with local group being trivial group, $\mathbb{Z}_2$ and $(\mathbb{Z}_2)^2$ respectively, as shown in \hyperref[right-angled Coxeter cell]{Figure 3}.

 \begin{figure}[h]\label{right-angled Coxeter cell}
\centering
\def\svgwidth{0.4\textwidth}
%% Creator: Inkscape inkscape 0.92.3, www.inkscape.org
%% PDF/EPS/PS + LaTeX output extension by Johan Engelen, 2010
%% Accompanies image file '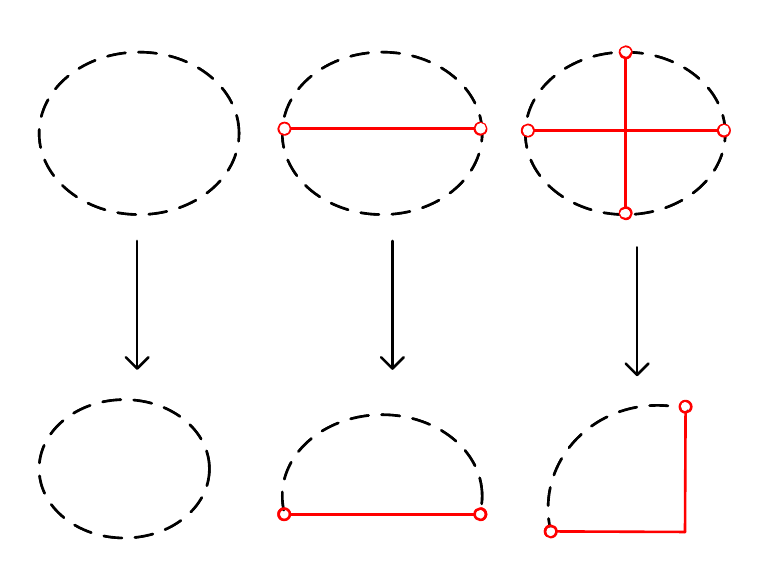' (pdf, eps, ps)
%%
%% To include the image in your LaTeX document, write
%%   \input{<filename>.pdf_tex}
%%  instead of
%%   \includegraphics{<filename>.pdf}
%% To scale the image, write
%%   \def\svgwidth{<desired width>}
%%   \input{<filename>.pdf_tex}
%%  instead of
%%   \includegraphics[width=<desired width>]{<filename>.pdf}
%%
%% Images with a different path to the parent latex file can
%% be accessed with the `import' package (which may need to be
%% installed) using
%%   \usepackage{import}
%% in the preamble, and then including the image with
%%   \import{<path to file>}{<filename>.pdf_tex}
%% Alternatively, one can specify
%%   \graphicspath{{<path to file>/}}
%% 
%% For more information, please see info/svg-inkscape on CTAN:
%%   http://tug.ctan.org/tex-archive/info/svg-inkscape
%%
\begingroup%
  \makeatletter%
  \providecommand\color[2][]{%
    \errmessage{(Inkscape) Color is used for the text in Inkscape, but the package 'color.sty' is not loaded}%
    \renewcommand\color[2][]{}%
  }%
  \providecommand\transparent[1]{%
    \errmessage{(Inkscape) Transparency is used (non-zero) for the text in Inkscape, but the package 'transparent.sty' is not loaded}%
    \renewcommand\transparent[1]{}%
  }%
  \providecommand\rotatebox[2]{#2}%
  \newcommand*\fsize{\dimexpr\f@size pt\relax}%
  \newcommand*\lineheight[1]{\fontsize{\fsize}{#1\fsize}\selectfont}%
  \ifx\svgwidth\undefined%
    \setlength{\unitlength}{107.65945816bp}%
    \ifx\svgscale\undefined%
      \relax%
    \else%
      \setlength{\unitlength}{\unitlength * \real{\svgscale}}%
    \fi%
  \else%
    \setlength{\unitlength}{\svgwidth}%
  \fi%
  \global\let\svgwidth\undefined%
  \global\let\svgscale\undefined%
  \makeatother%
  \begin{picture}(1,0.7219431)%
    \lineheight{1}%
    \setlength\tabcolsep{0pt}%
  %  \put(0,0){\includegraphics[width=\unitlength,page=1]{G2.pdf}}%
\put(0,0){\includegraphics[width=\unitlength]{G2.pdf}}%
    \put(0.10004701,-0.00909856){\color[rgb]{0,0,0}\makebox(0,0)[lt]{\lineheight{1.25}\smash{\begin{tabular}[t]{l}$\Gamma=1$\end{tabular}}}}%
    \put(0.40232503,-0.00290044){\color[rgb]{0,0,0}\makebox(0,0)[lt]{\lineheight{1.25}\smash{\begin{tabular}[t]{l}$\Gamma=\mathbb{Z}_2$\end{tabular}}}}%
    \put(0.72056393,-0.00905161){\color[rgb]{0,0,0}\makebox(0,0)[lt]{\lineheight{1.25}\smash{\begin{tabular}[t]{l}$\Gamma=\mathbb{Z}_2^2$\end{tabular}}}}%
   % \put(0,0){\includegraphics[width=\unitlength,page=2]{G2.pdf}}%
  \end{picture}%
\endgroup%

\caption{Right-angled Coxeter $2$-cells}
\end{figure}

\vskip .2cm

An $n$-dimensional {\em right-angled  Coxeter cellular complex}   (or {\em Coxeter CW complex})  $X$  can be constructed in the same way as CW complex (see \cite[Page 5]{H}).
A key point is that: The attaching map of every $n$-dimensional right-angled Coxeter cell $e_\alpha^n/\Gamma$ to the $(n-1)$-skeleton $X^{n-1}$
\begin{equation}\label{CC}
\phi_\alpha:\partial{\overline{e_\alpha^n}}/\Gamma\rightarrow X^{n-1}
\end{equation}
is required to {\bf preserve the local group of each point in $\partial{\overline{e_\alpha^n}}/\Gamma$}.
\vskip.2cm
Here  the attaching maps $\{\phi_{\alpha}\}$ of right-angled Coxeter cells with non-trivial local groups have a much stronger restriction  than those in CW complexes.
Actually, $\phi_{\alpha}$  preserving local groups implies that  singular points and non-singular points of each embedding right-angled Coxeter $n$-cell are  still singular  and non-singular in $X$, respectively. % Therefore,   there is no case where the boundary of a right-angled Coxeter $n$-cell with nontrivial local group  is mapped to $ \mathcal{O}^{n-2}$.

\begin{rem}[{ Right-angled Coxeter cubical cellular complex}]
Recall that a cubical cellular complex is a CW complex $X$ whose  cells are cubes, with
the property that for two cubes $c$, $c'$ of $X$, $c \cap c'$ is a common face of $c$ and
$c'$; in other words,  cubes are glued in $X$ via combinatorial isometries of their faces.
Similarly, a {\em right-angled Coxeter cubical cellular complex} can be defined in the same way  whose cells are all right-angled Coxeter cubical cells, that is, the orbits of standard reflections  on an $n$-cube $[-1,1]^n$. For example, the standard cubical cellular decomposition of  a simple polytope $P$ (i.e., the cone of the barycentric subdivision of $\mathcal{N}(P)$) gives a right-angled Coxeter cubical cellular  complex structure of $P$.
Of course, right-angled Coxeter cubical cellular complexes form a special class of right-angled Coxeter cellular complexes.
\end{rem}

\begin{prop}
Each simple handlebody has a finite right-angled Coxeter cellular complex structure.
\end{prop}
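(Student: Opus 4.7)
The plan is to produce the required cellular structure on $Q$ by pushing forward a canonical cellular structure on $P_Q$ through the cutting-belt identifications. Recall that by definition $Q=P_Q/\!\sim$, where $\sim$ glues disjoint pairs of codimension-one cutting belts. Hence it suffices to choose a right-angled Coxeter cellular complex structure on $P_Q$ in which each cutting belt is a subcomplex and the identification of paired belts is both cellular and local-group-preserving; the quotient structure on $Q$ will then be the desired one.

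First, I would equip $P_Q$ with its canonical right-angled Coxeter cubical cellular decomposition, namely the cone at the barycenter of $P_Q$ over the barycentric subdivision of the nerve $\mathcal{N}(P_Q)$. The preceding remark in the text already records that this is a right-angled Coxeter cubical cellular complex; its cells are indexed by ascending chains of faces of $P_Q$, and the local group on any cell is the subgroup of $(\mathbb{Z}_2)^{|\mathcal{F}(P_Q)|}$ generated by the reflections across the facets of $P_Q$ containing that cell. Since $P_Q$ has only finitely many faces, this decomposition is finite.

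Next, I would observe that each cutting belt is a facet of $P_Q$, and by condition (c) of the definition of simple orbifolds it is itself a simple $(n-1)$-polytope. Its canonical cubical decomposition agrees with the restriction of the above decomposition of $P_Q$ to that facet, so each cutting belt is a subcomplex of $P_Q$. Two cutting belts that get identified in forming $Q$ must be combinatorially isomorphic, since they yield the same codimension-one $B$-belt in $Q$, and the identification is induced by a combinatorial isomorphism of their face posets, hence is a cellular isomorphism of their cubical structures.

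Finally, I would take the cellular structure on $Q$ to be the pushforward of the cellular structure on $P_Q$ under the quotient map, and verify that the resulting attaching maps preserve local groups. A point in the relative interior of a cutting belt in $P_Q$ has local group $\mathbb{Z}_2$, generated by the reflection across that facet, and its image in $Q$ lies in the relative interior of the corresponding $B$-belt, which again has local group $\mathbb{Z}_2$; at higher codimension along the belt, condition (b) that $\mathcal{N}(Q)$ triangulates $\partial |Q|$ ensures that the two local groups on opposite sides of the gluing match the local group of the stratum in $Q$. This local-group compatibility is the main technical point of the argument; once it is verified, the pushed-forward structure is automatically a finite right-angled Coxeter cellular complex, since $P_Q$ has only finitely many cells to begin with.
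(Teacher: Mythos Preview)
Your overall strategy---pushing forward the standard cubical subdivision of $P_Q$ through the belt identifications---is exactly what the paper does, but your local-group analysis at the cutting belts is wrong, and this is not a minor technicality: it is the entire content of the argument. You assert that a point in the relative interior of a cutting-belt facet $B^{\pm}\subset P_Q$ has local group $\mathbb{Z}_2$ and that its image in $Q$ ``again has local group $\mathbb{Z}_2$''. The second claim is false. The cutting belt $B$ lies in the \emph{interior} of $|Q|$, so a point in the relative interior of $B$ has codimension~$0$ in $Q$ and hence trivial local group. Consequently, if you treat $B^{\pm}$ as mirror facets of $P_Q$ (as you do when you say the local group there is $\mathbb{Z}_2$), the na\"{\i}ve pushforward of cells does \emph{not} give attaching maps that preserve local groups, and you do not obtain a right-angled Coxeter cellular structure on $Q$.

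The paper's Section~4 fixes this by regarding $P_Q$ as a right-angled Coxeter orbifold \emph{with boundary} $\bigsqcup_{B\in\mathcal{F}_B}B$, so the cutting-belt facets carry no reflection at all. One then does more than push forward: for a $k$-cube $c\subset\mathcal{C}(B^+)$ and its partner $c'\subset\mathcal{C}(B^-)$, the two $(k{+}1)$-cubes of $\mathcal{C}(P_Q)$ having $c$ and $c'$ as faces are glued along $c\sim c'$ to form a \emph{single} right-angled Coxeter cell of type $e^{k+1}/(\mathbb{Z}_2)^k$, the $k$ reflections coming only from the genuine mirror facets of $Q$ and none from the belt direction. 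This merging step is exactly what is missing from your argument; without it the local groups cannot match, and with it one obtains the (non-cubical) decomposition $\mathcal{C}(Q)$ the paper describes.
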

 \begin{proof} Let $Q$ be a    simple $n$-handlebody with the
  associated simple polytope $P_Q$. Then the standard cubical subdivision of $P_Q$ induces a right-angled Coxeter cellular decomposition of $Q$.  More details will be shown in \hyperref[section4]{Section 4}.
 \end{proof}
\begin{rem}
It should be pointed out that each  simple handlebody  still has a right-angled Coxeter cubical cellular complex structure. This can be seen in \hyperref[section-61]{Section 7.1}.
\end{rem}

In general, a right-angled Coxeter cellular complex is naturally an orbispace. Here its orbifold fundamental group is defined by the homotopy classes of based orbifold loops. For more details, see~\cite[Section 3]{C}.
Although a right-angled Coxeter cell with non-trivial local group is not contractible in the sense of orbifold,
all attaching maps $\{\phi_\alpha\}$ preserving local groups ensure that the orbifold fundamental group of a right-angled Coxeter cellular complex is isomorphic to the orbifold fundamental group of its $2$-skeleton.

\begin{prop}\label{p1}
Let $X$ be a right-angled Coxeter cellular complex. Then
$$
\pi^{\text{orb}}_1(X^2)\cong \pi^{\text{orb}}_1(X),
$$
where $X^2$ is the 2-skeleton of $X$.
\end{prop}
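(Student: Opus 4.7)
The plan is to mimic the classical proof that $\pi_1(X^2)\cong \pi_1(X)$ for an ordinary CW complex, now in the orbifold setting, using an orbifold van Kampen theorem (via the based-orbifold-loop formulation of $\pi_1^{\text{orb}}$ from \cite[Section 3]{C}) together with the hypothesis from (\ref{CC}) that each attaching map $\phi_\alpha$ preserves local groups. The local-group preservation is precisely what makes the cellular pushout into an \emph{orbifold} pushout, so that the van Kampen formula applies on orbifold fundamental groups.

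First I would reduce the statement to the inductive claim that the inclusion $X^{n-1}\hookrightarrow X^n$ induces an isomorphism on $\pi_1^{\text{orb}}$ for every $n\ge 3$. For possibly infinite-dimensional $X$, compactness of based orbifold loops and of orbifold homotopies forces them to lie in a finite subcomplex, giving $\pi_1^{\text{orb}}(X)\cong \varinjlim_n \pi_1^{\text{orb}}(X^n)$; thus the inductive step propagates to the limit.

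Next I would compute the pieces entering van Kampen. For a right-angled Coxeter $n$-ball $\overline{e^n_\alpha}/\Gamma_\alpha = B^n/\Gamma_\alpha$, the orbifold universal cover is the contractible manifold $B^n$ with deck group $\Gamma_\alpha$, so $\pi_1^{\text{orb}}(B^n/\Gamma_\alpha)\cong \Gamma_\alpha$. For $n\ge 3$ the boundary $\partial B^n\cong S^{n-1}$ is simply connected, so its $\Gamma_\alpha$-quotient has orbifold universal cover $S^{n-1}$ and hence $\pi_1^{\text{orb}}(\partial B^n/\Gamma_\alpha)\cong \Gamma_\alpha$; moreover the inclusion $\partial B^n/\Gamma_\alpha\hookrightarrow B^n/\Gamma_\alpha$ induces the identity on $\Gamma_\alpha$. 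Writing $X^n$ as the orbifold pushout of $X^{n-1}$ with the disjoint union of the right-angled Coxeter $n$-balls along $\bigsqcup_\alpha \phi_\alpha$, the orbifold van Kampen theorem identifies $\pi_1^{\text{orb}}(X^n)$ with an iterated amalgamated free product whose contribution from each $n$-cell has the form
\[
A *_{\Gamma_\alpha} \Gamma_\alpha ,
\]
where $A$ is the ``current'' group, the homomorphism $\Gamma_\alpha \to A$ is $(\phi_\alpha)_\ast$, and the homomorphism $\Gamma_\alpha\to\Gamma_\alpha$ is the identity. Such an amalgamation is trivial (it freely adds the generators of $\Gamma_\alpha$ and then identifies each with its image in $A$), so $\pi_1^{\text{orb}}(X^n)\cong \pi_1^{\text{orb}}(X^{n-1})$; iterating down to $n=3$ yields the claim.

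The main obstacle is the careful set-up of the orbifold van Kampen theorem in the right-angled Coxeter cellular context, and in particular the verification that ``$\phi_\alpha$ preserves local groups'' is precisely what is needed to make the cellular pushout compute $\pi_1^{\text{orb}}$ by the usual amalgamated-product formula; once this categorical framework is in place the remaining steps — the computation of $\pi_1^{\text{orb}}$ of $B^n/\Gamma$ and $\partial B^n/\Gamma$, and the collapse of the amalgamation $A*_\Gamma \Gamma$ — are routine.
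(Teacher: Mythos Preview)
Your proposal is correct and follows essentially the same approach as the paper: both reduce to showing that attaching right-angled Coxeter $n$-cells for $n\ge 3$ does not change $\pi_1^{\text{orb}}$, with the key input being that the attaching maps preserve local groups so that the boundary orbifold $\partial B^n/\Gamma$ carries the same local-group data as the cell itself. The paper simply invokes Hatcher's loop-pushing argument (Proposition~1.26 in \cite{H}) adapted to the orbifold setting, whereas you phrase the same step via an orbifold van Kampen computation $A*_\Gamma \Gamma\cong A$; these are two standard packagings of the same idea, and your version has the modest advantage of making explicit why local-group preservation is exactly the hypothesis needed.
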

\begin{proof}
 The argument can be proved in a similar way as shown by Hatcher~\cite[Proposition 1.26]{H}. The only thing to note is that the local group information of each right-angled Coxeter $n$-cell can be inherited by the boundary orbifold of its closure in $X^{n-1}$.
\end{proof}

\begin{figure}[h]\label{generator-relation}
\centering
\def\svgwidth{0.55\textwidth}
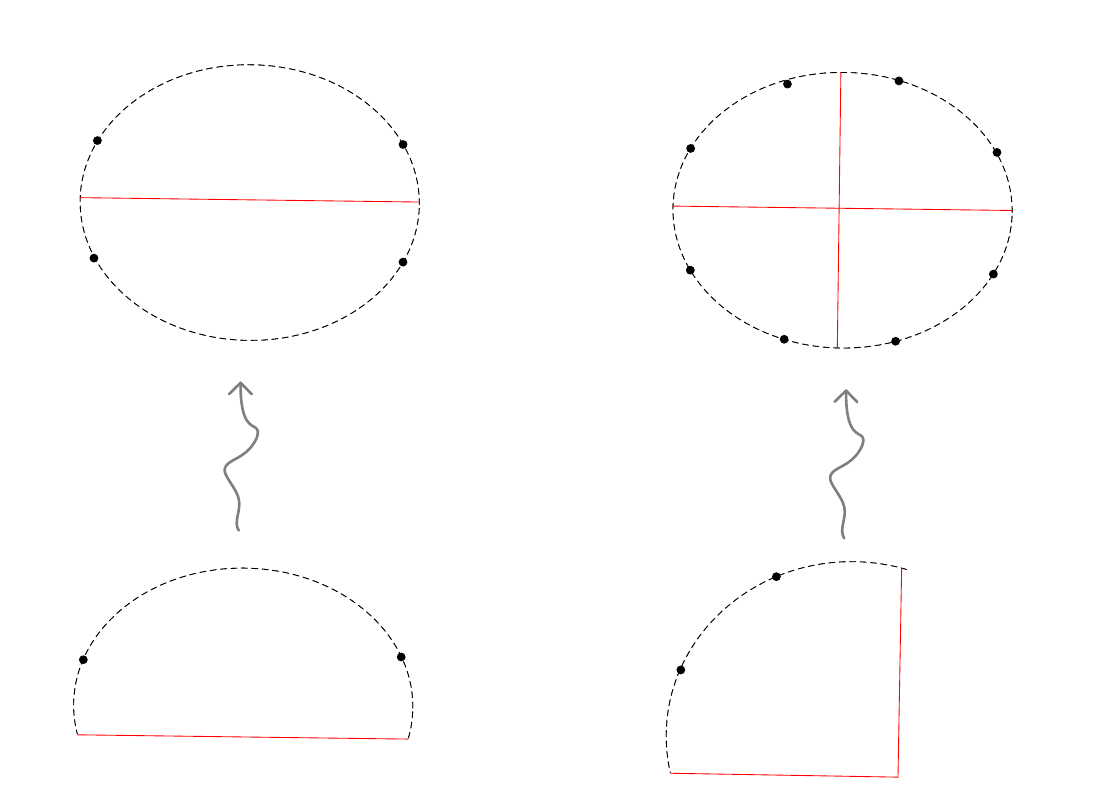
\caption{Relations determined by  right-angled Coxeter $2$-cells in the case $n=3$.}
\end{figure}
\begin{rem}
We can easily read out the generators and relations of $\pi^{\text{orb}}_1(X)\cong \pi^{\text{orb}}_1(X^2)$ from the $2$-skeleton of a right-angled Coxeter cellular complex $X$.  Let us look at  a right-angled Coxeter $2$-cell with non-trivial local group in $X$.
Assume that   the boundary of a right-angled Coxeter $2$-cell with non-trivial local group   consists of $x_1, x_2,\cdots, x_n$, where each $x_i$ is a closed oriented orbifold loop in $X$, and only one endpoint of $x_1$ and $x_n$ has non-trivial local group.
Regard these closed orbifold loops as generators. Then $x_1^2=x_n^2=1$. Moreover,
the right-angled  Coxeter $2$-cells with local group $\mathbb{Z}_2$ give a relation $x_1x_2\cdots x_n\cdot x^{-1}_{n-1}\cdots x_2^{-1}=1$, while the right-angled Coxeter $2$-cells with local group $\mathbb{Z}^2_2$ give a relation $(x_1x_2\cdots x_n\cdot x^{-1}_{n-1}\cdots x_2^{-1})^2=1$. This can intuitively be seen from \hyperref[generator-relation]{Figure 4} when $n=3$.
\end{rem}

 \begin{example}Let $P$ be a simple polytope with facet set $\mathcal{F}(P)$. Regard $P$ as a right-angled Coxeter orbifold. The standard cubical subdivision of $P$ is a right-angled Coxeter cellular decomposition of $P$.
 Calculating the orbifold fundamental group of  $P$ by  the $2$-skeleton of its right-angled Coxeter cellular  decomposition,   $\pi^{\text{orb}}_1(P)$ can be represented by the right-angled Coxeter group $W_P$ of $P$:
$$
\pi^{\text{orb}}_1(P)\cong W_P=\langle s_F, F\in \mathcal{F}(P) | s_F^2=1,~\text{for all }F;~ (s_Fs_{F'})^2=1, ~\text{for}~ F\cap F'\neq\emptyset\rangle$$
\end{example}

\subsection{Group action and fundamental domain (\cite[Page 64]{D2} or \cite[Page 159-161]{VS})}
  Suppose that a discrete group $G$ acts properly on a connected topological space $X$. A closed subset $D\subset X$ is a {\em fundamental domain} for the $G$-action on $X$ if each $G$-orbit intersects with $D$ and if for each point $x$ in the interior of $D$, $G(x)\cap D=\{x\}$. In other words, $\{gD | g\in G\}$ forms a locally finite cover for $X$, such that no two of $\{gD | g\in G\}$ have common interior points. Such $\{gD,g\in G\}$ is called a {\em decomposition} for $X$ so
   $$X=\bigcup_{g\in G} gD$$
    and each $gD$ is called a {\em chamber} of $G$ on $X$.
\vskip .2cm
Throughout the following,  the fundamental domain of $G$ acting on $X$ will be taken as a simple convex polytope $D$.
Then each $g\in G$ gives a self-homeomorphism of $X$
$$\phi_g:X\longrightarrow X$$
by mapping chamber $hD$ to $g\cdot hD$ for any $h\in G$.
If  two chambers $gD$ and $hD$ have a nonempty intersection which includes some facets of  $gD$ and $hD$, then there is a homeomorphism $\phi_{hg^{-1}}$ that maps $gD$ to $hD$.  Hence, for two facets $F$ and $F'$ from $gD$ and $hD$, respectively, that are glued together in $X$, naturally we can assign $hg^{-1}$ and $gh^{-1}$ to $F$ and $F'$, respectively.
This means that the action of $G$ on $X$ gives a {\em characteristic map} on the facets set of $D$:
$$\lambda: \mathcal{F}(D)\longrightarrow G.$$
For each facet $F$ of $D$, $\lambda(F)\in G$  is called a {\em coloring} on $F$. Each $\lambda(F)\in G$ naturally determines a self-homeomorphism $\phi_{\lambda(F)}\in \text{Homeo}(X)$, which is called an {\em adjacency transformation} on $X$ with respect to $F$.
Such $\phi_{\lambda(F)}$ maps each chamber into adjacent chamber such that the facet $F$  is contained in the intersection of those two chambers.
  Each adjacency transformation has an inverse adjacency transformation corresponding to  a facet
  $F'$ of $D$. Of course, $F=F'$ is allowed. In this case, we call $F$ a {\em mirror} of $X$ associated with $G$, and the corresponding adjacency transformation is called a {\em reflection} of $X$ with respect to $F$.
   \begin{rem}
It should be pointed out that  two adjacency transformations  determined by different facets of $D$ are viewed as being different, although they may correspond to the same self-homeomorphism of $X$. The inverse adjacency transformation of an adjacency transformation determined by a facet $F$ is exactly determined by another facet $F'$  which is identified with $F$ in $X$.
\end{rem}
 All inverse adjacency transformations give an equivalence relation $\sim$ on $\mathcal{F}(D)\times G$, where
$(F,g)\sim (F',h)$  if and only if
\begin{equation}\begin{cases}\label{C1}
 \lambda(F)\cdot g=h\\
 \lambda(F')\cdot h=g.\\
\end{cases}\end{equation}
In other words, if two chambers $gD$ and $hD$ are attached together by identifying a facet $F$ of $gD$ with a facet $F'$ of $hD$ in $X$, then $\lambda(F)\cdot \lambda(F')=1$, which  gives a  {\em pair relation} for $G$.
When $F$ is a mirror, the pair relation is $\lambda(F)^2=1$.

\begin{rem}
The equivalence relation $\sim$ on $\mathcal{F}(D)\times G$ gives an  equivalence relation $\sim'$ on $\mathcal{F}(D)$
via the projection $\mathcal{F}(D)\times G\longrightarrow \mathcal{F}(D)$ as follows: for $F, F'\in \mathcal{F}(D)$,
$$F\sim' F'\Longleftrightarrow (F,g)\sim (F',h) \text{ for } g, h\in G \text{ satisfying (\ref{C1})}.$$
Thus, we can obtain a quotient orbifold $D/\sim'$ by attaching some facets on the boundary of $D$ via the equivalence relation $\sim'$ on $\mathcal{F}(D)$.
\end{rem}

\vskip .2cm

On the contrary, giving a simple polytope $D$ and a characteristic map  satisfying (\ref{C1}),  we can construct a space $X$ with $G$-action in the following way:
\begin{equation}
X=D\times G/\sim
\end{equation}
where  the equivalence relation is defined in (\ref{C1}).

\vskip .2cm

The construction of $X$ gives a natural polyhedral cellular decomposition of $X$, denoted by $\mathcal{P}(X)$. The dual complex of  $\mathcal{P}(X)$ is denoted by $\mathcal{C}(X)$. If each codimension-$k$ face of $D$ in $X$ intersects with exactly $2^k$ chambers, then each cell of $\mathcal{C}(X)$ is a cube, which is exactly one induced by the standard cubical decomposition of the simple polytope $D$. Furthermore, if $\mathcal{C}(X)$ is a cubical complex, then the link of each vertex in $\mathcal{C}(X)$ is a simplicial complex which is exactly the boundary complex of the dual of $D$. The $1$-skeleton of $\mathcal{C}(X)$ is exactly the Cayley graph of $G$ with generator set consisting  adjacency transformations determined by all facets of $D$.
Therefore, one has that
\begin{lem}[{\cite[Page 160]{VS}}]
The group $G$ is generated by all adjacency transformations.
\end{lem}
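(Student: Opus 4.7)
The plan is to prove that the subgroup $H$ of $G$ generated by all adjacency transformations $\{\phi_{\lambda(F)}\mid F\in\mathcal{F}(D)\}$ exhausts $G$. Setting $U:=HD=\bigcup_{h\in H}hD$, it suffices to show $U=X$; indeed the chambers are in bijection with $G$ via $g\mapsto gD$, so $U=X$ forces $H=G$.

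I would argue that $U$ is both open and closed in the connected space $X$ and contains the base chamber $D$, hence equals $X$. Closedness is essentially immediate: the chamber decomposition $\{gD\mid g\in G\}$ is locally finite by properness of the action, so $X\setminus U=\bigcup_{g\notin H}gD$ is a locally finite union of closed chambers, therefore closed.

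Openness is the substantive step. Given $x\in U$, pick any chamber $hD$ (with $h\in H$) containing $x$. If $x$ lies in the interior of $hD$, an interior neighborhood of $x$ sits inside $hD\subset U$, and we are done. Otherwise $x$ lies in the relative interior of some codimension-$k$ face $f$ of $hD$ with $k\geq 1$. Because $D$ is a simple polytope, $f$ is the transverse intersection of exactly $k$ facets $F_{i_1},\ldots,F_{i_k}$ of $hD$, and a local model for $X$ near $x$ is obtained by attaching copies of $\mathcal{C}^k\times\mathbb{R}^{n-k}$ along their facet mirrors. Properness of the action ensures that only finitely many chambers meet $x$, and in this local fan any two chambers sharing a common facet are related by a reflection across one of the $F_{i_j}$ (or the image of such a facet under a previously applied adjacency transformation); each such reflection is an adjacency transformation and hence lies in $H$. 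Iterating shows that every chamber through $x$ has the form $h'D$ with $h'\in H$, giving an open neighborhood of $x$ contained in $U$.

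Combining the two steps and invoking connectedness of $X$ yields $U=X$, and therefore $H=G$. The principal obstacle is precisely this openness argument at higher-codimension faces: one must verify that the local graph whose vertices are the chambers through $x$ and whose edges record facet-adjacency is connected. The simple-polytope hypothesis on $D$ supplies the needed local model $\mathcal{C}^k\times\mathbb{R}^{n-k}$ in which this connectedness is manifest, while properness of the $G$-action keeps the local fan finite; without either of these ingredients the induction step can fail.
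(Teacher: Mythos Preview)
Your open--closed argument is correct and is the standard way this lemma is proved; the paper itself does not give a detailed proof but only records the observation preceding the lemma that the $1$-skeleton of the dual complex $\mathcal{C}(X)$ is precisely the Cayley graph of $G$ with respect to the set of adjacency transformations, so that connectedness of $X$ (hence of this $1$-skeleton) forces the adjacency transformations to generate. Your argument unpacks the same connectedness idea topologically rather than combinatorially: the paper's route is shorter once the dual complex is in hand, while yours makes the local step at codimension-$k$ faces explicit and does not require setting up $\mathcal{C}(X)$.

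One terminological slip: you repeatedly call the adjacency transformations ``reflections.'' In the generality of this section they need not be involutions---when two \emph{distinct} facets $F$ and $F'$ are identified one has the pair relation $\lambda(F)\lambda(F')=1$, not $\lambda(F)^2=1$. The argument is unaffected (what you actually use is that facet-adjacent chambers differ by an element of $H$), but the word ``reflection'' should be replaced by ``adjacency transformation'' throughout.
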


 For simplifying notation, denote $\lambda(F_i)=s_i$ or $s_{F_i}$ for each $F_i\in \mathcal{F}(D)$.
Then for each $g\in G$, $\phi_g$ can be decomposed into the composition of some adjacency transformations:
$$g=s_{i_1}s_{i_2}\cdots s_{i_k}$$
The relations with form $s_{i_1}s_{i_2}\cdots s_{i_k}=1$ except pair relations is called {\em Poincar$\acute{\text{e}}$ relations}.
\begin{lem}[{\cite[Page 161]{VS}}]
The  Poincar$\acute{\text{e}}$ relations together with the pair relations form a set of relations of group $G$.
\end{lem}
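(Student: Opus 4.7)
The plan is to show that the abstract group $\widehat{G}$ defined by the presentation with generators $\{s_F : F\in \mathcal{F}(D)\}$ and relations consisting of the pair and Poincar\'e relations is isomorphic to $G$. The previous lemma provides a tautological surjection $\psi\colon \widehat{G}\twoheadrightarrow G$, so the only thing to prove is that $\psi$ is injective.

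First I would mimic the construction of $X$ by forming $\widehat{X}:=D\times \widehat{G}/\sim$ using the direct analog of \eqref{C1} with $G$ replaced by $\widehat{G}$. This is well-posed because the pair relations, which make the equivalence symmetric, are built into $\widehat{G}$. The group $\widehat{G}$ then acts on $\widehat{X}$ with $D$ as fundamental domain, and $\psi$ induces a $\widehat{G}$-equivariant continuous surjection $\widetilde{\psi}\colon \widehat{X}\to X$ that sends each chamber $\widehat{g}D\subset \widehat{X}$ homeomorphically to the chamber $\psi(\widehat{g})D\subset X$.

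The core of the argument is to verify that $\widetilde{\psi}$ is a covering map. At points in the interiors of chambers or of single facets there is nothing to check, since in the facet case the identification is governed solely by a pair relation, which holds on both sides. The decisive step is at a point in the relative interior of a codimension-two face of some chamber: the local picture in $X$ is a cyclic sequence of chambers $g_1 D, g_2 D, \dots, g_k D = g_1 D$ glued around that face along adjacent facets with labels $s_{i_1},\dots,s_{i_k}$, and the closure of the cycle translates into the relation $s_{i_1}\cdots s_{i_k}=1$ in $G$. By construction this relation is either a pair relation or one of the Poincar\'e relations, hence it also holds in $\widehat{G}$; thus the corresponding cyclic picture closes up in $\widehat{X}$ as well, and $\widetilde{\psi}$ is a local homeomorphism at such points. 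Faces of higher codimension reduce to the codimension-two case by taking transverse slices through $D$.

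Finally, invoking the simple connectedness of $X$ that underlies this lemma (the relevant setting in the paper, since $X$ will be taken to be the orbifold universal cover in later sections), the connected covering $\widetilde{\psi}$ must be a homeomorphism. Equivariance then forces $\psi$ to be injective, yielding $\widehat{G}\cong G$ as required. The main obstacle is the codimension-two verification: one has to confirm that every cyclic sequence of chambers around every codimension-two face of $D$ produces a relation that is genuinely among the listed Poincar\'e relations (i.e.\ that the given list is exhaustive relative to the 2-skeleton of the natural cellular decomposition $\mathcal{P}(X)$), and, conversely, that each listed Poincar\'e relation arises geometrically from such a cycle, so no extraneous relation is imposed.
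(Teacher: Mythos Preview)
The paper does not supply a proof of this lemma; it is cited directly from \cite[Page 161]{VS}. More importantly, you have misread what is being asserted. In the paper's terminology (the paragraph immediately preceding the lemma), a \emph{Poincar\'e relation} is defined to be \emph{any} word $s_{i_1}\cdots s_{i_k}$ that equals $1$ in $G$ and is not already a pair relation. With that definition the lemma is essentially tautological: the previous lemma says the adjacency transformations generate $G$, and adding \emph{all} relations among those generators trivially yields a presentation.

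What you have actually sketched is a proof of the stronger statement that the pair relations together with only the \emph{codimension-two} Poincar\'e relations present $G$. In the paper this stronger statement is isolated only \emph{after} the lemma: the group $G_D$ of \eqref{E3} is defined using precisely the codimension-two relations, and the comparison between $G_D$ and $G$ is the content of \autoref{general} and \autoref{cor3}. Those results show that in general $G_D$ surjects onto $G$ with kernel $\pi_1(X)$, so $G_D\cong G$ holds if and only if $X$ is simply connected. Your argument is the classical Poincar\'e polyhedron theorem proof of exactly this fact, and your covering-space outline is correct for that purpose; but you had to import the hypothesis that $X$ is simply connected (``the relevant setting in the paper, since $X$ will be taken to be the orbifold universal cover in later sections''), which is not part of the lemma as stated and is not available at this point in the exposition. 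In short, your proof is aimed at \autoref{cor3}, not at the cited lemma.
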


For each codimension $2$ face of $D$, there is a Poincar$\acute{\text{e}}$ relation with form $s_ks_{k-1}\cdots s_1=1$ (alternatively,  $s'_1\cdots s'_k=1$,
where $s'_i=(s_i)^{-1}$ for each $i$).

\vskip .2cm

Define a group $G_D$ with generators consisting of all adjacency transformations determined by $\mathcal{F}(D)$ and relations  formed by all pair relations and Poincar$\acute{\text{e}}$ relations determined by all codimension $2$ faces in $D$.
\begin{equation}\label{E3}
\begin{split}
G_D=\langle s_i, \text{for } F_i\in \mathcal{F}(D) \mid & s_is_j=1,  \text{for } F_i\sim' F_j;\\
&s_{i_1}s_{i_2}\cdots s_{i_k}=1, \text{for each codim-$2$~ face~ in}~ D\rangle
\end{split}
\end{equation}

For the sake of preciseness,  suppose again that each codimension-$k$ face of $D$ in $X$ intersects with exactly $2^k$ chambers.  Then the cubical subdivision of $D$ induces a right-angled Coxeter cellular decomposition for the quotient orbifold $X/G$.
It is not difficult to see that $D/\sim'$ is isomorphic to $X/G$ as orbifolds.
 According to \autoref{p1},  $G_D$ is isomorphic to the orbifold fundamental group of the quotient space $X/G$.  Therefore,
 we have that
\begin{lem}\label{lmA3}
The orbifold fundamental group of $D/\sim'\cong X/G$   is isomorphic to $G_D$.
\end{lem}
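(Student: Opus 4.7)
The plan is to compute $\pi_1^{\text{orb}}(D/\sim') \cong \pi_1^{\text{orb}}(X/G)$ by reading off a presentation from the right-angled Coxeter cellular decomposition of $X/G$ induced by the cubical subdivision of $D$. By \autoref{p1}, this reduces to analyzing the $2$-skeleton of $D/\sim'$, so the strategy is: pick a basepoint, use the $1$-skeleton to produce the generators together with the pair relations, and use the $2$-cells to produce the Poincar\'e relations; then match the resulting presentation against (\ref{E3}).

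First I would take the basepoint to be the image of the barycenter $b_D$ of $D$. In the cubical subdivision of $D$, each facet $F \in \mathcal{F}(D)$ contributes exactly one $1$-cell from $b_D$ to the barycenter $b_F$ of $F$. After passing to the quotient $D/\sim'$, this $1$-cell becomes a closed orbifold loop based at $b_D$: if $F$ is a mirror, its far endpoint acquires a $\mathbb{Z}_2$ local group, giving an orbifold loop $s_F$ which automatically satisfies $s_F^2 = 1$; if $F \sim' F'$ with $F \neq F'$, the two $1$-cells emanating from $b_D$ toward $F$ and $F'$ are glued at their far endpoints and yield a single honest loop, forcing $s_F s_{F'} = 1$. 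Thus the $1$-skeleton of $D/\sim'$ supplies exactly the generating set $\{s_F : F \in \mathcal{F}(D)\}$ and all pair relations of $G_D$.

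Next I would read off the remaining relations from the right-angled Coxeter $2$-cells of $D/\sim'$. Each codimension-$2$ face $f$ of $D$ is surrounded in the cubical subdivision by a fan of little squares; after identification by $\sim'$, these squares assemble into a single right-angled Coxeter $2$-cell whose local group is trivial, $\mathbb{Z}_2$, or $\mathbb{Z}_2^2$ depending on how many of the facets through $f$ are mirrors, and whose boundary word, traversed once around $f$, spells out precisely the Poincar\'e relation $s_{i_1} s_{i_2} \cdots s_{i_k} = 1$ attached to $f$. Conversely, every $2$-cell of $D/\sim'$ arises in this way, so the $2$-cells contribute exactly the Poincar\'e relations appearing in (\ref{E3}) and no others.

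The main technical point will be to verify that the attaching maps of these $2$-cells satisfy the local-group-preservation condition (\ref{CC}) — i.e., that the singular strata along the boundary of each $2$-cell in $X^1/G$ are faithfully recorded by the attaching map. This is a local check at the corners of $D$ meeting $f$, where one must compare the stabilizer of a codim-$2$ orbit under $G$ with the product of local groups read off from the pair relations and, in the mirror case, confirm that the attaching map indeed halves or quarters the boundary as in the $\mathbb{Z}_2$ or $\mathbb{Z}_2^2$ pictures of Figure~3. Once this is checked, the presentation of $\pi_1^{\text{orb}}(D/\sim')$ read off from its $2$-skeleton is word-for-word the presentation (\ref{E3}) of $G_D$, completing the proof that $\pi_1^{\text{orb}}(X/G) \cong G_D$.
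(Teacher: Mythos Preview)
Your proposal is correct and follows essentially the same approach as the paper: the paper simply notes that the cubical subdivision of $D$ induces a right-angled Coxeter cellular decomposition of $X/G \cong D/\sim'$ and then invokes \autoref{p1} to read off the presentation from the $2$-skeleton, without spelling out the details. You have filled in exactly those details---how the $1$-cells yield the generators and pair relations and how the $2$-cells around codimension-$2$ faces yield the Poincar\'e relations---so your argument is a fleshed-out version of the paper's terse one.
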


There is a natural quotient map $\lambda_*: G_D\longrightarrow G$, and the image of $\lambda_*$ on each adjacency transformation $s_F$ is the coloring on corresponding facet $F$. Then the fundamental group of $X$ is isomorphic to the kernel of $\lambda_*$.
\begin{prop}\label{general}
Let $G$ be a discrete group which acts properly discontinuously on a   manifold $X$. Suppose $X$ is  decomposed into
$X=\bigcup_{g\in G} gD=D\times G/\sim$, where $D$ is a  simple convex polytope and each codimension-$k$ face of $D$ in $X$ intersects with exactly $2^k$ chambers. Let $G_D$ be the group defined as in (\ref{E3}), and $\lambda_*$ be the quotient map from $G_D$ to $G$ induced by the characteristic map $\lambda: \mathcal{F}(D)\longrightarrow G$. Then there is a short exact group sequence
$$1\longrightarrow \pi_1(X) \longrightarrow G_D\overset{\lambda_*}{\longrightarrow} G \longrightarrow 1$$
 which is induced by an orbifold covering $\pi:X \longrightarrow X/G$.
\end{prop}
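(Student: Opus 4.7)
The plan is to combine \autoref{lmA3} with general orbifold covering theory. First I would verify that $\pi\colon X\To X/G$ is a regular orbifold cover: since $G$ acts properly discontinuously on the manifold $X$, the quotient $X/G$ carries a natural orbifold structure, and the fundamental-domain description $X=D\times G/\sim$ identifies $X/G$ as an orbifold with $D/\sim'$, the space obtained by attaching facets of $D$ through the characteristic map $\lambda$.

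Next I would invoke the orbifold analogue of the standard exact sequence of a regular covering. For any regular orbifold covering $\widetilde{Y}\To Y$ whose total space $\widetilde{Y}$ is a manifold and whose deck group is $G$, orbifold covering theory (see \cite{C,TH} and the discussion following \autoref{HHG}) supplies
$$
1\To \pi_1(\widetilde{Y})\To \pi_1^{\text{orb}}(Y)\To G\To 1,
$$
in which the quotient map sends a based orbifold loop to its associated deck transformation. Specializing to $\widetilde{Y}=X$ and $Y=X/G\cong D/\sim'$, the middle term becomes $\pi_1^{\text{orb}}(D/\sim')$, which is isomorphic to $G_D$ by \autoref{lmA3}.

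The final step is to check that, under the identification $\pi_1^{\text{orb}}(D/\sim')\cong G_D$, the covering-theoretic quotient onto $G$ coincides with the combinatorially defined map $\lambda_*$. This is essentially tautological: each generator $s_F\in G_D$ is represented by the short orbifold loop crossing the facet $F$, which lifts in $X$ to a path from the base chamber $D$ into the adjacent chamber $\lambda(F)\cdot D$, so the associated deck transformation is precisely $\lambda(F)$. Since the two homomorphisms $G_D\To G$ agree on the generating set $\{s_F\}_{F\in\mathcal{F}(D)}$, they agree everywhere, and exactness then identifies $\ker\lambda_*$ with $\pi_1(X)$. The main obstacle I anticipate lies in this last identification: one must track base points carefully and check that the hypothesis ``each codimension-$k$ face meets exactly $2^k$ chambers'' produces precisely the Poincar\'e relations defining $G_D$ in \autoref{E3}, so that the geometric loops and their concatenations in $X/G$ correspond faithfully to the abstract presentation of $G_D$.
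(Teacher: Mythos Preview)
Your proposal is correct and follows essentially the same approach as the paper: the paper's proof simply cites Chen \cite[Page 40--49]{C} for the orbifold covering exact sequence and then invokes \autoref{lmA3} to identify $\pi_1^{\text{orb}}(X/G)$ with $G_D$. Your additional verification that the covering-theoretic quotient agrees with $\lambda_*$ on generators is a welcome elaboration of a point the paper leaves implicit.
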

\begin{proof} Refer to Chen (\cite[Page 40-49]{C}). Here it is only necessary to show that $G_D\cong \pi^{orb}_1(X/G)$, which is exactly \autoref{lmA3}.
\end{proof}

Given a simple convex polytope $D$ and a discrete group $G$, assume that there exists a characteristic  map $\lambda:\mathcal{F}(D)\longrightarrow G$ such that $X=D\times G/\sim$ is a  $G$-manifold, where $(F,g)\sim (F',h)$ for any $F, F'\in \mathcal{F}(D), g,h\in G$ if and only if  (\ref{C1}) holds.
Then, we have the following result.

\begin{cor}\label{cor3}
Under the assumption of \autoref{general},
 $X$ is simply-connected if and only if $G\cong G_D$.
\end{cor}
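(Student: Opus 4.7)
The plan is to deduce this corollary as an immediate consequence of the short exact sequence established in \autoref{general}, namely
$$
1 \longrightarrow \pi_1(X) \longrightarrow G_D \overset{\lambda_*}{\longrightarrow} G \longrightarrow 1.
$$
This already does all the work: since $\lambda_*$ is surjective, one has $\pi_1(X) \cong \ker(\lambda_*)$ and $G_D/\ker(\lambda_*) \cong G$, so the triviality of $\pi_1(X)$ is equivalent to $\lambda_*$ being an isomorphism.

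For the forward direction, assume $X$ is simply-connected. Then $\pi_1(X) = 1$, so exactness forces $\lambda_*$ to be injective; combined with surjectivity, $\lambda_* : G_D \to G$ is an isomorphism, hence $G \cong G_D$.

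For the converse, assume $G \cong G_D$. The natural candidate for this isomorphism is $\lambda_*$ itself: if we take the hypothesis to mean that the canonical quotient map $\lambda_*$ is an isomorphism, then $\ker(\lambda_*) = 1$ and therefore $\pi_1(X) = 1$ by the exact sequence, so $X$ is simply-connected.

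The only delicate point — and the step that will need explicit comment — is the interpretation of ``$G \cong G_D$'' in the converse. A priori, an abstract isomorphism between $G_D$ and $G$ need not coincide with the canonical surjection $\lambda_*$, and one could imagine $G_D \cong G$ holding while $\lambda_*$ still has nontrivial kernel (unless $G$ is known to be Hopfian). I would therefore explicitly interpret the statement as asserting that the canonical map $\lambda_* : G_D \to G$ is an isomorphism; with that reading the corollary is immediate from \autoref{general}, and no additional argument is required beyond unpacking the short exact sequence.
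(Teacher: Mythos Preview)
Your proposal is correct and matches the paper's intended argument: the corollary is stated without proof precisely because it is meant to be read off directly from the short exact sequence of \autoref{general}, exactly as you do. Your observation about the converse---that ``$G\cong G_D$'' should be read as the canonical map $\lambda_*$ being an isomorphism rather than a bare abstract isomorphism---is a legitimate subtlety that the paper leaves implicit, and your handling of it is appropriate.
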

\begin{example}
Let $P$ be a square with faces $F_1,F_2,F_3,F_4$ colored by $e_1, e_2 ,e_1, e_2$ respectively, where $e_1, e_2$ are generators of $(\mathbb{Z}_2)^2$.
Then $X=P\times (\mathbb{Z}_2)^2/\sim\cong T^2$ is a small cover over $P$ (\cite{DJ}),
and $G_P=\langle s_1,s_2,s_3,s_4\mid s_i^2=1; (s_1s_2)^2=(s_2s_3)^2=(s_3s_4)^2=(s_4s_1)^2=1\rangle$ is the right-angled Coxeter group determined by $P$. Then
$\pi_1(X)\cong \ker \lambda_*=\mathbb{Z}^2$ is a normal subgroup in $G_P$ generated by Poincar\'e relations $s_1s_3$ and $s_2s_4$.
\end{example}

\subsection{Right-angled Coxeter group and HNN extension}

In this subsection, we refer to \cite[Chapter 3]{D2} and \cite[Chapter 4]{LS}.

\vskip.2cm
Let $w=s_1s_2\cdots s_m$ be a word in a right-angled Coxeter group $W=\langle ~S\ | R~\rangle$.
An {\em elementary operation} on $w$ is one of the following two types of operations:
\begin{itemize}
\item[(i)] Length-reducing: Delete a subword of $ss$;
\item[(ii)] Braid (commutation): Replace a subword of the form $st$ with $ts$, if $(st)^2=1$ in the relations set $R$ of $W$.
\end{itemize}
A word is {\em reduced} if it cannot be shorten by a sequence of elementary operations.

\begin{thm}[Tits~{\cite[Theorem 3.4.2]{D2}}]\label{Tits}
Two reduced words $x,y$ are the same in a right-angled Coxeter group  if and only if one of both $x$ and $y$ can be transformed into the other one by a sequence of elementary operations of type (ii).
\end{thm}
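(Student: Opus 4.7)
The plan is to prove both directions, with the forward direction being essentially immediate and the reverse direction requiring a Matsumoto-type induction on word length.

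For the easy direction, suppose $x$ is transformed into $y$ by a sequence of elementary operations of type (ii). Each such operation replaces a subword $st$ by $ts$ where $(st)^2=1$ is a defining relation of $W$; hence the group element represented by the word is preserved at every step, so $x=y$ in $W$.

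For the reverse direction, I would argue by induction on the common length $k = \ell(x) = \ell(y)$ of the two reduced words $x = s_1 \cdots s_k$ and $y = t_1 \cdots t_k$ representing the same element $w \in W$. The base case $k \leq 1$ is trivial. For the inductive step, if $s_1 = t_1$, delete the initial letter from both words, apply the inductive hypothesis to the resulting reduced words of length $k-1$ representing $s_1 w$, and prepend $s_1$ to each intermediate word in the resulting sequence of commutations. If $s_1 \neq t_1$, the Exchange Condition for Coxeter groups (which follows from the standard reflection representation) yields an index $i \geq 2$ such that $s_1 y = t_1 \cdots \widehat{t_i} \cdots t_k$, equivalently $t_1 \cdots t_i = s_1 t_1 \cdots t_{i-1}$. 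In particular $t_i$ is conjugate to $s_1$ via $t_1 \cdots t_{i-1}$, and because in a right-angled Coxeter group distinct generators lie in distinct conjugacy classes (all braid relations being of the form $(st)^2=1$), we conclude $t_i = s_1$ and $(t_1 \cdots t_{i-1})$ commutes with $s_1$.

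The crucial step is upgrading this global commutation to pointwise commutation: I would show that $s_1$ commutes individually with each $t_j$ for $j < i$, so that the letter $t_i = s_1$ can be dragged to the front of $y$ through the prefix $t_1 \cdots t_{i-1}$ using only operations of type (ii). This is the main obstacle. When $i < k$, the prefix $t_1 \cdots t_i$ provides two reduced expressions of length $i$ for the same element, namely itself and $s_1 t_1 \cdots t_{i-1}$; applying the outer inductive hypothesis to this shorter pair produces the needed sequence of commutations, and the chain of commutations swapping $s_1$ past each $t_j$ falls out by reading off the moves. When $i = k$, I would peel off the letters one at a time using the Exchange Condition iteratively (equivalently, use a secondary induction on $i$): the length-two base case $t_1 s_1 = s_1 t_1$ forces $s_1$ to commute with $t_1$ directly, and the inductive step uses the same Exchange-Condition mechanism within the shorter word $t_2 \cdots t_{i-1} s_1$. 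Once $s_1$ has been moved to the front via commutations, we are reduced to Case (a) and the outer induction concludes.

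An alternative, more conceptual approach avoids the combinatorial bookkeeping by realizing $W$ on its Davis complex $\Sigma$, a CAT(0) cube complex on which $W$ acts by reflections. Reduced expressions for $w$ correspond bijectively to combinatorial geodesics from the identity vertex to the vertex $w$ in the $1$-skeleton of $\Sigma$; two such geodesics cobound a minimal disc diagram filled by squares, and elementary homotopies across each square are precisely type (ii) commutations. The hardest step in either approach is the statement that generators of a right-angled Coxeter group are pairwise non-conjugate, which I would extract from the standard geometric representation of $W$ or from the normal form theorem for the parabolic subgroup $\langle s,t\rangle$.
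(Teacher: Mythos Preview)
The paper does not prove this theorem; it is quoted from Davis's book \cite[Theorem 3.4.2]{D2} as a background result and invoked later (notably in the proof of \hyperref[FTT]{Theorem B} and in \autoref{Lemma-51}) without further justification. There is therefore no proof in the paper against which to compare your proposal.

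That said, your argument is the standard Matsumoto-type proof specialised to the right-angled case, and the overall strategy is sound. The one place that deserves more care is the $i=k$ subcase: your ``secondary induction'' is stated loosely, and as written it is not clear how the inductive step actually runs. A cleaner way to close this case is to observe that once $t_k=s_1$, the words $s_2\cdots s_k$ and $t_1\cdots t_{k-1}$ are both reduced expressions for $s_1w$ of length $k-1$, so the outer induction already transforms $x=s_1s_2\cdots s_k$ into $s_1t_1\cdots t_{k-1}$ by commutations; you are then left with the two reduced expressions $s_1t_1\cdots t_{k-1}$ and $t_1\cdots t_{k-1}s_1$ for $w$, and applying the Exchange Condition with $t_1$ to the first of these (together with non-conjugacy of distinct generators) lets you peel off $t_1$ and recurse on length $k-1$. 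Alternatively, the Davis-complex / CAT(0) disc-diagram argument you sketch at the end avoids this bookkeeping entirely and is arguably the cleanest route.

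One small simplification: the fact that distinct generators of a right-angled Coxeter group are pairwise non-conjugate follows immediately from the abelianisation $W\to(\mathbb{Z}_2)^{|S|}$, which sends each generator to a distinct basis vector; you do not need the geometric representation for this step.
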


\begin{defn}[Higman-Neumann-Neumann Extension~{\cite[Page 179]{LS}}]
Let $G$ be a group with presentation $G=\langle S\ | R \rangle$, and let $\phi: A\longrightarrow B$ be an isomorphism between two subgroups of $G$. Let $t$ be a new symbol out of $S$. Then the HNN extension of $G$  relative to $\phi$ is defined as
$$G*_{\phi}=\langle S, t | R, t^{-1}gt=\phi(g),  ~g\in A\rangle.$$
\end{defn}

Let $\omega=g_0t^{\epsilon_1}g_1t^{\epsilon_2}\cdots g_{n-1}t^{\epsilon_n}g_n$ ($n\geq 0$)
be an expression in $G*_{\phi}$, where each $g_i$ is an element in $G$ (probably $g_i$ may be taken as the unit element $1$ in $G$), and $\epsilon_i$ is either number $1$ or $-1$.
Then $\omega$ is said to be {\em $t$-reduced} if there is no consecutive subword $t^{-1}g_it$ or  $tg_jt^{-1}$ with $g_i\in A$ and $g_j\in B$, respectively.

\vskip.2 cm

A {\em normal form} of an element in $G*_{\phi}$ is a word  $\omega=g_0t^{\epsilon_1}g_1t^{\epsilon_2}\cdots g_{n-1}t^{\epsilon_n}g_n\ (n\geq 0)$ where
\begin{itemize}
\item[(i)] $g_0$ is an arbitrary element of $G$;
\item[(ii)] If $\epsilon_i=-1$, then $g_i$ is a representative of a coset of $A$ in $G$;
\item[(iii)] If $\epsilon_i=+1$, then $g_i$ is a representative of a coset of $B$ in $G$;
\item[(iv)] There is no consecutive subword $t^{\epsilon}1t^{-\epsilon}$.
\end{itemize}

\begin{thm}[The Normal Form Theorem for HNN Extensions, {\cite[Theorem 2.1, Page 182]{LS}}]\label{TT2}
Let $
G*_{\phi}=\langle G,t \mid t^{-1}gt=\phi(g), g\in A\rangle$ be an HNN extension. Then there are two equivalent statements:
\begin{itemize}
\item[(I)] The group $G$ is embedded in $G*_\phi$ by the map $g\mapsto g$. If  $\omega=g_0t^{\epsilon_1}g_1\cdots t^{\epsilon_n}g_n=1$ in $G*_{\phi}$, then $\omega$ is not reduced;
\item[(II)] Every element $\omega$ of $G*_{\phi}$ has a unique representation  $\omega=g_0t^{\epsilon_1}g_1\cdots t^{\epsilon_n}g_n$ which is a normal form.
\end{itemize}
\end{thm}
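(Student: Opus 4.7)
The plan is to establish both statements simultaneously by constructing a faithful action of $G*_\phi$ on the set $\Omega$ of formal normal-form expressions; once faithfulness is in hand, (II) reads off as uniqueness and (I) follows by specializing to $\omega=1$. First I would fix a left transversal $T_A$ of $A$ in $G$ and a left transversal $T_B$ of $B$ in $G$, each containing the identity, so that every $g\in G$ has unique factorizations $g=a\cdot\tau_A(g)=b\cdot\tau_B(g)$ with $a\in A$, $b\in B$, $\tau_A(g)\in T_A$, $\tau_B(g)\in T_B$. Then $\Omega$ is the set of formal strings $g_0 t^{\epsilon_1}g_1\cdots t^{\epsilon_n}g_n$ satisfying conditions (i)--(iv) of the theorem.

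The core construction is a left action of the generating set $G\cup\{t\}$ on $\Omega$. For $h\in G$, set $h\cdot\omega=(hg_0)t^{\epsilon_1}g_1\cdots t^{\epsilon_n}g_n$; this is well-defined because $g_0$ is unconstrained. For $t\cdot\omega$, factor $g_0=b\cdot\tau_B(g_0)$ with $b\in B$: if $\epsilon_1=-1$ and $\tau_B(g_0)=1$, then the new $t$ cancels with the leading $t^{-1}$ and $\phi^{-1}(b)$ is absorbed into $g_1$ via the $A$-transversal; otherwise, prepend $t\,\tau_B(g_0)$ and put $\phi^{-1}(b)$ into the new $g_0$-slot. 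The action of $t^{-1}$ is defined symmetrically with the roles of $A,T_A,\phi$ interchanged with $B,T_B,\phi^{-1}$. A direct case analysis verifies that these rules respect the HNN relations $t^{-1}gt=\phi(g)$ for $g\in A$, yielding a homomorphism $\rho: G*_\phi\to\mathrm{Sym}(\Omega)$. Faithfulness follows from the identity $\rho(\omega)\cdot 1=\omega$ for $\omega\in\Omega$ (viewed as a word in the generators), proved by induction on length.

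With $\rho$ in hand, uniqueness of the normal form in (II) is immediate; existence is a routine rewriting, iterating the pinch-reductions $t^{-1}at\mapsto\phi(a)$ and $tbt^{-1}\mapsto\phi^{-1}(b)$ together with coset-representative replacements that absorb $A$- and $B$-factors into adjacent $G$-slots, a process that terminates because each pinch strictly decreases the $t$-length. Statement (I) then follows: the embedding $G\hookrightarrow G*_\phi$ is the $n=0$ case of (II), and if a reduced word with $n\geq 1$ equalled $1$ its normal form would still have $t$-length $n$, contradicting the empty normal form of $1$. The main obstacle I expect is verifying that $\rho(t)$ and $\rho(t^{-1})$ are mutually inverse permutations of $\Omega$ and that $\rho(t^{-1})\rho(a)\rho(t)=\rho(\phi(a))$ holds on every $\omega\in\Omega$ for every $a\in A$; the argument splits into several cases according to whether $\omega$ begins with $t$, with $t^{-1}$, or with a $G$-letter whose transversal part is trivial, and the combinatorics of coset representatives under $\phi$ must be tracked carefully. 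This is the classical Britton--Lyndon--Schupp bookkeeping carried out in \cite[Chapter 4]{LS}.
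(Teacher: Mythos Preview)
The paper does not supply its own proof of this statement: Theorem~\ref{TT2} is quoted verbatim from Lyndon--Schupp \cite[Theorem 2.1, Page 182]{LS} as a background result, with no argument given. So there is nothing in the paper to compare your proposal against.

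That said, your proposal is the standard Britton--Lyndon--Schupp proof and is correct in outline. One small phrasing issue: when $t$ acts on $\omega$ and a cancellation $t\cdot t^{-1}$ occurs, you write that ``$\phi^{-1}(b)$ is absorbed into $g_1$ via the $A$-transversal''; in fact $\phi^{-1}(b)g_1$ simply becomes the new unconstrained leading slot $g_0$, and no transversal adjustment is needed at that stage. The transversal bookkeeping only enters when you \emph{prepend} a new $t^{\pm 1}$. This is a cosmetic slip and does not affect the argument.
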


 A {\em $t$-reduction} of $\omega=g_0t^{\epsilon_1}g_1\cdots t^{\epsilon_n}g_n$ is one of the following two operations.
\begin{itemize}
\item replace a subword of the form $t^{-1}gt$, where $g\in A$, by $\phi(g)$;
\item replace a subword of the form $tgt^{-1}$, where $g\in B$, by $\phi^{-1}(g)$.
\end{itemize}

A finite number of $t$-reductions  leads from  $\omega=g_0t^{\epsilon_1}g_1\cdots t^{\epsilon_n}g_n$ to a normal form.

%%%%%%%%%%%%%%%%%%%%%
\section{ Flagness and beltness of simple orbifolds and simple handlebodies}\label{section3}
\subsection{$B$-belts and flagness}
Assume that $Q$ is a  simple $n$-orbifold with nerve $\mathcal{N}(Q)$.  Denote $Q^*$ as the {\em dual} of $Q$,  whose facial structure is given by $\mathcal{N}(Q)$.

%\subsection{$B$-belts}

\begin{defn}[$B$-belts]\label{Def-B}
Let $i: B\hookrightarrow Q$ be  an embedding closed simple $k$-suborbifold whose underlying space is a $k$-ball.
We say that $i(B)$ is an {\em $B$-belt} of $Q$ if
\begin{itemize}
\item $i$ preserves codimensions, i.e., $i$ maps each codimension-$d$ face $f$ of $B$ to a codimension-$d$ face $F_f$ of $Q$;
\item The intersection $\cap f_\alpha=\emptyset$ for some facets $f_\alpha$ in $B$ if and only if either $\cap F_{f_\alpha}= \emptyset$  or
$\cup F_{f_\alpha}$ cannot deformatively retract onto $B$ in $|Q|$.
\end{itemize}
\end{defn}

\begin{rem}
The orbifold embedding $i: B\hookrightarrow Q$  preserving codimension is equivalent to that $i$ restricting on the local group of each point in $B$  induces an identity.
The statement that  $\cup F_{f_\alpha}$ cannot deformatively retract onto $B$ in $|Q|$ is equivalent to that there is at least a hole in the area surrounded by $\{F_{f_\alpha}\}$ and $B$.
\vskip.1cm
A simple polytope $P$ itself  is a belt.
For $B$-belt in a simple polytope $P$, the intersection $\cap f_\alpha=\emptyset$ for some facets $f_\alpha$ in $B$ if and only if  $\cap F_{f_\alpha}= \emptyset$. And each $B$-belt is {\em $\pi$-injective} in the sense of  \autoref{proposition-61}, which is an analogue of $\pi_1$-injective surface in a $3$-manifold.

\end{rem}

A $2$-dimensional $B$-belt in a simple $3$-handlebody $Q$ is a $k$-gon.  Traditionally, such a $B$-belt is also called a {\em $k$-belt} of $Q$.
In the case of dimension three, any simple $3$-polytope except tetrahedron has a $2$-dimensional $B$-belt.
\vskip.2cm

Next, we want to generalize the definition of flagness to simple orbifolds in terms of $B$-belts defined above. Recall that a simplicial complex $K$ with vertex set $V$  is a {\em flag} complex if every finite subset of $V$, which is pairwise joined by edges, spans a simplex.
Let $X$ be a cubical complex equipped with a   piecewise Euclidean structure. Then Gromov Lemma \cite{G} tells us that $X$ is non-positively curved if and only if the link of each vertex in  $X$ is a flag simplicial complex. Furthermore, by Cartan-Hadamard theorem, a non-positively curved space is aspherical.
\vskip.2cm
A simple polytope $P$ is {\em flag} if the boundary complex  of its dual is a flag simplicial complex. Let $M\rightarrow P$ be a small cover or a real moment angled manifold over $P$. Then we know from \cite[Theorem 2.2.5]{DJS2} that $M$ is aspherical  if and only if $P$ is flag. Equivalently, $P$, as a right-angled Coxeter orbifold, is orbifold-aspherical if and only if it is flag.

\vskip.2cm

Naturally, the flagness of  a simple orbifold $Q$ should still be closely related to orbifold-asphericity of $Q$. The right-angled Coxeter orbifold structure of $Q$ induces a facial structure of $Q$ which can be carried by the nerve $\mathcal{N}(Q)$ of $Q$ as a manifold with corners. The combinatorial  obstruction of orbifold-asphericity of $Q$ contains some quotient orbifolds of $S^k$ for $k\geq 2$ by reflective actions of $(\mathbb{Z}_2)^l$ for $1\leq l\leq k+1$. Moreover, since $Q$ is a simple orbifold, $S^k/(\mathbb{Z}_2)^{k+1}\cong \Delta^{k}$ is the unique possible combinatorial  obstruction of orbifold-asphericity of $Q$.
Notice that the orbifold-asphericity of $Q$ is determined by both $\mathcal{N}(Q)$ and $|Q|$.  Hence,  a natural conjecture arises as follows:

\vskip.2cm
\noindent {\bf Conjecture:} {\em A simple orbifold $Q$ is orbifold-aspherical if and only if  its underlying space $|Q|$ is aspherical as a manifold and $Q$ contains no  $\Delta^k$-belt for any $k\geq 2$.}
\begin{rem} Davis' results in \cite[Theorem 9.1.4]{D2} and \cite[Theorem 3.5]{D3} tell us that the conjecture is true  in the cases where $|Q|$ is acyclic or $Q$ has a corner structure defined in \cite[Section 3.1]{D3}. Our \hyperref[DJS-small]{Theorem A} also proves the case that $Q$ is a simple handlebody. All these  provide the support evidence of the conjecture.
\end{rem}

%Let $Q$ be a simple orbifold, and $M$ be the manifold double over $Q$.  Let $p:\overline{Q}\rightarrow Q$ be the universal cover over $Q$ as a manifold with corners. Then $\overline{Q}$ admits a simple orbifold structure induced by $Q$. Clearly, $|\overline{Q}|$ is simply-connected.

%\begin{lem}\label{lem-31}
%Let $Q$ be a simple orbifold. If $Q$ is orbifold-aspherical, then $|Q|$ is aspherical and $Q$ contains no $\Delta^k$-belt where $k\geq 2$.
%\end{lem}
%\begin{proof}
%By \cite[Theorem 3.5]{D3}, $M$ is aspherical if and only if $|Q|$ is aspherical and $\overline{Q}$ is flag. Moreover, if $Q$ contains a $\Delta^k$-belt for $k\geq 2$, then $p^{-1}(\Delta^k)$ gives a $\Delta^k$-belt for $\overline{Q}$ which implies that $\overline{Q}$ is not flag. The result follows.
%\end{proof}

\begin{defn}\label{Def-T}
Let $Q$ be a simple orbifold.
If its underlying space $|Q|$ is aspherical, then $Q$  is said to be {\em flag}  if  it contains no  $\triangle^k$-belt  for any $k\geq 2$.
\end{defn}

A {\em simple handlebody} $Q$ is a simple polytope or  there exist finitely many disjoint  $B$-belts of codimension one, named
{\em cutting belts},  such that $Q$ can be cut open into a simple polytope $P_Q$ along those cutting belts.
%Of course, each cutting belt must be a simple polytope in this case, and a simple handlebody of genus zero is naturally understood as a simple polytope.
Here the cutting operation is similar to a hierachy of Haken $3$-manifolds (or  $3$-orbifolds).
Generally a simple $3$-handlebody is not a Haken $3$-orbifold except that it is flag.
Refer to \cite{WF68,FR} for Haken $3$-manifolds and generalized Haken manifolds.

\vskip.2cm

%Recall that a simplicial complex $K$ with vertices set $V$  is a {\em flag} complex if every finite subset of $V$, which is pairwise joined by edges, spans a simplex. Now
Let $Q$ be  a simple handlebody. We see that some vertices $F_1, F_2, \cdots, F_k$ of $\mathcal{N}(Q)$ span a simplex $\triangle^{k-1}$ in $\mathcal{N}(Q)$ if and only if the associated vertices  span a simplex  in $\mathcal{N}(P_Q)$, and they span an {\em empty simplex} (that is, $\partial \triangle^{k-1}\subset \mathcal{N}(Q)$ but $\triangle^{k-1}$ itself is not in $\mathcal{N}(Q)$) whose interior is contained in the interior of $Q^*$ if and only if associated vertices  span an  empty simplex in $\mathcal{N}(P_Q)$. Specifically, those empty simplices correspond to some $\triangle^k$-belts in  $Q$.
 Hence, we have the following result.

\begin{lem}
A simple handlebody $Q$ is  flag if and only if the associated simple polytope $P_Q$ is flag (in other words,   $\mathcal{N}(P_Q)$ is a flag simplicial complex).
\end{lem}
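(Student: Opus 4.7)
My plan is to reduce both sides of the biconditional to the single combinatorial condition that $Q$ contains no $\Delta^k$-belt for any $k\geq 2$, and then to invoke the correspondence between the nerves of $Q$ and $P_Q$ sketched in the paragraph immediately preceding the lemma. For $Q$: the underlying space $|Q|$ of a simple handlebody is by definition an $n$-dimensional handlebody, hence homotopy equivalent to a wedge of circles and in particular aspherical; by \autoref{Def-T}, $Q$ is flag if and only if it contains no $\Delta^k$-belt for any $k\geq 2$. For $P_Q$: since $|P_Q|$ is contractible, the ``cannot deformatively retract'' alternative in the $B$-belt definition (\autoref{Def-B}) is never triggered, so $\Delta^k$-belts of $P_Q$ are precisely the empty $k$-simplices of $\mathcal{N}(P_Q)$. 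Thus $\mathcal{N}(P_Q)$ is a flag simplicial complex (equivalently, $P_Q$ is flag in the classical polytopal sense) if and only if $P_Q$ has no $\Delta^k$-belt for any $k\geq 2$.

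It then remains to establish that $Q$ has no $\Delta^k$-belt iff $P_Q$ has no $\Delta^k$-belt. Here I would invoke the correspondence noted in the paragraph just above: since $Q$ is obtained from $P_Q$ by gluing some pairs of facets along cutting belts, each facet of $Q$ is either a non-glued facet of $P_Q$ or the image of a pair of identified facets of $P_Q$ (a cutting belt). The cited paragraph records that a set of $k+1$ vertices of $\mathcal{N}(Q)$ spans an empty simplex whose interior lies in the interior of $Q^*$ if and only if the associated vertices span an empty simplex in $\mathcal{N}(P_Q)$, and that such interior empty simplices of $\mathcal{N}(Q)$ are exactly the $\Delta^k$-belts of $Q$. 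Composing this correspondence with the two reductions of the previous paragraph yields the lemma.

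The step requiring the most care is the clause ``whose interior lies in the interior of $Q^*$'': one must verify that every $\Delta^k$-belt of $Q$ does correspond to an empty simplex in this interior, or equivalently that no facet of a $\Delta^k$-belt can coincide with a cutting belt of $Q$. The key observation is that $\Delta^k$ has a contractible underlying space with contractible facets, while cutting belts are essential codimension-one $B$-belts required to cut open the handles of $|Q|$ and therefore carry nontrivial topology in $|Q|$; hence a cutting belt cannot appear as a facet of an embedded simplex-shaped suborbifold. Once this observation is recorded, the correspondence becomes a genuine bijection between $\Delta^k$-belts of $Q$ and $\Delta^k$-belts (equivalently empty $k$-simplices) of $P_Q$, completing the proof.
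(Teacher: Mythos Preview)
Your proposal is correct and follows essentially the same approach as the paper, which treats the lemma as an immediate consequence of the correspondence recorded in the paragraph preceding it (there is no separate proof in the paper beyond ``Hence, we have the following result'').

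One minor imprecision worth flagging: your final paragraph misidentifies where the care is needed. Cutting belts are interior suborbifolds of $Q$, not facets of $Q$, so they can never appear as facets of a $\Delta^k$-belt in $Q$; that direction is automatic. The genuine issue is the reverse: an empty simplex in $\mathcal{N}(P_Q)$ could a priori have one of the cutting-belt copies $B^\pm\in\mathcal{F}_B$ among its vertices, in which case it would not descend to a $\Delta^k$-belt in $Q$. The paper handles this later (in the proof of \autoref{nonempty}) by observing that the $B$-belt condition in \autoref{Def-B} prevents any $B^\pm$ from being a vertex of an empty simplex of $\mathcal{N}(P_Q)$. Your heuristic that cutting belts ``carry nontrivial topology in $|Q|$'' is not quite the right reason---each $B_i$ is a contractible simple polytope---so you should instead invoke \autoref{Def-B} directly for this step.
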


\begin{rem}
Notice that a flag simple handlebody defined above may contain an empty simplex whose interior cannot be embedded in its dual $Q^*$,  as shown in ~\hyperref[f4]{Figure 5} for three pairwise intersected  faces $F_1,F_2,F_3$ in a flag simple solid torus. Therefore, the statement that $\mathcal{N}(Q)$ is a flag simplicial complex is not equivalent to that $Q$ is a flag simple handlebody.
\begin{figure}[h]\label{f4}
\centering
\def\svgwidth{0.45\textwidth}
%% Creator: Inkscape inkscape 0.92.3, www.inkscape.org
%% PDF/EPS/PS + LaTeX output extension by Johan Engelen, 2010
%% Accompanies image file '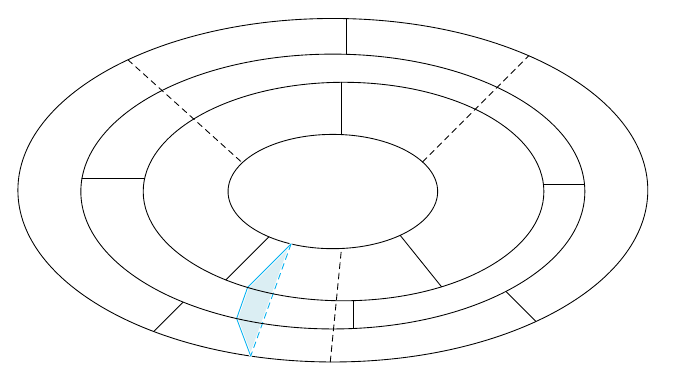' (pdf, eps, ps)
%%
%% To include the image in your LaTeX document, write
%%   \input{<filename>.pdf_tex}
%%  instead of
%%   \includegraphics{<filename>.pdf}
%% To scale the image, write
%%   \def\svgwidth{<desired width>}
%%   \input{<filename>.pdf_tex}
%%  instead of
%%   \includegraphics[width=<desired width>]{<filename>.pdf}
%%
%% Images with a different path to the parent latex file can
%% be accessed with the `import' package (which may need to be
%% installed) using
%%   \usepackage{import}
%% in the preamble, and then including the image with
%%   \import{<path to file>}{<filename>.pdf_tex}
%% Alternatively, one can specify
%%   \graphicspath{{<path to file>/}}
%% 
%% For more information, please see info/svg-inkscape on CTAN:
%%   http://tug.ctan.org/tex-archive/info/svg-inkscape
%%
\begingroup%
  \makeatletter%
  \providecommand\color[2][]{%
    \errmessage{(Inkscape) Color is used for the text in Inkscape, but the package 'color.sty' is not loaded}%
    \renewcommand\color[2][]{}%
  }%
  \providecommand\transparent[1]{%
    \errmessage{(Inkscape) Transparency is used (non-zero) for the text in Inkscape, but the package 'transparent.sty' is not loaded}%
    \renewcommand\transparent[1]{}%
  }%
  \providecommand\rotatebox[2]{#2}%
  \newcommand*\fsize{\dimexpr\f@size pt\relax}%
  \newcommand*\lineheight[1]{\fontsize{\fsize}{#1\fsize}\selectfont}%
  \ifx\svgwidth\undefined%
    \setlength{\unitlength}{195.52640533bp}%
    \ifx\svgscale\undefined%
      \relax%
    \else%
      \setlength{\unitlength}{\unitlength * \real{\svgscale}}%
    \fi%
  \else%
    \setlength{\unitlength}{\svgwidth}%
  \fi%
  \global\let\svgwidth\undefined%
  \global\let\svgscale\undefined%
  \makeatother%
  \begin{picture}(1,0.55619701)%
    \lineheight{1}%
    \setlength\tabcolsep{0pt}%
    \put(0,0){\includegraphics[width=\unitlength,page=1]{G24a.pdf}}%
    \put(0.04166667,0.27734372){\color[rgb]{0,0,0}\makebox(0,0)[lt]{\lineheight{1.25}\smash{\begin{tabular}[t]{l}$F_1$\end{tabular}}}}%
    \put(0.87630209,0.27343749){\color[rgb]{0,0,0}\makebox(0,0)[lt]{\lineheight{1.25}\smash{\begin{tabular}[t]{l}$F_2$\end{tabular}}}}%
    \put(0.54166668,0.03645832){\color[rgb]{0,0,0}\makebox(0,0)[lt]{\lineheight{1.25}\smash{\begin{tabular}[t]{l}$F_3$\end{tabular}}}}%
  \end{picture}%
\endgroup%

\caption{A flag simple 3-handlebody whose nerve is not a flag simplicial complex.}
\end{figure}
\end{rem}

\subsection{$\square$-belts in a simple handlebody}
\begin{defn}\label{Def-S}
Let $Q$ be a flag simple handlebody, and $Q^*$ be its dual. By  $\square$-belt we mean a quadrilateral-belt in $Q$.
The dual of an $\square$-belt in $Q$ is said to be an $\square$ in  $Q^*$.
\end{defn}

\begin{rem}\label{sphere}
(1) In  Gromov's paper~\cite[Section 4.2]{G},
{\em Siebenmann's no $\square$-condition} for a  flag simplicial complex $K$ means no {\em empty square}  in $K$, where an empty square in $K$ must make sure that neither pair of opposite vertices  is connected by an edge,   which is a special case in our definition.

\vskip .2cm

%(2) When $\dim Q=3$, an $\square$ in $Q^*$ corresponds to  a $4$-belt in $Q$.
%Under an additional condition that $Q$ admits a $(\mathbb{Z}_2)^3$-coloring,
%Li and Ma in \cite{LM} showed that $Q$ is right-angled hyperbolic if and only if each embedding disk in $Q$ intersects with at least $5$ edges, except vertex-linking disk or edge-linking disk. So the later statement is equivalent to saying that $Q$ is flag and contains no $\square$-belt according to  \hyperref[T1]{Theorem A}.

%\vskip .2cm

(2) A prismatic $3$-circuit~\cite{RHD} in  a  simple $3$-polytope $P^3$ determines an $\Delta^2$-belt in $P^3$.
 If there is no  prismatic $3$-circuit in $P^3$, then $P^3$ is a flag polytope or a tetrahedron.  Similarly for a prismatic $4$-circuit~\cite{RHD} in a flag simple polytope, it determines an $\square$-belt in $P$ in our definition.
\end{rem}

Next, we give two lemmas as the preliminary of the proof of \hyperref[FTT]{Theorem B}.
\vskip.2cm

Let $Q$ be a flag simple handlebody, and $B_{\square}$ be an $\square$-belt in $Q$ with four ordered edges  $f_1,f_2,f_3,f_4$, any two of which have a non-empty intersection except for pairs $\{f_1, f_3\}$ and $\{f_2,f_4\}$.  Assume that each $f_i$ is contained in a facet $F_{i}$ of $Q$. Then we may claim that $\{F_{i}\mid i=1,2,3,4\}$ must be different from each other. More precisely, we have the following lemma.
\begin{lem} \label{special-belt}
Let $Q$ be a flag simple handlebody, and $B_{\square}$ be an $\square$-belt in $Q$. Then,
\begin{itemize}
\item Two adjacent edges of $B_{\square}$ cannot be contained in the same facet of $Q$;
\item Two disjoint edges of $B_{\square}$ cannot be contained in the same facet of $Q$.
\end{itemize}
\end{lem}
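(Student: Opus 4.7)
The plan is to argue each bullet of the lemma separately by contradiction, drawing on the two defining properties of a $B$-belt in \autoref{Def-B}.

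\emph{Adjacent edges.} For the first bullet I would use a codimension count. If two adjacent edges $f_i, f_{i+1}$ of $B_\square$ lay in the same facet $F$ of $Q$, then their common vertex $v=f_i\cap f_{i+1}$, which has codimension $2$ in $B_\square$, would map under the $B$-belt embedding $i$ to a point of codimension $2$ in $Q$ (by the codimension-preservation clause of \autoref{Def-B}). But $Q$ is simple, so every codimension-$2$ point lies in exactly two facets of $Q$; these two facets must be $F_i$ and $F_{i+1}$ (the unique facets containing the two incident edges). If $F_i = F_{i+1}$, then $i(v)$ would lie in only one facet, contradicting its codimension.

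\emph{Opposite edges.} Suppose $F_1 = F_3 = F$. I would invoke the $\pi$-injectivity of $B$-belts stated in the Remark following \autoref{Def-B} (formally \autoref{proposition-61}): the inclusion induces an injection $i_* : \pi_1^{\mathrm{orb}}(B_\square)\hookrightarrow \pi_1^{\mathrm{orb}}(Q)$. The group $\pi_1^{\mathrm{orb}}(B_\square)$ is the right-angled Coxeter group of the square, generated by the edge-reflections $s_{f_1},\ldots,s_{f_4}$ with the only non-trivial relations $(s_{f_j} s_{f_{j+1}})^2=1$ between adjacent pairs; in particular $s_{f_1}$ and $s_{f_3}$ satisfy no commutation relation, so by Tits' theorem (\autoref{Tits}) the word $s_{f_1}s_{f_3}$ is already reduced and hence represents an element of infinite order (indeed the product $D_\infty\times D_\infty$ decomposition makes this explicit). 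Under $i_*$ the edge-reflection $s_{f_j}$ is sent to the facet-reflection $s_{F_j}\in \pi_1^{\mathrm{orb}}(Q)$; the hypothesis $F_1=F_3=F$ then gives $i_*(s_{f_1}s_{f_3})=s_F^2=1$, contradicting the injectivity of $i_*$.

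\emph{Main obstacle.} The delicate point is the second bullet, since it rests on the $\pi$-injectivity of $B$-belts, which is only formally established later in \autoref{section6}. The principal worry is circularity: one must check that the proof of \autoref{proposition-61} does not itself invoke \autoref{special-belt}. If such a dependency forces a self-contained treatment here, my fallback plan is a direct topological argument: lifting $B_\square$ to the associated simple polytope $P_Q$ by cutting $Q$ along its cutting belts, the hypothesis $F_1=F_3=F$ means the two opposite edges $f_1, f_3$ of the lifted disk lie in the pair of glued facets $F_1', F_3'\subset P_Q$; the gluing homeomorphism $\phi:F_1'\to F_3'$, being forced by the simple-orbifold structure of $Q$ to preserve the face lattice, then identifies these two edges and so collapses the lifted $2$-disk to a cylinder (or Möbius band) in $Q$, contradicting the hypothesis that the underlying space of $B_\square$ is a $2$-ball.
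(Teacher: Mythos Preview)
Your argument for the first bullet is essentially the paper's, just rephrased via codimension instead of self-intersection of a facet; both reduce to the observation that the nerve $\mathcal{N}(Q)$ is an honest simplicial complex, so no $1$-simplex can have coinciding endpoints.

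The second bullet, however, has a genuine gap. The step $i_*(s_{f_1}s_{f_3})=s_F^2=1$ presumes that $i_*(s_{f_1})=i_*(s_{f_3})$ as elements of $\pi_1^{\mathrm{orb}}(Q)$, but this is precisely what fails in the only case that matters. By the $B$-belt condition, $f_1\cap f_3=\emptyset$ together with $F_1=F_3=F$ forces $F$ \emph{not} to deformation retract onto $B_\square$ in $|Q|$; hence there is a handle of $|Q|$ separating $F$ from $B_\square$. The two reflection loops $i_*(s_{f_1})$ and $i_*(s_{f_3})$ are then reflections through $F$ along paths that differ by a loop encircling this handle, so they are conjugate but not equal in $\pi_1^{\mathrm{orb}}(Q)$; their product is of the form $s_F\,\gamma s_F\gamma^{-1}$ with $\gamma$ nontrivial, which need not vanish. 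So $\pi$-injectivity alone does not give the contradiction you want. Your fallback also misfires: the ``glued facets'' of $P_Q$ are the copies $B^\pm$ of the cutting belts, not pieces of a regular facet $F$; the hypothesis $F_1=F_3$ does not say that $f_1,f_3$ land on a glued pair.

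The paper's proof of the second bullet avoids $\pi_1^{\mathrm{orb}}$ entirely and argues directly with the nerve: from $F_1=F_3=F$ and the hole between $F$ and $B_\square$, it observes that the adjacent facet $F_2$ (which is a simple polytope, hence contractible) must meet $F$ near both $f_1\cap f_2$ and $f_2\cap f_3$, and these two pieces cannot be joined within $F_2\cap F$ without crossing the hole. Thus $F_2\cap F$ is disconnected, i.e.\ $\mathcal{N}(Q)$ has two $1$-simplices on the same vertex pair, contradicting that $\mathcal{N}(Q)$ is a simplicial complex. That is the missing idea you need.
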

\begin{proof}
Assume that the four edges $\{f_1,f_2,f_3,f_4\}$ of $B_{\square}$ are contained in four ordered facets $\{F_1,F_2,F_3,F_4\}$ of $Q$, respectively.
If there are two adjacent edges of $B_{\square}$  contained in the same facet of $Q$. Without loss of generality, suppose that $F_1=F_2$. Then $f_1\cap f_2\neq \emptyset$ implies that $F_1$ has a self-intersection, which is equivalent to that there is a $1$-simplex which bounds a single vertex in $\mathcal{N}(Q)$.
This contradicts that $Q$ is simple.

\vskip.2cm

Similarly,  if there are two disjoint edges of $B_{\square}$  contained in the same facet of $Q$, then one can assume that $F_1=F_3$. This  happens only for the case where the genus of $Q$ is more than zero since $B_\square$ is an $\square$-belt in $Q$.  Thus there are some holes between $F_1$ and $B_\square$. However, $F_2$ is contractible, so this induces that $F_2\cap F_1$ is disconnected. In other words, there are two $1$-simplices  which bound the same two vertices in $\mathcal{N}(Q)$. This is  also impossible since  $Q$ is simple.
\end{proof}

  \autoref{special-belt} tells us that in a  flag simple handlebody  $Q$, an $\square$-belt  can be presented as four different vertices $\{F_1,F_2,F_3,F_4\}$ in $ \mathcal{N}(Q)$, which satisfies the following two conditions:
\begin{itemize}
\item [(I)]
$\{F_1,F_2,F_3,F_4\}$ bounds a square with its interior located in the interior of $Q^*$ and with its edges contained in $1$-skeleton of $\mathcal{N}(Q)$;
   \item [(II)]     The full subcomplex spanned by $\{F_1,F_2,F_3, F_4\}$ in $\mathcal{N}(Q)$  is either a square  or a non-square subcomplex (containing two $2$-simplices gluing along an edge). Here the latter ``a non-square subcomplex" may happen only when the genus of $Q$ is more than zero.
\end{itemize}

   \begin{example}[$\square$s in the dual of a simple handlebody]\label{example-31}
   Let $Q$ be a simple handlebody, and $Q^*$ be its dual. There are some possible cases of $\square$s and non-$\square$s in $Q^*$, listed in \hyperref[F6]{Figure 6}, where all vertices and edges  are considered  in $\mathcal{N}(Q)$.
(a) and (b) are not  $\square$  in $Q^*$, while $(c)$ and $(d)$ are. Notice that  (d) is not an empty square in $\mathcal{N}(Q)$, which is different from the case of Siebenmann's no $\square$-condition, as stated in Remark~\ref{sphere} (1).
   \end{example}
\begin{figure}[h]\label{F6}
\centering
\def\svgwidth{0.60\textwidth}
%% Creator: Inkscape inkscape 0.92.3, www.inkscape.org
%% PDF/EPS/PS + LaTeX output extension by Johan Engelen, 2010
%% Accompanies image file '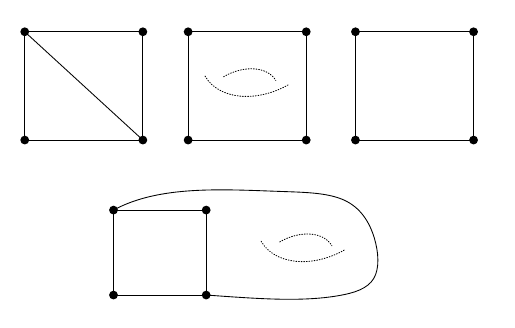' (pdf, eps, ps)
%%
%% To include the image in your LaTeX document, write
%%   \input{<filename>.pdf_tex}
%%  instead of
%%   \includegraphics{<filename>.pdf}
%% To scale the image, write
%%   \def\svgwidth{<desired width>}
%%   \input{<filename>.pdf_tex}
%%  instead of
%%   \includegraphics[width=<desired width>]{<filename>.pdf}
%%
%% Images with a different path to the parent latex file can
%% be accessed with the `import' package (which may need to be
%% installed) using
%%   \usepackage{import}
%% in the preamble, and then including the image with
%%   \import{<path to file>}{<filename>.pdf_tex}
%% Alternatively, one can specify
%%   \graphicspath{{<path to file>/}}
%% 
%% For more information, please see info/svg-inkscape on CTAN:
%%   http://tug.ctan.org/tex-archive/info/svg-inkscape
%%
\begingroup%
  \makeatletter%
  \providecommand\color[2][]{%
    \errmessage{(Inkscape) Color is used for the text in Inkscape, but the package 'color.sty' is not loaded}%
    \renewcommand\color[2][]{}%
  }%
  \providecommand\transparent[1]{%
    \errmessage{(Inkscape) Transparency is used (non-zero) for the text in Inkscape, but the package 'transparent.sty' is not loaded}%
    \renewcommand\transparent[1]{}%
  }%
  \providecommand\rotatebox[2]{#2}%
  \newcommand*\fsize{\dimexpr\f@size pt\relax}%
  \newcommand*\lineheight[1]{\fontsize{\fsize}{#1\fsize}\selectfont}%
  \ifx\svgwidth\undefined%
    \setlength{\unitlength}{148.18840027bp}%
    \ifx\svgscale\undefined%
      \relax%
    \else%
      \setlength{\unitlength}{\unitlength * \real{\svgscale}}%
    \fi%
  \else%
    \setlength{\unitlength}{\svgwidth}%
  \fi%
  \global\let\svgwidth\undefined%
  \global\let\svgscale\undefined%
  \makeatother%
  \begin{picture}(1,0.63675181)%
    \lineheight{1}%
    \setlength\tabcolsep{0pt}%
    \put(0,0){\includegraphics[width=\unitlength,page=1]{G4.pdf}}%
    \put(0.139748,0.30469642){\color[rgb]{0,0,0}\makebox(0,0)[lt]{\lineheight{1.25}\smash{\begin{tabular}[t]{l}(a)\end{tabular}}}}%
    \put(0.45131732,0.30927833){\color[rgb]{0,0,0}\makebox(0,0)[lt]{\lineheight{1.25}\smash{\begin{tabular}[t]{l}(b)\end{tabular}}}}%
    \put(0.78923255,0.30584192){\color[rgb]{0,0,0}\makebox(0,0)[lt]{\lineheight{1.25}\smash{\begin{tabular}[t]{l}(c)\end{tabular}}}}%
    \put(0.43642612,-0.01145472){\color[rgb]{0,0,0}\makebox(0,0)[lt]{\lineheight{1.25}\smash{\begin{tabular}[t]{l}(d)\end{tabular}}}}%
  \end{picture}%
\endgroup%

\caption{$\square$s and non-$\square$s}
\end{figure}

\begin{lem}\label{Lemma-34}
Let $B_\square$ be an $\square$-belt in a simple $n$-handlebody $Q$, and $B$ be a cutting belt of $Q$. Then
either $B_\square$ and $B$ can be separated in $Q$, or $B$ intersects transversely  with only a pair of disjoint edges of $B_\square$.
\end{lem}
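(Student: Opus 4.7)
The plan is to analyze the transverse intersection $B\cap B_\square$ inside the $2$-disk $B_\square$ by innermost-disk and outermost-arc arguments, and to rule out the forbidden configurations using the simple and flag hypotheses on $Q$ (equivalently on the simple polytope $P_Q$ obtained by cutting $Q$ open along its cutting belts, since flagness of $Q$ is equivalent to that of $P_Q$). After placing $B$ and $B_\square$ in general position, $B\cap B_\square$ is a properly embedded $1$-submanifold of $B_\square$ whose boundary sits on $\partial B_\square=f_1\cup f_2\cup f_3\cup f_4$, so it decomposes into finitely many arcs between pairs of edges and possibly some circles interior to $B_\square$.

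I would first eliminate circle components by an innermost-disk argument: an innermost circle $L\subset B\cap B_\square$ bounds a disk $D\subset B_\square$ with $D\cap B=L$, and by the $\pi$-injectivity of $B$ as a disk-type codimension-one $B$-belt inside the handlebody $|Q|$ (the orbifold analogue of $\pi_1$-injective surfaces in $3$-manifolds, cf.~\autoref{proposition-61}), the loop $L$ also bounds a disk on $B$; irreducibility of the handlebody $|Q|$ then yields a ball across which $B$ can be isotoped to remove $L$. Next I would take an outermost arc $\alpha\subset B\cap B_\square$, cobounding with a subarc $\beta\subset\partial B_\square$ a sub-disk $E\subset B_\square$ whose interior is disjoint from $B$, and split into three cases according to which edges contain the endpoints of $\alpha$. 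If both endpoints lie on the same edge $f_i$, then cutting $Q$ along $B$ turns $E$ into an embedded bigon in the simple polytope $P_Q$ whose two edges lie on $F_i$ and on one of the two facets produced by the cut, with both corners on a single codimension-$2$ face of $P_Q$; no such bigon exists in a simple polytope. If the endpoints lie on two adjacent edges $f_i, f_{i+1}$, cutting along $B$ turns $E$ into an embedded triangle in $P_Q$ with edges on the three pairwise distinct facets $F_i, F_{i+1}$ and a facet produced by the cut (pairwise distinctness by \autoref{special-belt}); this triangle realizes a $\Delta^2$-belt in $P_Q$, contradicting flagness. Only the case of opposite edges $f_i, f_{i+2}$ survives.

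Iterating these outermost reductions, every arc of $B\cap B_\square$ eventually connects a pair of opposite edges. Since arcs between $\{f_1,f_3\}$ and arcs between $\{f_2,f_4\}$ would have to cross inside the planar $2$-disk $B_\square$ (impossible for an embedded $1$-submanifold), they share a single pair $\{f_i,f_{i+2}\}$. If no arc remains, $B$ and $B_\square$ are separated in $Q$; otherwise $B$ meets $B_\square$ transversely precisely along the pair of disjoint edges $f_i, f_{i+2}$, as asserted.

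The main obstacle is to justify, in the two forbidden cases above, that the sub-disk $E$, after cutting along $B$, genuinely produces an obstructing bigon or $\Delta^2$-belt in $P_Q$ (rather than degenerating into a collar of a higher-codimension face of $P_Q$); this requires unwinding the deformation-retract clause of \autoref{Def-B} for the corresponding union of facets, using the outermost choice of $\alpha$ together with the simple handlebody structure of $Q$.
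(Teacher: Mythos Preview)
Your outermost-arc framework and the three-case split are essentially the same decomposition the paper uses (the paper phrases it as ``adjacent points $v,v'$ in $\mathcal V=\partial B\cap\partial B_\square$'', which is dual to your outermost arc). The planarity observation at the end, that arcs joining $\{f_1,f_3\}$ and arcs joining $\{f_2,f_4\}$ cannot coexist, is also correct.

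Where you diverge from the paper is in how you dispose of cases (i) and (ii), and this is where your argument has a genuine gap. You try to cut along $B$ first and then find a contradiction in $P_Q$: a bigon in case (i), a $\Delta^2$-belt (via flagness) in case (ii). Neither of these yields a contradiction as stated. A $2$-disk whose boundary runs along two facets of a simple polytope, even with both corners on the same codimension-$2$ face, is perfectly allowed; and in case (ii) the three facets $F_i,F_{i+1},B^{\pm}$ may well meet at a codimension-$3$ face of $P_Q$, so the triangle need not be a $\Delta^2$-belt. You flag this yourself as ``the main obstacle'', but the proposed fix (unwinding the retract clause via outermostness) does not work: outermostness controls the $B_\square$-side of $E$, not the configuration of facets of $P_Q$ on the other side.

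The paper's resolution is different and does not pass through $P_Q$ or flagness at all. In each forbidden subcase one asks whether the intersection can be removed by an isotopy of $\partial B_\square$ inside the relevant facets of $Q$ (sliding $f_i$ in $F_i$, or sliding the vertex $f_i\cap f_{i+1}$ along $F_i\cap F_{i+1}$). If it can, remove it and continue. If it cannot, then the second clause in \autoref{Def-B} for the cutting belt $B$ forces a hole of $|Q|$ to lie in the region bounded by $B$ and the relevant piece of $\partial B_\square$; chaining such holes around the cycle $\partial B_\square$ shows $\partial B_\square$ is not null-homotopic in $|Q|$, contradicting that $B_\square$ is a disk. So the mechanism is ``isotope if possible; otherwise the $B$-belt hole condition traps a handle and contradicts contractibility of $B_\square$'', rather than ``produce a forbidden belt in $P_Q$''.
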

\begin{proof}
Assume that the four ordered edges $f_1,f_2,f_3,f_4$ of $B_\square$ are contained in four facets $F_1,F_2,F_3,F_4$ of $Q$, respectively. Since $B_\square$ and $B$ are contractible, we see that $B$ and $B_\square$ can be separated if and only if their boundaries can be separated.

\vskip.2 cm
First we assume that $\partial B$ and $\partial B_\square$ intersect transversely, meaning that $\partial B\cap \partial B_\square$  is a set of isolated points  cyclically ordered on the boundary of $B_\square$, which is denoted by $\mathcal{V}$.  Then $\mathcal{V}$ contains at least two points if $\mathcal{V}$ is non-empty.

\vskip.2cm

Let $v$ and $v'$ be two adjacent points in $\mathcal{V}$.
Then there are the following cases:
\begin{itemize}
\item[(i)] $v$ and $v'$ are located in the same edge of $B_\square$;
\item[(ii)] $v$ and $v'$ are located in two adjacent edges of $B_\square$;
\item[(iii)] $v$ and $v'$ are located in two disjoint edges of $B_\square$.
\end{itemize}
\vskip .2cm
In the case (i), without loss of generality, suppose that $v,v'\in \text{int}(f_1)$. Now  if $v$ and $v'$ are contained in the same connected component of $F_1\cap B$ (without a loss of generality, assume that $B$ is regarded as $B_1$ of (a) in  \hyperref[F7]{Figure 7}), then we can deform the interior of $f_1$ such that $f_1\cap\partial B=\emptyset$ will not contain $v$ and $v'$.
 If $v$ and $v'$ are contained in two connected components of $F_1\cap B$, without loss of generality, assume that $B$ ia regarded as $B_2$ of (a) in \hyperref[F7]{Figure 7}. Since $B$ is an $B$-belt,  there is a hole surrounded by $f_1$ and $B$. This case is allowed (also see (b) and (c) in \hyperref[F7]{Figure 7}).

\begin{figure}[h]\label{F7}
\centering
\def\svgwidth{0.7\textwidth}
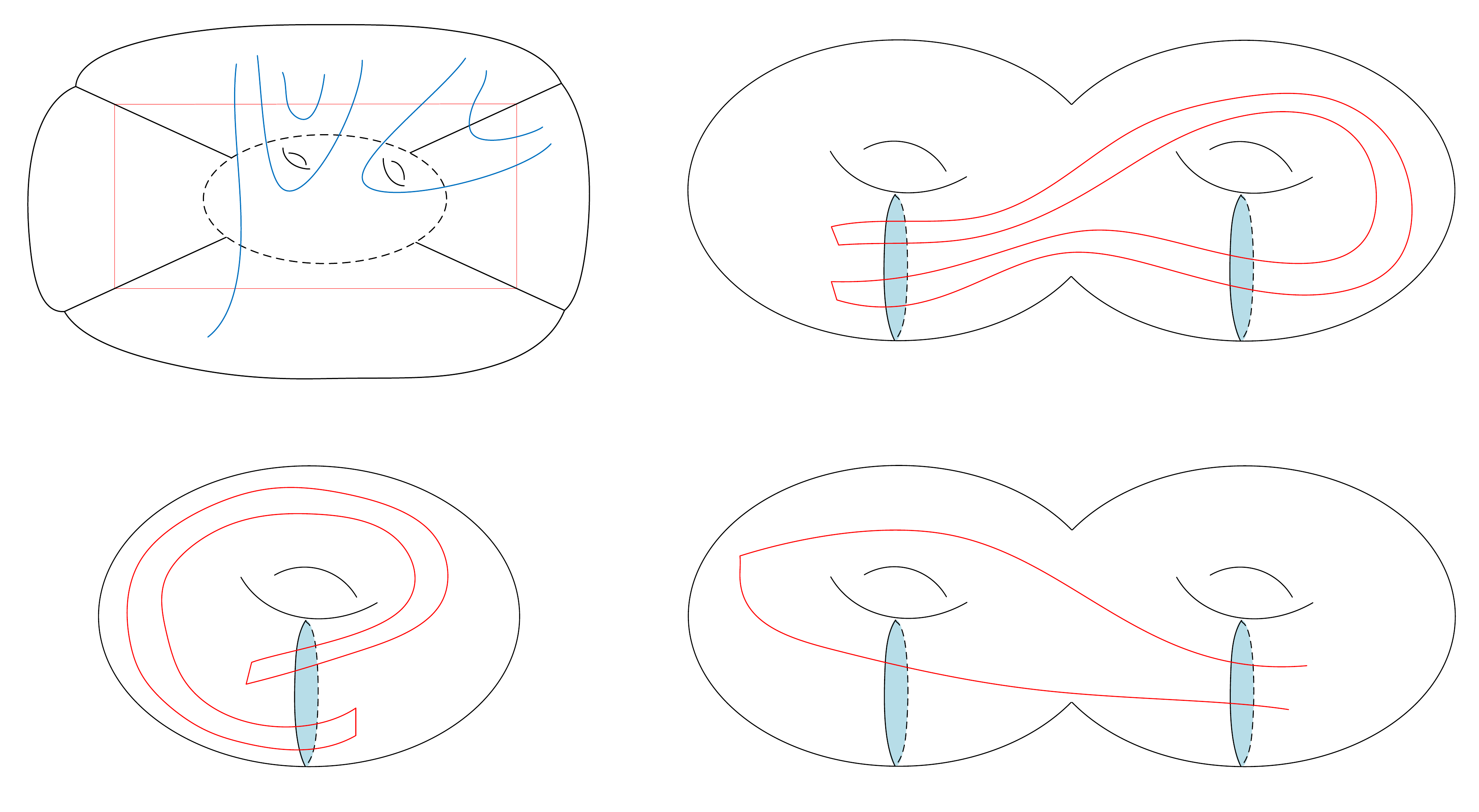
\caption{$\square$-belt and cutting belt}
\end{figure}

In the case (ii),  without loss of generality,  assume that $B$ intersects with $f_1$ and $f_2$.
 Now if $B\cap F_1\cap F_2\neq \emptyset$  (regard $B$ as $B_3$ of (a) in  \hyperref[F7]{Figure 7}),  then we can move vertex $f_1\cap f_2$ in $F_1\cap F_2$ such that $\partial B_\square\cap\partial B$ does not contain $v$ and $v'$.

 \vskip.2cm

Repeating this operation, we can assume that any two adjacent points $v$ and $v'$ in $\mathcal{V}$ cannot remove. This means that
  $B\cap F_1\cap F_2= \emptyset$ in the case (ii), so we may regard $B$ as $B_4$ of (a) in  \hyperref[F7]{Figure 7}. Then by the definition of $B$-belt, there is a hole in the area surrounded by $B, f_1,f_2$ (see (d)  in \hyperref[F7]{Figure 7}). If $|\mathcal{V}|=2$, then $B_\square$ will not be contractible. This is a contradiction. If $|\mathcal{V}|>2$,  let $v''$ be a point after $v'$ by the cyclic order of all isolated points in $\mathcal{V}$.
 If $v'$ and $v''$  belong to the same edge $f$ of $B_\square$, then there must be a hole surrounded by $f$ and $B$.
  If $v'$ and $v''$  belong to two adjacent edges $f'$ and $f''$ of $B_\square$, then there is also a hole surrounded by $f', f''$ and $B$.
  If $v'$ and $v''$  belong to two disjoint edges $f'$ and $f''$ of $B_\square$, then there  is still a hole surrounded by $f, f''$ and $B$, where $f$ is the edge containing $v$. Whichever of all possible cases above happens implies that  $\partial B_{\square}$ is not contractible in $|Q|$, but this is impossible.

\vskip.2cm

The case (iii)  is allowed, see $B_5$ of (a) in  \hyperref[F7]{Figure 7}. So the conclusion holds.
\end{proof}

 We see that if there are some cutting belts that intersect with $B_\square$, then one can do some deformations such that those cutting belts either do not intersect with $B_\square$ or intersect transversely with only a pair of disjoint edges of $B_\square$.% In 3-dimensional case, there always exist a set of cutting belts that separate from a fixed $\square$-belt.

 \section{ The orbifold fundamental groups of  simple handlebodies}\label{section4}
\subsection{ The right-angled Coxeter cellular decomposition of simple handlebodies}
Let $Q$ be  a simple $n$-handlebody of genus $\mathfrak{g}$ with facet set $\mathcal{F}(Q)= \{F_1, ..., F_m\}$.
Then we can  cut $Q$ into  a simple polytope $P_Q$ along $\mathfrak{g}$  cutting belts $B_1, ..., B_\mathfrak{g}$, each  of which intersects transversely with some facets  of $Q$ and is a simple $(n-1)$-polytope.
Two copies of $B_i$ in $P_Q$, denoted by $B^+_i$ and $B^-_i$, respectively,  are two disjoint facets of $P_Q$.
Since they share the common belt $B_i$ in $Q$, by $B^+_i\sim B^-_i$ we denote this share between them.
   The number of facets of $P_Q$ around $B^+_i$ is the same as the number of  facets of $P_Q$ around $B^-_i$.
   In addition, each  facet $F$ of $P_Q$ around $B^+_i$  also uniquely corresponds to a    facet $F'$ of $P_Q$ around $B^-_i$
   such that $F$ and $F'$ share a common facet in $Q$, so by $F\cap B^+_i\sim F'\cap B^-_i$ we mean this share between $F$ and $F'$ via the belt $B_i$ of $Q$.

  \vskip .2cm
  Let $\mathcal{F}(P_Q)$ denote the set of all facets in $P_Q$ and $\mathcal{F}_B$ denote the set of those facets in $P_Q$, produced by cutting belts of $Q$, so $\mathcal{F}_B$ contains $2\mathfrak{g}$ facets of $P_Q$, appearing in pairs.
  \vskip .2cm
 $P_Q$ is viewed as a right-angled Coxeter orbifold with boundary consisting of  all facets in $\mathcal{F}_B$.
 By attaching all  pairs $B^+\sim B^-$  in $\mathcal{F}_B$ and  all corresponding pairs $(F, F')$  with $F\cap B^+\sim F'\cap B^-$ together, we can recover $Q$ from $P_Q$. Thus $Q$ can be regarded as a
 quotient $P_Q/\sim$, and we denote the quotient map by
 \begin{equation}\label{eq-31}q:P_Q\longrightarrow Q.\end{equation}

There is a canonical right-angled Coxeter cubical cellular decomposition $\mathcal{C}(P_Q)$ of  $P_Q$, whose cells  consist of
\begin{itemize}
\item all cubes in the standard cubical decomposition of  $P_Q$;
\item  all cubes in the standard cubical decomposition of  all boundary components  of $P_Q$ in $\mathcal{F}_B$.
 \end{itemize}
Moreover, $\mathcal{C}(P_Q)$ induces a right-angled Coxeter cellular decomposition on $Q$ by attaching some cubical cells of the copies of $B$-belts.
 Let $c$ be a $k$-cube in $\mathcal{C}(P_Q)$ and $B\in \mathcal{F}_B$.
 \begin{itemize}
\item If  $c\cap B=\emptyset$, then we may take $c$ as a right-angled Coxeter cubical  cell for $Q$. Such $c$  corresponds to a codimension $k$ face in $P_{Q}$ which is determined by $k$ facets in $\mathcal{F}(P_Q)-\mathcal{F}_B$,
 so $c$ is of the form $e^k/(\mathbb{Z}_2)^k$.

\item If $c$ is a $k$-cube in $\mathcal{C}(B^+)\subset \mathcal{C}(P_Q)$, then there is also another $k$-cube $c'\in \mathcal{C}(B^-)\subset \mathcal{C}(P_Q)$. Both $c$ and $c'$ are codimension-one faces of two  $(k+1)$-cubes in $\mathcal{C}(P_Q)$, respectively. Gluing those two  $(k+1)$-cubes by identifying $c$ with $c'$, we  obtain a right-angled Coxeter cubical  cell with form $e^{k+1}/(\mathbb{Z}_2)^k$.
\end{itemize}

\vskip .2cm

 Finally, we obtain  a right-angled Coxeter cellular decomposition of $Q$, denoted by $\mathcal{C}(Q)$,  whose cells are right-angled cubes.  Of particular note is that $\mathcal{C}(Q)$ is not cubical. This is  because there exists the cubical cell  glued by two cells $c$ and $c'$ in $\mathcal{C}(P_Q)$ as above, which has a self-intersection, namely the cone point $x_0$, as shown in \hyperref[cone point]{Figure 8}. The cone point is the only $0$-cell in $\mathcal{C}(Q)$, which will be chosen as the basepoint when we calculate the orbifold fundamental group $\pi^{\text{orb}}_1(Q)$ of $Q$.
\begin{figure}[h]\label{cone point}
\centering
\def\svgwidth{0.6\textwidth}
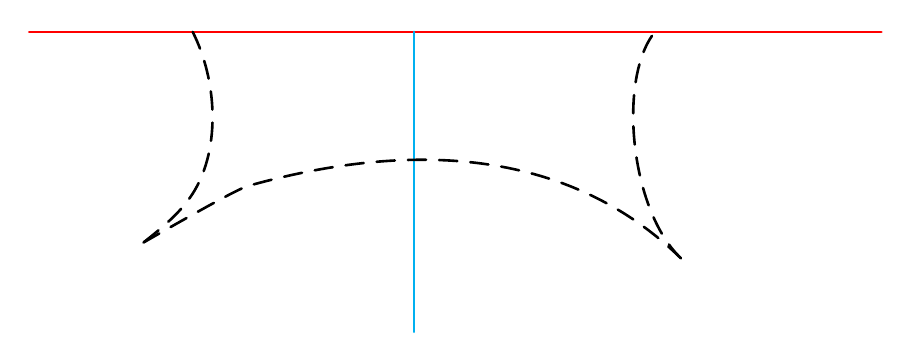
\caption{The right-angled Coxeter $2$-cell nearby $B$-belt.}
\end{figure}

\subsection{ The orbifold fundamental groups of  simple handlebodies}
Following the above notations, by \autoref{p1}, we can directly write out a presentation of orbifold fundamental group of $Q$.
\begin{prop}\label{Lemma-31}
Let $Q$ be a simple handlebody of genus $\mathfrak{g}$, and $P_Q$ be the associated simple polytope with copies of cutting belts $\mathcal{F}_B$.  Then  $\pi_1^{\text{orb}}(Q)$ has a presentation with
generators $s_{F}$ indexed by $F\in\mathcal{F}(P_Q)$, satisfying the following relations:
  \begin{itemize}
\item[$(1)$] $s_F^2=1$ for $F\in \mathcal{F}(P_Q)- \mathcal{F}_B$;
\item[$(2)$] $t_{B^+}t_{B^-}=1$ for two $B^+$ and $B^-$ with $B^+\sim B^-$ in $\mathcal{F}_B$;
\item[$(3)$] $(s_Fs_{F'})^2=1$  for $F,F'\in \mathcal{F}(P_Q)-\mathcal{F}_B$ with $F\cap F'\neq \emptyset$;
 \item[$(4)$]
 $s_{F}t_{B^+}=t_{B^+}s_{F'}$  for $B^+\sim B^-$ in $\mathcal{F}_B$ and $F,F'\in \mathcal{F}(P_Q)-\mathcal{F}_B$ with $F\cap B^+\sim F'\cap B^-$
  \end{itemize}
 where the basepoint of $\pi^{\text{orb}}_1(Q)$ is the cone point $x_0$ in the interior of $Q$.
\end{prop}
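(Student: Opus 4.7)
The plan is to apply \autoref{p1} and compute $\pi_1^{\text{orb}}(\mathcal{C}(Q)^{2})$, where $\mathcal{C}(Q)$ is the right-angled Coxeter cellular decomposition constructed in the preceding subsection. The argument is essentially van Kampen's theorem adapted to the right-angled Coxeter setting, following the recipe illustrated in \hyperref[generator-relation]{Figure 4}: since $\mathcal{C}(Q)$ has the cone point $x_0$ as its unique $0$-cell, every based orbifold loop is a word in the $1$-cells, and the $2$-cells impose the defining relations after the appropriate local-group bookkeeping on their boundaries.

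First I would read off the generators and relations (1)-(2) from the $1$-cells. Each facet $F\in\mathcal{F}(P_Q)-\mathcal{F}_B$ contributes a $1$-cell whose far endpoint sits in the interior of $F$ and carries local group $\mathbb{Z}_2$; the associated based orbifold loop is the reflection generator $s_F$, and the $\mathbb{Z}_2$ endpoint forces $s_F^{2}=1$, which is (1). Each pair $B^{+}\sim B^{-}$ in $\mathcal{F}_B$ is identified in $Q$ to a single belt with trivial local group, and the $1$-cell crossing it becomes a torsion-free loop at $x_0$; orienting it gives $t_{B^{+}}$, while $t_{B^{-}}$ is the same loop traversed in reverse, which is (2). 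Note in particular that because $F$ and $F'$ with $F\cap B^{+}\sim F'\cap B^{-}$ may be distinct facets of $P_Q$ glued to a single facet of $Q$, the symbols $s_F$ and $s_{F'}$ are \emph{a priori} distinct generators, which is what makes an HNN-type relation available.

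Next I would read off relations (3)-(4) from the $2$-cells, which correspond to the codim-$2$ faces of $Q$ as a manifold with corners. A codim-$2$ face given by two intersecting facets $F,F'\in\mathcal{F}(P_Q)-\mathcal{F}_B$ produces a $2$-cell of $\mathcal{C}(Q)$ with local group $(\mathbb{Z}_2)^{2}$, whose attaching word is $(s_Fs_{F'})^{2}$ by the pattern of \hyperref[generator-relation]{Figure 4}; this is (3). The more delicate case is a codim-$2$ face where $F\in\mathcal{F}(P_Q)-\mathcal{F}_B$ meets a boundary facet $B^{+}\in\mathcal{F}_B$: by the construction of $\mathcal{C}(Q)$, it is paired via the belt identification with the corresponding face where $F'$ meets $B^{-}$ (under $F\cap B^{+}\sim F'\cap B^{-}$), and the two $2$-cubes of $\mathcal{C}(P_Q)$ are glued into a single $2$-cell of $\mathcal{C}(Q)$ with local group $\mathbb{Z}_2$ coming from the $F,F'$ side. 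Tracking orientations as depicted in \hyperref[cone point]{Figure 8}, the boundary word reads $s_F\,t_{B^{+}}\,s_{F'}\,t_{B^{-}}$; using (2) to replace $t_{B^{-}}$ by $t_{B^{+}}^{-1}$ and using $s_{F'}^{2}=1$ rewrites this as $s_F t_{B^{+}}=t_{B^{+}} s_{F'}$, which is (4). This orientation-tracking will be the main subtlety; once it is settled, completeness follows because every codim-$2$ face of $Q$ is of one of the two kinds above, and \autoref{p1} ensures that cells of dimension $\geq 3$ add no further relations.
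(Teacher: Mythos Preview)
Your proposal is correct and follows exactly the route the paper takes: the paper simply states that ``by \autoref{p1}, we can directly write out a presentation of orbifold fundamental group of $Q$'' from the Coxeter cellular decomposition $\mathcal{C}(Q)$ built in the preceding subsection, and you have supplied the details of that direct reading (generators from the $1$-cells, relations (1)--(4) from the two types of $2$-cells, with \hyperref[cone point]{Figure 8} governing the belt case). The only thing to be careful about is the precise boundary word of the $\mathbb{Z}_2$-type $2$-cell near a belt, which you rightly flag as the main subtlety; once orientations are fixed consistently with the convention in the remark after \autoref{p1}, relation (4) drops out as you describe.
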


 On the other hand,  we  show here that $\pi^{\text{orb}}_1(Q)$ is actually an iterative  HNN-extension on $W(P_Q, \mathcal{F}_B)$, where $W(P_Q, \mathcal{F}_B)$
 is a right-angled Coxeter group determined by facial structure of $P_Q$ by ignoring the facets in $\mathcal{F}_B$:
  \begin{equation*}\label{W-Group}
 \begin{split}
W(P_Q, \mathcal{F}_B)  = \langle s_{F}, &\forall F\in\mathcal{F}(P_Q)-\mathcal{F}_B\mid
 s_F^2=1, \forall F\in \mathcal{F}(P_Q)- \mathcal{F}_B; \\
 &(s_Fs_{F'})^2=1,  \forall F,F'\in \mathcal{F}(P_Q)-\mathcal{F}(B), F\cap F'\neq \emptyset
 \rangle
  \end{split}
 \end{equation*}
 which can be regarded as the orbifold fundamental group $\pi^{\text{orb}}_1(P_Q)$ of $P_Q$ as a right-angled Coxeter orbifold with boundary consisting of the disjoint union of all facets in $\mathcal{F}_B$.

 \vskip .2cm

Let $B$ be a cutting belt in $Q$, and $B^+,B^{-}\in\mathcal{F}_B $ be two copies of $B$. Set $\mathcal{F}^{B^+}=\{F\in \mathcal{F}(P_Q)-\mathcal{F}_B | F\cap B^{+}\neq \emptyset \}$, and $\mathcal{F}^{B^{-}}=\{F\in \mathcal{F}(P_Q)-\mathcal{F}_B| F\cap B^{-}\neq \emptyset \}$. The associated right-angled Coxeter group $W_{B^+}$ and $W_{B^-}$ are two isomorphic groups since
$B^+$ and $B^-$ are homeomorphic as simple polytopes.

\begin{lem}\label{lemma-34}
The maps  $i_{B^+}: W_{B^+}\rightarrow W(P_Q, \mathcal{F}_B)$ and $i_{B^{-}}: W_{B^{-}}\rightarrow W(P_Q, \mathcal{F}_B)$  induced by inclusions $B^+\hookrightarrow P_Q$ and $B^-\hookrightarrow P_Q$ are monomorphisms.
\end{lem}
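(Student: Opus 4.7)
The plan is to factor $i_{B^+}$ through a parabolic subgroup of $W(P_Q,\mathcal{F}_B)$ and invoke the standard parabolic subgroup theorem for Coxeter groups. First I will verify well-definedness. The defining involutive relations $s_{F\cap B^+}^2=1$ of $W_{B^+}$ map to $s_F^2=1$ in $W(P_Q,\mathcal{F}_B)$, and each commutation $(s_{F\cap B^+}\,s_{F'\cap B^+})^2=1$ holds in $W_{B^+}$ exactly when $F\cap F'\cap B^+\neq\emptyset$ in $B^+$, which forces $F\cap F'\neq\emptyset$ in $P_Q$, so the image relation $(s_Fs_{F'})^2=1$ is valid in $W(P_Q,\mathcal{F}_B)$.

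Next I will apply Davis's parabolic subgroup theorem (\cite[Theorem 4.1.6]{D2}) to the subset $T=\{s_F:F\in\mathcal{F}^{B^+}\}\subset W(P_Q,\mathcal{F}_B)$. The theorem identifies the subgroup $H:=\langle T\rangle$ with the right-angled Coxeter group on $T$ whose Coxeter matrix is the one restricted from $W(P_Q,\mathcal{F}_B)$, and moreover $H\hookrightarrow W(P_Q,\mathcal{F}_B)$ is an injection. The map $i_{B^+}$ then factors as $W_{B^+}\twoheadrightarrow H\hookrightarrow W(P_Q,\mathcal{F}_B)$, so the injectivity of $i_{B^+}$ reduces to showing that the first arrow is an isomorphism---equivalently, that the Coxeter matrix of $W_{B^+}$ coincides with the restricted Coxeter matrix of $H$.

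That agreement amounts to the combinatorial assertion: for any $F,F'\in\mathcal{F}^{B^+}$, $F\cap F'\neq\emptyset$ in $P_Q$ if and only if $F\cap F'\cap B^+\neq\emptyset$. The ``$\Leftarrow$'' direction is immediate from $F\cap F'\cap B^+\subseteq F\cap F'$. For the ``$\Rightarrow$'' direction I will argue by contradiction using the cutting-belt structure: suppose $F\cap F'\neq\emptyset$ in $P_Q$ while $F\cap F'\cap B^+=\emptyset$. Then the facets $F\cap B$ and $F'\cap B$ of the belt $B\subset Q$ are disjoint, while the images of $F,F'$ in $Q$ have nonempty intersection; by the $B$-belt condition in \autoref{Def-B}, the union $F\cup F'$ must fail to deformation retract onto $B$ in $|Q|$, enclosing a nontrivial ``hole.'' I will then argue that such a hole is incompatible with $B$ being a cutting belt of a simple handlebody, i.e., with the property that cutting $Q$ along $B$ restores a convex simple polytope $P_Q$ whose nerve triangulates $\partial|Q|$ (condition (b)).

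The main obstacle is the final step: converting the homotopical obstruction (the failure of $F\cup F'$ to retract onto $B$) into a strictly combinatorial contradiction against the simple-polytope structure of $P_Q$ at $B^+$. In dimension three this amounts to excluding prismatic $3$-circuits $\{B^+,F,F'\}$ in $P_Q$, and I anticipate accomplishing this by a careful analysis of the two endpoints of the ridge $F\cap F'$ (each of which is a vertex of $P_Q$, hence the intersection of $B^+$ with some pair of other facets) together with the label-preserving pairing of facets around $B^+$ and $B^-$ forced by the cutting-belt gluing; the higher-dimensional case should reduce to this by induction over codimension.
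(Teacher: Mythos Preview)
Your approach via the parabolic subgroup theorem is correct in outline but heavier than needed. The paper gives a one-line retraction argument: define $j_{B^+}:W(P_Q,\mathcal{F}_B)\to W_{B^+}$ by $s_F\mapsto s_{F\cap B^+}$ for $F\in\mathcal{F}^{B^+}$ and $s_F\mapsto 1$ otherwise; then $j_{B^+}\circ i_{B^+}=\mathrm{id}_{W_{B^+}}$ forces $i_{B^+}$ to be injective, with no appeal to parabolic structure. Both routes, however, hinge on the same combinatorial point you isolate: for $F,F'\in\mathcal{F}^{B^+}$, the implication $F\cap F'\neq\emptyset\ \Rightarrow\ F\cap F'\cap B^+\neq\emptyset$ (this is precisely what makes $j_{B^+}$ a well-defined homomorphism into $W_{B^+}$, and equivalently what matches the Coxeter matrices in your formulation). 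The paper simply asserts it; you are right that it is the substantive step.

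Your proposed proof of that implication has a genuine gap. You correctly deduce from \autoref{Def-B} that if the corresponding facets of $B$ are disjoint while the ambient facets of $Q$ meet, then the union fails to retract onto $B$, so there is a ``hole.'' But the presence of such a hole does not contradict $B$ being a cutting belt: the handlebody has positive genus, holes are expected, and the $B$-belt definition explicitly allows this disjunct. Your fallback plan---recasting the configuration as a prismatic $3$-circuit $\{B^+,F,F'\}$ in $P_Q$ and excluding it via the facet pairing around $B^\pm$---does not work either, since nothing in the hypotheses forces $P_Q$ to be flag, and a general simple polytope may carry such circuits; the label-preserving pairing between $\mathcal{F}^{B^+}$ and $\mathcal{F}^{B^-}$ by itself imposes no constraint on whether $\{B^+,F,F'\}$ spans a $2$-simplex in $\mathcal{N}(P_Q)$. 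What is actually needed is an argument specific to how $B^+$ arises as the trace of a transverse cut (equivalently, that the link of the vertex $B^+$ in $\mathcal{N}(P_Q)$ is a \emph{full} subcomplex), and your sketch does not supply one.
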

 \begin{proof}
 According to the definition of $B$-belt, $i_{B^+}$ and $i_{B^-}$ are obviously well-defined.  There are two group homomorphisms $j_{B^+}: W(P_Q, \mathcal{F}_B) \rightarrow W_{B^+}$ and $j_{B^-}: W(P_Q, \mathcal{F}_B) \rightarrow  W_{B^-}$, defined by reducing modulo the normal subgroups generated by facets being not in $\mathcal{F}^{B^+}$ and $\mathcal{F}^{B^-}$, such that $j_{B^+}\circ i_{B^+}=id_{W_{B^+}}$ and $j_{B^-}\circ i_{B^-}=id_{W_{B^-}}$.  The result follows from this.
\end{proof}

Thus, $W_{B^+}$ and $W_{B^-}$  can also be regarded as two isomorphic subgroups of $W(P_Q, \mathcal{F}_B)$ generated by $s_F, F\in\mathcal{F}^{B^+}$ and $s_{F'}, F'\in \mathcal{F}^{B^-}$, respectively.  Define $\phi_B:  W_{B^-}\longrightarrow W_{B^+}$ by $\phi_B(s_{F'})=s_F$ with $F'\cap B^-\sim F\cap B^+$. Then $\phi_B$ is a well-defined isomorphism.
 Furthermore, attaching  two facets on $P_Q$ corresponding to the cutting belt $B$ is equivalent to doing once HNN-extension on its orbifold fundamental group $\pi_1^{orb}(P_Q)=W(P_Q,\mathcal{F}_B)$, giving new elements $t_{B^+}, t_{B^-}$ with certain conditions in $\pi^{\text{orb}}_1(Q)$.
 By doing an induction on the genus of $Q$ and repeating the use of the normal form theorem of HNN-extension (\autoref{TT2}),
 the orbifold fundamental group of $Q$ is isomorphic to  $\mathfrak{g}$ times HNN-extensions on the right-angled Coxeter group $W(P_Q, \mathcal{F}_B)$, as shown below:
$$\xymatrix@=0.4cm{
  (Q_{\mathfrak{g}},B_{\mathfrak{g}}) \ar[rr] & & \cdots \ar[rr] && (Q_1,B_1) \ar[rr] &&Q_0=P_Q\\
  &&& \ar[rr]^{\text{Cutting}} && &&&\\
 &&& & &\ar[ll]_{\text{HNN-extension}}&&&\\
G_\mathfrak{g}=\pi^{\text{orb}}_1(Q)&& \ar[ll]\cdots&& \ar[ll] G_1 &&\ar[ll]  G_0=W(P_Q,\mathcal{F}_B)\\
  }$$
where each $Q_k$ is the simple handlebody of  genus $k$ obtained from $Q_{k+1}$ by cutting open along the $(k+1)$-th belt $B_{k+1}$, which is a right-angled Coxeter orbifold with boundary consisting of double copies of $\{B_{k+1}, \cdots, B_\mathfrak{g}\}$, and each $G_k$ is the orbifold fundamental group of $Q_{k}$ which is obtained from an HNN extension on $G_{k-1}$.

\begin{prop}\label{HNN}
Let $Q$ be a simple handlebody of genus $\mathfrak{g}$ with cutting belts $B_1, ..., B_\mathfrak{g}$. Then
$\pi^{\text{orb}}_1(Q)\cong (\cdots((W(P_Q,\mathcal{F}_B){*}_{\phi_{B_1}}){*}_{\phi_{B_2}})\cdots){*}_{\phi_{B_\mathfrak{g}}}$.
 \end{prop}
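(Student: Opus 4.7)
The plan is to proceed by induction on the genus $\mathfrak{g}$, interpreting each re-gluing of a cutting belt as a single HNN extension on orbifold fundamental groups. For the base case $\mathfrak{g}=0$, $Q=P_Q$ and $\mathcal{F}_B=\emptyset$, so \autoref{Lemma-31} specializes to the standard presentation of $W(P_Q,\mathcal{F}_B)$. Writing $G_k=\pi_1^{\text{orb}}(Q_k)$ for the genus-$k$ intermediate stage described in the excerpt, the inductive step reduces to identifying $G_k\cong G_{k-1}*_{\phi_{B_k}}$.

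For the inductive step I would give two complementary arguments. First, a direct rewriting of the presentation from \autoref{Lemma-31}: relation $(2)$ lets us eliminate $t_{B_k^-}=t_{B_k^+}^{-1}$ and retain a single stable letter, and then relation $(4)$ becomes $t_{B_k^+}^{-1}s_Ft_{B_k^+}=s_{F'}$ whenever $F\cap B_k^+\sim F'\cap B_k^-$, which is exactly the HNN conjugation relation $t^{-1}wt=\phi_{B_k}^{-1}(w)$ on $W_{B_k^+}$. Since the remaining relations $(1)$ and $(3)$, together with those relations coming from the belts $B_1,\ldots,B_{k-1}$, recover $G_{k-1}$ by induction, the full presentation in \autoref{Lemma-31} is term-for-term the presentation of $G_{k-1}*_{\phi_{B_k}}$. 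The second, geometric argument uses an orbifold Seifert-van Kampen decomposition of $Q_k$ into a tubular neighborhood $N(B_k)$ with $\pi_1^{\text{orb}}(N(B_k))\cong W_{B_k}\times\mathbb{Z}$ and the complement $Q_k-N(B_k)$, which retracts onto $Q_{k-1}$; the overlap is orbifold-homotopy equivalent to $B_k^+\sqcup B_k^-$ with orbifold fundamental groups $W_{B_k^+}$ and $W_{B_k^-}$. Van Kampen's theorem for orbispaces then produces exactly the HNN extension, with the $\mathbb{Z}$-factor supplying the stable letter as a based loop that crosses $B_k$ once.

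The main obstacle is verifying that the HNN extension is well-defined at every stage of the induction, that is, that $W_{B_k^+}$ and $W_{B_k^-}$ actually embed as subgroups of $G_{k-1}$. \autoref{lemma-34} gives the embeddings $W_{B_k^\pm}\hookrightarrow G_0=W(P_Q,\mathcal{F}_B)$, and \autoref{TT2}(I) shows that each HNN extension $G_{j-1}\hookrightarrow G_j$ is itself injective on the base group. Composing these inclusions produces $W_{B_k^\pm}\hookrightarrow G_{k-1}$, legitimizing the HNN construction at step $k$; if one prefers to stay purely combinatorial, the same injectivity can be read off from \autoref{Tits} (any nontrivial reduced Coxeter word in $W_{B_k^\pm}$ contains none of the stable letters $t_{B_1},\ldots,t_{B_{k-1}}$, so it is already in HNN normal form and hence nontrivial in every $G_j$ by \autoref{TT2}(II)). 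Iterating over $k=1,\ldots,\mathfrak{g}$ then yields the claimed presentation of $\pi_1^{\text{orb}}(Q)$ as an iterated HNN extension over $W(P_Q,\mathcal{F}_B)$.
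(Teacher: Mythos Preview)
Your proposal is correct and follows essentially the same approach as the paper: induction on the genus, with each re-gluing of a cutting belt interpreted as one HNN extension, using \autoref{lemma-34} for the base embeddings and \autoref{TT2} to propagate injectivity of $W_{B_k^\pm}\hookrightarrow G_{k-1}$ through the iterated extensions. Your explicit rewriting of the presentation from \autoref{Lemma-31} and the supplementary Seifert--van Kampen argument are more detailed than what the paper records, but the underlying strategy is identical.
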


 Notice that the expression $(\cdots((W(P_Q, \mathcal{F}_B){*}_{\phi_{B_1}}){*}_{\phi_{B_2}})\cdots){*}_{\phi_{B_\mathfrak{g}}}$ in \autoref{HNN} is independent of orders of $\phi_{B_i}$. In addition, the  presentation of  $\pi^{orb}_1(Q)$ in \autoref{Lemma-31} can be simplified  by deleting all generators $t_{B^{-}}$ and relations $t_{B^{+}}t_{B^{-}}=1$, meanwhile, replaced by only all $t_B$ . Here the  group  $\pi^{orb}_1(Q)$  is called a {\em handlebody group}.
 It should be pointed out that the right-angled Coxeter group $W_Q$ determined by facial structure of  $Q$ is not a subgroup of $\pi^{\text{orb}}_1(Q)$ in general.
 Actually, $W_Q$ is the quotient group of $\pi^{\text{orb}}_1(Q)$ with respect to the normal group generated by all $t_{B}$.

  \begin{rem}
In \cite[Theorem 4.7.2]{DJS}, Davis, Januszkiewicz and Scott give a similar form. However, all generators in their paper lifted into the universal space as  homeomorphisms onto itself are involutions, i.e.,  $t_B^2=1$. Here,  with a little difference,  we require that the lifted action of $t_B$ is free.
In particular, the last relation in \autoref{Lemma-31}
 belongs to a kind of {\em Baumslag-Solitar relations}, which are related to the HNN-extension; in other words,  pasting pairs of  facets corresponding to cutting belts of the polytope $P_Q$ can be viewed as a topological explanation for the HNN-extension of their orbifold fundamental groups.
More precisely, for a cutting belt $B$, there are two copies $B^+$ and $B^-$ in $P_Q$, and the composite  map
$$W_B\cong W_{B^+}\overset{i_{B^+}}{\longrightarrow} W(P_Q,\mathcal{F}_B)\overset{i_{1}}{\longrightarrow} G_1\overset{i_{2}}{\longrightarrow}\cdots\overset{i_{g}}{\longrightarrow} G_\mathfrak{g}=\pi^{\text{orb}}_1(Q)$$
 embeds $W_{B}$ into $\pi^{\text{orb}}_1(Q)$, where $i_{k}$ is defined by $i_{k}(h)=h\in G_k$ for $h\in G_{k-1}$. Both $W_{B^{+}}$ and $W_{B^-}$ are linked in $\pi^{\text{orb}}_1(Q)$ by an isomorphism and the injectivity of $i_{k}$ is followed by the normal form theorem of HNN-extension (\autoref{TT2}).
 \end{rem}

 \subsection{The orbifold universal covers of simple handlebodies}
  Let $Q$ be a simple handlebod with cutting belts $\{B_1,\cdots,B_{\mathfrak{g}}\}$, and $P_Q$ be the simple polytope given by cutting $Q$ with the quotient map $q: P_Q\longrightarrow Q$. Let $\pi^{orb}_1(Q)$ be the orbifold fundamental group with the presentation in \autoref{Lemma-31}.
 Define a {\em characteristic map} on the facet set of $P_Q$:
 $$\lambda: \mathcal{F}(P_Q)\longrightarrow \pi^{orb}_1(Q)$$
 given by $\lambda(F)=s_F$ for $F\in \mathcal{F}(P_Q)-\mathcal{F}_B$, and $\lambda(B)=t_B$ for $B\in\mathcal{F}_B$.
 Then we construct the following space
 \begin{equation}\label{uni-cover}
\widetilde{Q}=P_Q\times \pi^{orb}_1(Q)/\sim
 \end{equation}
where $(x,g)\sim(y,h)$ if and only if
\begin{equation*}\begin{cases}\label{er}
x=y\in F\in\mathcal{F}(P_Q)-\mathcal{F}_B, gs_F=h,\\
(x,y)\in (B,B^{'}), B,B'\in \mathcal{F}_B, q(x)=q(y), t_B\cdot g=h.
 \end{cases}\end{equation*}

\vskip.2cm
  The orbit space of the action of $\pi^{orb}_1(Q)$ on $\widetilde{Q}$ is $Q$, so the polytope $P_Q$ can be viewed as the fundamental domain of $\pi^{orb}_1(Q)$ acting on $\widetilde{Q}$.
According to \autoref{cor3},
 \begin{lem}\label{Lemma-A4}
 $\widetilde{Q}$ is the universal orbifold cover of  $Q$.
 \end{lem}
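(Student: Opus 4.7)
The plan is to invoke \autoref{cor3} in its orbifold form, with the simple polytope $D=P_Q$ playing the role of fundamental domain for the natural left action of $G=\pi_1^{\text{orb}}(Q)$ on $\widetilde{Q}$ given by $g\cdot[x,h]=[x,gh]$. First I would verify that this action is properly discontinuous, that $P_Q\times\{1\}\subset\widetilde{Q}$ meets every orbit exactly as a fundamental domain should, and that the obvious map $[x,h]\mapsto q(x)$ descends to an orbifold covering $\widetilde{Q}\to Q$ with deck group $G$. Because $Q$ is simple, at every codimension-$k$ face of $P_Q$ the $k$ colourings $\lambda(F_{i_j})$ are independent in $\pi_1^{\text{orb}}(Q)$, so exactly $2^k$ chambers meet along the lift of that face; this is the combinatorial hypothesis required by \autoref{general}, and it also endows $\widetilde{Q}$ with a right-angled Coxeter orbifold structure whose charts are compatible with those of $Q$.

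Next I would compute the abstract group $G_{P_Q}$ from the recipe in equation~(\ref{E3}) and match it against the presentation of $\pi_1^{\text{orb}}(Q)$ supplied by \autoref{Lemma-31}. The pair relations split into two families: each mirror facet $F\in\mathcal{F}(P_Q)-\mathcal{F}_B$ contributes $s_F^2=1$, while each paired cutting-belt facet $B^+\sim B^-$ contributes $t_{B^+}t_{B^-}=1$. The Poincar\'e relations come from the codimension-two faces of $P_Q$: an intersection $F\cap F'$ of two mirror facets yields $(s_Fs_{F'})^2=1$, whereas an intersection $F\cap B^+$ of a mirror facet with a cutting-belt facet yields the commutation $s_Ft_{B^+}=t_{B^+}s_{F'}$, where $F'$ is the mirror facet identified with $F$ across the belt via $F\cap B^+\sim F'\cap B^-$. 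These are exactly the generators and relations of \autoref{Lemma-31}, so the canonical quotient homomorphism $\lambda_*\colon G_{P_Q}\to\pi_1^{\text{orb}}(Q)$ is an isomorphism.

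Finally I would combine \autoref{lmA3}, which identifies the orbifold fundamental group of the quotient orbifold $\widetilde{Q}/G=Q$ with $G_{P_Q}$, together with the short exact sequence
\[
1\longrightarrow\pi_1^{\text{orb}}(\widetilde{Q})\longrightarrow G_{P_Q}\overset{\lambda_*}{\longrightarrow}\pi_1^{\text{orb}}(Q)\longrightarrow 1
\]
supplied by the orbifold version of \autoref{general}. Since $\lambda_*$ is an isomorphism its kernel is trivial, so $\pi_1^{\text{orb}}(\widetilde{Q})=1$ and $\widetilde{Q}$ is therefore the universal orbifold cover of $Q$.

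The main obstacle is the bookkeeping in the second step: one must verify that the Poincar\'e relation coming from each codimension-two face of the form $F\cap B^+$ in $P_Q$ closes up in $\widetilde{Q}$ precisely as the Baumslag--Solitar--type commutation $s_Ft_{B^+}=t_{B^+}s_{F'}$ and not as some other word. Concretely this amounts to tracking the four chambers that meet along a lift of $F\cap B^+$ and confirming that they are exactly $P_Q,\ s_F\cdot P_Q,\ t_{B^+}\cdot P_Q$ and $t_{B^+}s_{F'}\cdot P_Q = s_Ft_{B^+}\cdot P_Q$, which is transparent from the local cellular picture near a cutting belt but requires care because $t_{B^+}$ has infinite order whereas $s_F$ and $s_{F'}$ are involutions.
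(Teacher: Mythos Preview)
Your proposal is correct and follows exactly the approach the paper intends: the paper's proof consists of the single line ``According to \autoref{cor3}'', and what you have written is precisely the unpacking of that citation---verifying the $2^k$-chamber hypothesis of \autoref{general}, matching the presentation $G_{P_Q}$ from (\ref{E3}) with that of \autoref{Lemma-31}, and reading off simple-connectivity from the resulting isomorphism $\lambda_*$. One small wording correction: the phrase ``the $k$ colourings $\lambda(F_{i_j})$ are independent'' is apt only when all $k$ facets are mirrors; when one of them is a cutting-belt facet $B^+$ the relevant fact is rather that $s_{F_{i_1}},\dots,s_{F_{i_{k-1}}},t_{B^+}$ satisfy only the commutation-type relation (4), which still forces exactly $2^k$ chambers (indeed $2^{k-1}$ on each side of the belt) to meet along the lift---this is exactly the bookkeeping you already flag in your final paragraph.
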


%%%%%%%%%%%%%%%%%%
\section{Proof of \hyperref[DJS-small]{Theorem A}}\label{section5}

This section is devoted to giving the  proof of \hyperref[DJS-small]{Theorem A}.

\subsection{Non-positively curved cubical complex}
A geodesic metric space $X$ is  {\em non-positively curved} if it is a locally CAT(0) space.
The Cartan-Hadamard theorem implies that non-positively curved spaces are aspherical. Cf~\cite{BH, D2, G}.

\begin{defn}[The links in  a cubical complex  {\cite[Subsection 7.15]{BH} or \cite[Page 508]{D2}}]
  Let $K$ be a cubical complex. For each vertex $v\in K$, its {\em (geometric) link}, denoted by Lk$(v)$, is a  simplicial complex defined by
  all cubes in $K$ that properly contains $v$ with respect to the inclusion.
 A $d$-cube $c$ of $K$ that properly contains $v$  determines a $(d-1)$-simplex $s(c)$ in Lk$(v)$.
\end{defn}

\begin{prop}[Gromov Lemma, also see {\cite[Corollary I.6.3]{D2}}]\label{LMG}
A piecewise Euclidean cubical complex is non-positively curved if and only if the link of its each vertex is a flag complex.
\end{prop}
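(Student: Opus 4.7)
The plan is to reduce the CAT(0) condition to a combinatorial condition on vertex links, in two stages: first reduce to links of vertices, then characterize when all-right spherical complexes are CAT(1).

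First I would apply the standard Berestovskii-style criterion (Bridson--Haefliger, Theorem II.5.2): in a piecewise Euclidean polyhedral complex, points in the interior of a positive-dimensional cell automatically have CAT(0) neighborhoods because Euclidean cubes are themselves CAT(0). Hence the only points where non-positive curvature can fail are the vertices, and local CAT(0) at a vertex $v$ is equivalent to the link $\mathrm{Lk}(v)$, endowed with its induced piecewise spherical metric, being a CAT(1) space.

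Next I would make the structure of $\mathrm{Lk}(v)$ explicit in the cubical setting. Every $d$-cube containing $v$ contributes a spherical $(d-1)$-simplex to $\mathrm{Lk}(v)$; because the dihedral angles of a Euclidean cube at any vertex are all right angles, each such simplex is \emph{all-right}, meaning every edge has spherical length $\pi/2$. Thus $\mathrm{Lk}(v)$ is an all-right piecewise spherical simplicial complex, and the proposition reduces to the following well-known combinatorial-metric fact:

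\emph{An all-right spherical simplicial complex $L$ is CAT(1) if and only if $L$ is flag.}

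For the direction ``not flag implies not CAT(1)'', take pairwise-adjacent vertices $v_0,\ldots,v_k$ that fail to span a simplex, and reduce to the smallest such ``empty simplex''. Its boundary sits in $L$ as a combinatorial sphere made of all-right spherical simplices; a direct check exhibits a locally geodesic closed loop in it of length strictly less than $2\pi$, contradicting the standard criterion that every closed geodesic in a CAT(1) space has length at least $2\pi$. For the converse, I would argue by induction on $\dim L$. Flagness is inherited by links of simplices in $L$, so the inductive hypothesis gives that every link of a simplex of $L$ is CAT(1); this verifies the local CAT(1) condition at every point of $L$. The final step is to globalize local CAT(1) to CAT(1), which by Bowditch's criterion reduces to showing that any closed locally geodesic loop in $L$ has length $\ge 2\pi$; using flagness, a loop of length $<2\pi$ would have to turn at corners sharing a common simplex, forcing a contradiction with the non-existence of simplices of the forbidden shape.

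The main obstacle is the ``flag implies CAT(1)'' direction, specifically the global systolic estimate on closed geodesics in all-right spherical complexes. This is the content of Gromov's original argument and is treated in detail in Bridson--Haefliger (Theorem II.5.18); I would quote it rather than reprove it, since the rest of the paper only uses \autoref{LMG} as a black box to verify non-positive curvature of the cubical structure on $\widetilde{Q}$.
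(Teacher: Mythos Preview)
The paper does not prove this proposition; it is stated with attribution to Gromov~\cite{G} and Davis~\cite[Corollary I.6.3]{D2} and used thereafter as a black box (precisely as you suggest in your last paragraph). Your outline is the standard argument found in Bridson--Haefliger~\cite[Theorem II.5.20]{BH}, so there is nothing to compare against here.

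One small point worth tightening: the claim that ``the only points where non-positive curvature can fail are the vertices'' is not literally true for general piecewise Euclidean complexes; a point in the relative interior of a positive-codimension face also requires a link check. In a cubical complex this reduces to the vertex case because the link of such a point is a spherical join of a round sphere with the link of that face, the latter being again an all-right spherical complex whose flagness is inherited from the flagness of the vertex links. This reduction is routine and is part of the package you are citing from~\cite{BH}, but the sentence as written slightly overstates what the Berestovskii criterion gives you for free.
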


\subsection{Homology groups of  manifold covers  over simple handlebodies}\label{App-1}
Let $M_Q$ and $\widetilde{Q}$ be the manifold double and orbifold universal cover over a simple handlebody $Q$ with $m$ facets, respectively. In this subsection, we discuss the homology groups of $M_Q$ and $\widetilde{Q}$.

\subsubsection{Homology groups of $M_Q$}
By Davis' \cite[Theorem 8.12]{D2},
\begin{equation}\label{hh}
H_*(M_Q)\cong \bigoplus_{g\in (\mathbb{Z}_2)^m}H_*(|Q|,\mathcal{F}_{g})
\end{equation}
where $\mathcal{F}_{g}=\bigcup_{s_i\in S(g)} F_i\subset \partial |Q|$, $F_i\in \mathcal{F}(Q)$ and $S(g)=\{s_i| l(s_i\cdot g)=l(g)-1\}$ for   a reduced word $g$ of length $l(g)$ in $(\mathbb{Z}_2)^m$.
If $Q$ is a simple handlebody of genus $\mathfrak{g}\geq 0$, then $|Q|\simeq \bigvee_{\mathfrak{g}} S^1 $. By the long exact sequence of homology groups of $(|Q|,\mathcal{F}_{g})$, if $*\geq3$ %(or $Q$ is a simple polytope),
then
$$
H_*(|Q|,\mathcal{F}_{g})\cong H_{*-1}(\mathcal{F}_{g})\cong H_{*-1}(K_{g})
$$
where $K_g\simeq \mathcal{F}_{g}$ is the dual simplicial complex of $\mathcal{F}_{g}$, which is a subcomplex of $\mathcal{N}(Q)$. Hence for $*\geq3$ %(or $Q$ is a simple polytope)
$$
H_*(M_Q)\cong \bigoplus_{J\subset\mathcal{F}(Q)}H_{*-1}(K_{J}).
$$
where $J$ is the set of those facets $F_i$ corresponding to all $s_i\in S(g)$.
\vskip .2cm

 For $*=1,2$, we have
 $$
 0\rightarrow H_2(|Q|,\mathcal{F}_{g})\rightarrow H_{1}(\mathcal{F}_{g})\overset{(i_g)_*}{\longrightarrow} H_{1}(|Q|)\cong \mathbb{Z}^{\mathfrak{g}}\rightarrow H_1(|Q|,\mathcal{F}_{g})\rightarrow 0.
 $$
 where $i_g: \mathcal{F}_g\longrightarrow |Q|$ is an inclusion.
 Then
  $$
H_1(M_Q)\cong \bigoplus_{g\in (\mathbb{Z}_2)^m} \text{coker} (i_g)_*
$$
and
 $$
H_2(M_Q)\cong \bigoplus_{g\in (\mathbb{Z}_2)^m} \ker (i_g)_*.
$$
\begin{rem}
The formula~(\ref{hh}) is actually the Hochster's formula in the setting of  simple handlebodies. When $Q$ is a simple polytope $P$, the Hochster's formula~\cite[Proposition 3.2.11]{TT} can also be expressed as
$$
H^{l(g)-i-1}(\mathcal{F}_g)\cong \text{Tor}_{\mathbb{Z}[v_1,\cdots,v_m]}^{-i,l(g)}(\mathcal{K}(P),\mathbb{Z})
$$
where $\mathcal{K}(P)$ is the Stanley-Reisner face ring of $P$.

\end{rem}
\subsubsection{Homology groups of $\widetilde{Q}$}
  Davis' \cite[Theorem 8.12]{D2} can not be directly applied to give the homology groups of $\widetilde{Q}$ since $\pi_1^{orb}(Q)$ is not a Coxeter group when the genus of $Q$ is more than zero.
  However, we can employee the method of Davis in \cite[Chapter 8]{D2} to calculate of the homology groups of $\widetilde{Q}$.

 \vskip .2cm

 Let $Q$ be a simple handlebody with nerve $\mathcal{N}(Q)$, and $P_Q$ be the associated simple polytope.
 Let $G=\pi^{orb}_1(Q)$ be the orbifold fundamental group of $Q$.
 We have known that $G$ is an iterative HNN-extension on a right-angled Coxeter group $W(P_Q,\mathcal{F}_B)$. Namely
$$G=\pi^{\text{orb}}_1(Q)\cong (\cdots((W(P_Q,\mathcal{F}_B){*}_{\phi_{B_1}}){*}_{\phi_{B_2}})\cdots){*}_{\phi_{B_\mathfrak{g}}}$$
where $\mathfrak{g}$ is the genus of $Q$.
For any $w\in G$, consider the following reduced normal form,
\begin{equation*}\label{Reduced-x}
w=g_0t_{1}g_1\cdots g_{m-1}t_{m}g_m
 \end{equation*}
 where each $g_i$ is reduced in $W(P_Q,\mathcal{F}_B)$, and each $t_i$ is one of $\{t^{\pm 1}_B\}$ which determines an isomorphism of $\{\phi^{\pm 1}_B\}$ on some subgroups of $\pi_1^{orb}(Q)$.
 Denote the generator set of $G$ by $\mathcal{S}=\{s_F; F\in \mathcal{F}(P_Q)-\mathcal{F}_B\}\cup \{t_B; B\in \mathcal{F}_B\}$.  For any word $w\in G$,
put
 $$
S(w)=\{s\in \mathcal{S}\mid l(ws)<l(w)\},
$$
where $l(w)$ is  the word length of the reduced normal form of $w$ in $G$ (i.e.,  the shortest length between $1$ and $w$ in the Cayley graph of $G$ associated with the generator set $\mathcal{S}$).
For each subset $T$ of $\mathcal{S}$, let $P_Q^T$ be the subcomplex of $P_Q$  defined by
$$
P_Q^T=\bigcup_{t\in T}F_t,
$$
where $F_{s_F}=F$ for $s_F\in \mathcal{F}(P_Q)-\mathcal{F}_B$ and $F_{t_{B}}=B'$ for $B\in \mathcal{F}_B$ with $B\sim B'$.

\vskip.2cm
 Let  $\widetilde{Q}=P_Q\times G/\sim$ be the universal cover of $Q$ defined as in (\ref{uni-cover}).  Then we have the following conclusion which generalizes Davis' theorem in \cite[Theorem 8.12]{D2}.
\begin{prop}\label{thm4}
The homology of $\widetilde{Q}$ is isomorphic to the following direct sum
$$
H_*(\widetilde{Q})\cong \bigoplus_{w\in G}H_*(P_Q,P_Q^{S(w)}).
$$
\end{prop}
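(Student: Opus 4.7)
The plan is to generalize Davis's method from \cite[Chapter 8]{D2}---which proves the analogous formula for right-angled Coxeter groups---to the HNN-extension setting described in \autoref{HNN}. Enumerate the elements of $G = \pi_1^{orb}(Q)$ as a sequence $w_0 = 1, w_1, w_2, \ldots$ with $l(w_i) \leq l(w_{i+1})$, where $l$ denotes reduced-word length with respect to the generating set $\mathcal{S}$. Such a length function is well-defined by the normal form theorem for HNN-extensions (\autoref{TT2}) applied iteratively, combined with Tits's theorem (\autoref{Tits}) for the underlying Coxeter pieces $W(P_Q, \mathcal{F}_B)$. Form the filtration $L_i = \bigcup_{j \leq i} w_j P_Q \subset \widetilde{Q}$, noting $L_0 = P_Q$ (since $S(1) = \emptyset$) and $\widetilde{Q} = \bigcup_i L_i$.

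The central combinatorial assertion is that for every $i \geq 1$,
$$w_i P_Q \cap L_{i-1} = w_i \cdot P_Q^{S(w_i)}.$$
This holds because a facet $F_s$ of $w_i P_Q$ is glued in $\widetilde{Q}$ to the corresponding facet of the adjacent chamber $w_i s P_Q$ via the equivalence relation $\sim$ defining $\widetilde{Q}$, and $w_i s \in \{w_0, \ldots, w_{i-1}\}$ precisely when $l(w_i s) < l(w_i)$, i.e., when $s \in S(w_i)$. Excision then yields
$$H_*(L_i, L_{i-1}) \cong H_*(w_i P_Q, \, w_i P_Q^{S(w_i)}) \cong H_*(P_Q, P_Q^{S(w_i)}).$$

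To promote this filtration computation into a direct sum decomposition, the next step is to show that each long exact sequence of the pair $(L_i, L_{i-1})$ splits. Following Davis, one constructs a combinatorial retraction of $L_i$ onto $L_{i-1} \cup w_i P_Q^{S(w_i)}$ that collapses the interior of the new chamber $w_i P_Q$ onto $w_i P_Q^{S(w_i)}$; such a retraction exists because any geodesic in the Cayley graph of $(G, \mathcal{S})$ from $1$ to $w_i$ must enter $w_i P_Q$ through a facet indexed by $S(w_i)$. Assembling the splittings and passing to the direct limit (singular homology commutes with direct limits, since $\widetilde{Q} = \bigcup_i L_i$) yields
$$H_*(\widetilde{Q}) \cong \bigoplus_{i \geq 0} H_*(P_Q, P_Q^{S(w_i)}) = \bigoplus_{w \in G} H_*(P_Q, P_Q^{S(w)}).$$

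The principal obstacle is verifying the combinatorial claim and the splitting property in the presence of HNN generators, since the $t_B$ do not satisfy $t_B^2 = 1$ and participate in the Baumslag--Solitar-type relations $s_F t_B = t_B s_{F'}$ of \autoref{Lemma-31}(4). Concretely, I will need to check that the notion of ``adjacent chamber across a facet $F_s$'' behaves correctly for both types of generator---using $F_{t_B} = B'$ when $s = t_B$---and that the allowed elementary operations on reduced normal forms (Coxeter length-reducing moves, Coxeter braid commutations, and $t$-reductions) do not spuriously alter $S(w)$. This will involve a case analysis according to whether the last letter of a reduced expression for $w_i$ is a Coxeter generator $s_F$ or an HNN letter $t_B^{\pm 1}$, and in the latter case an appeal to the uniqueness clause of \autoref{TT2} to identify precisely which facet of $w_i P_Q$ is shared with $L_{i-1}$.
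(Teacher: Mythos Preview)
Your filtration strategy and the central combinatorial claim $w_iP_Q\cap L_{i-1}=w_iP_Q^{S(w_i)}$ are correct and match the paper's \autoref{lm17}. The gap is in your splitting mechanism. You propose a retraction $L_i\to L_{i-1}$ that ``collapses the interior of the new chamber $w_iP_Q$ onto $w_iP_Q^{S(w_i)}$''. Such a retraction would require a retraction $P_Q\to P_Q^{S(w_i)}$, and this does \emph{not} exist once $P_Q^{S(w_i)}$ has nontrivial reduced homology---for instance precisely when $S(w_i)$ is the vertex set of an empty simplex, which is the case driving \autoref{nonempty}. The Cayley-graph observation you cite (geodesics enter through facets indexed by $S(w_i)$) is true but establishes only the intersection lemma, not a topological retraction.

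The paper, following Davis, splits in the opposite direction: it builds a \emph{section} of $k_*^w$. For spherical $T=S(w)$ one sets $\beta_T=\sum_{v\in W_T}(-1)^{l(v)}v$ and checks (\autoref{A6}) that multiplication by $\beta_T$ kills chains supported on $P_Q^T$, hence induces $\rho^w=w\beta_T\colon H_*(P_Q,P_Q^T)\to H_*(wW_TP_Q)\subset H_*(L_i)$ with $k_*^w\circ\rho_*^w=\mathrm{id}$. This ``antisymmetrization'' manufactures an absolute cycle by reflecting a relative one across the facets in $T$; it is not a retraction. You will also need the paper's structural lemma that $S(w)$ is either spherical or a spherical set $T_S$ together with a single HNN letter $t_B$; in the latter case one shows $B'$ is disjoint from $P_Q^{T_S}$ (because $T_S\subset S(W_B)$), so $H_*(P_Q,P_Q^{S(w)})\cong H_*(P_Q,P_Q^{T_S})$ for $*>1$ and $\beta_{T_S}$ still furnishes the section. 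The case $*=1$ is handled separately using simple connectivity of $\widetilde{Q}$.
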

\begin{rem}
It should be emphasized that here $P_Q$ is not used as  a mirrored space in the sense of Davis in~\cite{D2} although is is a simple polytope.
Actually, here we just put $P_Q$ and $\pi_1^{orb}(Q)$ together to construct the orbifold universal cover $\widetilde{Q}$, but
$\pi_1^{orb}(Q)$ is not a Coxeter group except that the genus of $Q$ is zero.
\end{rem}
\begin{cor}\label{nonempty}
If there is an empty $k$-simplex $\triangle^k$ in $\mathcal{N}(P_Q)$, then $H_k(\widetilde{Q})\neq 0$.
\end{cor}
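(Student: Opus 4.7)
The plan is to invoke the Davis-type decomposition
\[
H_*(\widetilde{Q})\cong \bigoplus_{w\in G}H_*(P_Q,P_Q^{S(w)})
\]
from the preceding proposition and exhibit a single group element $w\in G=\pi_1^{orb}(Q)$ whose associated summand contributes nonzero homology in degree $k$. Let $F_0,F_1,\dots,F_k$ be the vertices of the empty $k$-simplex; by hypothesis, every proper sub-intersection of the $F_i$ is a nonempty face of $P_Q$, while $F_0\cap\dots\cap F_k=\emptyset$. I will present the argument first in the case that none of the $F_i$ lies in $\mathcal{F}_B$, and at the end indicate how to adapt it when some of them do.

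First I would form the word $w=s_{F_0}s_{F_1}\cdots s_{F_k}\in G$. Since each pair $F_i\cap F_j$ is nonempty, relation (3) of the presentation gives $(s_{F_i}s_{F_j})^2=1$, so the generators $s_{F_0},\dots,s_{F_k}$ pairwise commute in the Coxeter subgroup $W(P_Q,\mathcal{F}_B)\subset G$. By Tits' theorem (\autoref{Tits}), every reduced expression of $w$ in $W(P_Q,\mathcal{F}_B)$ is a permutation of these letters, so $w$ is reduced of length $k{+}1$ and the set of Coxeter generators $s$ with $l(ws)<l(w)$ inside $W(P_Q,\mathcal{F}_B)$ is exactly $\{s_{F_0},\dots,s_{F_k}\}$. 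I then need to pass this information to the full group $G$, which is an iterated HNN-extension of $W(P_Q,\mathcal{F}_B)$. Using the normal form theorem (\autoref{TT2}), one sees that the normal form of $w$ in $G$ uses no $t_B$ letter, and hence no generator $t_B$ can reduce the length of $w$. Therefore $S(w)=\{s_{F_0},\dots,s_{F_k}\}$, and consequently
\[
P_Q^{S(w)}=F_0\cup F_1\cup\cdots\cup F_k .
\]

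Next I would compute the homotopy type of this union. The cover $\{F_0,\dots,F_k\}$ of $P_Q^{S(w)}$ is a good cover: each nonempty intersection $F_{i_0}\cap\cdots\cap F_{i_l}$ (with $l<k$) is a face of the simple polytope $P_Q$, hence contractible, whereas the total intersection is empty. So the nerve of this cover is precisely $\partial\Delta^k\cong S^{k-1}$, and the nerve lemma yields $P_Q^{S(w)}\simeq S^{k-1}$. Because $P_Q$ is itself contractible, the long exact sequence of the pair $(P_Q,P_Q^{S(w)})$ reduces to
\[
H_k(P_Q,P_Q^{S(w)})\cong \widetilde H_{k-1}(P_Q^{S(w)})\cong \widetilde H_{k-1}(S^{k-1})\cong\mathbb{Z},
\]
and by the decomposition this $\mathbb{Z}$ is a direct summand of $H_k(\widetilde{Q})$.

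The main obstacle is controlling $S(w)$ when the ambient group is the HNN-extension rather than a pure Coxeter group: one must rule out that any stable letter $t_B$ could shorten $w$ or give a new reduced expression of $w$ with a $t_B$ at the end. This is where a careful appeal to the normal form theorem for HNN-extensions is essential. A secondary subtlety is the case in which some $F_i$ is a copy of a cutting belt, i.e.\ $F_i\in\mathcal{F}_B$; there one substitutes the corresponding $t_{B_i^{\pm}}$ for $s_{F_i}$, uses relation (4) to commute past the remaining $s_{F_j}$ (so that the relevant conjugate lies in the edge group of the HNN-extension), and reads off $S(w)$ from the resulting normal form. In both cases $P_Q^{S(w)}$ remains the union $F_0\cup\cdots\cup F_k$, so the topological part of the argument is unchanged.
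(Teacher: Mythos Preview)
Your approach is essentially the paper's: choose $w=s_{F_0}\cdots s_{F_k}$, identify $S(w)$ with the set $\{s_{F_0},\dots,s_{F_k}\}$, observe $P_Q^{S(w)}=F_0\cup\cdots\cup F_k\simeq S^{k-1}$, and read off $H_k(P_Q,P_Q^{S(w)})\cong\mathbb{Z}$ from the long exact sequence of the pair. Your nerve--lemma justification of the homotopy type and your more explicit handling of $S(w)$ via Tits' theorem and the normal form theorem are welcome additions; the paper simply asserts both.

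One simplification you are missing: the case $F_i\in\mathcal{F}_B$ cannot occur, so your ``secondary subtlety'' paragraph is unnecessary. The paper disposes of this in one line by appealing to the definition of a $B$-belt. Concretely, if say $F_0=B^+\in\mathcal{F}_B$, then each $F_i$ ($i\ge1$) meets $B^+$ and hence contains a facet $f_i$ of $B^+$; since $F_1\cap\cdots\cap F_k\neq\emptyset$ (it is a proper face of the empty simplex), the $B$-belt condition forces $f_1\cap\cdots\cap f_k\neq\emptyset$, but this intersection lies in $B^+\cap F_1\cap\cdots\cap F_k=\emptyset$, a contradiction. So every vertex of an empty simplex in $\mathcal{N}(P_Q)$ lies in $\mathcal{F}(P_Q)\setminus\mathcal{F}_B$, and you may work entirely inside the Coxeter subgroup $W(P_Q,\mathcal{F}_B)$. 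This also means your check that no stable letter $t_B$ enters $S(w)$ is the only HNN--theoretic point needed, and it follows immediately since the reduced normal form of $w$ contains no $t$-letter (cf.\ the paper's lemma characterising $S(w)$).
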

\begin{proof}
 Assume that the vertex set of the empty $k$-simplex $\triangle^k$ in $\mathcal{N}(P_Q)$ is $$T=\{F_1,F_2,\cdots,F_{k+1}\}$$ which does not contain the facet in $\mathcal{F}_B$ (in fact, any facet in $\mathcal{F}_B$ is not the vertex of any empty simplex of $\mathcal{N}(P_Q)$. This is guaranteed by the definition of $B$-belt). Then $W_T\cong(\mathbb{Z}_2)^{k+1}$ is generated by $s_1,\cdots,s_{k+1}$. Let $w=s_1s_2\cdots s_{k+1}$. Regard $T$ as $\{s_1, ..., s_{k+1}\}$. Then $S(w)=T$. Moreover, $P_Q^{S(w)}=P_Q^T=\cup_{i=1}^{k+1} F_i\backsimeq \partial \triangle^k \backsimeq S^{k-1}$. Since $P_Q$ is a contractible ball,  by the long exact homology group sequence of pair $(P_Q,P_Q^T)$, we have
$$H_k(P_Q, P_Q^T)\cong H_{k-1}(P_Q^T)\cong H_{k-1}(S^{k-1})\neq 0.$$
Therefore, $H_k(\widetilde{Q})\neq 0$.
\end{proof}

%Now  the proof of \autoref{pro5} is complete.

\subsubsection{Proof of \autoref{thm4}}
Before we prove \autoref{thm4}, we first give some notations (cf~\cite{D2}).

\vskip.2cm

A subset $T$ of $\mathcal{S}$ is called {\em spherical} if the subgroup generated by $T$ is a finite subgroup of $G$. Each $s_F$ in a spherical subset $T$ exactly corresponds to a facet $F\in \mathcal{F}(P)-\mathcal{F}_B$, and $F\cap F'\neq\emptyset$ for any $s_F,s_{F'}$ in spherical set $T$. Let $W_T$ be the  group generated by a spherical subset $T$. Then $W_T\cong (\mathbb{Z}_2)^{\# T}$, where $\# T$ denotes the number of all elements in $T$.

\vskip.2cm

If the set $T$ is the union of a spherical set $ T_S$ and a $t_{B}$ for $B\in \mathcal{F}_B$, then
$$W_T=W_{T_S}\cup t_{B'}W_{T_S},$$
 where  $B'$ is the facet which is identified with $B$ in $Q$.

\begin{lem}
Let $G$ be the orbifold fundamental group of a simple handlebody with generator set $\mathcal{S}$.
Then, for each $w\in G$, $S(w)$ is either a spherical subset of $\mathcal{S}$ or the union of a $t_B$ and  a spherical subset in $\mathcal{S}$.
\end{lem}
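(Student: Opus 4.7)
The plan is to determine $S(w)$ by examining reduced expressions of $w$ in $\mathcal{S}$ and combining Tits' theorem (\autoref{Tits}) for the Coxeter group $W(P_Q,\mathcal{F}_B)$ with the normal form theorem for HNN extensions (\autoref{TT2}) applied iteratively to the decomposition of $G$ from \autoref{HNN}. The starting observation is the equivalence
$$s\in S(w)\iff w \text{ admits a reduced }\mathcal{S}\text{-expression ending in }s^{-1},$$
where $s_F^{-1}=s_F$ and $t_B^{-1}=t_{B'}$ for $B\sim B'$; indeed $l(ws)=l(w)-1$ forces cancellation between $w$ and $s$ at the very end of a minimal concatenation.

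Next I would fix a $t$-reduced normal form
$w=g_0\,t_{j_1}^{\varepsilon_1}\,g_1\cdots t_{j_n}^{\varepsilon_n}\,g_n$
with $g_i\in W(P_Q,\mathcal{F}_B)$, and proceed in two steps. For the $t$-generators: the uniqueness part of \autoref{TT2}, applied across the $\mathfrak{g}$ iterated HNN extensions, shows that the sequence $(t_{j_1}^{\varepsilon_1},\ldots,t_{j_n}^{\varepsilon_n})$ of stable letters is an invariant of $w$. Hence if $t_B\in S(w)$, then a reduced expression ending in $t_{B'}=t_B^{-1}$ forces this stable letter to coincide with $t_{j_n}^{\varepsilon_n}$; in particular, at most one $t$-generator can belong to $S(w)$.

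For the $s_F$-generators: if $s_F\in S(w)$, then some reduced $\mathcal{S}$-expression of $w$ ends in $s_F$. Using the commutation relation $s_F\,t_{B^+}=t_{B^+}\,s_{F'}$ (which preserves length) to transport $s$-letters across stable letters when necessary, this forces $s_F$ to be a ``rightmost'' Coxeter generator of the last nontrivial Coxeter block of $w$. If $s_F,s_{F'}\in S(w)$ are two such generators, then both can simultaneously be commuted to the end of that block; by Tits' theorem for the right-angled Coxeter group $W(P_Q,\mathcal{F}_B)$, this requires $s_F$ and $s_{F'}$ to commute, i.e., $F\cap F'\neq\emptyset$. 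Consequently $S(w)\cap\{s_F\mid F\in\mathcal{F}(P_Q)-\mathcal{F}_B\}$ is spherical, and combining with the previous step yields the stated dichotomy. The case $g_n=1$ is handled by applying the same Coxeter argument to $g_{n-1}$ after transporting $s$-letters past the terminal stable letter $t_{j_n}^{\varepsilon_n}$ via the HNN commutation relation.

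The main obstacle is controlling the interaction between Coxeter reductions $s_F^2=1$, HNN cancellations $t_{B^+}t_{B^-}=1$, and the commutation relations $s_F\,t_{B^+}=t_{B^+}\,s_{F'}$. One has to verify that no exotic combination of these relations creates a length-reducing operation not captured by the case analysis; this amounts to a careful bookkeeping exercise using the uniqueness of the HNN normal form in \autoref{TT2} together with the length-additivity across Coxeter blocks guaranteed by \autoref{Tits}.
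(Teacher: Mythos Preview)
Your proposal is correct and follows essentially the same approach as the paper: both arguments fix a reduced normal form for $w$, use the uniqueness of the stable-letter sequence from \autoref{TT2} to conclude that at most one $t_B$ (namely the inverse of the terminal stable letter) can lie in $S(w)$, and then argue that the Coxeter generators in $S(w)$ come from the rightmost Coxeter block and therefore pairwise commute. Your write-up is somewhat more explicit than the paper's in invoking Tits' theorem to justify the commutativity of the $s_F$'s in $S(w)$ and in handling the case $g_n=1$ via the commutation relation $s_F t_{B^+}=t_{B^+}s_{F'}$, but the underlying strategy is identical.
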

\begin{proof}

Let $w=g_0t_{1}g_1\cdots g_{m-1}t_{m}g_m$ be a reduced normal form in $G$.
We might as well assume that this expression of  $w$ is a normal form  in  the opposite direction for  each $t_{B}$, that is, each $g_i$ is a representative of a coset of $W_{B_{i+1}}$ or $W_{B'_{i+1}}$ in $G$, for $i=0,\cdots,m-1$.

\vskip.2cm
It is easy to see that  for $F\in \mathcal{F}(P)-\mathcal{F}_B$, $s_F\in S(w)$ if and only if $s_F\in S(t_mg_{m})$.
If there is a $B\in \mathcal{F}_B$ such that $t_B\in S(w)$, then $g_{m}t_B=t_Bg'_{m}$ where $g'_{m}=\phi_B(g_{m})$, and  the last $t_m$ is $t_B^{-1}$. For another $t_{B'}\neq t_{B}$, it cannot reduce the length of $w$. Thus the conclusion holds.
\end{proof}

\vskip.2cm

For a spherical set $T=S(w)$, we define an element in  $\mathbb{Z}W_{T}\subset \mathbb{Z}W(P_Q,\mathcal{F}_B)$ by a formula
$$
\beta_T=\sum_{w\in W_T}(-1)^{l(w)}w.
$$
Consider a natural cellular decomposition of $P_Q$ given by its facial structure.
 Let $C_*(P_Q)$ and $C_*(\widetilde{Q})$ denote the cellular chain complexes of $P_Q$ and $\widetilde{Q}$, respectively, and let $H_*(P_Q)$ and $H_*(\widetilde{Q})$ be their respective homology groups. Since $G$ acts cellularly on $\widetilde{Q}$, $C_*(\widetilde{Q})$ is a $\mathbb{Z}(G)$-module.

\vskip.2cm
Let $T$ be a spherical set.
Multiplication by $\beta_T$ defines a homomorphism $\beta_T: C_*(P_Q)\longrightarrow C_*(W_TP_Q)$.
\begin{lem}\label{A6}
$C_*(P_Q^T)$ is contained in the kernel of $\beta_T: C_*(P_Q)\longrightarrow C_*(W_TP_Q)$.
\end{lem}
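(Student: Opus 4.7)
The plan is to exploit the fact that $W_T$ is a finite \emph{elementary abelian} $2$-group (since $T$ is a spherical subset of $\mathcal{S}$ consisting only of generators $s_F$ with pairwise intersecting facets, so $W_T\cong(\mathbb{Z}_2)^{\#T}$), together with the geometric fact that each generator $t=s_F\in T$ acts as the identity on cells lying in $F_t=F$. These two ingredients combine to give a one-line factorization argument.

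First I would observe that for every $t\in T$ the element $\beta_T$ factors in the group ring $\mathbb{Z}[W_T]$ as
\[
\beta_T \;=\; \beta_{T\setminus\{t\}}\cdot (1-t).
\]
To see this, split the sum over $W_T$ according to the two cosets of the subgroup $W_{T\setminus\{t\}}$: every $w\in W_T$ is uniquely $w'$ or $w't$ with $w'\in W_{T\setminus\{t\}}$, and since $W_T$ is abelian with $t$ commuting with each $w'$, the length satisfies $l(w't)=l(w')+1$. Grouping gives exactly the factorization above. (This is standard in the Coxeter-theoretic literature, but for the right-angled case is immediate.)

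Next I would check that for each $t=s_F\in T$ and each cell $c\subset F_t=F$, one has $t\cdot c=c$ in $C_*(\widetilde{Q})$. Under the identification $C_*(P_Q)\hookrightarrow C_*(\widetilde{Q})$ via $c\mapsto[(c,1)]$ and the left action $h\cdot[(x,g)]=[(x,hg)]$, the defining relation \eqref{uni-cover} reads $(x,g)\sim(x,gs_F)$ for $x\in F$. Hence for $c\subset F$,
\[
s_F\cdot[(c,1)] \;=\; [(c,s_F)] \;=\; [(c,1)],
\]
so $(1-t)\cdot c=0$. Combining with the factorization, $\beta_T\cdot c=\beta_{T\setminus\{t\}}\cdot(1-t)\cdot c=0$ for every $c\in C_*(F_t)$, i.e.\ $C_*(F_t)\subset\ker\beta_T$.

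Finally, since $P_Q^T=\bigcup_{t\in T}F_t$, the cellular chain complex $C_*(P_Q^T)$ is generated (as an abelian group) by chains supported in the individual facets $F_t$, $t\in T$. By the previous paragraph each such generator is annihilated by $\beta_T$, so $C_*(P_Q^T)\subset\ker\beta_T$, as claimed. I do not expect a real obstacle here: the only subtlety is being careful about left versus right actions and about the convention that $[(c,1)]$ and $[(c,s_F)]$ represent the same cell in $\widetilde{Q}$ whenever $c\subset F$; once this is sorted, the argument reduces to the two-line factorization $\beta_T=\beta_{T\setminus\{t\}}(1-t)$ together with $(1-t)|_{C_*(F_t)}=0$.
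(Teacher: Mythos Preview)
Your proof is correct and is essentially the same argument as the paper's. The paper chooses a transversal $\mathcal{B}\subset W_T$ for the cosets of $\langle s_F\rangle$ and writes $\beta_T=\sum_{v\in\mathcal{B}}(-1)^{l(v)}(v-vs_F)$, which is exactly your factorization $\beta_T=\beta_{T\setminus\{t\}}(1-t)$ spelled out; both then use that $s_F$ fixes cells in $F$ under the identification in $\widetilde{Q}$.
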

\begin{proof}
Suppose $\tau$ is a cell in $P_Q^T$.
If $T$ is a spherical set,
then $\tau$ lies in some $F\in \mathcal{F}(P_Q)-\mathcal{F}_B$ such that $s_F\in T$.  Let $\mathcal{B}$ is a subset of $W_T$
such that $W_T=\mathcal{B}\cup s_F\mathcal{B}$, then we can write $\beta_T$ as follows
$$
\beta_T=\sum_{w\in W_T}(-1)^{l(w)}w=\sum_{v\in \mathcal{B}}(-1)^{l(v)}(v-vs_F).
$$
Since $vF$ is identified with $vs_FF$ in $\widetilde{Q}$, we have that $$\beta_T\tau=\sum(-1)^{l(v)}(v-vs_F)\tau=\sum(-1)^{l(v)}(v\tau-v\tau)=0.
$$
Thus, $C_*(P_Q^T)\subset \ker\beta_T$.
\end{proof}

Hence, $\beta_T$ induces a chain map $C_*(P_Q,P_Q^T)\longrightarrow C_*(W_TP_Q)$, still denoted by $\beta_T$.

\vskip.2cm

For each $w\in G$ satisfying that $T=S(w)$ is a spherical set, we then define a map $$\rho^w=w\beta_T:C_*(P_Q,P_Q^T)\overset{\beta_T}{\longrightarrow} C_*(W_TP_Q)\overset{w}{\longrightarrow} C_*(wW_TP_Q).$$ Hence, we have a map
$$
\rho_*^w:H_*(P_Q,P_Q^T)\longrightarrow H_*(wW_TP_Q).
$$
When $T=S(w)=\{t_B\}\cup T_S$ where $T_S$ is a spherical,
$t_Bs=st_B$ for any $s\in T_S$ implies that $W_{T_S}<W_{B}$, i.e., $B\cap F_s\neq \emptyset$ for any $s\in T_S$. So $B'$ does not intersect any $F_s$, hence for $k>1$ we have $${H}_k(P_Q,P_Q^T)\cong {H}_{k-1}(P_Q^T)\cong {H}_{k-1}(P_Q^{T_S} \coprod B')\cong {H}_{k-1}(P_Q^{T_S})\cong {H}_k(P_Q,P_Q^{T_S})$$ where $P_Q$ and $B'$ are contractible simple polytopes.
Now put
$$\rho_*^w: H_k(P_Q,P_Q^T)\cong{H}_k(P_Q,P_Q^{T_S}) \overset{\beta_{T_S}}{\longrightarrow} H_*(W_{T_S}P_Q)\overset{i_*}{\longrightarrow} H_*(W_{T}P_Q)\overset{\times w}{\longrightarrow} H_*(wW_TP_Q).$$

\vskip .2cm
Next,
order the elements of $G$,
$$w_1,w_2,\cdots$$
so that $l(w_i)\leq l(w_{i+1})$. For each $n\geq 1$, put
$$X_n=\bigcup_{i=1}^{n}w_i P_Q.$$
To simplify notation, set $w=w_n$.

\begin{lem}\label{lm17}
$X_{n-1}\cap wP=wP^{S(w)}$.
\end{lem}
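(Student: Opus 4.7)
The plan is to prove both inclusions of the claimed equality separately.

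First I would establish $wP_Q^{S(w)} \subseteq X_{n-1} \cap wP_Q$ directly. For $s \in S(w)$ one has $l(ws) < l(w)$ by definition, so since the enumeration of $\{w_i\}$ is by non-decreasing length, there is an index $j < n$ with $ws = w_j$. The defining equivalence relation (\ref{uni-cover}) identifies each point of $wF_s$ with a point of the chamber $(ws)P_Q$, so $wF_s \subseteq wP_Q \cap w_j P_Q \subseteq X_{n-1}$. Taking the union over $s \in S(w)$ gives the inclusion.

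For the reverse inclusion, let $z \in X_{n-1} \cap wP_Q$ and write $z = [x, w] = [y, w_j]$ with $j < n$ and $x, y \in P_Q$. Since $w_j \neq w$, the representative $x$ must lie on $\partial P_Q$; let $\tau$ be the open face of $P_Q$ containing $x$, and let $T_\tau \subseteq \mathcal{S}$ be the set of generators associated to facets of $P_Q$ whose closure contains $\tau$. Because the facets in $\mathcal{F}_B$ are pairwise disjoint in $P_Q$, $T_\tau$ contains at most one belt generator $t_B$. A direct analysis of the gluings, making use of the commutation relation $s_F t_B = t_B s_{F'}$ from \autoref{Lemma-31}(4), shows that the chambers of $\widetilde{Q}$ containing $z$ are precisely $\{whP_Q : h \in H_\tau\}$, where $H_\tau = W_{T_\tau}$ when $T_\tau$ consists only of reflections, and $H_\tau = W_{T'} \cup W_{T'} t_B$ (with $T' = T_\tau \setminus \{t_B\}$) otherwise. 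Consequently $w_j = wh$ for some $h \in H_\tau \setminus \{1\}$, with $l(wh) = l(w_j) \leq l(w)$ by the chosen ordering.

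The crux is the following claim: whenever $l(wh) \leq l(w)$ for some $h \in H_\tau \setminus \{1\}$, the intersection $S(w) \cap T_\tau$ is nonempty. In the purely reflective case, this is the standard Coxeter exchange fact (a consequence of Tits's \autoref{Tits}): if no $s \in T_\tau$ were a descent of $w$, then $w$ would be the unique minimal-length representative of the coset $wW_{T_\tau}$, forcing $l(wh) = l(w) + l(h) > l(w)$ for every $h \neq 1$. When $t_B \in T_\tau$, the same conclusion should follow by combining this Coxeter argument with the HNN normal form theorem \autoref{TT2}: after writing $w$ in reduced normal form and tracking the product $wrt_B$ syllable by syllable, the absence of any descent in $T_\tau$ precludes all possible reductions, so $l(wrt_B)$ strictly grows. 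Granting the claim, pick $s \in S(w) \cap T_\tau$; then $x \in \overline{\tau} \subseteq \overline{F_s}$, so $x \in F_s$ and $z \in wF_s \subseteq wP_Q^{S(w)}$, completing the proof.

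The principal obstacle I foresee is the HNN portion of the crux claim: the commutation relation $s_F t_B = t_B s_{F'}$ entangles the reflection generators with the belt generator, so verifying that no hidden reduction of $wrt_B$ can occur requires a careful case analysis based on whether the final syllable of the normal form of $w$ belongs to the relevant HNN associated subgroup.
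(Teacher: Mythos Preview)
Your approach mirrors the paper's but with considerably more care. The paper's proof is terse: it asserts (without proof) the dichotomy $l(ws)=l(w)\pm 1$ for every $s\in\mathcal{S}$, observes that a chamber $w_iP_Q$ with $i<n$ meets $wP_Q$ \emph{in a full facet} $wF$ exactly when the generator attached to $F$ lies in $S(w)$, and then writes ``Therefore $X_{n-1}\cap wP_Q=wP_Q^{S(w)}$''. The codimension~$\geq 2$ case---where the chamber $w_jP_Q$ meeting $wP_Q$ at $z$ need not be facet-adjacent to $wP_Q$---is not treated explicitly; your ``crux claim'' is precisely the statement needed to close that gap, and the paper's final ``Therefore'' silently assumes it.

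So you have not overlooked a shortcut: you have made explicit what the paper leaves implicit. Your concern about the HNN portion of the crux claim is legitimate, and the paper offers no separate argument for it either---both proofs ultimately rest on extending the Coxeter coset-minimality fact (unique shortest representative of $wW_{T_\tau}$) to the iterated HNN setting. One structural simplification you already noted and should exploit: since distinct cutting belts are pairwise disjoint in $P_Q$, any face $\tau$ meets at most one facet from $\mathcal{F}_B$, so $H_\tau$ involves at most a single belt letter $t_B$; together with the relation $s_Ft_B=t_Bs_{F'}$ of \autoref{Lemma-31}(4), this confines the normal-form bookkeeping to a single HNN syllable and makes the case analysis you outline manageable.
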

\begin{proof}
Notice that $X_{n-1}$ contains a subgraph of Cayley graph of $G$ associated with the generator set $\mathcal{S}$, where the length between each vertex and the unit element is less than or equal to $l(w)$.
Then
$$l(ws)=\begin{cases}l(w)-1, if~ s\in S(w),\\
l(w)+1, if~s\in \mathcal{S}-S(w).
\end{cases}$$
 A chamber $w_iP_Q \ (i<n)$ in $X_{n-1}$ intersects with $wP_Q$ in the facet $wF$ if and only if either  $w_i\cdot s_F=w$ for $F\in \mathcal{F}(P)-\mathcal{F}_B$ or $w_i\cdot t_F^{-1}=w$ for $F\in \mathcal{F}_B$ where $t_F$ is a torsion-free generator in $\mathcal{S}$; in other words, either $l(ws_F)=l(w)-1$ or $l(wt_F)=l(w)-1$. Therefore, $X_{n-1}\cap wP_Q=wP_Q^{S(w)}$.
\end{proof}

Finally let us finish the proof of  \autoref{thm4}.

\vskip .2cm

\noindent{\em Proof of  \autoref{thm4}.}
 We know from \autoref{lm17}  that $X_{n-1}\cap wP_Q=wP_Q^{S(w)}$. Hence, the excision theorem gives an isomorphism
$$
H_*(X_{n},X_{n-1})\overset{\cong}{\longrightarrow} H_{*}(wP_Q,wP_Q^{S(w)}).
$$
Consider the exact sequence of the pair $(X_n,X_{n-1})$
$$
\cdots\rightarrow H_*(X_{n-1})\overset{j_*}{\longrightarrow}H_*(X_n)\overset{k_*}{\longrightarrow}H_*(X_n,X_{n-1})\rightarrow\cdots
$$
We claim that the map $k_*$ is a split epimorphism, which is equivalent to that the map $k_*^w:H_*(X_n)\rightarrow H_*(P_Q,P_Q^{S(w)})$ is a split epimorphism, where $k_*^w$ denotes the composition of $k_*$ with the excision isomorphism and left translation by $w^{-1}$. Consider the map $\rho_*^w$ on $H_*(P_Q, P_Q^{S(w)})$ whose image is contained in $H_*(wW_{S(w)}P_Q)$.  For every $v\neq 1$ in $W_{S(w)}$, we have $l(wv)<l(w)$; hence, $wW_{S(w)}P_Q\subset X_n$. Hence the image of $\rho_*^w$ is contained in $H_*(X_n)$. All these can be seen from the following commutative diagram:
$$
\xymatrix{\ar @{} [dr] |{}
H_*(X_n,X_{n-1}) \ar[r]^{\cong} & H_*(wP_Q, wP_Q^{S(w)}) \ar[d]^{\times w^{-1}} \\
H_*(X_n) \ar @/^/[r]^{k_*^w}  \ar[u]^{k_*}  & \ar @/^/ [l]^{\rho_*^w}  H_*(P_Q, P_Q^{S(w)})\ar[d]^{ \beta_{*}}\\
H_*(wW_{S(w)}P_Q) \ar[u]^{i_*}&\ar[l]^{\times w} H_*(W_{S(w)}P_Q)
}
$$
where $\beta_{*}$ is induced by multiplication by $\beta_{S(w)}$ when $S(w)$ is a spherical set, and is the composition $$\beta_{T_S}\circ i_*: H_k(P_Q,P_Q^T)\cong{H}_k(P_Q,P_Q^{T_S}) \overset{\beta_{T_S}}{\longrightarrow} H_*(W_{T_S}P_Q)\overset{i_*}{\longrightarrow} H_*(W_{T}P_Q)$$ when $S(w)$ is the union of a $\{t_B\}$ and a spherical set $T_S$.
\vskip.2cm

Since $\widetilde{Q}$ is the universal cover of $Q$,
$H_1(\widetilde{Q})\cong 0$. For $*>1$,
it can be see that $k_*^w\circ \rho_*^w$ is the identity on $H_*(P_Q, P_Q^{S(w)})$ by above diagram. Hence there is the following splitting  short exact sequence:
$$0\rightarrow H_*(X_{n-1})\overset{j_*}{\longrightarrow}H_*(X_n)\overset{k_*^w}{\longrightarrow}H_*(P_Q, P_Q^{S(w)})\rightarrow 0.
$$
This implies that
$$H_*(X_n)\cong H_*(X_{n-1})\oplus H_*(P_Q, P_Q^{S(w)})$$
where $H_*(X_1)= H_*(P_Q)=0$.
Since $\widetilde{Q}$ is the increasing union of the $X_n$, we have
$$H_*(\widetilde{Q})=\lim_{n\rightarrow \infty}H_*(X_n)\cong\bigoplus_{w\in G}H_*(P_Q, P_Q^{S(w)}).$$
This completes the proof. $\hfill\Box$

 %Therefore we have that
\subsection{Proof of \hyperref[DJS-small]{Theorem A}}
Now let us give the proof of \hyperref[DJS-small]{Theorem A}.
%\begin{theoremA}\label{pro5} A simple handlebody of dimension $n \geq 3$ is orbifold-aspherical if and only if it is flag. \end{theoremA} \begin{proof}
Let $Q$ be a simple handlebody, and $q:P_Q\longrightarrow Q$ be the quotient map by gluing all paired facets in $\mathcal{F}_B$.
Then the orbifold universal cover $\pi:\widetilde{Q}\rightarrow Q$ of $Q$ can be constructed by (\ref{uni-cover}).

\vskip .2cm
Let $\mathcal{C}(P_Q)$ be the standard cubical cellular decomposition of $P_Q$. For each cube $c\in \mathcal{C}(P_Q)$, each component of $\pi^{-1}(c)$ is a cube in $\widetilde{Q}$. Then $\mathcal{C}(P_Q)$ determines a cubical  cellular decomposition of $\widetilde{Q}$, denoted by $\mathcal{C}(\widetilde{Q})$, such that the link of each point $v$ in $\mathcal{C}(\widetilde{Q})$ is exactly the nerve $\mathcal{N}(P_Q)$ of $P_Q$.
Hence, if $Q$ is flag, then $P_Q$ is flag, so is $\mathcal{N}(P_Q)$.  By Gromov lemma (\autoref{LMG}), $\widetilde{Q}$ is non-positively curved. In fact, $\widetilde{Q}$ is a CAT(0) space. Then by  Cartan-Hadamard Theorem, $\widetilde{Q}$ is aspherical. Therefore, $Q$ is orbifold-aspherical.

\vskip.2cm

On the contrary,
if $Q$ is orbifold-aspherical, then $\widetilde{Q}$ is contractible.
 Using an idea of Davis in \cite[Subsection 8.2]{D3}, we shall show that if $P_Q$ is not flag then $\widetilde{Q}$ is not contractible.
 Indeed,  if $P_Q$ is not flag, then $\mathcal{N}(P_Q)$ contains an empty $k$-simplex for some $k\geq 2$. The dual of this empty $k$-simplex gives an essential embedding sphere in $\widetilde{Q}$. By \autoref{nonempty}, the fundamental class of such a sphere is nontrivial in $H_k(\widetilde{Q})$, which contradicts that $\widetilde{Q}$ is contractible.
 This completes the proof.  $\hfill \square$

  % See \hyperref[thm4]{Theorem C} in \autoref{App-1} for the calculation of the homology groups of $\widetilde{Q}$.

% then $\widetilde{Q}$ is contractible. Using an idea of Davis in \cite[Subsection 8.2]{D3}, we shall show that if $P_Q$ is not flag then $\widetilde{Q}$ is not contractible.
% Indeed,  if $P_Q$ is not flag, then $\mathcal{N}(P_Q)$ contains an empty $k$-simplex for $k\geq 2$. The dual of this empty $k$-simplex gives an essential embedding sphere in $\widetilde{Q}$. Then the fundamental class of such a sphere is nontrivial in $H_k(\widetilde{Q})$, which contradicts that $\widetilde{Q}$ is contractible.
%   See \hyperref[thm4]{Theorem C} in \autoref{App-1} for the calculation of the homology groups of $\widetilde{Q}$.
%\end{proof}

%%%%%%%%%%%%%%
\section{Proof of \hyperref[FTT]{Theorem B}}\label{section6}
%In Gromov's paper~\cite{G}
 %(see also Davis \cite[Proposition I.6.8]{D2}), let $X$ be a finite cubical complex satisfying that  the link  of each cube in $X$ is flag and contains no Siebenmann's $\square$, and let us give $X$ a suitable piecewise hyperbolic structure.  Then $X$ admits a strict negative curvature in the sense of Alexandrov \cite[Definition 2.1 in Chapter II.1]{BH}.
%However, we cannot use Gromov's result directly, since it may produce a new $\square$  when we cut $Q$ along a cutting belt $B$.
 %Let us look at the cubical decomposition of $M$ constructed in \hyperref[section-61]{Section 6.1}. It is obvious that the nerve of $N^+(B)$ or $N^-(B)$  has an $\square$ as long as $B$ is flag, which is in a link of some vertex in the cubical decomposition of $M$.
%In fact, such an $\square$  cannot make sure that there is a subgroup $\mathbb{Z}\oplus \mathbb{Z}$ in $\pi_1(M)$ in general.

\vskip .2cm

The  purpose of this section is to characterize the rank two free abelian subgroup $\mathbb{Z}\oplus \mathbb{Z}$ in $\pi^{\text{orb}}_1(Q)$ in terms of an  $\square$-belt in $Q$.

\begin{theoremB}\label{TH0}
  Suppose that $Q$ is a simple $n$-handlebody. Then there is a rank two free abelian subgroup $\mathbb{Z}\oplus \mathbb{Z}$ in $\pi^{\text{orb}}_1(Q)$ if and only if $Q$ contains an $\square$-belt.
\end{theoremB}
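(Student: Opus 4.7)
The plan is to prove both implications using the iterative HNN-extension presentation of $\pi^{\text{orb}}_1(Q)$ from \autoref{HNN}, together with Tits' theorem (\autoref{Tits}) on reduced words in right-angled Coxeter groups and the normal form theorem (\autoref{TT2}) for HNN-extensions.

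For the ``if'' direction, fix an $\square$-belt $B_\square$ in $Q$ with cyclically ordered bounding facets $F_1,F_2,F_3,F_4$, which by \autoref{special-belt} are four distinct facets. The strategy is to show that the inclusion $B_\square \hookrightarrow Q$ induces an embedding of the orbifold fundamental group $\pi^{\text{orb}}_1(B_\square) \cong W_\square = D_\infty\times D_\infty$ into $\pi^{\text{orb}}_1(Q)$; the index-four free abelian subgroup $\langle s_{F_1}s_{F_3},\,s_{F_2}s_{F_4}\rangle\cong \mathbb{Z}\oplus\mathbb{Z}$ of $W_\square$ then supplies the desired subgroup. By \autoref{Lemma-34} I may assume, after a preliminary isotopy, that every cutting belt either misses $B_\square$ or crosses it transversely through one pair of disjoint edges. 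When no cutting belt meets $B_\square$, the four reflections $s_{F_1},\dots,s_{F_4}$ generate a parabolic subgroup of $W(P_Q,\mathcal{F}_B)$, which by \autoref{Tits} is exactly $W_\square$ (because in this subcase the square-subcomplex condition (II) after \autoref{special-belt} forces $F_1\cap F_3=F_2\cap F_4=\emptyset$), and the embedding of $W(P_Q,\mathcal{F}_B)$ into $\pi^{\text{orb}}_1(Q)$ from \autoref{TT2}(I) transports this parabolic subgroup to $\pi^{\text{orb}}_1(Q)$. When a cutting belt $B$ does cross $B_\square$ through a pair $\{f_2,f_4\}$, I would adapt the generators by substituting the HNN stable letter $t_B$ for one of the $s_{F_i}$: the Baumslag--Solitar-type relation (4) of \autoref{Lemma-31}, read along the geometry of the crossing, provides the commutation needed, and iterated use of \autoref{TT2}(II) together with \autoref{Tits} in the base group verifies that the resulting two-generator subgroup is free abelian of rank two. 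Iterating across successive cutting belts that meet $B_\square$ finishes this direction.

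For the ``only if'' direction, suppose $\alpha,\beta\in\pi^{\text{orb}}_1(Q)$ commute, both have infinite order, and generate $\mathbb{Z}\oplus\mathbb{Z}$. I would argue by induction on the genus $\mathfrak{g}$, peeling off one HNN-extension at a time. At each stage, put $\alpha$ and $\beta$ in reduced normal form with respect to the outermost stable letter $t_{B_\mathfrak{g}}$. The relation $\alpha\beta=\beta\alpha$, pushed through the uniqueness part of \autoref{TT2}(II), constrains how the $t_{B_\mathfrak{g}}$-syllables of $\alpha$ and $\beta$ can interleave; after repeated conjugation and $t$-reduction, either both elements are conjugated into $G_{\mathfrak{g}-1}$ (and the inductive hypothesis applies), or exactly one of them, say $\alpha$, is conjugate into the amalgamating subgroup $W_{B_\mathfrak{g}}$, while $\beta$ takes the form $t_{B_\mathfrak{g}}^{\pm 1}\gamma$ with $\gamma\in W_{B_\mathfrak{g}}$ commuting with $\alpha$. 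In the latter situation, the commuting pair $(\alpha,\gamma)$ inside $W_{B_\mathfrak{g}}$ is, by \autoref{Tits}, contained in a parabolic $D_\infty\times D_\infty$ of the right-angled Coxeter group associated to $B_\mathfrak{g}$, and thus corresponds to four cyclically intersecting codimension-one faces on $B_\mathfrak{g}$ that assemble, after re-gluing along $B_\mathfrak{g}$, into an $\square$-belt of $Q$ crossing $B_\mathfrak{g}$. The base case $\mathfrak{g}=0$ reduces to the classical statement that a rank-two free abelian subgroup of the right-angled Coxeter group $W_{P_Q}$ is conjugate into a parabolic $D_\infty\times D_\infty$, i.e.\ is detected by a $4$-cycle in the $1$-skeleton of $\mathcal{N}(P_Q)$ with no diagonals, which is precisely an $\square$-belt of $P_Q=Q$.

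The main obstacle I anticipate is the inductive step of the ``only if'' direction: translating algebraic commutation data, after a long sequence of HNN-extensions and conjugations, into a geometric $\square$-belt satisfying both conditions (I) and (II) following \autoref{special-belt}. In particular one must rule out ``phantom'' candidate squares whose four commuting reflections do not actually bound an embedded quadrilateral disk in $Q$, and this is where the geometric information from \autoref{Lemma-34}---that a cutting belt can interact with an $\square$-belt only through a pair of disjoint edges---must be used in reverse to certify that the square assembled from the algebra is geometrically realizable. This interplay between braid moves in the base Coxeter group and $t$-reductions across the stable letters is the source of the ``tedious'' case analysis alluded to in the introduction.
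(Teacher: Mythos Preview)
Your ``if'' direction is essentially the paper's argument, though your description ``substituting the HNN stable letter $t_B$ for one of the $s_{F_i}$'' is imprecise: what one actually does (see \autoref{Example-51} and the sufficiency subsection) is keep $s_{F_1}s_{F_3}$ as one generator (the pair of edges \emph{not} crossed by cutting belts) and replace the second generator by $s_{F_2}t_1\cdots t_k\, s_{F_4}\, t_k^{-1}\cdots t_1^{-1}$, where $t_1,\dots,t_k$ record the successive crossings. The commutation follows directly from the relations in \autoref{Lemma-31}, and the rank-two freeness from \autoref{TT2}.

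Your ``only if'' direction, however, has a genuine gap in the inductive step. The dichotomy you assert---either both $\alpha,\beta$ conjugate into $G_{\mathfrak g-1}$, or $\alpha$ conjugates into $W_{B_{\mathfrak g}}$ while $\beta=t_{B_{\mathfrak g}}^{\pm1}\gamma$ with $\gamma\in W_{B_{\mathfrak g}}$ \emph{commuting with $\alpha$}---is neither exhaustive nor correct as stated. If $\alpha\in W_{B_{\mathfrak g}^-}$ and $\beta=t\gamma$, then $\alpha\beta=\beta\alpha$ gives $\phi_{B_{\mathfrak g}}(\alpha)\gamma=\gamma\alpha$, not $\gamma\alpha=\alpha\gamma$; so $(\alpha,\gamma)$ need not commute at all, and your appeal to a $D_\infty\times D_\infty$ parabolic inside $W_{B_{\mathfrak g}}$ collapses. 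Even if it did commute, an $\square$ in the $(n-1)$-polytope $B_{\mathfrak g}$ involves four $(n-2)$-faces of $Q$, not four facets, so it does not directly yield an $\square$-belt of $Q$.

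The paper avoids induction on genus altogether. Instead it first shows (\autoref{Lemma-52}) that by comparing the $t$-tails of the normal forms of $xy$ and $yx$ and replacing $(x,y)$ by $(xy^{-1},y)$ one can drive the \emph{total} $t$-length of one generator to zero, so that after a conjugation $y=h$ lies in the base Coxeter group $W(P_Q,\mathcal F_B)$ and is cyclically reduced there. A separate lemma (\autoref{Lemma-54}) uses the simplicity of $Q$ (facets are contractible) to rule out the degenerate form $x=g_0t_1\cdots t_k$. With $y=h\in W(P_Q,\mathcal F_B)$ fixed, the commutation $xh=hx$ is then unwound syllable by syllable through the normal form of $x$: the pair $F_1,F_3$ comes from two noncommuting letters of $h$, and $F_2,F_4$ are extracted from the first two nontrivial $g_i$'s in $x$ that lie outside the relevant $W_{B}$'s. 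This is where the ``tedious'' case analysis actually lives, and it is not organised as a peeling-off induction.
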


\begin{rem}
The ``simple'' condition of a handlebody is necessary  in above proposition. In fact, it is easy to see that the orbifold fundamental group of a two-dimensional annulus as a right-angled Coxeter orbifold is isomorphic to $\mathbb{Z}\oplus (\mathbb{Z}_2*\mathbb{Z}_2)$, which contains a rank two free abelian subgroup $\mathbb{Z}\oplus\mathbb{Z}$.  Consider a right-angled Coxeter 3-handlebody $Q$ with an $\pi_1$-injective annulus-suborbifold $B$ such that $B$ is a $\pi_1$-injective suborbifold,  it provides a subgroup $\mathbb{Z}\oplus\mathbb{Z}$ in its orbifold fundamental group. Of course, such $Q$ is not simple.  All of these results are the generalization of   \cite[Lemma 5.22]{BH} which is related to the Flat Torus Theorem in \cite[Chapter II.7]{BH}.
\end{rem}

 \begin{example}[$\square$'s of \autoref{example-31}]  \label{Example-51}
 We show that each $\square$ in (c) and (d) of \autoref{example-31} determines a subgroup  $\mathbb{Z}\oplus \mathbb{Z}$  in $\pi^{\text{orb}}_1(Q)$, whereas the cases of (a) and (b) do not so.

\vskip .2cm
 On (a), the four facets $F_1,F_2,F_3,F_4$ correspond to a suborbifold $B$ which is a quadrilateral in $Q^*$,
 but it is not an $\square$-belt in $Q$. In fact,  $$i_*(\pi_1^{orb}(B))\cong W_\square/\langle (s_1s_3)^2\rangle\cong (\mathbb{Z}_2)^2\oplus (\mathbb{Z}_2*\mathbb{Z}_2)<\pi^{orb}_1(Q)$$ and
 $s_1s_3, s_2s_4$ generate a subgroup $\mathbb{Z}_2\oplus \mathbb{Z}$ in $i_*(\pi_1^{orb}(B))<\pi^{orb}_1(Q)$, where $i_*:\pi_1^{orb}(B)\rightarrow \pi_1^{orb}(Q)$ is induced by the inclusion $i:B\hookrightarrow Q$. Thus, there is no subgroup $\mathbb{Z}\oplus \mathbb{Z}$ in $i_*(\pi_1^{orb}(B))$.

\vskip.2cm

 On (b), $\{F_1,F_2,F_3,F_4\}$  does not determine a quadrilateral sub-orbifold. Without loss of generality, assume that $\{F_1,F_2,F_3,F_4\}$ bounds only one hole of $Q^*$. Then there are at least $5$ generators in $\pi^{orb}_1(Q)$ associated to five facets in $P_Q$, denoted by $\{F_1,F_2,F_3,F_4, F_1'\}$ with $F_1\cap B^+\sim F'_1\cap B^-$, where $B$ is the cutting belt of $Q$ and cut $F_1$ into two facets in $P_Q$. Thus, (b) induces a subgroup of $\pi^{orb}_1(Q)$ as follows:
 \begin{equation*}\begin{split}
 W_{b}:=& \langle s_1,s_2,s_3,s_4,s'_1,t\mid (s_i)^2=1, \forall i; (s_1s_2)^2=(s_2s_3)^2=(s_3s_4)^2=(s_4s'_1)^2=1; s'_1=ts_1t\rangle\\
&=\langle s_1,s_2,s_3,s_4,t\mid (s_i)^2=1, \forall i; (s_1s_2)^2=(s_2s_3)^2=(s_3s_4)^2=(s_4ts_1t^{-1})^2=1\rangle
 \end{split}
 \end{equation*}
which contains no subgroup $\mathbb{Z}\oplus \mathbb{Z}$.

\vskip.2cm
On (c) or (d),  $\{F_1,F_2,F_3,F_4\}$  determines an $\square$-belt $B_\square$ of $Q$.
If $B_\square$ does not intersect with any cutting belt, then $B_\square$ is kept in $P_Q$, so there is a subgroup $\mathbb{Z}\oplus \mathbb{Z}<W(P_Q,\mathcal{F}_B)<\pi_1^{orb}(Q)$.
If there are some cutting belts $B_1,B_2,\cdots, B_k$ intersecting transversely  with only a pair of disjoint edges of $B_\square$, without loss of generality, assume that $B_1,B_2,\cdots, B_k$ intersect with two disjoint edges $f_1$ and $ f_3$ of $B_\square$, where some cutting belts may cut $f_1$ and $f_3$ many times, see (c) in \hyperref[F7]{Figure 7}. Then there is also a subgroup $\mathbb{Z}\oplus \mathbb{Z}$ generated by $s_1s_3$ and $s_2t_1t_2\cdots t_k s_4 t_k^{-1}\cdots t^{-1}_2t^{-1}_1$ where each $t_i$ is one of $\{t^{\pm 1}_B\}$. Also see the following \hyperref[F9]{Figure 9}.
 \begin{figure}[h]\label{F9}
\centering
\def\svgwidth{0.55\textwidth}
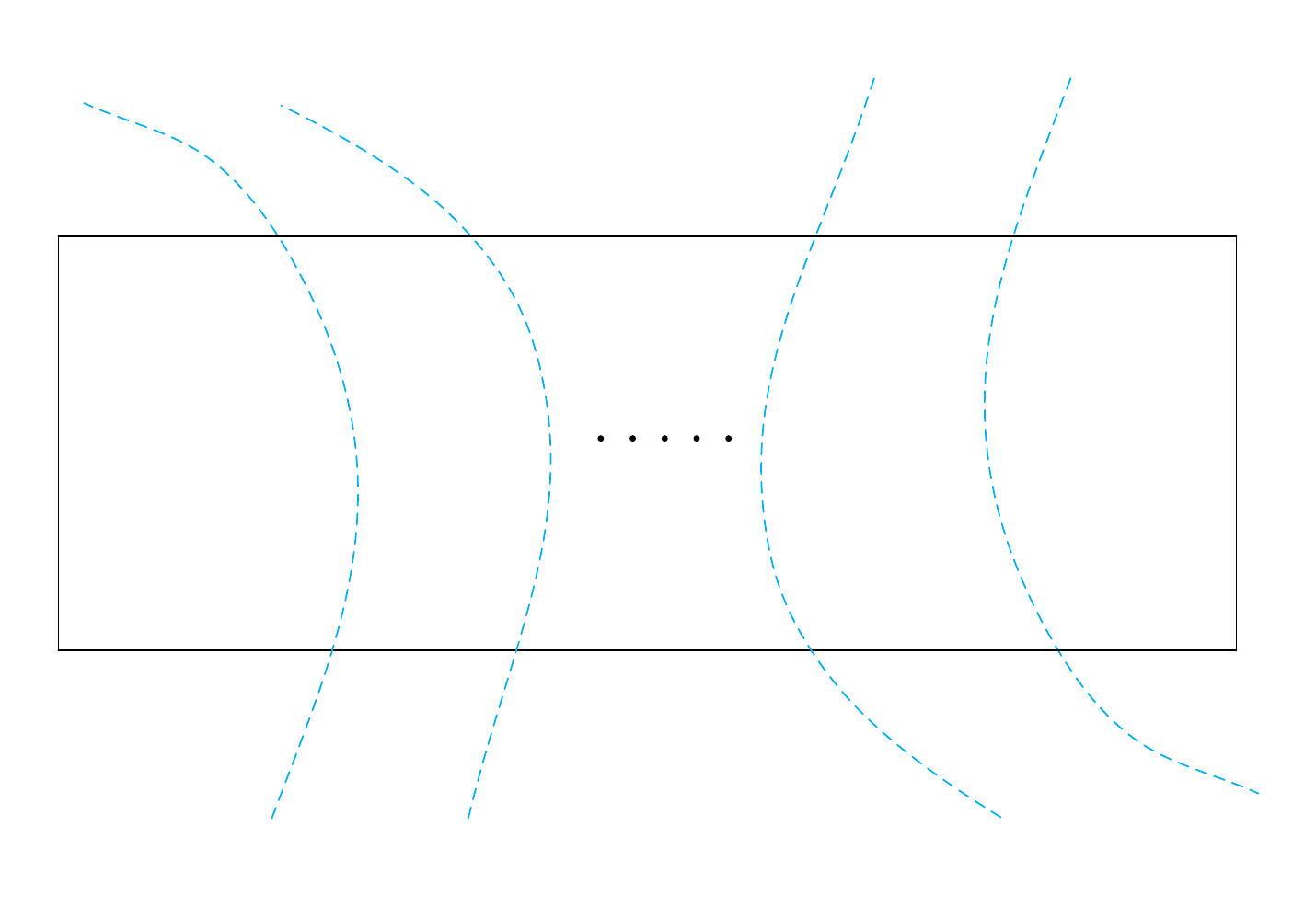
\caption{$\square$-belt and $\mathbb{Z}\oplus \mathbb{Z}$.}
\end{figure}
\end{example}

\subsection{The special case where $Q$ is a simple polytope} \label{spp}
First let us prove \hyperref[TH0]{Theorem B} when $Q$ is a simple polytope. This case can be followed by  Moussong's result~\cite{M} (see also~\cite[Corollary 12.6.3]{D2} in details).
Here we give an alternative proof as follows.
\vskip .2cm

Let $W_P=\langle S\ |R\rangle$ be the right-angled Coxeter group associated with a simple polytope $P$. We are going to show that there is a subgroup $\mathbb{Z}\oplus \mathbb{Z}$ in $W_P$ if and only if $P$ contains an $\square$-belt.

\vskip .2cm

Assume that $w=s_1\cdots s_m$ is a reduced word of length $m$ in $W_P$, and $t$ is a generator in $S$.
\begin{itemize}
\item If the length of $wt$ equals to $m-1$ after a sequence of elementary operations, then we call $t$ {\bf DIE} in $w$, i.e.,  there is a $s_i=t$ such that $(s_{>i}t)^2=1$ ($s_{>i}$ is a $s_j$ with $j>i$);
\item  If $wt=tw$ is reduced, then we call $t$ {\bf SUCCESS} for $w$, meaning that $t$ can commutate with all $s_i$;
\item If $wt$ is reduced and $wt\neq tw$, then we call $t$ {\bf FAIL} for $w$; in other words, there is a $s_i$ in $w$ such that $(s_it)^2\neq1$ and $(s_{>i}t)^2=1$.
\end{itemize}

\noindent{\em Proof of \hyperref[TH0]{Theorem B} for $Q$ to be a simple polytope $P$.}
Let $W_\square$ be the right-angled Coxeter group determined by a quadrilateral. Then two pairs of  disjoint edges of $\square$ provide two elements $s_1s_3$ and $s_2s_4$ which generate a subgroup $\mathbb{Z}\oplus \mathbb{Z}$ in $W_\square<W_P$.
Suppose that  there is subgroup $\mathbb{Z}\oplus \mathbb{Z}$ in $W_P$.
Then we need to find a required $\square$-belt in $P$.
We proceed as follows.

\vskip.2cm

\noindent{\bf  Claim-1} {\em There are two generators $x=s_1\cdots s_m$ and $y=t_1\cdots t_n$ of $\mathbb{Z}\oplus \mathbb{Z}$ such that all $t_i$ commutate with all $s_j$.}
\vskip.2cm

Assume that $x=s_1\cdots s_m$ and $y=t_1\cdots t_n$ are arbitrary two reduced expressions, which generate a  subgroup $\mathbb{Z}\oplus \mathbb{Z}$ in $W_P$. Then $xy=yx$, giving that
$$
s_1\cdots s_m\cdot t_1\cdots t_n=t_1\cdots t_n\cdot s_1\cdots s_m
$$
where, without loss of generality, assume $m\geq n$.
 Tits theorem (\autoref{Tits}) tells us that the word $w=s_1\cdots s_m\cdot t_1\cdots t_n$ can be turned into $t_1\cdots t_n\cdot s_1\cdots s_m$ by a series of elementary operations. We perform an induction with $xt_1$ as a starting point. On $xt_1$, there are the following three cases:
\begin{itemize}
\item[(A)] $t_1$ is DIE at $s_k$ in $x$. Then we may write $x=t_1s'_2\cdots s'_{m-1}t_1$ and $y=t_1t'_2\cdots t'_{n-1}t_1$. So we can take two shorter words $x'=s'_2\cdots s'_{m-1}$ and $y'=t'_2\cdots t'_{n-1}$  as generators of $\mathbb{Z}\oplus \mathbb{Z}$ as well. This returns back to the starting of our argument with two words of shorter word length.
\item[(B)]  $t_1$ is FAIL in $x$. Then we can take $s_1=t_1$, so $x=t_1s_2\cdots s_m$.
\item[(C)]  $t_1$ is SUCCESS. Then $t_1$ commutates with all $s_i$.
\end{itemize}

Consider $xt_2$ in the case (B),
if  $t_2$ is DIE, then $(t_1t_2)^2=1$, and one may write $x=t_2t_1s'_3\cdots s'_{m-1}t_2$ and $y=t_2t_1t'_3\cdots t'_{n-1}t_2$. In a similar way to the case (A), set $t_1s'_3\cdots s'_{m-1}$ and $t_1t'_3\cdots t'_{n-1}$ as new  generators of  $\mathbb{Z}\oplus \mathbb{Z}$.
If  $t_2$ is FAIL, then one may write $x=t_1t_2s_3\cdots s_m$.
If  $t_2$ is SUCCESS, then $t_2$ commutates with all $s_i$ (including $s_1=t_1$).
\vskip.2cm

Consider $xt_2$ in  the case (C), if  $t_2$ is DIE, then one can take $s_1=t_2$, so $x=t_2s_2\cdots s_m$ and $(t_1t_2)^2=1$.  Moreover, exchanging $t_1$ and $t_2$ in $y$ returns to the case (A), so we can take two shorter words as generators  of  $\mathbb{Z}\oplus \mathbb{Z}$.
If $t_2$ is FAIL, then $x=t_2s_2\cdots s_m$.
Otherwise, $t_2$ is SUCCESS, too.

\vskip.2cm
The above procedure can always be carried out by inductive  hypothesis. We can end this procedure after finite steps  of elementary operations until we have obtained a complete analysis for all $t_i$. Actually, each $t_i$ is either FAIL or SUCCESS for the final $x$ and $y$. There are only three possibilities as follows:
\begin{itemize}
\item[$(i)$] All $t_i$ are FAIL. In this case, we may write $x=t_1t_2\cdots t_ns_{n+1} \cdots s_m=ys_{n+1} \cdots s_m$, so that we can take $y^{-1}x$ and $y$ as new generators of $\mathbb{Z}\oplus \mathbb{Z}$. Of course, all $t_i$ can commutate with all $s_j$, as desired.
\item[$(ii)$] Some $t_i$'s are FAIL. In this case, we may write $x=t_{i_1}\cdots t_{i_k}s_{k+1}\cdots s_m$ such that each of those  $t_{j_u}\not=t_{i_1}, ..., t_{i_k}$ commutates with all $s_i$ and $t_{i_1},\cdots, t_{i_k}$. So one may write $x=t_1\cdots t_n\cdot t_{j_1}\cdots t_{j_{n-k}}\cdot s_{k+1}\cdots s_m$. Then $y^{-1}x$ removes those FAIL $t_i$'s in $x$.
      Furthermore,  $y^{-1}x$ and $y$ can be chosen as  new generators of $\mathbb{Z}\oplus \mathbb{Z}$ as desired.
\item[$(iii)$]  All $t_i$ are SUCCESS. In this case, $x$ and $y$ are naturally the required generators of Claim-1.
\end{itemize}

Thus we finish the proof of Claim-1.

\vskip.2cm
Now choose two generators $x$ and $y$ of $\mathbb{Z}\oplus \mathbb{Z}$ which satisfy the property in Claim-1.

\vskip .2cm

\noindent{\bf Claim-2:} {\em There are two letters $s,s'$ in $x$ and two letters $t,t'$ in $y$, which correspond to
four facets of $P$, denoted by $F_s,F_{s'},F_t,F_{t'}$, that form an $\square$ in $P^*$.}

\vskip.2cm

Since $x$ is free, there must exist two letters $s, s'$ in $x$ such that $F_{s}\cap F_{s'}=\emptyset$. Similarly,  there also exist two letters $t, t'$ in $y$ such that $F_{t}\cap F_{t'}=\emptyset$.
If   $\{s,s'\}\cap \{t,t'\}= \emptyset$, since $t, t'$ commute with $s,s'$, then clearly $F_s,F_{s'},F_t,F_{t'}$ determine an $\square$-belt in $P$.  Otherwise,  $\{s,s'\}\cap \{t,t'\}\neq \emptyset$.
Assume that $s=t$, then $(tt')^2=(st')^2=1$. This gives a contradiction since $y$ is a reduced word.

\vskip .2cm
Together with the above arguments, this completes the proof. $\hfill\Box$

\vskip .2cm
Next let us deal with the case of a simple handlebody.  Let $Q$ be a  simple handlebody of genus $\mathfrak{g}$, and $P_Q$ be the associated simple polytope obtained by cutting $Q$ open along  cutting belts $\{B_i | i=1, 2, \cdots, \mathfrak{g}\}$.

\subsection{Proof of the sufficiency  of \hyperref[TH0]{Theorem B}}
Assume that there is an $\square$-belt $B_\square$ given by $\{F_1,F_2,F_3,F_4\}$ in $\mathcal{N}(Q)$. After cutting $Q$ open along  cutting belts $B_i, i=1, 2, \cdots, \mathfrak{g}$,
  by \autoref{Lemma-34}, there are the following two cases.
\begin{itemize}
\item
The $B_\square$ is still kept in  $P_Q$. Then $B_\square$ gives a subgroup $ \mathbb{Z}\oplus\mathbb{Z}$ in $W(P_Q, \mathcal{F}_B)<\cdots<\pi^{orb}_1(Q)$, which is generated by $s_1s_3$ and $s_2s_4$.

\item
 The $B_\square$ is not kept in  $P_Q$. Then there is only one situation in which  some cutting belts $B_i$ intersect transversely with a pair of disjoint edges of $B_\square$, say   $F_1$ and $F_3$.
 If  $B_\square$ intersects transversely with cutting belts $B_1, B_2, \cdots, B_k$ in turn, then  $s_1s_3$ and $s_2t_1\cdots t_ks_4t_k^{-1}\cdots t_1^{-1}$ generate a subgroup  $\mathbb{Z}\oplus\mathbb{Z}$ in $\pi^{orb}_1(Q)$, as the cases of (c) or (b) on \autoref{example-31}.
 See also \autoref{Example-51}.
 $\hfill\Box$
 \end{itemize}

\subsection{Proof of the necessity  of \hyperref[TH0]{Theorem B}}

 Cutting $Q$ open along a cutting belt $B$, we get a    right-angled Coxeter $n$-handlebody of genus $\mathfrak{g}-1$, denoted by $Q_{\mathfrak{g}-1}$. Conversely, $Q$
can be recovered from $Q_{\mathfrak{g}-1}$ by gluing its two disjoint boundary facets associated with $B$, which implies that the orbifold fundamental group of $Q$ is an  HNN-extension on $\pi^{orb}_1(Q_{\mathfrak{g}-1})$.
 Write $G_{\mathfrak{g}-1}=\pi^{orb}_1(Q_{\mathfrak{g}-1})$, and let $W_{B^+}$ and $W_{B^-}$ be two isomorphic subgroups of $G_{\mathfrak{g}-1}$ determined by two copies of $B$. Then we have
\begin{equation}
\pi^{orb}_1(Q)\cong G_{\mathfrak{g}-1}*_{\phi}= \langle G_{\mathfrak{g}-1}, t\mid t^{-1}a t=\phi(a), a\in W_{B^-}\rangle
\end{equation}
 where $\phi:W_{B^-}\rightarrow W_{B^+}$ is an isomorphism by mapping $s'\in W_{B^-}$ into $s\in W_{B^+}$.
Generally, $\pi^{orb}_1(Q)$ is isomorphic to  $\mathfrak{g}$ times HNN-extensions on the right-angled Coxeter group $W(P_Q, \mathcal{F}_B)$ as we have seen in the proof of \autoref{HNN}:
$$\xymatrix@C=0.5cm{
 \pi^{\text{orb}}_1(Q)&& \ar[ll]G_{\mathfrak{g}-1}&& \ar[ll]\cdots&& \ar[ll] G_1 &&\ar[ll]  G_0=W(P_Q,\mathcal{F}_B)\\
  }$$
where  each $G_{k}$ is also an HNN-extension over $G_{k-1}$ for $1\leq k \leq \mathfrak{g}-1$, and $G_0=W(P_Q,\mathcal{F}_B)$ is a right-angled Coxeter group.

 \vskip.2cm

 According to the normal form theorem of HNN-extension (\autoref{TT2}), each element $x$ in $\pi^{orb}_1(Q)$ has a unique iterative normal form. First, write
   $$x=g_0t_\mathfrak{g}^{\epsilon_1}g_1t_\mathfrak{g}^{\epsilon_2}\cdots g_{n-1}t_\mathfrak{g}^{\epsilon_n}g_n$$
   as a normal form for $t_\mathfrak{g}$ where $g_i\in G_{\mathfrak{g}-1}$. Next  inductively each $g_i$ is also a normal form in $G_k$ for $1\leq k \leq \mathfrak{g}-1$. More generally,  $x$ has a unique form
 \begin{equation}\label{Reducedx}
x=g_0t_{1}g_1\cdots g_{m-1}t_{m}g_m
 \end{equation}
 where each $g_i$ is reduced in $G_0=W(P_Q,\mathcal{F}_B)$, and each $t_i$ is one of $\{t^{\pm 1}_B\}$ which determines an isomorphism of $\{\phi^{\pm 1}_B\}$ on some subgroups of $\pi_1^{orb}(Q)$. This expression of  $x$ is a normal form with respect to all possible $t_{B}$.
  The expression in (\ref{Reducedx}) is called a {\em reduced normal form} of $x$ in $\pi_1^{orb}(Q)$.
  The number $m$ is called the {\em (total) $t$-length of $x$}.

 \vskip.2cm

By applying the Tits Theorem (\autoref{Tits})  and the Normal Form Theorem of HNN-extension (\autoref{TT2}), we have the following conclusion.
\begin{lem}\label{Lemma-51}
Two reduced words $x,y$  are the same in $\pi^{orb}_1(Q)$  if and only if one of both $x$ and $y$ can be transformed into the other one by a sequence of commutations of right-angled Coxeter group and $t$-reductions of HNN-extension.
\end{lem}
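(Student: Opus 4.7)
The ``if'' direction is immediate: a Coxeter braid commutation $st \mapsto ts$ (with $(st)^2 = 1$ in $W(P_Q,\mathcal{F}_B)$) and an HNN $t$-reduction $t^{-\epsilon} a t^{\epsilon} \mapsto \phi_B^{\epsilon}(a)$ each preserve the element represented in $\pi_1^{orb}(Q)$. I focus on the converse, which I plan to prove by induction on the genus~$\mathfrak{g}$, exploiting the iterated HNN tower
$$G_0 = W(P_Q,\mathcal{F}_B), \qquad G_k = G_{k-1} *_{\phi_{B_k}}, \qquad G_\mathfrak{g} = \pi_1^{orb}(Q)$$
furnished by \autoref{HNN}. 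The base case $\mathfrak{g}=0$ is precisely Tits' Theorem (\autoref{Tits}).

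For the inductive step, set $t = t_{\mathfrak{g}}$. A reduced normal form $x = g_0 t_1 g_1 \cdots t_m g_m$ in $G_\mathfrak{g}$ can be regrouped by isolating the occurrences of $t^{\pm 1}$ from those of $t_{B_1}^{\pm 1}, \ldots, t_{B_{\mathfrak{g}-1}}^{\pm 1}$, yielding
$$x = X_0 t^{\epsilon_1} X_1 \cdots t^{\epsilon_n} X_n, \qquad y = Y_0 t^{\delta_1} Y_1 \cdots t^{\delta_{n'}} Y_{n'},$$
with $X_j, Y_j \in G_{\mathfrak{g}-1}$ and no $t$-pinch in either expression. Since $x = y$ in $G_\mathfrak{g}$, part~(II) of \autoref{TT2} forces $n = n'$, $\epsilon_i = \delta_i$ for all $i$, and the existence of elements $a_1,\ldots,a_n$, alternately in $W_{B_\mathfrak{g}^+}$ and $W_{B_\mathfrak{g}^-}$ according to the sign pattern, satisfying, in $G_{\mathfrak{g}-1}$,
$$X_0 = Y_0 a_1, \qquad X_i = \phi_{B_\mathfrak{g}}^{\epsilon_i}(a_i)^{-1}\, Y_i\, a_{i+1} \quad (1 \leq i < n), \qquad X_n = \phi_{B_\mathfrak{g}}^{\epsilon_n}(a_n)^{-1} Y_n.$$

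The remaining task is to realize each of these identities and each coset transfer by admissible moves. To slide an $a \in W_{B_\mathfrak{g}^-}$ across a $t^{-1}$, I insert the trivial pair $tt^{-1}$ (the inverse of a $t$-reduction) and then collapse the created pinch $t^{-1}at \mapsto \phi_{B_\mathfrak{g}}(a)$ by a genuine $t$-reduction; the three symmetric cases are analogous. Each identity $X_i = \phi_{B_\mathfrak{g}}^{\epsilon_i}(a_i)^{-1} Y_i a_{i+1}$ lives in $G_{\mathfrak{g}-1}$, so by the inductive hypothesis it can be exhibited by commutations and $t$-reductions that only involve $t_{B_1},\ldots,t_{B_{\mathfrak{g}-1}}$ and the Coxeter generators. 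Splicing all these transformations converts $x$ into $y$. The main obstacle I anticipate is the bookkeeping around the insertion step: the intermediate word is temporarily non-reduced, and one must verify that the Tits commutations used to bring the relevant letters of $X_i$ into adjacency with the appropriate $t^{\pm}$ never create collateral pinches at other $t_{B_j}$ that cannot be dissolved legitimately. Maintaining the invariant that the entire transformation stays within the allowed moves, rather than tacitly invoking the group law, is the technical heart, and it is exactly where the uniqueness clause of \autoref{TT2}~(II) together with Tits' rigid description of reduced expressions in $G_0$ supplies the needed control.
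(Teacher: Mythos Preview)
Your approach is sound and considerably more detailed than what the paper supplies. The paper does not give a proof of this lemma at all: it merely states that the conclusion follows ``by applying the Tits Theorem and the Normal Form Theorem of HNN-extension,'' with no further argument. Your inductive scheme on the genus---reducing to Tits' theorem at $\mathfrak{g}=0$ and peeling off one stable letter at a time using the uniqueness clause of \autoref{TT2}---is the natural way to make that citation into an honest proof, and the transfer relations $X_i = \phi_{B_\mathfrak{g}}^{\epsilon_i}(a_i)^{-1} Y_i a_{i+1}$ you extract are exactly the right intermediate data.

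One point worth tightening: your ``slide'' step uses \emph{inverse} $t$-reductions (inserting $tt^{-1}$ or $t^{-1}t$), whereas the lemma as phrased mentions only commutations and $t$-reductions. For reduced words this is unavoidable---two distinct $t$-reduced expressions for the same element, such as $t^{-1}s$ and $\phi(s)t^{-1}$, admit no forward $t$-reduction at all---so the moves must be read as a symmetric equivalence relation. That is consistent with how the lemma is actually used downstream in the paper, but you should say so explicitly. Your own caveat about intermediate non-reduced words is well placed: after an insertion you briefly need length-reducing Coxeter moves (type~(i)) to cancel $aa^{-1}$, not just braid commutations. This is harmless because each $g_i$ can be re-reduced in $W(P_Q,\mathcal{F}_B)$ independently, but again it deserves a sentence since the lemma's wording singles out type~(ii) moves.
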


Next, we prove two lemmas.
\begin{lem}\label{Lemma-52}
If there is a subgroup $\mathbb{Z}\oplus \mathbb{Z}$ in $\pi_1^{orb}(Q)$, then one  generator of  $\mathbb{Z}\oplus \mathbb{Z}$ can be presented as a cyclically reduced word in $W(P_Q,\mathcal{F}_B)$.
\end{lem}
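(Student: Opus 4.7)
The plan is to induct on the genus $\mathfrak{g}$ of $Q$, exploiting the iterative HNN structure
$$\pi_1^{\text{orb}}(Q)\cong(\cdots((W(P_Q,\mathcal{F}_B)*_{\phi_{B_1}})*_{\phi_{B_2}})\cdots)*_{\phi_{B_\mathfrak{g}}}$$
given by \autoref{HNN}. When $\mathfrak{g}=0$, $\pi_1^{\text{orb}}(Q)=W(P_Q)$ is itself a right-angled Coxeter group; \hyperref[spp]{Subsection 6.1} already produces generators $x,y$ of the given $\mathbb{Z}\oplus\mathbb{Z}$ represented by reduced words in $W(P_Q)$, and a cyclic conjugation by a suitable element of $W(P_Q)$ then makes $x$ cyclically reduced, replacing $\mathbb{Z}\oplus\mathbb{Z}$ only by a conjugate subgroup.

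For the inductive step I would write $\pi_1^{\text{orb}}(Q)=G_\mathfrak{g}=G_{\mathfrak{g}-1}*_{\phi_{B_\mathfrak{g}}}$ and work with the reduced normal forms (\ref{Reducedx}) of two commuting generators $x,y$. First perform a cyclic $t$-reduction: after replacing $x,y$ by cyclic conjugates in $G_\mathfrak{g}$ (equivalently, replacing $\mathbb{Z}\oplus\mathbb{Z}$ by a conjugate) one may assume both $x$ and $y$ are cyclically reduced over the HNN presentation, with cyclic $t$-lengths $m,n\geq 0$. If $m=n=0$ then $x,y\in G_{\mathfrak{g}-1}$ and the induction hypothesis finishes the job. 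If $m,n>0$, a signed $t$-translation $\tau(\cdot)\in\mathbb{Z}$ can be read off from a cyclically reduced normal form (using \autoref{Lemma-51}), and it is additive on commuting elements sharing a common $t$-axis. B\'ezout's identity then produces a unimodular change of basis $\{x,y\}\rightsquigarrow\{x',y'\}$ with $\tau(x')=0$, and one more cyclic reduction gives $x'$ of $t$-length $0$; this reduces to the mixed case below.

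The mixed case $m=0<n$ is the crux. Here $x\in G_{\mathfrak{g}-1}$, while $y$ has positive cyclic $t$-length. Iterating $y^{k}xy^{-k}=x$ for every $k\in\mathbb Z$ and repeatedly invoking the uniqueness of the HNN normal form (\autoref{TT2}) pushes $x$ into each of the associated subgroups arising from the successive $t$-blocks of $y$, and these intersect in a conjugate of $W_{B_\mathfrak{g}^+}$. After one further conjugation of $\mathbb{Z}\oplus\mathbb{Z}$ by an element of $G_{\mathfrak{g}-1}$, we may assume $x\in W_{B_\mathfrak{g}^+}\subset W(P_Q,\mathcal{F}_B)$. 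If $x$ is not yet cyclically reduced as a word in $W(P_Q,\mathcal{F}_B)$, a conjugation by a suitable element of $W(P_Q,\mathcal{F}_B)$, relying on \autoref{Tits}, makes it so while preserving the fact that $x$ and the conjugated $y$ still generate a $\mathbb{Z}\oplus\mathbb{Z}$.

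The principal obstacle will be rigorously showing that an element $x\in G_{\mathfrak{g}-1}$ which commutes with a cyclically reduced $t$-hyperbolic $y$ must lie in a conjugate of the associated subgroup $W_{B_\mathfrak{g}^+}$. This amounts to a careful HNN analysis: one tracks how each $y^{k}xy^{-k}$ is brought into reduced normal form and uses the uniqueness in \autoref{TT2}, together with the fact that the $g_i$-blocks of $y$ are proper coset representatives outside $W_{B_\mathfrak{g}^\pm}$, to conclude that $x$ is forced into the associated subgroup. Once this step is in place, the induction on $\mathfrak{g}$ and the final Tits cyclic reduction inside $W(P_Q,\mathcal{F}_B)$ complete the proof.
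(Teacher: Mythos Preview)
Your approach is sound but takes a genuinely different route from the paper's. The paper does \emph{not} induct on the genus or invoke Bass--Serre theory. Instead it works directly with the total reduced normal forms (\ref{Reducedx}) of two commuting generators $x,y$ (with total $t$-lengths $m\ge n$) and argues that, since the unique normal forms of $xy$ and $yx$ must coincide, the last $n$ ``$t$-blocks'' of $x$ agree with those of $y$; hence $x=g_0t_1\cdots t_{m-n}g_{m-n}\,h_0^{-1}y$, and the pair $(xy^{-1},y)$ still generates $\mathbb{Z}\oplus\mathbb{Z}$ with strictly smaller total $t$-length. Iterating this Euclidean-style step forces one generator into $W(P_Q,\mathcal{F}_B)$, after which a single conjugation makes it cyclically reduced. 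Your argument instead peels off one HNN layer at a time and (implicitly) uses the tree action: the elliptic/hyperbolic dichotomy, a common axis for commuting hyperbolics, and the fact that an elliptic element commuting with a hyperbolic one must fix the axis and hence lie in an edge stabilizer, i.e.\ a conjugate of $W_{B_\mathfrak{g}^+}\subset W(P_Q,\mathcal{F}_B)$. The paper's route is more elementary and self-contained (only \autoref{Tits} and \autoref{TT2} are used); yours is more conceptual and would transport to other graph-of-groups settings.

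Two small points to tighten. First, one cannot in general conjugate so that \emph{both} $x$ and $y$ become cyclically $t$-reduced simultaneously; what you actually need is the conjugation-invariant datum ``elliptic vs.\ hyperbolic'', together with the standard fact that two commuting elliptic isometries of a tree share a fixed vertex (so both land in $G_{\mathfrak{g}-1}$ after one conjugation). Second, in the mixed case the edge stabilizers along the axis of $y$ need not \emph{intersect} in a single conjugate of $W_{B_\mathfrak{g}^+}$; the correct statement is that $x$ lies in each of them, and any one (e.g.\ the edge at the base vertex) already places $x$, after one conjugation, inside $W(P_Q,\mathcal{F}_B)$.
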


\begin{proof}
Assume that there is a subgroup $\mathbb{Z}\oplus \mathbb{Z}$ in $\pi^{orb}_1(Q)$, which is generated by two reduced normal forms as in (\ref{Reducedx}):
  $$x=g_0t_{1}g_1\cdots g_{m-1}t_{m}g_m$$
  and
   $$y=h_0t'_{1}h_1\cdots h_{n-1}t'_{n}h_n.$$
   Then $xy=yx$ in $\pi^{orb}_1(Q)$.   By \autoref{Lemma-51}, $xy$ and $yx$ have the same reduced normal form as in (\ref{Reducedx}).

\vskip .2cm

We do $t$-reductions on
 $$xy=g_0t_{1}g_1\cdots g_{m-1}t_{m}g_m\cdot h_0t'_{1}h_1\cdots h_{n-1}t'_{n}h_n$$
 and
 $$yx=h_0t'_{1}h_1\cdots h_{n-1}t'_{n}h_n\cdot g_0t_{1}g_1\cdots g_{m-1}t_{m}g_m.$$

Since $x,y$ are reduced normal forms, $xy$ and $yx$ have the same tails.
Without  loss of generality,  assume that $m\geq n$. Write $\widetilde{y}=t'_{1}h_1\cdots h_{n-1}t'_{n}h_n=h_0^{-1}y$. Then
$x$ can be written as
$$
x=g_0t_{1}g_1\cdots t_{m-n}g_{m-n}\widetilde{y}=g_0t_{1}g_1\cdots t_{m-n}g_{m-n}\cdot h_0^{-1}y.
$$
Since $x$ and $y$ generate $\mathbb{Z}\oplus \mathbb{Z}$,  both $y$ and $xy^{-1}$ do so. The word $xy^{-1}$ has a shorter $t$-length. We further do $t$-reductions on $xy^{-1}$ to get a normal form, also denoted by $x$.
\vskip.2cm
We can always continue to do this algorithm,  so that we can take either $x$ or $y$ from $W(P_Q,\mathcal{F}_B)$. Suppose $y=h\in W(P_Q,\mathcal{F}_B)$.

\vskip.2cm

Furthermore,  we can assume that $h$ is a cyclically reduced word in $W(P_Q,\mathcal{F}_B)$. In fact, if $h$ is not  cyclically reduced, without  loss of generality, assume that $h$ is of the form $w^{-1}h'w$, where $w$ is an arbitrary word and $h'$ is a cyclically reduced word in $W(P_Q,\mathcal{F}_B)$. Then we replace $h$ by $h'$, such that  $h'$ and $wxw^{-1}$ generate a $\mathbb{Z}\oplus \mathbb{Z}$ in $\pi_1^{orb}(Q)$. This completes the proof.
\end{proof}

\begin{lem}\label{Lemma-54}
 Let $x=g_0\cdot t_1\cdots t_k$ be a reduced normal form, where $g_0\in W(P_Q,\mathcal{F}_B)$ and each $t_i$ is  one of $\{t^{\pm 1}_B\}$, and $h$ be a cyclically reduced word in $W(P_Q,\mathcal{F}_B)$.
Then $x,h$ cannot generate a $\mathbb{Z}\oplus \mathbb{Z}$ in $\pi_1^{orb}(Q)$.
\end{lem}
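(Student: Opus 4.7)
The plan is to argue by contradiction. Assume $x$ and $h$ generate $\mathbb{Z}\oplus\mathbb{Z}\le\pi_1^{\text{orb}}(Q)$, so $h\ne 1$ has infinite order and $xh=hx$; since $h\in W(P_Q,\mathcal{F}_B)$ has trivial $t$-length while $x$ has $t$-length $k\ge 1$, $h$ cannot be a nontrivial power of $x$. Rewriting the commutation relation gives
\[
t_1t_2\cdots t_k\cdot h \;=\; \bigl(g_0^{-1}hg_0\bigr)\cdot t_1t_2\cdots t_k.
\]
The right-hand side is already a reduced normal form in the iterated HNN-extension presentation of \autoref{HNN}, with leading base-group factor $g_0^{-1}hg_0\in W(P_Q,\mathcal{F}_B)$ and no intermediate base-group factors. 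To equate it with the left-hand side I would push $h$ leftward through each $t_i$ in turn via the HNN relation $t_{B_i}^{-1}a\,t_{B_i}=\phi_{B_i}(a)$ for $a\in W_{B_i^-}$, applying \autoref{TT2} at each stage.

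Each such push through $t_i^{\epsilon_i}$ requires the current base-group factor to lie in the associated subgroup $W_{B_i^{\epsilon_i}}$ and replaces it by its image under $\phi_{B_i}^{-\epsilon_i}$. Setting $h^{(0)}:=h$ and inductively $h^{(j)}:=\phi_{B_{k-j+1}}^{-\epsilon_{k-j+1}}(h^{(j-1)})$ for $j=1,\dots,k$, uniqueness of the normal form then imposes the cascade of conditions
\[
h^{(j-1)}\in W_{B_{k-j+1}^{\epsilon_{k-j+1}}}\quad (j=1,\dots,k),\qquad h^{(k)}=g_0^{-1}hg_0.
\]
Since $h$ is cyclically reduced in the right-angled Coxeter group $W(P_Q,\mathcal{F}_B)$, its belonging to a parabolic subgroup $W_T$ forces the support of its reduced form into $T$ (a standard consequence of \autoref{Tits}). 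Hence every $h^{(j)}$ is cyclically reduced with support inside the generator set attached to a specific cutting belt, and the closing identity $g_0^{-1}hg_0=h^{(k)}$ becomes a conjugacy equation in $W(P_Q,\mathcal{F}_B)$ between two cyclically reduced words of this restricted form.

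The main obstacle is to convert this chain of constraints into a definite contradiction. I would apply \autoref{Tits} to the conjugacy $g_0^{-1}hg_0=h^{(k)}$: in a right-angled Coxeter group, two cyclically reduced words that are conjugate must be related by a sequence of commutation moves, which forces strong compatibility between their supports and the conjugator. Combined with the explicit description of $h^{(k)}$ as the image of $h$ under a composition of the $\phi_{B_i}$'s (each identifying facets on the two copies of a cutting belt) and with the intersection formula $W_T\cap W_{T'}=W_{T\cap T'}$ for parabolic subgroups of right-angled Coxeter groups, this should force any cyclically reduced $h$ satisfying the cascade either to be trivial or to lie inside a spherical (hence finite) parabolic subgroup --- each case contradicting the hypothesis $\langle x,h\rangle\cong\mathbb{Z}\oplus\mathbb{Z}$. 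The detailed combinatorial bookkeeping of supports through the $\phi_{B_i}$'s and conjugation by $g_0$, under the simple-handlebody hypothesis, is expected to be the bulk of the proof.
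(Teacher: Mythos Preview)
Your setup is correct and matches the paper: from $xh=hx$ you push $h$ leftward through the $t_i$'s, each step forcing the current word into the appropriate vertex subgroup and applying the corresponding $\phi_i^{\pm 1}$, ending with $h^{(k)}=g_0^{-1}hg_0$ where $h^{(k)}=\phi_1\circ\cdots\circ\phi_k(h)$. The paper then uses the FAIL/DIE/SUCCESS trichotomy together with the cyclically-reduced hypothesis on $h$ (and the fact that each $\phi_i$ sends generators to generators, hence preserves word length) to conclude that every letter of $g_0$ commutes with $h$, so in fact $h^{(k)}=h$, i.e.\ $\Phi(h)=h$ for $\Phi:=\phi_1\circ\cdots\circ\phi_k$.

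The gap is in your proposed endgame. You aim to force $h$ into a spherical parabolic via intersections $W_T\cap W_{T'}=W_{T\cap T'}$ and Tits-type support bookkeeping. That is not the mechanism, and there is no purely algebraic reason it should succeed: $\Phi$ is a generator-to-generator bijection between (possibly large) parabolic subgroups, and $\Phi(h)=h$ by itself places no finiteness constraint on the support of $h$. The actual contradiction is \emph{geometric} and this is where the simple-handlebody hypothesis enters essentially, not as an afterthought. If $\Phi=\mathrm{id}$ then $t_1\cdots t_k=1$, contradicting that $x$ is reduced. If $\Phi\neq\mathrm{id}$, take any letter $s$ occurring in $h$ and follow the chain $s,\ \phi_k(s),\ \phi_{k-1}\phi_k(s),\ldots$; these are generators corresponding to facets of $P_Q$ which, after regluing the cutting belts, assemble into a \emph{single} facet of $Q$ that wraps nontrivially around a handle (either because $\Phi$ fixes $s$, or because the finite support of $h$ forces the orbit to close up). Such a facet is not contractible, hence not a simple polytope, contradicting the definition of a simple orbifold. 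Your outline never arrives at ``non-contractible facet'', and the target ``$h$ lies in a spherical parabolic'' is not what one obtains.
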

\begin{proof}
If $x,h$  generate a $\mathbb{Z}\oplus \mathbb{Z}$ in $\pi_1^{orb}(Q)$, then
$$x\cdot h=g_0\cdot t_1\cdots t_k \cdot h=g_0h' \cdot t_1\cdots t_k$$
  where $h'=\phi_1\circ\cdots \circ\phi_k(h)$ is the image of the composition of some $\phi_i$ on $h$.

\vskip .2cm
We first claim that $h'$ is reduced in $W(P_Q,\mathcal{F}_B)$, and  the word length of $h'$ and $h$ are equal.
In fact, for each $i$, $\phi_i$ is an isomorphism from some $W_{B^-}$ to $W_{B^+}$ which maps generators  to generators, and all $W_{B^+}$ and $W_{B^-}$ are subgroups of $W(P_Q,\mathcal{F}_B)$.

\vskip.2cm

Next, we claim that $h=h'$. In fact,
$xh=hx$ implies that $g_0h'=hg_0$, that is $h'=g_0^{-1}hg_0$.
Let $s$ be a letter in  $g_0$.
If $s$ is FAIL in $h$, then the length of $h'$ is greater than the length of $h$, which is a contradiction. If   $s$ is DIE  in $h$, then $h$ has a form $s\overline{h}s$, which contradicts that $h$ is cyclically reduced. Thus, all letters in $g_0$ are SUCCESS in $h$. In other words, $g_0h=hg_0=g_0h'$, Thus $h=h'=\phi_1\circ\cdots \circ \phi_k(h)$.

\vskip.2cm

If $\phi_1\circ\cdots \circ \phi_k= id$, then the associated sequence $t_1\cdots t_k=1$, which contradicts that $x$ is reduced.
If $\phi_1\circ\cdots \circ \phi_k\neq id$ has a fixed point in all letters in $h$, then there is a letter $s_0$ in $h$ such that $\phi_1\circ\cdots \circ \phi_k(s_0)=s_0$. Furthermore,   $s_0, \phi_k(s_0),\cdots, \phi_1\circ\cdots \circ \phi_{k-1}(s_0)$ determine a non-contractible facet in $Q$, which contradicts that $Q$ is simple.
More generally, if $\phi_1\circ\cdots \circ \phi_k\neq id$ has no any fixed point, then there is a generator $s_1$ as a letter in $h$, such that $s_2=\phi_1\circ\cdots \circ \phi_k(s_1)\neq s_1$. Continue this procedure,
one can get a sequence $s_1,s_2, s_3, ...$, such that each $s_i$ is a generator as a letter in $h$ and $s_i=\phi_1\circ\cdots \circ \phi_k(s_{i-1})$. However, the word length of $h$ is finite, thus there must be two same elements in the sequence. Geometrically, this means that there is a non-contractible facet in $Q$, which contradicts that $Q$ is simple.
This completes the proof.
\end{proof}

Now let us give the proof of the necessity  of \hyperref[TH0]{Theorem B} in the general case.

\vskip .2cm
\noindent {\em Proof of the necessity  of \hyperref[TH0]{Theorem B}.}  Suppose that there are two elements $x$ and $y$ in $\pi^{\text{orb}}_1(Q)$ which generate a rank two free abelian subgroup $\mathbb{Z}\oplus \mathbb{Z}$. Our arguments are divided into  the following steps.
 \vskip.2cm
 \noindent{\bf Step-1.} {\em Simplify two generators $x,y$ of $\mathbb{Z}\oplus \mathbb{Z}$ by doing $t$-reductions.}
\vskip.2cm
 \autoref{Lemma-52} tells us that one of $x,y$  can be chosen as a  cyclically reduced word $h$ in $W(P_Q,\mathcal{F}_B)$,  say $y=h$.
 Now if $x$ is also a word in $W(P_Q,\mathcal{F}_B)$ (i.e., the $t$-length of $x$ is zero), then by \hyperref[spp]{Subsection 5.1}, there is an $\square$-belt in $P_Q$ which can appear in $Q$, as desired.
\vskip.2cm
Next let us consider the case in which the  $t$-length of $x$ is greater than zero.
Let  $$x=g_0t_{1}g_1\cdots g_{m-1}t_{m}g_m$$
  be a reduced normal form in $\pi_1^{orb}(Q)$.
Then  $xh=hx$ implies that
  \begin{itemize}
\item $g_mh=hg_m; ~~t_m\cdot h=\phi_m(h)\cdot t_m$;
\item $g_{m-1}\cdot\phi_m(h)=\phi_m(h)\cdot g_{m-1}; ~~t_{m-1}\cdot \phi_m(h)=\phi_{m-1}\circ\phi_m(h)\cdot t_{m-1}$;\\
$\cdots$
\item $g_0\cdot\phi_1\circ\cdots\circ\phi_m(h)=\phi_1\circ\cdots\circ\phi_m(h)\cdot g_0$
  \end{itemize}
  where each $\phi_i: W_{B_i}\longrightarrow W_{B_i'}$ is an isomorphism determined by  some $B_i\in \mathcal{F}_B$,  each $\phi_i\circ\cdots\circ\phi_m(h)$ is an expression  in $W_{B'_i}\cap W_{B_{i-1}}$ for $i=2,\cdots,m$   and $\phi_1\circ\cdots\circ\phi_m(h)\in W_{B_1'}$,  $h\in W_{B_m}$. Here two $B_i$ and $B_j$ may correspond to the same $B\in \mathcal{F}_B$.

\vskip .2cm
\noindent{\bf Step-2. }{\em Find facets $F_1,F_3$ around $B$ or $B'$ in $P_Q$.}
\vskip .2cm
Without loss of generality,  $h\in W(P_Q,\mathcal{F}_B)$ is a cyclically reduced word.
Since $h$ is a free element in $W_{B_m}\cap W(P_Q,\mathcal{F}_B)$,  we can take two generators $s_1$ and $s_3$ in $h$ corresponding  to two disjoint facets $F_1$ and $F_3$ of $P_Q$ such that
$F_1$ and $F_3$ intersect with $B_m$. In particular,  $s_1s_3$ is a free element in $W_{B_m}<W(P_Q,\mathcal{F}_B)=G_0<\cdots<G_{\mathfrak{g}-1}<G_\mathfrak{g}=\pi^{orb}_1(Q)$.

\vskip .2cm

\noindent{\bf Step-3.~}{\em Find the facet $F_2$ which intersects with $F_1,F_3$.}
\vskip .2cm

If $g_m\neq 1$, since $x$ is a normal form, then $g_m$ is
a representative of a coset of $W_{B_m}$ in $\pi_1^{orb}(Q)$.
 Thus there is a generator  $s_2\notin S(W_{B_m})$ in $g_m$ such that $hs_2=s_2h$, where $S(W_{B_m})$ is the generator set of $W_{B_m}$. This generator $s_2$ determines  a facet  $F_2$ in $P_Q$, as desired.
\vskip.2cm

If $g_m=1$, then
$
x\cdot h=g_0t_{1}g_1\cdots t_{m} \cdot h=g_0t_{1}g_1\cdots t_{m-1}g_{m-1}\cdot\phi_m(h)\cdot t_{m}.
$
A similar argument shows that either there is a $s_2\notin S(W_{B_{m-1}})$ as desired, or
  $$x\cdot h=g_0t_{1}g_1\cdots t_{m-1}\cdot\phi_m(h)\cdot t_{m}=g_0t_{1}g_1\cdots t_{m-2}g_{m-2}\cdot\phi_{m-1}\circ\phi_m(h)\cdot t_{m-1} t_{m}.$$

  We can continuously carry out  the above procedure. Finally we can arrive at two possible cases:
  \begin{itemize}
\item There  exist  some $g_i\neq 1$ for $i>0$. Then  there must be a letter $s_2$ in $g_i$ which determines the required $F_2$;
\item  $x$ is of the form  $x=g_0\cdot t_1\cdots t_m$, where $g_0\in W(P_Q,\mathcal{F}_B)$ and $t_1\cdots t_m$ is a word formed by letters in $\{t^{\pm 1}_{B}\}$.
By \autoref{Lemma-54},  $x$ and $h$ cannot generate a subgroup $\mathbb{Z}\oplus \mathbb{Z}$ in $\pi_1^{orb}(Q)$. So $x=g_0\cdot t_1\cdots t_m$ is impossible.
\end{itemize}

\vskip.2cm

 Thus, we can always find a facet $F_2$ from a nontrivial $g_i$ in the reduced form (\ref{Reducedx}) of $x$ where $i>0$.

\vskip .2cm

\noindent{\bf Step-4. }{\em Find a facet $F_4$ such that  $F_1, F_2, F_3, F_4$ determine an $\square$-belt in $Q$.}

\vskip .2cm

We proceed our argument as follows.

\vskip.2cm

 (I). If there is only a $g_i\neq 1$ (i.e., $g_j=1$ for any $j\not=i$) in the expression of $x$, then $x=t_{1}\cdots t_{i}\cdot g_i \cdot t_{i+1}\cdots t_m$, where $i$ must be more than zero by \autoref{Lemma-54}.  Now   $xh=hx$ implies that $t_{1}\cdots t_{i}\cdot  t_{i+1}\cdots t_m=1$. Actually, if $t_{1}\cdots t_{i}\cdot  t_{i+1}\cdots t_m\neq 1$, then $\phi_1\circ \cdots\circ \phi_m(h)=h$ implies that there is a non-contractible facet in $Q$, which is impossible (also see the proof of \autoref{Lemma-54}).
  Thus, $t_{1}\cdots t_{i}=(t_{i+1}\cdots t_m)^{-1}$,
  so $x=t_{1}\cdots t_{i}\cdot  g_i\cdot t_{i+1}\cdots t_m=(t_{i+1}\cdots t_m)^{-1}g_i (t_{i+1}\cdots t_m)$.
Since $x,h$ generate a $\mathbb{Z}\oplus \mathbb{Z}$, we see that $g_i, \phi_{i+1}\circ\cdots\circ \phi_m(h)$ generate a $\mathbb{Z}\oplus \mathbb{Z}$ in $W(P_Q,\mathcal{F}_B)$. Then by  \hyperref[spp]{Subsection 5.1}, there is an $\square$-belt in $Q$.

  \vskip.2cm
(II). If there are at least two nontrivial $g_i, g_{j}\neq 1$ in $x$ where $0<j< i\leq m$ but $g_k=1$ for all $k>j$ and $k\neq i$, then one may write $x=\cdots t_j\cdot  g_j\cdot t_{j+1}\cdots t_{i}\cdot g_i \cdot  t_{i+1}\cdots t_m$. So we have
\begin{equation}
\begin{split}
xh&=\cdots  t_j\cdot  g_j\cdot t_{j+1}\cdots t_{i}\cdot g_i \cdot  t_{i+1}\cdots t_m \cdot h\\
&=\cdots  t_j\cdot  g_j\cdot t_{j+1}\cdots t_{i}\cdot g_i \cdot h' t_{i+1}\cdots t_m\\
&=\cdots  t_j\cdot  g_j\cdot t_{j+1}\cdots t_{i}\cdot h' g_i \cdot  t_{i+1}\cdots t_m\\
&=\cdots t_j\cdot  g_j\cdot h'' \cdot t_{j+1}\cdots t_{i}\cdot g_i \cdot  t_{i+1}\cdots t_m.
\end{split}
\end{equation}
where $h'=\phi_{i+1}\circ\cdots\circ\phi_m(h)$ and $h''=\phi_{j+1}\circ\cdots\circ\phi_m(h)$. Since $xh=hx$, we have that
$g_jh''=h''g_j$, so   we can take a generator  $s_4$ in $g_{j}$ (not in $S(W_{B_{j}})$) such that $h''s_4=s_4h''$. Similarly, here $s_4$  determines a facet $F_4$ of $P_Q$ such that $F_4\cap F_1''\neq \emptyset$ and $F_4\cap F_3''\neq \emptyset$ where $F_1''$ and $F_3''$ are two facets of $P_Q$ determined by the images of  $\phi_{j+1}\circ\cdots\circ \phi_m$ on $s_1,s_3$.
In particular, $F_2\neq F_4$ in $P_Q$. Otherwise, the intersection of $q(F_1)$ and $q(F_2)$ in $Q$ is disconnected where $q:Q\rightarrow P_Q$ is defined in (\ref{eq-31}) , which contradicts that $Q$ is simple.
Hence, we get an $\square$-belt in $Q$.
 \vskip.2cm

(III).  If there are only $g_0$ and $g_i$ that are non-trivial in $x$ where $i>0$, then one may write $x=g_0 t_1\cdots t_{i} \cdot g_i \cdot t_{i+1} \cdots t_m$. Without  loss of generality, assume that $g_0,g_i$ are two reduced words in $W(P_Q,\mathcal{F}_B)$.
 Now if $x=g_0 t_1\cdots t_i \cdot g_i \cdot t_{i+1} \cdots t_m=t_1\cdots t_i \cdot g'_0g_i \cdot t_{i+1} \cdots t_m$ where $g'_0=\phi^{-1}_i\circ\cdots\circ\phi^{-1}_1(g_0)$, then by the proof of \autoref{Lemma-54}, $xh=hx$ implies that $t_1\cdots t_i\cdot t_{i+1} \cdots t_m=1$.
 As in the first case (I),  $g_0'g_i$ and $h'= \phi_{i+1}\circ\cdots\circ \phi_m(h)$ generate a $\mathbb{Z}\oplus \mathbb{Z}$ in $W(P_Q,\mathcal{F}_B)$. Hence we can find  an $\square$ in $Q$.
 If $$x=g_0 t_1\cdots t_i \cdot g_i \cdot t_{i+1} \cdots t_m=g_0't_1\cdots t_jg''_0 t_{j+1}\cdots t_i \cdot g'''_0g_i \cdot t_{i+1} \cdots t_m$$ where $g''_0$ cannot cross $t_{j+1}$ and $g_0=g_0'\cdot \phi_1\circ\cdots\circ\phi_j(g_0'')\cdot \phi_1\circ\cdots\circ\phi_i(g_0''')$.
  As in the second case (II), there is a generator $s_4$ in $g''_0$ which is not in $S(W_{{B'_{j+1}}})$. Then $s_4$ determines a facet $F_4$ of $P_Q$ such that $F_4$ intersects with $F_1$ and $F_3$ in $Q$. So there is an $\square$-belt in $Q$.
\vskip.2cm
Together with  all arguments above, we  complete the proof.
$\hfill\Box$

\section{Applications}\label{section7}
%\subsection{$B$-belt and fundamental group}
Throughout the following, we always assume that $Q$ is a genus $\mathfrak{g}$ simple handlebody with $m$ facets and $M$ is the manifold double over $Q$.
 In this section, we shall show that some $B$-belts in $Q$ can play a role in the obstruction of the existence of Riemannian metrics on $M$.
First we can see that every $B$-belt of  $Q$ is a {\em $\pi$-injective} suborbifold in the sense of the following \hyperref[proposition-61]{Lemma 7.1}.

\begin{lem}\label{proposition-61}
Let $i:B\hookrightarrow Q$ be a belt of $Q$. Then  $i_*:\pi_1^{orb}(B)\rightarrow\pi_1^{orb}(Q)$ is an injection.
Moreover, if $B$ is not orbifold-aspherical, then $Q$ is not orbifold-aspherical.
%$\pi_i(B)= 0$ for $1<i<k$ and $\pi_k(B)\neq 0$ for some $k$, then $\pi_k(Q)\neq 0$.
\end{lem}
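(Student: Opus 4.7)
The plan is to establish the two parts in order: first using the HNN-extension structure of \hyperref[section4]{Section 4} to prove injectivity of $i_*$, and then deducing the non-asphericity statement from this together with \hyperref[DJS-small]{Theorem A}.

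For the first assertion, I would observe that since $|B|$ is a $k$-ball, $B$ is itself a simple polytope as a right-angled Coxeter orbifold, so $\pi_1^{orb}(B) \cong W_B$. Using the contractibility of $|B|$, a general-position argument lets me choose the cutting belts $B_1, \dots, B_\mathfrak{g}$ of $Q$ to be disjoint from $B$; then $B$ embeds as a $B$-belt inside the associated simple polytope $P_Q$. The map $W_B \to W(P_Q, \mathcal{F}_B)$ defined by $s_{f_j} \mapsto s_{F_{f_j}}$, where $F_{f_j}$ is the facet of $P_Q$ containing the facet $f_j$ of $B$, is well-defined by the intersection axiom of \autoref{Def-B}, and admits a retraction sending $s_F \mapsto s_{f_j}$ when $F \supseteq f_j$ and $s_F \mapsto 1$ otherwise (argued along the lines of \autoref{lemma-34}). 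Composing with the iterated HNN embedding $W(P_Q, \mathcal{F}_B) \hookrightarrow \pi_1^{orb}(Q)$ from \autoref{HNN} and \autoref{TT2} then produces the required injection.

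For the second assertion, I would apply \hyperref[DJS-small]{Theorem A} to $B$ viewed as a simple handlebody of genus $0$: if $B$ is not orbifold-aspherical then $B$ is not flag, so $B$ contains a $\Delta^k$-belt $\iota: \Delta^k \hookrightarrow B$ for some $k \geq 2$. The composition $i \circ \iota: \Delta^k \hookrightarrow Q$ is then itself a $\Delta^k$-belt of $Q$, so $Q$ is not flag, and by the converse direction of \hyperref[DJS-small]{Theorem A}, $Q$ is not orbifold-aspherical.

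The main technical obstacle is verifying that $i \circ \iota$ genuinely satisfies the intersection axiom of \autoref{Def-B} with respect to $Q$. Both the $B$-belt axiom for $\iota$ (inside $B$) and the axiom for $i$ (inside $Q$) invoke the disjunction ``either the ambient intersection is empty, or the union of enveloping facets fails to retract onto the belt''; tracking this through the composition requires a careful case analysis, since failure of retraction of $\cup F^Q_{f_\alpha}$ onto $B$ does not formally imply failure of retraction onto the smaller $\Delta^k \subset B$. Should this direct argument prove too fragile, a backup route is to extend the group retraction of Part~1 to $\pi_1^{orb}(Q) \to W_B$ by declaring $\rho(t_B) = 1$ for each cutting belt, lift a compatible retraction $P_Q \to B$ to an orbifold retraction $\widetilde{Q} \to \widetilde{B}$ of the orbifold universal covers, and conclude via $\pi_k(\widetilde{B}) \hookrightarrow \pi_k(\widetilde{Q})$ for $k \geq 2$.
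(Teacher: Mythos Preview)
Your Part~2 is essentially the paper's argument: both locate a $\Delta^k$-belt inside $B$ (the paper via Hurewicz and \autoref{thm4}, you via \hyperref[DJS-small]{Theorem A} applied to the genus-$0$ handlebody $B$) and then transfer it to $Q$. Your concern about whether $i\circ\iota$ satisfies the intersection axiom of \autoref{Def-B} is legitimate, and the paper glosses over exactly the same step with the sentence ``So there is an $\Delta^k$-belt of $Q$.''

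Your Part~1, however, has a real gap. The assertion that ``a general-position argument lets me choose the cutting belts $B_1,\dots,B_\mathfrak{g}$ disjoint from $B$'' is not justified by contractibility of $|B|$ alone. General position gives transversality, not disjointness; when $\dim B = n-1$ the expected intersection is positive-dimensional. More seriously, cutting belts are not arbitrary topological disks but $B$-belts satisfying the combinatorial intersection axiom, and the result of cutting must be a \emph{simple polytope}. \autoref{Lemma-34} shows only that a cutting belt meeting an $\square$-belt can be reduced to meeting a pair of disjoint edges, not that it can always be pushed off; and the proof of sufficiency of \hyperref[FTT]{Theorem~B} (and \autoref{Example-51}) explicitly treats the case where the $\square$-belt is \emph{not} kept in $P_Q$. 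So the paper's authors do not believe this disjointness can always be arranged.

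The paper instead allows $B$ to meet the cutting belts transversely, splits $B$ into pieces $B_1,\dots,B_k$, writes $\pi_1^{orb}(B)$ as an iterated amalgam of the $\pi_1^{orb}(B_i)$, and defines the retraction $\kappa:\pi_1^{orb}(Q)\to\pi_1^{orb}(B)$ directly on the HNN presentation by sending every $t_B$ to $1$ and every irrelevant $s_F$ to $1$. Ironically, this is precisely the ``extend the retraction by $\rho(t_B)=1$'' idea you propose as a \emph{backup for Part~2}: you have the right tool, but it belongs in Part~1, and it must be checked against the HNN relations $s_F t_B = t_B s_{F'}$ (which is where the decomposition of $B$ into the $B_i$ and the relations $R(E_i)$ enter). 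Your Part~1 as written does not go through without this.
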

\begin{proof}
If $B$ is a cutting belt of $Q$, then by \autoref{lemma-34}, $B$ is $\pi_1$-injective.

\vskip .2cm If $B$ is a belt of $Q$ which is disjoint with any cutting belts of $Q$, then $B$ can be embedded into $P_Q$, so the induced map $\pi_1^{orb}(B)\rightarrow \pi_1^{orb}(P_Q)$ is an injection. Thus $i_*:\pi_1^{orb}(B)\rightarrow \pi_1^{orb}(P_Q)\rightarrow \pi_1^{orb}(Q)$ is an injection.

\vskip.2cm
%If $B$ is not a cutting belt of $Q$, then it either intersects with some cutting belts or does not intersect with any cutting belt.
If $B$ intersects with some cutting belts, then we can always do some deformation on $B$  such that it intersects transversely with those cutting belts. Thus, we may assume that  $B$ is split into $B_1,B_2,\cdots,B_k$ by  cutting belts $E_1,E_2,\cdots,E_{k-1}$ such that
%$B$ intersects transversely with $E_1,E_2,\cdots,E_{k-1}$, and
for each $i$, $B_i$ and $B_{i+1}$ exactly intersect with $E_i$ since all $E_i$ are simple polytopes and $|B|$ is a ball.
%where $k=1$ means that $B$ does not intersect with any cutting belt.
In addition, it is also easy to see  that for each $i$, $\pi_1^{orb}(B_i)$ is a right-angled Coxeter group.     Now
$$
\pi_1^{orb}(B)=\pi_1^{orb}(B_1)*\pi_1^{orb}(B_2)*\cdots *\pi_1^{orb}(B_k)/\langle R(E_i)|i=1, ..., k-1\rangle
$$
where $R(E_i)$ is a relation set consisting of all equations $s=t$, each of which is associated with  $F_s\sim_{E_i} F_t$ where  $F_s\in\mathcal{F}(B_{i})$ and $F_t\in\mathcal{F}(B_{i+1})$.

\vskip.2cm

Now for $g\in \pi_1^{orb}(B_i)$, $i_*(g)=t_{i-1}^{-1}\cdots t_1^{-1}gt_1\cdots t_{i-1}$. Define $\kappa:\pi_1^{orb}(Q)\rightarrow \pi_1^{orb}(B)$,
$$
\kappa(s)=\begin{cases}
s, ~~~&for~F_s\in \cup\mathcal{F}(B_i)\\
1, ~~~&for~F_s\in \mathcal{F}(P_Q)-\cup\mathcal{F}(B_i)\\
\end{cases}
$$
where all torsion free generators are mapped to $1$.
Then it is clear that $\kappa$ is well-defined and $\kappa\circ i_*=id$. Hence, $i_*$ is an injection.

\vskip.2cm

Let $\widetilde{Q},\widetilde{B}$ be the universal cover of $Q,B$, respectively.
If $B$ is not orbifold-aspherical, then there is an integer $k\geq 2$ such that $\pi_k(\widetilde{B})\neq 0$ and $\pi_i(\widetilde{B})= 0$  for any $1\leq i<k$. By Hurewize theorem, $H_k(\widetilde{B})\cong\pi_k(\widetilde{B})\neq 0$.
Hence by \autoref{thm4} there is an $\Delta^k$-belt in $B$. So there is an $\Delta^k$-belt of $Q$. Hence $Q$ is not orbifold-aspherical.
\end{proof}

\begin{lem}\label{or}
The manifold double of  a  simple handlebody is orientable.
\end{lem}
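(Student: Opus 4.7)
The plan is to exhibit a coherent orientation on $M_Q$ chamber by chamber, using the ``sign of the element'' homomorphism on $(\mathbb{Z}_2)^m$, which is the standard route to orientability for real moment-angle manifolds over simple polytopes.

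First I would fix an orientation $\omega$ on $|Q|$; this is available because, by the definition recalled in \hyperref[Section 1]{Section 1}, $|Q|$ is a tubular neighbourhood of a wedge of circles in $\mathbb{R}^n$ and is therefore an orientable manifold with corners. Next, introduce the surjective homomorphism
\[
\epsilon\colon (\mathbb{Z}_2)^m\longrightarrow \mathbb{Z}_2,\qquad \epsilon(g_1,\ldots,g_m)=g_1+\cdots+g_m \pmod{2},
\]
and note that $\epsilon(\lambda(F_i))=\epsilon(e_i)=1$ for every facet $F_i$ of $Q$. Recalling that $M_Q=Q\times (\mathbb{Z}_2)^m/\!\sim$ is covered by the $2^m$ chambers $|Q|\times\{g\}$, I would orient the chamber labelled by $g$ by $(-1)^{\epsilon(g)}\omega$.

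The central step is then to verify that these choices are compatible on overlaps. Two distinct chambers meet in codimension one only when they are $|Q|\times\{g\}$ and $|Q|\times\{ge_i\}$, glued along (an open subset of) the facet $F_i$ via the reflection across $F_i$ in the local Euclidean model $\mathbb{R}^n/(\mathbb{Z}_2)^n$. This reflection is orientation-reversing on $|Q|$, while $\epsilon(ge_i)=\epsilon(g)+1$ flips the sign factor; the two sign changes cancel and the oriented chambers glue coherently along $F_i$. Compatibility along the higher-codimension strata is then automatic, since the union of codimension-one identifications is dense in the glueing locus, so a continuous orientation defined there extends uniquely across the corners.

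The one book-keeping point I would make explicit is that the cutting belts of $Q$ pose no extra obstruction. The manifold double is built from the facet data $\mathcal{F}(Q)$ alone (not from $\mathcal{F}(P_Q)$); each cutting belt $B_j$ has its interior contained in $\mathrm{Int}(|Q|)$ by the codimension-preservation clause of \autoref{Def-B}, and hence lifts to $2^m$ disjoint interior submanifolds in $M_Q$ across which no identification is performed. Thus orientability of $M_Q$ reduces to the orientability of $|Q|$ together with the existence of the homomorphism $\epsilon$ with $\epsilon(e_i)=1$ for all $i$, both of which are clear. I do not anticipate any substantial obstacle beyond this verification.
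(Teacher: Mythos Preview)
Your argument is correct and rests on the same idea as the paper's proof: orient the chamber indexed by $g\in(\mathbb{Z}_2)^m$ with sign $(-1)^{\epsilon(g)}$ (the paper writes $(-1)^{l(g)}$, which has the same parity) and use that each facet reflection reverses orientation while $\epsilon$ flips by one. The only cosmetic difference is that the paper packages this as a computation showing $H_n(M;\mathbb{Z})\cong\mathbb{Z}$ via the cellular chain complex built from the cut polytope $P_Q$ (where the two copies $B_i^{\pm}$ of each cutting belt cancel in $\partial P_Q$), whereas you work directly with the $|Q|$-chambers and verify the orientation compatibility geometrically; your observation that the cutting belts lie in the interior and impose no extra gluing is exactly what replaces the paper's $B_i^+ + B_i^- = 0$ cancellation.
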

\begin{proof}
Let $Q$ be a simple $n$-handlebody  with $m$ facets and cutting belts $\mathcal{F}_B$,  $P_Q$ be the associated simple polytope, and $M=Q\times (\mathbb{Z}_2)^m/\sim$ be the manifold double over $Q$, as defined in (\ref{E1}).
 It suffices to prove that
  $H_n(M; \mathbb{Z})\cong \mathbb{Z}.$
We shall   use the proof method of Nakayama and Nishimura \cite[Theorem 1.7]{NN} here.
  The combinatorial structure of $P_Q$ defines a natural cellular decomposition of $M$. We denote by $\{(C_k(M),\partial_k)\}$ the chain complex associated with this cellular decomposition. In particular, $C_n(M)$ and $C_{n-1}(M)$ are the free abelian groups generated by $\{P_Q\}\times (\mathbb{Z}_2)^m=\{(P_Q,g)\mid g\in (\mathbb{Z}_2)^m\}$ and $\mathcal{F}(P_Q)\times (\mathbb{Z}_2)^m/\sim'=\{[F,g]\mid F\in \mathcal{F}(P_Q), g\in(\mathbb{Z}_2)^m \}$, respectively, where the equivalence class of $\mathcal{F}(P_Q)\times (\mathbb{Z}_2)^m$ is defined by the equivalence relation
$$\begin{cases}
(F,g)\sim' (F,g\cdot e_F) & \text{ if } F\in \mathcal{F}(P_Q)-\mathcal{F}_B,\\
(B^+,g)\sim' (B^-, g) & \text{ if ~} B^+, B^-\in \mathcal{F}_B.
\end{cases}$$
It should be pointed out that actually  there is a coloring $\lambda: \mathcal{F}(Q)\longrightarrow (\mathbb{Z}_2)^m$ in the construction of  $M=Q\times (\mathbb{Z}_2)^m/\sim$ such that
$\{\lambda(F)=e_F | F\in \mathcal{F}(Q)\}$ is the standard basis $\{e_i| i=1, ..., m\}$ of $(\mathbb{Z}_2)^m$. For any facet $F'$ in $\mathcal{F}(P_Q)-\mathcal{F}_B$, there must be a facet $F$ in $\mathcal{F}(Q)$ such that $F'=F$ or $F'\subsetneqq F$, so $F'$ and $F$ are colored by the same element $e_F$ of $(\mathbb{Z}_2)^m$. For any $B$ in $\mathcal{F}_B$, since $\text{Int}B\subset \text{Int}(Q)$, we convention that $B$ is colored by the unit element ${\bf e}_0$ of $(\mathbb{Z}_2)^m$.
In other words, the coloring $\lambda: \mathcal{F}(Q)\longrightarrow (\mathbb{Z}_2)^m$ induces a compatible coloring $\lambda':  \mathcal{F}(P_Q)\longrightarrow (\mathbb{Z}_2)^m$ such that for any $F' \in \mathcal{F}(P_Q)-\mathcal{F}_B$,  $e_{F'}=\lambda'(F')=\lambda(F)=e_F$ where $F\in \mathcal{F}(Q)$ with $F'\subset F$,
and for $B$ in $\mathcal{F}_B$, $\lambda'(B)={\bf e}_0$.
\vskip .2cm

We give an orientation on each facet $F_i$ and $B^\pm_i$ such that the orientation of $B_i^+$ is exactly the inverse orientation of $B_i^-$, so
\begin{equation*}
\begin{split}
\partial P_Q &= \sum_{F\in \mathcal{F}(P_Q)}F=F_1+\cdots+F_{m'}+B^+_1+\cdots+B^+_\mathfrak{g}+B^-_1+\cdots+B^-_\mathfrak{g}\\
&=\sum_{F\in \mathcal{F}(P_Q)-\mathcal{F}_B}F\\
\end{split}
\end{equation*}
 where $m'$ is the number of all facets in $\mathcal{F}(P_Q)-\mathcal{F}_B$.

 \vskip .2cm

Let $c_n=\sum_{g\in(\mathbb{Z}_2)^m} n_g(P,g)$ be an $n$-cycle of $C_n(M)$ where $n_g\in \mathbb{Z}$. Then
\begin{equation*}
\begin{split}
\partial(c_n)&=[\sum_{g\in(\mathbb{Z}_2)^m}n_g \sum_{F\in \mathcal{F}(P_Q)}(F,g)]=\sum_{[F,g]\in ((\mathcal{F}(P_Q)-\mathcal{F}(B))\times (\mathbb{Z}_2)^m)/\sim'}(n_g+n_{ge_F})[F,g]=0
\end{split}
\end{equation*}
 which induces that $n_g=-n_{ge_F}$ for any facet $F\in \mathcal{F}(P_Q)-\mathcal{F}_B$ and $g\in( \mathbb{Z}_2)^m$. Let $l(g)$ denote the word length of $g$ presented by $\{e_F\}$. For any $g\in (\mathbb{Z}_2)^m$,
there exists a subset $\mathcal{I}_g=\{F_{i_1}, ..., F_{i_k}\}$ of $\mathcal{F}(P_Q)-\mathcal{F}_B$ such that $g=\prod_{F\in \mathcal{I}_g}e_F$.
Then we see easily that
$$n_g=-n_{ge_{F_{i_1}}}=n_{ge_{F_{i_1}}e_{F_{i_2}}}=\cdots=(-1)^{l(g)}n_{g\prod_{F\in \mathcal{I}_g}e_F}=(-1)^{l(g)}n_{{\bf e}_0}$$
so $c_n=n_{{\bf e}_0}\sum_{g\in(\mathbb{Z}_2)^m} (-1)^{l(g)}(P,g)$.
 Then we obtain that $H_n(M;\mathbb{Z})=\ker \partial_n\cong \mathbb{Z}$ is generated by $\sum_{g\in(\mathbb{Z}_2)^m}(-1)^{l(g)}(P,g)$, which follows that  $M$ is orientable.
\end{proof}

\subsection{Nonpositive curvature}\label{section-61}
Recall that a geodesic metric space $X$ is  {\em non-positively curved} if it is a locally CAT(0) space. In general, $X$ is said to be of {\em curvature $\leq k$} (in the sense of Alexandrov) if it is locally a CAT($k$) space. See \cite[1.2 Definition, Page-159]{BH}.
\begin{thm}[{\cite[1A.6 Theorem, Page-173]{BH}}]
A smooth Riemannian manifold $N$ is of curvature $\leq k$ in the sense of Alexandrov if and only if the sectional curvature  of $N$ is $\leq k$.
\end{thm}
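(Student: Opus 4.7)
The plan is to prove this as a two-direction comparison theorem, with the forward direction (sectional curvature $\leq k$ implies Alexandrov curvature $\leq k$) relying on the Rauch comparison theorem, and the reverse direction using an infinitesimal analysis with normal coordinates.

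For the forward direction, I would first recall that being of Alexandrov curvature $\leq k$ means that every point has a neighborhood $U$ such that every geodesic triangle contained in $U$ (with perimeter $< 2D_k$ when $k>0$, where $D_k = \pi/\sqrt{k}$) satisfies the CAT($k$) inequality: distances between points on the sides are bounded above by the corresponding distances in the comparison triangle in the model plane $M_k^2$. Given a point $p \in N$, choose a convex ball $B(p,\rho)$ where $\rho$ is smaller than the injectivity radius at $p$ and (if $k>0$) smaller than $D_k/2$, so that any two points are joined by a unique minimizing geodesic and any triangle has a unique comparison triangle. The key tool is Rauch I: if the sectional curvature of $N$ is $\leq k$, then for any Jacobi field $J$ along a geodesic $\gamma$ in $N$ with $J(0)=0$, the norm $\|J(t)\|$ dominates the norm of the corresponding Jacobi field along a geodesic in $M_k^n$ with the same initial data. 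Applying this to a geodesic variation joining a vertex to points on the opposite side yields, after integration, the distance comparison $d_N(x,y) \leq d_{M_k^2}(\bar x, \bar y)$ for $x,y$ on the sides of a triangle in $B(p,\rho)$, which is precisely the CAT($k$) inequality.

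For the reverse direction, I would argue by contradiction at the infinitesimal level. Suppose $N$ is locally CAT($k$) but some sectional curvature $K_\sigma$ of a 2-plane $\sigma \subset T_p N$ exceeds $k$. Work in normal coordinates around $p$; then the metric has the Taylor expansion
\begin{equation*}
g_{ij}(x) = \delta_{ij} - \tfrac{1}{3} R_{ikjl}(p)\, x^k x^l + O(|x|^3).
\end{equation*}
Form a small isoceles geodesic triangle with vertex at $p$ whose two equal sides of length $\varepsilon$ lie inside $\exp_p(\sigma)$. A careful expansion of the third side, and of distances between midpoints, gives a leading-order correction proportional to $(K_\sigma - k)\varepsilon^3$ compared to the corresponding quantity in the model $M_k^2$. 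Since $K_\sigma > k$, the small triangles in $N$ are strictly \emph{fatter} than their comparison triangles, which violates the CAT($k$) condition for all sufficiently small $\varepsilon$. Equivalently, one can use the angle comparison formulation: compute the angles of the small triangle via the first and second variation formulas, and show that the comparison angle at $p$ is strictly less than the Riemannian angle when $K_\sigma > k$.

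The main obstacle is the careful handling of scales and remainder terms. In the forward direction one must pin down the correct radius (\emph{injectivity} versus \emph{convexity} versus the constraint $2D_k$) in which the comparison triangle exists uniquely and Rauch's conclusion integrates globally without being spoiled by a conjugate point. In the reverse direction the delicate point is extracting the precise coefficient $(K_\sigma-k)/12$ (or the analogous constant, depending on the quantity compared) from the Taylor expansion of the squared-distance function; this is a finite but genuinely involved bookkeeping of the Jacobi equation to second order. Once these two local computations are set up, the biconditional follows immediately, since both conditions are local and pointwise in nature.
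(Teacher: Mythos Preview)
The paper does not give a proof of this statement at all: it is quoted as a known result from Bridson--Haefliger \cite[Theorem~1A.6]{BH} and used as a black box to translate between Alexandrov curvature bounds and sectional curvature bounds. There is therefore nothing in the paper to compare your argument against.

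That said, your outline is the standard one (and is essentially the argument given in \cite{BH}): Rauch comparison for the direction ``sectional $\leq k \Rightarrow$ locally CAT($k$)'', and a Taylor expansion in normal coordinates for the converse. Your identification of the delicate points---choosing the radius small enough for unique comparison triangles and no conjugate points in the forward direction, and controlling the error term in the infinitesimal expansion for the reverse---is accurate. One small remark: in the forward direction it is cleaner to deduce the CAT($k$) inequality from the Alexandrov angle comparison (which follows more directly from Rauch/Toponogov) together with the Alexandrov lemma, rather than by integrating the Jacobi field bound along the opposite side; this avoids having to track distances between arbitrary pairs of points on the sides and is how \cite{BH} structures the proof.
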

\begin{prop}[{\cite[Corollary 6.2.4]{P16}}]If $(N,g)$ is a complete Riemannian manifold of nonpositive
curvature, then the fundamental group is torsion free.
\end{prop}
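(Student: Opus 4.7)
The plan is to combine the Cartan--Hadamard theorem with a circumcenter / fixed-point argument for finite-order isometries of CAT(0) spaces. First, since $(N,g)$ is complete and has nonpositive sectional curvature, the Cartan--Hadamard theorem says that for any basepoint the exponential map is a covering map, so the universal cover $\widetilde{N}$ is diffeomorphic to $\mathbb{R}^{\dim N}$; equipped with the pulled-back Riemannian metric, $\widetilde{N}$ is a complete, simply connected Riemannian manifold of nonpositive curvature, hence a complete CAT(0) space by the comparison theorem cited earlier in the paper (the discussion around \cite[Theorem 1A.6]{BH}).

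Next, I would argue by contradiction. Suppose $\gamma\in\pi_1(N)$ has finite order $k>1$. Realize $\gamma$ as a deck transformation of the covering $\widetilde{N}\to N$; then $\gamma$ acts on $\widetilde{N}$ by isometries, and because the deck group acts freely, $\gamma$ has no fixed point in $\widetilde{N}$. On the other hand, pick any $p\in\widetilde{N}$ and consider the orbit $\{p,\gamma p,\gamma^{2}p,\ldots,\gamma^{k-1}p\}$, which is a nonempty bounded subset of $\widetilde{N}$ that is preserved by $\gamma$. The key ingredient is now the Bruhat--Tits / Cartan fixed point theorem: every bounded subset of a complete CAT(0) space has a unique circumcenter, and this circumcenter is fixed by every isometry that preserves the set. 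Applying this to the orbit gives a point $c\in\widetilde{N}$ with $\gamma c=c$, contradicting freeness of the deck action.

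The main obstacle is not in the argument itself but in pinning down the precise citation for the circumcenter construction; I would appeal to \cite[Chapter II]{BH} (the center-of-mass / circumradius lemma for complete CAT(0) spaces) rather than reproving it. Everything else is routine: the Cartan--Hadamard step is standard Riemannian geometry, the conversion from sectional-curvature nonpositivity to CAT(0) is quoted directly from the comparison theorem already invoked in the paper, and the freeness of the deck action on $\widetilde{N}$ is automatic. Combining these three inputs shows that no nontrivial finite-order element can exist in $\pi_1(N)$, i.e.\ $\pi_1(N)$ is torsion free, as claimed.
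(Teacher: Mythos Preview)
Your argument is correct and is essentially the standard proof of this fact. Note that the paper does not actually prove this proposition: it is quoted as \cite[Corollary 6.2.4]{P16} and used as a black box, so there is no ``paper's own proof'' to compare against. The route you outline---Cartan--Hadamard to realize $\widetilde{N}$ as a complete CAT(0) space, followed by the Bruhat--Tits/Cartan circumcenter fixed-point theorem applied to the finite orbit of a putative torsion element acting as a deck transformation---is exactly the argument one finds in Petersen or in \cite[Chapter II.2]{BH}, so your proposal matches the intended external reference.
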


Let $Q$ be a simple handlebody of dimension $n\geq 3$ and genus $\mathfrak{g}$, and $M\longrightarrow Q$ be the manifold double over $Q$, as defined in (\ref{E1}).
Let  $P_Q$  be the simple polytope  obtained from $Q$ by cutting open along $\mathfrak{g}$ disjoint cutting belts $B_1, ..., B_\mathfrak{g}$ in $Q$.
More precisely, $P_Q$ can be obtained in the following way: For each belt $B_i$,  choose  a regular neighborhood $N(B_i)$ of  $B_i$ that is homeomorphic to $B_i\times [-1,1]$ as manifolds with corners. Clearly  $N(B_i)$ is identified with a simple polytope, and it can also be understood  as the disk $D^1$-bundle of the trivial normal bundle of $B_i$ in $Q$. Then we get $P_Q$ by removing the interiors of trivial $D^1$-bundles $B_i\times [-1,1]$ of all $B_i$.
\vskip .2cm
In order to use Gromov Lemma as above, we need  a cubical cellular structure of the manifold double $M$ over $Q$. For this,  we perform the following procedure:
 \begin{itemize}
\item[$(1)$] First we decompose $Q$ into more pieces
$$Q=P_Q \bigcup_{i=1}^\mathfrak{g} N^+(B_i)\cup N^-(B_i)$$
where $ N^+(B_i)=B_i\times [0,1]$ and  $N^-(B_i)=B_i\times [-1,0]$ satisfy  $N(B_i)=N^+(B_i)\cup N^-(B_i)$.
 \item[$(2)$]
Next, the standard cubical decompositions of $P_Q$ and all $N^{\pm}(B_i)$ determine a right-angled Coxeter cubical  cellular decomposition of $Q$, denoted by $\mathcal{C}(Q)$. Specifically, all cone points of  $P_Q$ and all $N^{\pm}(B_i)$ will be $0$-cells with trivial local group in $\mathcal{C}(Q)$.
There are two kinds of $k(>0)$-cubes in the cubical decompositions of $P_Q$ and all $N^{\pm}(B_i)$, each of which either intersects transversely  with an $(n-k)$-face $f^k=F_{i_1}\cap \cdots\cap F_{i_k}$ or intersects transversely  with an $(n-k)$-face $f^k=F_{i_1}\cap \cdots\cap F_{i_{k-1}}\cap B_i$.
The first type of cubes determine right-angled Coxeter cubical  cells of the form $e^k/(\mathbb{Z}_2)^k$ in $\mathcal{C}(Q)$, and the second type of cubes determine right-angled Coxeter cubical  cells of the form $e^k/(\mathbb{Z}_2)^{k-1}$.
Then, $\mathcal{C}(Q)$ is obtained by attaching each pair associated with $B_i$ of the second type of cubes together.
It is clear that $\mathcal{C}(Q)$ is a right-angled Coxeter cubical  cellular decomposition of $Q$.
\item[$(3)$] Finally, by pulling back $\mathcal{C}(Q)$ to $M$ via the covering map $p:M\longrightarrow Q$, one can obtain a cubical cellular decomposition of $M$, denoted by $\mathcal{C}(M)$, such that each cube in $\mathcal{C}(M)$ is  a connected component of  $p^{-1}(c)$ for $c$ in $\mathcal{C}(Q)$.
   In particular, all vertices in $\mathcal{C}(M)$ exactly consist of the liftings of all cone points in $\mathcal{C}(Q)$.
 \end{itemize}

 \begin{lem}\label{LP}
 Let $v$ be a vertex  in $\mathcal{C}(M)$. Then Lk$(v)$ in $\mathcal{C}(M)$ is combinatorially isomorphic to one of nerves
   $\mathcal{N}(P_Q)$,  $\mathcal{N}(N^{+}(B_i))$ and $\mathcal{N}(N^{-}(B_i))$.
 \end{lem}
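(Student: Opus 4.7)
The plan is to localize the link computation at $v$ to the piece of $Q$ containing the cone point that $v$ lies over, and then identify the resulting simplicial complex with the appropriate nerve. First I would note that, by construction $(3)$, every vertex of $\mathcal{C}(M)$ lies over a cone point of exactly one of $P_Q$, $N^+(B_i)$ or $N^-(B_i)$; denote the corresponding piece by $R$ and let $x$ be the cone point, so $v\in p^{-1}(x)$. Since $x$ has trivial local group in $\mathcal{C}(Q)$, the covering map $p\colon M\to Q$ restricts to a homeomorphism on a small neighborhood of $v$; and since $x\in\mathrm{int}(R)$, the cubes of $\mathcal{C}(Q)$ meeting a small neighborhood of $x$ are exactly the cubes of the standard cubical decomposition of $R$ that contain $x$. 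Therefore $\mathrm{Lk}(v)$ in $\mathcal{C}(M)$ is combinatorially isomorphic to the link of $x$ in the standard cubical decomposition of $R$.

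Next I would carry out the link computation at the cone point of a simple $n$-polytope $R$. The top-dimensional cubes through $x$ are the $n$-cubes $C_w$, one for each vertex $w\in V(R)$, where $C_w$ has $w$ and $x$ at opposite corners and its $n$ edges at $x$ lead to the barycenters of the $n$ facets of $R$ through $w$. Taking links, each $C_w$ contributes a unique $(n{-}1)$-simplex whose vertices are precisely the $n$ facets through $w$; two such simplices share a $(k{-}1)$-face iff the two corresponding vertices of $R$ lie on a common codimension-$k$ face of $R$, i.e., share a common subset of $k$ facets. This incidence pattern is exactly the definition of $\mathcal{N}(R)$: vertices of $\mathcal{N}(R)$ are facets of $R$, and a collection spans a simplex precisely when the corresponding facets have nonempty intersection.

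The point to handle with some care is that, inside $\mathcal{C}(Q)$, the cubes $C_w$ are actually right-angled Coxeter cubes whose nontrivial local groups (living at $w$, or at $w$ on a cutting belt) fold and unfold in passing from $R$ to $Q$ and then from $Q$ to $M$. But these foldings and unfoldings all happen at the $w$-end of $C_w$, far from $x$; near $x$ each $C_w$ is simply embedded, so the local homeomorphism argument of the first paragraph applies and the combinatorial type of the link is unaffected. Combining the steps yields $\mathrm{Lk}(v)\cong\mathcal{N}(R)$ with $R\in\{P_Q,N^+(B_i),N^-(B_i)\}$, as required.
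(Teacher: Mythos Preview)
Your proposal is correct and follows essentially the same approach as the paper: both localize to the piece $R\in\{P_Q,N^\pm(B_i)\}$ whose cone point lies under $v$, and identify the link there with $\mathcal{N}(R)$ via the correspondence between $k$-cubes through the cone point and codimension-$k$ faces of $R$. Your version is simply more explicit, in particular in spelling out why the local-group data at the far corners of the Coxeter cubes does not affect the link at $x$; the paper's proof is a one-line sketch of the same correspondence.
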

 \begin{proof}
 In fact, if $p(v)$ is the cone point of $P_Q$, then each $k(>0)$-cube adjacent  to $v$ gives a $(k-1)$-simplex in  $\mathcal{N}(P_Q)$, which corresponds to an $(n-k)$-face of $P_Q$. Therefore, Lk$(v)\cong \mathcal{N}(P_Q)$.
 A same argument can be applied to the case where $p(v)$ is the cone point of $N^{+}(B_i)$ or $N^{-}(B_i)$.
 \end{proof}
 \begin{prop}\label{npc}
% Let $Q$ be a simple handlebody (include simple polytope) of dimension greater than $2$, $M$ be the manifold double over $Q$. Then
  $M$ is non-positively curved if and only if  $Q$ is flag.
 \end{prop}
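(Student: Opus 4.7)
The plan is to apply Gromov's Lemma (Proposition \ref{LMG}) to the cubical complex $\mathcal{C}(M)$ and translate the flagness of vertex links into a combinatorial condition on $Q$. By construction, $\mathcal{C}(M)$ is a piecewise Euclidean cubical complex, so $M$ is non-positively curved if and only if the link of every vertex of $\mathcal{C}(M)$ is a flag simplicial complex. By Lemma \ref{LP}, every such link is combinatorially isomorphic to one of the three nerves $\mathcal{N}(P_Q)$, $\mathcal{N}(N^{+}(B_i))$, $\mathcal{N}(N^{-}(B_i))$ (and every one of them occurs as some link, via the covering $p\colon M\to Q$). Thus the task reduces to showing that these three families of nerves are all flag if and only if $Q$ is flag.

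Since $N^{\pm}(B_i)=B_i\times[0,1]$ is a simple polytope, a direct inspection of its facet intersections (the two ``end'' facets $B_i\times\{0\}$ and $B_i\times\{1\}$ are disjoint, while each $f\times[0,1]$ meets both and meets $f'\times[0,1]$ iff $f\cap f'\neq\emptyset$ in $B_i$) shows that $\mathcal{N}(N^{\pm}(B_i))$ is exactly the simplicial suspension $\Sigma\mathcal{N}(B_i)$. I would then prove the elementary combinatorial fact that the simplicial suspension $\Sigma K$ is flag iff $K$ is flag. One direction is immediate because $K$ is an induced subcomplex of $\Sigma K$; for the other direction, any pairwise-adjacent vertex set in $\Sigma K$ contains at most one of the two cone points (they are non-adjacent), and if $K$ is flag such a set must span a simplex in $K$, hence in $\Sigma K$.

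The last ingredient is to link $\mathcal{N}(P_Q)$ and $\mathcal{N}(B_i)$ to flagness of $Q$. We already know from Section \ref{section3} that $Q$ is flag iff $P_Q$ is flag iff $\mathcal{N}(P_Q)$ is a flag simplicial complex. Moreover, each cutting facet $B_i^{+}\in\mathcal{F}_B$ is a vertex of $\mathcal{N}(P_Q)$, and the vertex link $\mathrm{Lk}(B_i^{+},\mathcal{N}(P_Q))$ is canonically identified with $\mathcal{N}(B_i)$. Since the link of a vertex in a flag simplicial complex is again flag, flagness of $\mathcal{N}(P_Q)$ forces flagness of every $\mathcal{N}(B_i)$.

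Putting it all together: if $M$ is non-positively curved, then in particular $\mathcal{N}(P_Q)$ is flag, hence $Q$ is flag. Conversely, if $Q$ is flag, then $\mathcal{N}(P_Q)$ is flag and so is each $\mathcal{N}(B_i)$, whence each $\mathcal{N}(N^{\pm}(B_i))=\Sigma\mathcal{N}(B_i)$ is flag by the suspension lemma; Gromov's Lemma then gives non-positive curvature of $M$. I do not expect a genuine obstacle here; the one point requiring some care is verifying the suspension identification $\mathcal{N}(N^{\pm}(B_i))\cong\Sigma\mathcal{N}(B_i)$ and the link identification $\mathrm{Lk}(B_i^{+},\mathcal{N}(P_Q))\cong\mathcal{N}(B_i)$, both of which are combinatorial bookkeeping using that $B_i$ is a cutting belt and that the embedding $B_i\hookrightarrow Q$ preserves codimensions.
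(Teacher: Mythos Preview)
Your forward direction ($Q$ flag $\Rightarrow$ $M$ non-positively curved) is correct and agrees with the paper's argument; you in fact supply more detail than the paper, which simply asserts that $P_Q$ and all $N^{\pm}(B_i)$ being flag ``is also equivalent to saying that $Q$ is flag.'' Your suspension identification $\mathcal{N}(N^{\pm}(B_i))\cong\Sigma\mathcal{N}(B_i)$ and the link identification $\mathrm{Lk}(B_i^{+},\mathcal{N}(P_Q))\cong\mathcal{N}(B_i)$ make this step explicit and are fine.

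There is, however, a genuine gap in your converse. You write that ``$M$ is non-positively curved if and only if the link of every vertex of $\mathcal{C}(M)$ is a flag simplicial complex,'' citing Gromov's Lemma. But Gromov's Lemma (\autoref{LMG}) characterizes when the \emph{specific piecewise Euclidean cubical metric on $\mathcal{C}(M)$} is locally CAT(0); it does not say that $M$, as a manifold, admits \emph{some} non-positively curved metric only if the links of that particular cubical structure are flag. The statement ``$M$ is non-positively curved'' in this proposition means $M$ carries some locally CAT(0) metric (cf.\ the remark after \autoref{cct}), and there is no a priori reason that such a metric must be the cubical one. So from ``$M$ is non-positively curved'' you cannot directly conclude that $\mathcal{N}(P_Q)$ is flag.

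The paper closes this gap differently: if $M$ admits any non-positively curved metric, then by the Cartan--Hadamard Theorem $M$ is aspherical, hence $Q$ is orbifold-aspherical, and then \hyperref[DJS-small]{Theorem~A} gives that $Q$ is flag. This route is independent of which metric realizes the non-positive curvature, and is what you should use for the converse.
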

 \begin{proof}
 Let $\mathcal{C}(M)$ be the cubical cellular decomposition of $M$ discussed above.
Gromov Lemma (\autoref{LMG}) tells us that  $M$ is non-positively curved if and only if the link of each vertex in the cubical cellular decomposition of $M$ is flag.
By \autoref{LP}, the latter of the above statement means that $\mathcal{N}(P_Q)$ and all $\mathcal{N}(N^{\pm}(B_i))$ are flag, so $P_Q$ and all $N^\pm(B_i)$ are flag simple polytopes. This is also equivalent to  saying that $Q$ is flag. Thus,  if a simple handlebody $Q$ is flag, then its manifold double $M$  is non-positively curved.
\vskip.1cm
Conversely, if $M$ is non-positively curved, then by Cartan-Hadamard Theorem, $M$ is aspherical. Then $Q$ is orbifold-aspherical. By \hyperref[DJS-small]{Theorem A}, $Q$ is flag.
\end{proof}
\subsection{Strictly negative curvature}
\begin{prop}[Preissmann, 1943 {\cite[ Theorem 6.2.6]{P16}}]\label{proposition-63}
If $(N,g)$ is a compact manifold of negative
curvature, then any abelian subgroup of the fundamental group is cyclic.
\end{prop}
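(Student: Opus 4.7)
The plan is to pass to the universal cover and exploit the rigidity of axial isometries in strict negative curvature. By Cartan--Hadamard, the universal cover $(\widetilde{N},\tilde{g})$ is a complete, simply connected manifold of sectional curvature $\leq -\kappa<0$, diffeomorphic to $\mathbb{R}^n$, and in particular CAT($-\kappa$). The fundamental group $\pi_1(N)$ acts on $\widetilde{N}$ by deck transformations, hence by isometries, freely and properly discontinuously, with compact quotient $N$. My goal is to show that any nontrivial abelian subgroup $A<\pi_1(N)$ is infinite cyclic.

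The first key step is to observe that each nontrivial $\gamma \in \pi_1(N)$ is an \emph{axial isometry} of $\widetilde{N}$: because $N$ is compact, the translation length $|\gamma|:=\inf_{x\in\widetilde{N}} d(x,\gamma x)$ is positive and attained. The set $\mathrm{Min}(\gamma)$ where the infimum is attained is a nonempty, closed, convex, $\gamma$-invariant subset of $\widetilde{N}$, and $\gamma$ acts on $\mathrm{Min}(\gamma)$ by translation of length $|\gamma|$ along each of its geodesic fibers. Here I will use the strict negative curvature to upgrade this to uniqueness: if $c_1,c_2:\mathbb{R}\to\widetilde{N}$ were two parallel axes of $\gamma$, the function $t\mapsto d(c_1(t),c_2(t))$ would be convex (CAT(0)) and bounded, hence constant; but in CAT($-\kappa$) two distinct geodesics cannot remain at constant distance (equivalently, there are no flat strips, since a flat strip would contradict strict negativity of curvature via the CAT($-\kappa$) comparison). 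Thus $\gamma$ has a \emph{unique} axis $A_\gamma$.

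The second key step handles commuting pairs. Suppose $\alpha,\beta\in A$ are both nontrivial, so $\alpha\beta=\beta\alpha$. Then $\alpha(A_\beta)$ is also an axis of $\alpha\beta\alpha^{-1}=\beta$, so by uniqueness $\alpha(A_\beta)=A_\beta$; symmetrically $\beta(A_\alpha)=A_\alpha$. An isometry of $\widetilde{N}$ that preserves a geodesic line setwise either translates along it or reverses it; reversal is excluded because $\alpha$ has positive translation length. Hence $\alpha$ restricted to $A_\beta$ is a translation, so $A_\beta\subseteq\mathrm{Min}(\alpha)$; uniqueness of $A_\alpha$ forces $A_\alpha=A_\beta=:L$. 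This is the step that fails in nonpositive curvature (where commuting axial isometries can bound a nontrivial flat strip instead), and where the Preissmann hypothesis is used essentially.

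Finally, the entire subgroup $A$ preserves $L$ and acts on $L\cong\mathbb{R}$ by translations (any torsion element would have a fixed point, contradicting freeness of the deck action). This yields an injective homomorphism $\tau:A\hookrightarrow(\mathbb{R},+)$ sending each element to its signed translation length. Because the $\pi_1(N)$-action on $\widetilde{N}$ is properly discontinuous, the image $\tau(A)$ is a discrete subgroup of $\mathbb{R}$, and every discrete subgroup of $\mathbb{R}$ is infinite cyclic (or trivial). Therefore $A\cong\mathbb{Z}$, as claimed. The main obstacle in this program is the rigidity argument in the second step: establishing that commuting axial isometries share an axis requires the no-flat-strip property, which is exactly where strict negative curvature, as opposed to mere nonpositivity, becomes indispensable.
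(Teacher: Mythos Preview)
The paper does not supply its own proof of this proposition: it is quoted as a classical result of Preissmann, with a reference to Petersen's textbook \cite[Theorem~6.2.6]{P16}, and is used as a black box in the proof of \autoref{nc}. So there is nothing to compare against.

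Your argument is correct and is essentially the standard proof. The logical skeleton---Cartan--Hadamard to get a CAT$(-\kappa)$ universal cover, cocompactness to force every nontrivial deck transformation to be axial, the no-flat-strip property to make the axis unique, and then the observation that commuting axial isometries must share their unique axis so that the abelian subgroup embeds as a discrete subgroup of $(\mathbb{R},+)$---is exactly the classical line. One very small remark: when you exclude the possibility that $\alpha$ reverses $A_\beta$, the cleanest justification is that a reversal would have a fixed point on $A_\beta$, contradicting freeness of the deck action; your phrasing via ``positive translation length'' amounts to the same thing, since a fixed point would force $|\alpha|=0$.
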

\begin{prop}[Gromov {\cite{G}}]\label{proposition-64}
Let $\mathcal{C}$ be a cubical complex. Suppose  that the link of every vertex in  $\mathcal{C}$ is flag and contains no $\square$. Let us give $\mathcal{C}$ a $(N,-\epsilon)$ geometry, where each cube in $\mathcal{C}$ is isomorphic to the unit cube in the hyperbolic space of curvature $-\epsilon$. If $\epsilon$ is sufficiently small then $K(\mathcal{C})\leq -\epsilon$.
\end{prop}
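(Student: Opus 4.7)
The plan is to reduce the proposition to a local link condition and then run a perturbation argument comparing the hyperbolic and Euclidean cubical structures. First I would invoke the standard link characterization of upper curvature bounds: a piecewise $M_\kappa^n$-polyhedral complex built from only finitely many isometry classes of cells is locally CAT($\kappa$) exactly when the link of every vertex, equipped with the induced spherical metric, is a CAT($1$) space. Since ``curvature $\leq -\epsilon$ in the sense of Alexandrov'' is precisely the locally CAT($-\epsilon$) property, the task becomes checking that every vertex link of $\mathcal{C}$, in its hyperbolic cubical structure, is CAT($1$).

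Next I would compare the hyperbolic and Euclidean geometries at each vertex $v$. As a simplicial complex, $L = \mathrm{Lk}(v)$ depends only on the combinatorics of $\mathcal{C}$ near $v$; its metric depends on the dihedral angles of the cubes meeting at $v$. In a Euclidean unit cube these angles are $\pi/2$, giving the ``all-right'' spherical structure on $L$, while in a hyperbolic unit cube of curvature $-\epsilon$ they equal $\pi/2 - \delta(\epsilon)$ with $\delta(\epsilon) \to 0$ as $\epsilon \to 0$. In the Euclidean case, Gromov Lemma (Proposition 5.2) together with the flag hypothesis guarantees that $L$ with the all-right metric is already CAT($1$). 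The ``no $\square$'' hypothesis then upgrades this to a systolic estimate: every closed geodesic in $L$ has length strictly greater than $2\pi$, since a closed geodesic of length exactly $2\pi$ in an all-right spherical simplicial complex must trace the equator of an empty square (the Moussong-type combinatorial analysis of short geodesics). A compactness argument over the finitely many simplicial isomorphism types of vertex links available in dimension $\leq n$ then yields a uniform lower bound $2\pi + c$ with $c > 0$.

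Then I would run the perturbation step. For small $\epsilon$ the spherical metric on each simplex of $L$ coming from the hyperbolic cube is $O(\delta(\epsilon))$-close to the all-right metric, so the length of any closed geodesic perturbs by at most $O(\delta(\epsilon))$. Choosing $\epsilon$ small enough that this perturbation is less than $c/2$, one verifies that $L$ remains CAT($1$) with a systole bounded below by $2\pi + c/2$. Applying the link characterization in the reverse direction gives $K(\mathcal{C}) \leq -\epsilon$, as required.

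The main obstacle I expect lies in the quantitative aspect of the middle step: upgrading ``flag + no empty square'' to the strict systolic inequality and making the lower bound uniform across all vertex links requires a careful Moussong-type combinatorial analysis of closed geodesics in all-right spherical complexes, and the subsequent transfer of the estimate to the perturbed metric requires uniform CAT($1$) stability under $C^0$-close polyhedral metrics. This combinatorial-to-metric bridge is the technical heart of Gromov's original assertion; the rest of the argument is essentially a soft reduction to the link condition.
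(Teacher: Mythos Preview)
The paper does not actually prove this proposition; it is quoted verbatim from Gromov~\cite{G} and used as a black box in the proof of \autoref{nc}. So there is no ``paper's own proof'' to compare against, and your proposal is really a sketch of Gromov's original argument rather than of anything the authors do.

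That said, your outline is broadly the standard one: reduce to the link condition via the Bridson--Haefliger criterion, observe that the all-right metric on a flag no-$\square$ link has systole strictly greater than $2\pi$ (the Moussong lemma), and then perturb. One point deserves care: your uniformity step appeals to ``finitely many simplicial isomorphism types of vertex links available in dimension $\leq n$'', but there are infinitely many flag no-$\square$ simplicial $(n-1)$-spheres already for $n=3$, so a naive compactness over isomorphism types does not work. The correct uniform estimate comes instead from the explicit combinatorics of edge-paths in all-right spherical complexes: any closed geodesic is an edge-path of combinatorial length at least $5$ (four would force an empty square, three an empty triangle), hence of metric length at least $5\pi/2 > 2\pi$ regardless of the ambient link. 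This gives the uniform gap $c = \pi/2$ directly, without any compactness. With that fix your perturbation argument goes through for locally finite complexes of bounded dimension, which covers the application the paper has in mind.
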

\begin{lem}\label{lemma-64} Let $Q$ be a simple handlebody with $m$ facets, and $M$ be the manifold double over $Q$. Then
 $\mathbb{Z}\oplus\mathbb{Z}<\pi_1(M)$ if and only if  $\mathbb{Z}\oplus\mathbb{Z}<\pi^{orb}_1(Q)$.
\end{lem}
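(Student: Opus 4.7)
The plan is to deduce both implications from the short exact sequence
\[
1 \longrightarrow \pi_1(M) \longrightarrow \pi_1^{orb}(Q) \longrightarrow (\mathbb{Z}_2)^m \longrightarrow 1
\]
that was established in \autoref{section2} for the manifold double. Since $(\mathbb{Z}_2)^m$ is finite, $\pi_1(M)$ sits inside $\pi_1^{orb}(Q)$ as a normal subgroup of finite index $2^m$.

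For the forward direction, I simply observe that any $\mathbb{Z}\oplus\mathbb{Z}$ subgroup of $\pi_1(M)$ is, via the inclusion $\pi_1(M) \hookrightarrow \pi_1^{orb}(Q)$, automatically a $\mathbb{Z}\oplus\mathbb{Z}$ subgroup of $\pi_1^{orb}(Q)$. No further work is required here.

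For the backward direction, suppose $A = \langle x, y\rangle \cong \mathbb{Z}\oplus\mathbb{Z}$ sits inside $\pi_1^{orb}(Q)$. Let $\varphi \colon \pi_1^{orb}(Q) \to (\mathbb{Z}_2)^m$ denote the quotient map. Since $(\mathbb{Z}_2)^m$ has exponent $2$, the elements $x^2$ and $y^2$ lie in $\ker\varphi = \pi_1(M)$. I then need to verify that $x^2$ and $y^2$ generate a rank-two free abelian group. This is immediate because $A \cong \mathbb{Z}\oplus\mathbb{Z}$ is torsion-free, so the subgroup $\langle x^2, y^2\rangle$ of $A$ is isomorphic to $2\mathbb{Z}\oplus 2\mathbb{Z} \cong \mathbb{Z}\oplus\mathbb{Z}$. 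Hence $\pi_1(M)$ contains a copy of $\mathbb{Z}\oplus\mathbb{Z}$.

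There is no real obstacle here: the statement is a purely group-theoretic consequence of having a finite (abelian $2$-torsion) quotient. The only point worth spelling out is the torsion-freeness argument that guarantees $\langle x^2, y^2\rangle$ does not collapse to a smaller rank, which is essentially trivial inside $\mathbb{Z}\oplus\mathbb{Z}$.
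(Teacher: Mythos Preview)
Your argument is correct, and the forward direction matches the paper exactly. For the backward direction you take a more elementary route than the paper does. The paper invokes \hyperref[FTT]{Theorem B} to conclude that the $\mathbb{Z}\oplus\mathbb{Z}$ inside $\pi_1^{orb}(Q)$ comes from an $\square$-belt, writes down the explicit generators $x=s_1s_3$ and $y=s_2t_1\cdots t_k s_4 t_k^{-1}\cdots t_1^{-1}$, and then observes that $\lambda(x^2)=\lambda(y^2)=1$. You bypass Theorem~B entirely and argue directly from the exponent of $(\mathbb{Z}_2)^m$: for \emph{any} generators $x,y$ of the abstract $\mathbb{Z}\oplus\mathbb{Z}$, the squares $x^2,y^2$ land in $\ker\varphi=\pi_1(M)$ and still generate a rank-two free abelian group. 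This is cleaner and avoids a circularity risk, since in the paper the lemma is used in the proof of \autoref{nc}, which morally sits alongside Theorem~B rather than downstream of it. The paper's approach has the minor advantage of exhibiting the $\mathbb{Z}\oplus\mathbb{Z}$ in $\pi_1(M)$ geometrically (coming from a torus over the $\square$-belt), but for the bare group-theoretic statement your argument is both sufficient and preferable.
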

\begin{proof} If $\mathbb{Z}\oplus\mathbb{Z}<\pi_1(M)$, then the short exact sequence
$$1\longrightarrow \pi_1(M)\longrightarrow \pi_1^{orb}(Q)\overset{\lambda}{\longrightarrow} (\mathbb{Z}_2)^m \longrightarrow 1$$
 induces that  $\mathbb{Z}\oplus\mathbb{Z}<\pi_1(M)<\pi^{orb}_1(Q)$. Conversely, if $\mathbb{Z}\oplus\mathbb{Z}<\pi^{orb}_1(Q)$, then by \hyperref[FTT]{Theorem B}, there is an $\square$-belt in $Q$ and $\mathbb{Z}\oplus\mathbb{Z}<\pi^{orb}_1(Q)$ is generated by two pairs of disjoints facets in $\square$. Denote the generators of $\mathbb{Z}\oplus\mathbb{Z}$ by $x=s_1s_3$ and $y=s_2t_1t_2\cdots t_k s_4 t_k^{-1}\cdots t^{-1}_2t^{-1}_1$ where each $t_i$ is one of $\{t^{\pm 1}_B\}$, then $\lambda(x^2)=\lambda(y^2)=1$. Hence $x^2,y^2\in \ker \lambda\cong \pi_1(M)$. So  $\mathbb{Z}\oplus\mathbb{Z}<\pi_1(M)$.
\end{proof}

\begin{prop}\label{nc}
%Under the same assumption as Lemma~\ref{lemma-64},
If $M$ admits a strictly negative curvature, then $Q$ is flag and contains no any $\square$-belt. Specially, if $Q$ is simple polytope, then $M$ admits a strictly negative curvature if and only if $Q$ is flag and contains no any $\square$-belt.
\end{prop}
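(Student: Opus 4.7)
The plan is to split the argument into the forward implication (valid for any simple handlebody) and the converse (only claimed when $Q$ is a simple polytope).

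For the forward direction, I would argue as follows. Strictly negative sectional curvature implies non-positive curvature, so \autoref{npc} already yields that $Q$ is flag. To rule out an $\square$-belt, suppose for contradiction that $Q$ contains one. Then \hyperref[FTT]{Theorem B} supplies a subgroup $\mathbb{Z}\oplus\mathbb{Z}<\pi_1^{orb}(Q)$, and \autoref{lemma-64} transfers this to $\mathbb{Z}\oplus\mathbb{Z}<\pi_1(M)$. Since $M$ is compact (a finite cover of the compact orbifold $Q$) and orientable (\autoref{or}), Preissmann's theorem (\autoref{proposition-63}) forces every abelian subgroup of $\pi_1(M)$ to be cyclic, a contradiction.

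For the converse when $Q=P$ is a simple polytope, I would construct a piecewise hyperbolic metric of curvature $\leq -\epsilon<0$ on $M$ and then invoke the comparison theorem noted after \autoref{cct} to translate this Alexandrov bound into a Riemannian sectional curvature bound. In the polytope case the cubical decomposition $\mathcal{C}(M)$ described before \autoref{LP} has only one type of vertex, and each vertex link is combinatorially the nerve $\mathcal{N}(P)$. Flagness of $P$ is exactly flagness of $\mathcal{N}(P)$, while the absence of an $\square$-belt in $P$ amounts to the absence of prismatic $4$-circuits (Remark~\ref{sphere}(2)), i.e.\ to Siebenmann's no-$\square$ condition on $\mathcal{N}(P)$. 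The hypothesis of Gromov's theorem (\autoref{proposition-64}) is therefore met; replacing each Euclidean cube of $\mathcal{C}(M)$ by the hyperbolic cube of the same combinatorial type in $\mathbb{H}^n$ of constant curvature $-\epsilon$ for sufficiently small $\epsilon>0$ produces the desired strictly negatively curved metric on $M$.

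The main obstacle is aligning the combinatorial data of $Q$ with Gromov's criterion on vertex links. For polytopes the link is literally $\mathcal{N}(P)$ and the two combinatorial conditions translate transparently, but for a general simple handlebody the cubical decomposition of $M$ produced in the proof of \autoref{npc} has additional vertex types with links $\mathcal{N}(N^{\pm}(B_i))$ coming from thickenings of the cutting belts; the no-$\square$ condition on these extra links is not straightforwardly captured by a ``no $\square$-belt in $Q$'' statement alone. This is presumably why the ``if and only if'' is asserted only in the polytope case, while for an arbitrary simple handlebody the proposition records only the one-sided implication.
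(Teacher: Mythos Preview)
Your proposal is correct and follows essentially the same route as the paper: the forward direction uses \autoref{npc}, \hyperref[FTT]{Theorem~B}, \autoref{lemma-64}, and Preissmann's theorem in exactly the same order, and the converse for polytopes applies Gromov's criterion (\autoref{proposition-64}) to the cubical decomposition of $M$ whose vertex links are all $\mathcal{N}(P)$. Your additional remarks (invoking orientability, the Alexandrov--Riemannian comparison, and the explanation of why the argument does not extend to handlebodies) are correct but go slightly beyond what the paper's proof records; note that Preissmann's theorem only needs compactness, so the appeal to \autoref{or} is unnecessary.
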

\begin{proof}
If $M$ admits a strictly negative curvature, then by \autoref{npc}, $Q$ is flag. Furthermore, if there is an $\square$-belt in $Q$, then by  \hyperref[FTT]{Theorem B}, there is a rank-two abelian subgroup in $\pi_1^{orb}(Q)$. By \autoref{lemma-64}, there is a subgroup $\mathbb{Z}\oplus \mathbb{Z}$ in $\pi_1(M)$. By Preissmann's result (\autoref{proposition-63}), $M$ cannot admit a strictly negative curvature. Hence, $Q$ contains no any $\square$-belt.
\vskip.1cm
When $Q$ is a simple polytope, then the standard cubical cellular decomposition of $Q$ induces a cubical cellular decomposition of $M$, denoted by $\mathcal{C}(M)$, such that each point $v\in \mathcal{C}(M)$ has link $\mathcal{N}(Q)$. By a result of Gromov (\autoref{proposition-64}), if $\mathcal{N}(Q)$ is flag and contains no any $\square$-belt, then $M$ admits a strictly negative curvature.
\end{proof}
\begin{rem}
We are inclined to think that $M$ admits a strict negative curvature if a simple handlebody $Q$ is flag and contains no any $\square$-belt. But we cannot find a suitable cubical decomposition of $M$ so as to use Gromov's result. Of course, this is related to the weak hyperbolization conjecture: ``Let $N$ be a closed aspherical manifold. Then either $\pi_1(N)$ contains $\mathbb{Z}\oplus\mathbb{Z}$ or $\pi_1(N)$ is Gromov-hyperbolic". See \cite[Conjecture 20.12]{KAP}.
\end{rem}
\subsection{Hyperbolic curvature}

When $Q$ is a simple $3$-polytope, Pogorelov Theorem, which is right-angled case of Andreev Theorem \cite{And71,RHD},  states that $Q$ admits a right-angled hyperbolic structure if and only if it is flag and contains no any $4$-belt, where ``right-angled'' means that all dihedral angles are $\frac{\pi}{2}$. This  gives a combinatorial equivalent  description of the hyperbolicity of simple $3$-polytopes as a right-angled Coxeter orbifold.  Now $Q$ is also called a (right-angled) hyperbolic polyhedra in $\mathbb{H}^3$.
\vskip.2cm

 As a generalization of  $3$-dimensional hyperbolic polyhedra, a $3$-manifold with corners is {\em hyperbolic} if  its interior admits a hyperbolic metric which can be extended to the boundary such that its all faces are totally geodesic (or locally convex). Moreover we say that a $3$-manifold with corners is {\em right-angled hyperbolic} if its all dihedral angles are $\frac{\pi}{2}$.

\vskip.2cm

Notice that a hyperbolic $3$-manifold with corners is not right-angled in general. A  $3$-manifold with corners can be equipped with many different orbifold structures. A hyperbolic structure on a  $3$-manifold with corners should be compatible with an orbifold structure on it. Hence there may be different hyperbolic structures on a $3$-manifold with corners. However, the hyperbolic structure of a hyperbolic closed  $3$-orbifold (or 3-manifold) is unique by Mostow Rigidity Theorem~\cite{M73}.
 The hyperbolization of  $3$-manifold with corners corresponds to the generalization of Andreev Theorem~\cite{And71,RHD}.
 This question is still open now.
\vskip.2cm
Here we mainly consider the right-angled hyperbolicity of  simple $3$-manifolds with corners, where a simple 3-manifold with corners is given by forgetting the orbifold structure on a simple 3-orbifold.  Then  one can obtain  the same  understanding for  right-angled hyperbolicity from the following two geometric objects:
\begin{itemize}
\item[(1)] A right-angled hyperbolic simple $3$-manifold with corners;
\item[(2)] A hyperbolic simple 3-orbifold (as a  right-angled Coxeter $3$-orbifold).
\end{itemize}
Thus, a right-angled hyperbolic simple $3$-manifold with corners is a hyperbolic simple 3-orbifold, and vice versa.

\vskip.2cm

\begin{prop}\label{hc}
Let $Q$ be a  3-dimensional simple handlebody. Then
$M$ is hyperbolic if and only if $Q$ is flag and contains no any $\square$-belt.
\end{prop}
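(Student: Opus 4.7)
The plan is to split the biconditional: the necessity direction follows directly from preceding work on strictly negative curvature, while the sufficiency requires the extra input of a hyperbolization theorem. For necessity, a hyperbolic metric on $M$ has constant sectional curvature $-1$, hence is strictly negative, and \autoref{nc} immediately gives that $Q$ is flag and contains no $\square$-belt.

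For sufficiency, assume $Q$ is flag with no $\square$-belt. I would first pin down the topology of $M$: \hyperref[DJS-small]{Theorem A} makes $Q$ orbifold-aspherical so that $M$ is aspherical; \autoref{or} gives orientability; and $\pi_2(M)=0$ together with the Sphere Theorem forces $M$ to be irreducible. For atoroidality, a $\pi_1$-injective torus $T^2\hookrightarrow M$ would embed $\mathbb{Z}\oplus\mathbb{Z}$ into $\pi_1(M)$, hence via \autoref{lemma-64} into $\pi_1^{\text{orb}}(Q)$, so \hyperref[FTT]{Theorem B} would produce an $\square$-belt in $Q$, contrary to hypothesis. Thus $M$ is a closed, orientable, aspherical, irreducible, atoroidal $3$-manifold. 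The crucial step is then to upgrade these topological properties to a genuine hyperbolic metric, which I would do by invoking Otal's hyperbolization theorem~\cite{O98} applied to $Q$ itself: as observed in the remark following \autoref{cct}, the hypotheses ``flag and no $\square$-belt'' on a simple $3$-handlebody correspond precisely to Otal's ``irreducible and atoroidal $3$-manifold with corners.'' Otal's theorem then endows $Q$ with a right-angled hyperbolic structure, identifying $\widetilde Q$ with $\mathbb{H}^3$ and realizing $\pi_1^{\text{orb}}(Q)$ as a discrete cocompact subgroup of $\mathrm{Isom}(\mathbb{H}^3)$; since $\pi_1(M)$ is a torsion-free finite-index subgroup of $\pi_1^{\text{orb}}(Q)$, it acts freely and cocompactly on $\mathbb{H}^3$, and the quotient endows $M$ with a complete hyperbolic metric.

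The main obstacle is the appeal to Otal's theorem, and more precisely the verification that our combinatorial hypothesis matches the analytic hypothesis he requires; this dictionary is supplied by \autoref{proposition-61} and the pullback picture in Remark~\ref{Thm2-rem}(b), which encode $\Delta^k$- and $\square$-suborbifolds of $Q$ as essential spheres and tori in $M$. An alternative to Otal would be to apply Perelman's geometrization to $M$ directly and rule out the Seifert-fibered alternative via \hyperref[FTT]{Theorem B} again, but the Otal route is the cleanest.
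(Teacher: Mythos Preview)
Your proposal is correct but takes a genuinely different route from the paper. The paper handles both directions simultaneously by applying Thurston--Perelman hyperbolization directly to the closed manifold $M$: a closed orientable $3$-manifold is hyperbolic if and only if it is aspherical and atoroidal, and these two conditions are equivalent (via \autoref{or}, \hyperref[DJS-small]{Theorem A}, \hyperref[FTT]{Theorem B}, and \autoref{lemma-64}) to $Q$ being flag with no $\square$-belt. You instead split the biconditional, handling necessity through \autoref{nc} and sufficiency by applying Otal's hyperbolization to the orbifold $Q$ itself, then lifting the resulting right-angled hyperbolic structure to $M$ via the finite-index torsion-free subgroup $\pi_1(M)<\pi_1^{\text{orb}}(Q)$. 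Your approach yields more --- it produces the right-angled hyperbolic structure on $Q$ as part of the argument, which the paper only extracts afterward as a separate corollary --- but it requires the dictionary between ``flag with no $\square$-belt'' and Otal's ``irreducible and atoroidal with corners,'' which you rightly flag as the crux. The paper's route sidesteps this translation entirely by staying on the manifold side, where asphericity and atoroidality are already in hand; indeed, the alternative you mention at the end (Perelman on $M$ directly) is essentially what the paper does.
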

\begin{proof}
With Perelman's work together, Thurston's Hyperbolization Theorem implies that a closed orientable $3$-manifold is hyperbolic if and only if it is aspherical and atoroidal, where if there is no subgroup $\mathbb{Z}\oplus\mathbb{Z}$ in $\pi_1(M)$, then $M$ is atoroidal.  By \autoref{or}, $M$ is always orientable. Together with \hyperref[DJS-small]{Theorem A}, \hyperref[FTT]{Theorem B} and \autoref{lemma-64}, we know that $M$ is aspherical and atoroidal if and only if $Q$ is flag and contains no any $\square$-belt.
\end{proof}
%\begin{rem} In \cite{O98}, Otal considered the hyperbolization theorem on general right-angled Coxeter $3$-orbifolds (considered as $3$-manifolds with corners). An {\em irreducible and atoroidal} $3$-manifold $Q$ with corners defined in \cite[P168]{O98} is meaning that all $\Delta^2$ and $\square$ suborbifolds in  $Q$ are not belts, which is equivalent to say, $Q$ is flag and  contains no  $\square$-belts.\end{rem}
By $3$-dimensional hyperbolic manifold theory (or see \cite{O98}), $M$ is hyperbolic if and only if $Q$ is hyperbolic. Hence, we have
\begin{cor}
$Q$ is right-angled hyperbolic as a $3$-manifold with corners if and only if $Q$ is flag and contains no any $\square$-belt.
\end{cor}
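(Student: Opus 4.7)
The plan is to deduce this corollary directly from \autoref{hc}, using the equivalence between right-angled hyperbolicity of the simple $3$-handlebody $Q$ and hyperbolicity of its manifold double $M$. The key bridge is that the manifold double construction interacts naturally with reflective hyperbolic structures: reflections across totally geodesic faces extend a right-angled hyperbolic structure on $Q$ to a hyperbolic structure on $M$, and Mostow rigidity forces any hyperbolic structure on $M$ to be invariant under the deck $(\mathbb{Z}_2)^m$-action, descending to a right-angled hyperbolic structure on $Q$.

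For the forward direction, I would argue as follows. Suppose $Q$ admits a right-angled hyperbolic structure as a $3$-manifold with corners, so its interior carries a hyperbolic metric whose facets are totally geodesic and meet at right angles. Then one can build $M = Q \times (\mathbb{Z}_2)^m/\sim$ by iteratively reflecting copies of $Q$ across facets using isometries of $\mathbb{H}^3$; because all dihedral angles are $\frac{\pi}{2}$, the resulting identifications produce a well-defined hyperbolic metric (in particular, smooth at the codimension-$\geq 2$ strata). Thus $M$ is hyperbolic, and by \autoref{hc}, $Q$ is flag and contains no $\square$-belt.

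For the converse, assume $Q$ is flag and contains no $\square$-belt. By \autoref{hc}, the manifold double $M$ admits a hyperbolic structure. By Mostow rigidity~\cite{M73}, this hyperbolic structure is unique up to isometry, so the finite group $(\mathbb{Z}_2)^m$ of deck transformations of $M \to Q$ is realized (up to conjugation) by a group of isometries of $\mathbb{H}^3$. The fixed locus of each involution $e_F$ corresponding to a facet $F$ is then a totally geodesic submanifold, and the quotient orbifold $Q = M/(\mathbb{Z}_2)^m$ inherits a hyperbolic structure whose facets are totally geodesic. The right-angle condition at each codimension-two stratum follows because the local isotropy group is $(\mathbb{Z}_2)^2$ generated by two commuting reflections, whose fixed geodesic planes must meet orthogonally in $\mathbb{H}^3$. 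Hence $Q$ is right-angled hyperbolic.

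The only subtle step is the assertion that a hyperbolic structure on $M$ descends to a right-angled hyperbolic structure on $Q$; this is precisely where Mostow rigidity and the reflective nature of the $(\mathbb{Z}_2)^m$-action are essential, and I would cite \cite{O98} (the Hyperbolization Theorem for right-angled Coxeter $3$-orbifolds) as the standard reference closing this loop. Combined with \autoref{hc}, this yields the claimed equivalence.
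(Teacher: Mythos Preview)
Your proposal is correct and follows essentially the same route as the paper: the paper deduces the corollary from \autoref{hc} by invoking the equivalence ``$M$ is hyperbolic if and only if $Q$ is hyperbolic'' with a bare citation to $3$-dimensional hyperbolic manifold theory and \cite{O98}, and you have simply unpacked that equivalence (reflection across totally geodesic facets in one direction, Mostow rigidity plus \cite{O98} in the other). The only remark is that your Mostow step implicitly uses that the homotopy $(\mathbb{Z}_2)^m$-action is realized by a genuine isometric action, which is exactly what the orbifold hyperbolization in \cite{O98} provides, so your citation there is well placed.
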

%are equivalent to say that $Q$ is flag and contains no $\square$-belt.

%A simple convex 3-polytope $P$ with right-angled hyperbolic structure in $\mathbb{H}^3$ can also be presented as a hyperbolic right-angled Coxeter orbifold $\mathbb{H}^3/W_P$ where $W_P$ is the right-angled Coxeter group of $P$. Thus, the right-angled hyperbolicity of simple $3$-polytopes as a class of right-angled Coxeter $3$-orbifolds (or simple orbifolds) can completely be determined in terms of combinatorics.

%Let $Q$ be a simple $3$-handlebody and $M$ be the manifold doubles over $Q$. By Theorem A and Theorem B, it follows that $M$ is aspherical and atoroidal ($\pi_1(M)$ contains no $\mathbb{Z}\oplus\mathbb{Z}$) if and only if $Q$ is flag and contains no $\square$-belt, by Hyperbolization theorem of oriented closed $3$-manifold, if and only if $M$ is hyperbolic.
%\vskip.2cm

%In fact, the hyperbolic right-angled Coxeter $3$-orbifolds are considered in \cite{O98} by Equivalent Sphere Theorem and Equivalent Torus Theorem.

%weakly hyperbolic

For higher dimensional simple handlebody, the fundamental group of a closed hyperbolic manifold is a discrete convex-cocompact subgroup of $\text{Isom}(\mathbb{H}^n)$, and contains no subgroup $\mathbb{Z}\oplus \mathbb{Z} $. Hence, if $M$ is hyperbolic, then $Q$ must be flag and contains no any $\square$-belt. By a result in \cite[Corollary 6.11.6]{D2} there must exist a triangle or quadrilateral face in a simple polytope of dimension greater than  $4$. So
\begin{itemize}
\item A simple handlebody with  dimension greater than $4$ cannot be hyperbolic.
\end{itemize}
In 4-dimension case, it is not clear whether only dodecahedron and  $120$-cell are  hyperbolic simple $4$-polytopes, see \cite[Page 115]{D2} or \cite{GS}.
%\subsubsection{Relation to negative curvature}
\subsubsection{$3$-handlebodies with simplicial  nerve}
A {\em $3$-handlebody with simplicial  nerve} is just a simple $3$-orbifold whose underlying space is a $3$-handlebody.
Let $Q$ be a $3$-handlebody with simplicial  nerve and with genus $\mathfrak{g}>0$. Next we show that $Q$ can always be cut into a simple polytope along some
codimension-one B-belts.

\begin{lem}\label{lemma-1}
A $3$-handlebody with simplicial nerve is a simple $3$-handlebody.
\end{lem}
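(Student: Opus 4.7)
The plan is to argue by induction on the genus $\mathfrak{g}$ of $|Q|$. For the base case $\mathfrak{g}=0$, the claim is essentially definitional: $|Q|$ is a $3$-ball, the simplicial nerve $\mathcal{N}(Q)$ triangulates $\partial|Q|\cong S^2$, and together with the simplicity condition this exhibits $Q$ combinatorially as a simple $3$-polytope, which by convention is a simple $3$-handlebody of genus zero.

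For the inductive step $\mathfrak{g}\ge 1$, the main task is to produce a single codimension-one B-belt $B\subset Q$ whose excision drops the genus by one while preserving all hypotheses. I would start from a topological compressing disk $D_0\subset|Q|$, which exists because $|Q|$ is a handlebody of positive genus. Since the interior of $|Q|$ carries no singular strata, only the boundary curve $\partial D_0$ must be adjusted against the face stratification: by standard general position we may assume $\partial D_0$ misses every vertex of $\mathcal{N}(Q)$, meets every codimension-two stratum transversely in finitely many points, and lies inside each facet of $Q$ in a disjoint union of simple arcs. A finite sequence of innermost-disk and outermost-arc surgeries, performed inside the facets (each of which is a genuine $2$-disk because $Q$ is simple and $\mathcal{N}(Q)$ is simplicial), strictly simplifies the intersection pattern until the resulting disk $B$ meets each facet in arcs running between distinct edges of that facet, and its boundary is a simple polygonal loop whose successive edges lie on distinct facets and whose corners lie on codimension-two strata. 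This $B$ is a B-belt in the sense of \autoref{Def-B}, and by construction it remains a compressing disk.

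Cutting $Q$ open along $B$ yields a simple $3$-orbifold $Q'$: topologically $|Q'|$ is a handlebody of genus $\mathfrak{g}-1$, and $\mathcal{N}(Q')$ is obtained from $\mathcal{N}(Q)$ by gluing in two triangulated disks along the polygonal circuit $\partial B$, so it is still a simplicial triangulation of $\partial|Q'|$. By the inductive hypothesis $Q'$ admits $\mathfrak{g}-1$ further disjoint cutting belts reducing it to a simple $3$-polytope $P_Q$; together with $B$, these provide the $\mathfrak{g}$ disjoint cutting belts that display $Q$ as a simple $3$-handlebody.

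The hard step is the cleanup producing an honest B-belt: general position alone can leave arcs that enter and exit the same edge of a facet, or adjacent boundary edges of $\partial D_0$ lying in a common facet, each of which violates the combinatorial requirement of \autoref{Def-B} (and in particular would fail the conclusion of \autoref{special-belt} for the eventual $\square$-belt analogue). The simpliciality of $\mathcal{N}(Q)$, as opposed to mere ideal triangulation, is precisely what forces each innermost-arc surgery to strictly reduce the intersection complexity, so the cleanup terminates and the induction is well-founded. This is exactly the hypothesis whose weakening is illustrated by the bad examples flagged in \hyperref[s-63]{Subsubsection 7.3.2}.
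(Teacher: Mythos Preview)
Your overall strategy---start from a compressing disk and clean it up into a cutting belt---is the same as the paper's, and the inductive framing is a reasonable reorganisation of the paper's all-at-once argument. But the cleanup step as you describe it does not reach the $B$-belt condition of \autoref{Def-B}.

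Innermost-disk and outermost-arc surgeries performed \emph{inside a single facet} will indeed ensure that $\partial B$ is a polygonal curve meeting each facet in arcs with endpoints on distinct edges, and that consecutive edges of $\partial B$ lie on distinct facets. That makes $B$ a codimension-preserving polygonal suborbifold, but it is not yet a $B$-belt. The obstruction in \autoref{Def-B} is of a different nature: two \emph{non-adjacent} edges $f_1,f_2$ of $B$ may lie in distinct facets $F_{f_1},F_{f_2}$ of $Q$ with $F_{f_1}\cap F_{f_2}\neq\emptyset$ and with $F_{f_1}\cup F_{f_2}$ retracting onto $B$ (no hole in between). No amount of surgery confined to a single facet detects or removes this configuration, since $F_{f_1}$ and $F_{f_2}$ are different facets. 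The paper's cure is a ``push'' across the disk region in $\partial|Q|$ bounded by pieces of $f_1,f_2$ and $F_{f_1}\cap F_{f_2}$ (see the discussion around \hyperref[f3]{Figure~10}); this strictly reduces the number of edges of $B$, and simpliciality of $\mathcal{N}(Q)$ is what guarantees such a disk region exists and that the process terminates. Your final paragraph correctly singles out simpliciality as the key hypothesis, but the violations you list (arcs returning to the same edge, adjacent edges in a common facet) are not the ones that matter.

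A secondary point: in your inductive step, the cutting belts produced for $Q'$ by the inductive hypothesis may have boundary meeting the new facets $B^{\pm}$, and you have not said why, after regluing, they and $B$ form a disjoint system of $B$-belts in $Q$. The paper sidesteps this by taking a full system of compressing disks at once and modifying them simultaneously (the push also moves neighbouring $B_j$ out of the way).
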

\begin{proof}
Let $Q$ be a  $3$-handlebody of genus $\mathfrak{g}\geq 0$ with simplicial nerve. If $\mathfrak{g}=0$, then it is easy to check that $Q$ is
a simple $3$-polytope by  Steinitz Theorem and \cite[Proposition 3.4]{L}.
 If $\mathfrak{g}>0$,  let $$\{(D_i^{2}, \partial D_i^{2})\hookrightarrow (|Q|, \partial |Q|)\mid i=1,2,\cdots,\mathfrak{g}\}$$ be some disjoint compressing $2$-disks in $|Q|$  such that $|Q|$ is cut into a connected $3$-ball along those compressing $2$-disks. Considering the facial structure determined by the triangulation $\mathcal{N}(Q)$ of $\partial |Q|$, we can always do some slight deformations for the boundaries of  compressing $2$-disks on  faces of $Q$, so that $\{D_i^{2}\}$ can be modified into  some embedded sub-orbifolds $\{B_i \}$ of preserving codimension in $Q$. Each $B_i$ is a polygon.
\vskip.1cm

 Given a $B_i$, by the definition of $B$-belts, we see that $B_i$ is not an $B$-belt if and only if there must exist two non-adjacent edges  $f_1$ and $f_2$ in $B_i$ such that
 \begin{itemize}
\item[(i)] $F_{f_1}\cap F_{f_2}\neq \emptyset$ (probably $F_{f_1}$ and $F_{f_2}$ can even be the same face of $Q$);
\item[(ii)] $F_{f_1}\cup F_{f_2}$ can deformatively retract onto  $B_i$ in $|Q|$ (in fact,  $F_{f_1}\cup F_{f_2}$ can deformatively retract onto $\partial B$ in $\partial |Q|$).
\end{itemize}
where  $F_{f_1}$ and $F_{f_2}$ are two 2-faces of $Q$ that contain $f_1, f_2$ respectively.
So, if $B_i$ is not an $B$-belt, then there is no any hole
in the area $A$ in $\partial |Q|$ surrounded by $F_{f_1}, F_{f_2}$ and $B_i$.
Then, we can modify the boundary of $B_i$ by pushing the retract of $F_{f_1}\cup F_{f_2}$ into $F_{f_1}$, $F_{f_2}$ and throwing some edges of $B_i$ away, as shown in ~\hyperref[f3]{Figure 10},
\begin{figure}[h]\label{f3} \centering \def\svgwidth{0.5\textwidth}
%% Creator: Inkscape inkscape 0.92.3, www.inkscape.org
%% PDF/EPS/PS + LaTeX output extension by Johan Engelen, 2010
%% Accompanies image file '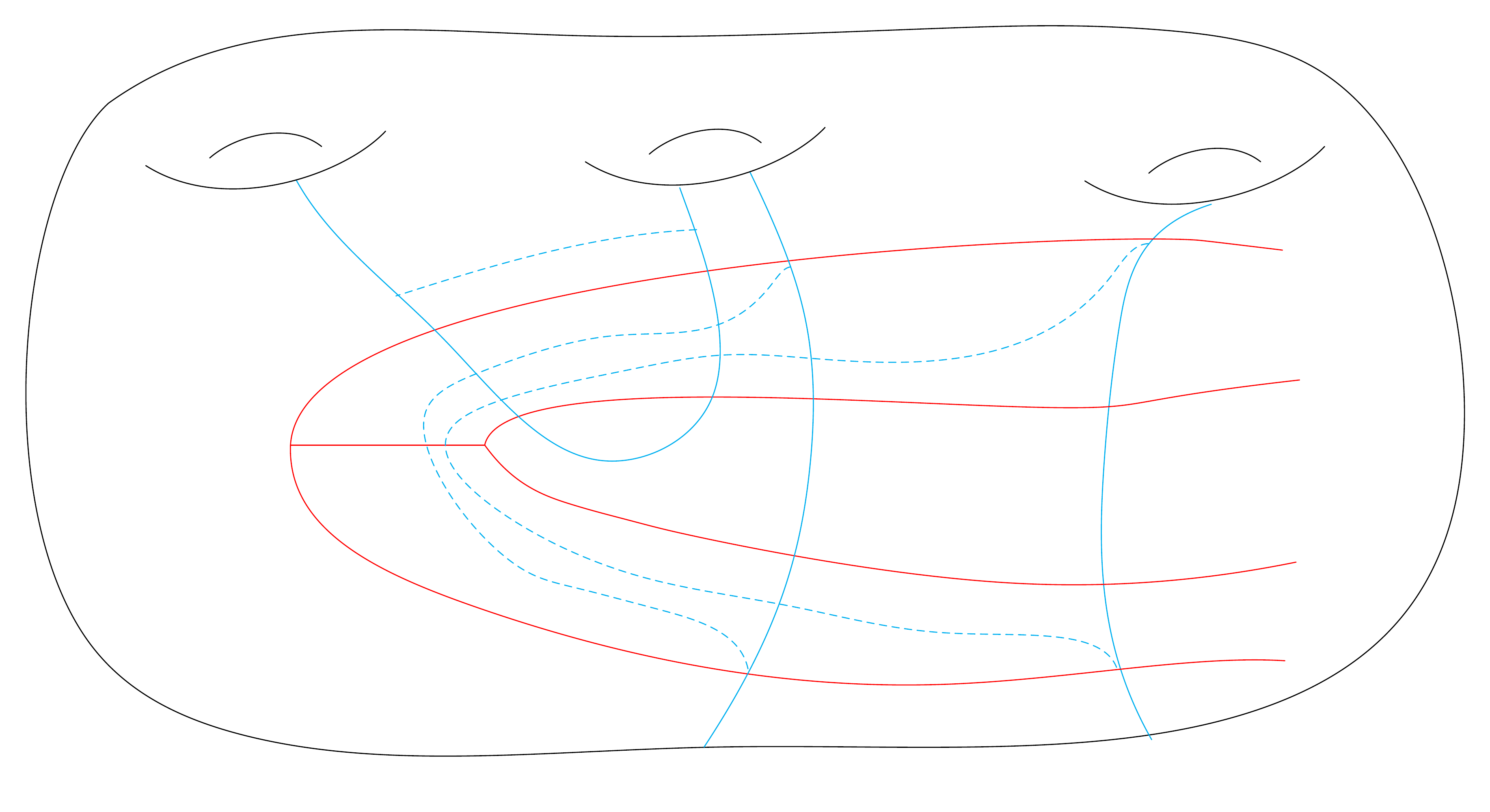' (pdf, eps, ps)
%%
%% To include the image in your LaTeX document, write
%%   \input{<filename>.pdf_tex}
%%  instead of
%%   \includegraphics{<filename>.pdf}
%% To scale the image, write
%%   \def\svgwidth{<desired width>}
%%   \input{<filename>.pdf_tex}
%%  instead of
%%   \includegraphics[width=<desired width>]{<filename>.pdf}
%%
%% Images with a different path to the parent latex file can
%% be accessed with the `import' package (which may need to be
%% installed) using
%%   \usepackage{import}
%% in the preamble, and then including the image with
%%   \import{<path to file>}{<filename>.pdf_tex}
%% Alternatively, one can specify
%%   \graphicspath{{<path to file>/}}
%% 
%% For more information, please see info/svg-inkscape on CTAN:
%%   http://tug.ctan.org/tex-archive/info/svg-inkscape
%%
\begingroup%
  \makeatletter%
  \providecommand\color[2][]{%
    \errmessage{(Inkscape) Color is used for the text in Inkscape, but the package 'color.sty' is not loaded}%
    \renewcommand\color[2][]{}%
  }%
  \providecommand\transparent[1]{%
    \errmessage{(Inkscape) Transparency is used (non-zero) for the text in Inkscape, but the package 'transparent.sty' is not loaded}%
    \renewcommand\transparent[1]{}%
  }%
  \providecommand\rotatebox[2]{#2}%
  \newcommand*\fsize{\dimexpr\f@size pt\relax}%
  \newcommand*\lineheight[1]{\fontsize{\fsize}{#1\fsize}\selectfont}%
  \ifx\svgwidth\undefined%
    \setlength{\unitlength}{1056.75bp}%
    \ifx\svgscale\undefined%
      \relax%
    \else%
      \setlength{\unitlength}{\unitlength * \real{\svgscale}}%
    \fi%
  \else%
    \setlength{\unitlength}{\svgwidth}%
  \fi%
  \global\let\svgwidth\undefined%
  \global\let\svgscale\undefined%
  \makeatother%
  \begin{picture}(1,0.54506742)%
    \lineheight{1}%
    \setlength\tabcolsep{0pt}%
    \put(0,0){\includegraphics[width=\unitlength,page=1]{G0920.pdf}}%
    \put(0.78982251,0.22727307){\color[rgb]{0,0,0}\makebox(0,0)[lt]{\lineheight{1.25}\smash{\begin{tabular}[t]{l}Push\end{tabular}}}}%
    \put(0.68489587,0.21484374){\color[rgb]{0,0,0}\makebox(0,0)[lt]{\lineheight{1.25}\smash{\begin{tabular}[t]{l}$B_i$\end{tabular}}}}%
    \put(0.47395835,0.21744793){\color[rgb]{0,0,0}\makebox(0,0)[lt]{\lineheight{1.25}\smash{\begin{tabular}[t]{l}$B_j$\end{tabular}}}}%
    \put(0.79231775,0.33007812){\color[rgb]{0,0,0}\makebox(0,0)[lt]{\lineheight{1.25}\smash{\begin{tabular}[t]{l}$F_{f_1}$\end{tabular}}}}%
    \put(0.81298861,0.12429612){\color[rgb]{0,0,0}\makebox(0,0)[lt]{\lineheight{1.25}\smash{\begin{tabular}[t]{l}$F_{f_2}$\end{tabular}}}}%
    \put(0,0){\includegraphics[width=\unitlength,page=2]{G0920.pdf}}%
    \put(0.59800243,0.20255662){\color[rgb]{0,0,0}\makebox(0,0)[lt]{\lineheight{1.25}\smash{\begin{tabular}[t]{l}$A$\end{tabular}}}}%
  \end{picture}%
\endgroup%
 \caption{ Modifying the boundary of $B_i$. }
\end{figure}
 so that one can obtain a new $B'_i$ with fewer edges which  intersects transversely with  $F_{f_1}\cap F_{f_2}$. In particular, if $F_{f_1}=F_{f_2}$, then $f_1$ and $f_2$ will become the same edge in $B'_i$, and if $F_{f_1}\not=F_{f_2}$ then $f_1$ is adjacent to $f_2$ in $B'_i$.
In addition, if there is also another  sub-orbifold $B_j$ which intersects with the area $A$ in $\partial |Q|$, this  means that $B_j$ is not a belt, too. The above "pushing" process will move the boundary of $B_j$ out from the area $A$ and modify $B_j$ into $B'_j$ with fewer edges such that  $B'_i\cap B'_j=\emptyset$.
 Since $B_i$ is a polygon with finite edges, this process can end after a finite number of steps until one has modified $B_i$ into an  $B$-belt  which does not intersect with other $B_j$.
\vskip.1cm
We can perform the same procedure to other non $B$-belts in $\{B_j\}_{j\not=i}$. Finally one can obtain a set of disjoint cutting belts  such that $Q$ is cut open into a simple $3$-polytope along those cutting belts, implying that $Q$ is a simple $3$-handlebody.
\end{proof}
Hence,  \autoref{hc} still holds for $3$-handlebodies with simplicial  nerve.
\subsubsection{$3$-handlebodies with  ideal  nerve}\label{s-63}
We say that $Q$ is  a {\em $3$-handlebody with ideal nerve} if $Q$ is a right-angled Coxeter 3-orbifold such  that its underlying space $|Q|$ is a 3-handlebody and
  its nerve  is  an ideal triangulation of the boundary $\partial |Q|$.
\vskip.2cm

Now let $Q$  be a  $3$-handlebody with ideal nerve. Then, by the definition of ideal triangulation (\cite[Definition 2.6]{FST08}),  the interior of each face of $Q$ is also contractible.
  On the facial structure of $Q$, there are three possible cases:
\begin{itemize}
\item Some 2-faces of $Q$ are henagons (i.e., 2-faces with only one point of codimension 3 in $Q$, see (a) in \hyperref[Figure-100]{Figure 11}) or digons (i.e., 2-faces with only two points of codimension 3 in $Q$, see (b) in \hyperref[Figure-100]{Figure 11});
\item There may be some 2-faces with self-intersection (see (c) in \hyperref[Figure-100]{Figure 11});
\item The intersection of two 2-faces may be not connected (see (d) in \hyperref[Figure-100]{Figure 11}).
\end{itemize}

\begin{figure}[h]\label{Figure-100} \centering \def\svgwidth{0.6\textwidth} %% Creator: Inkscape inkscape 0.92.3, www.inkscape.org
%% PDF/EPS/PS + LaTeX output extension by Johan Engelen, 2010
%% Accompanies image file '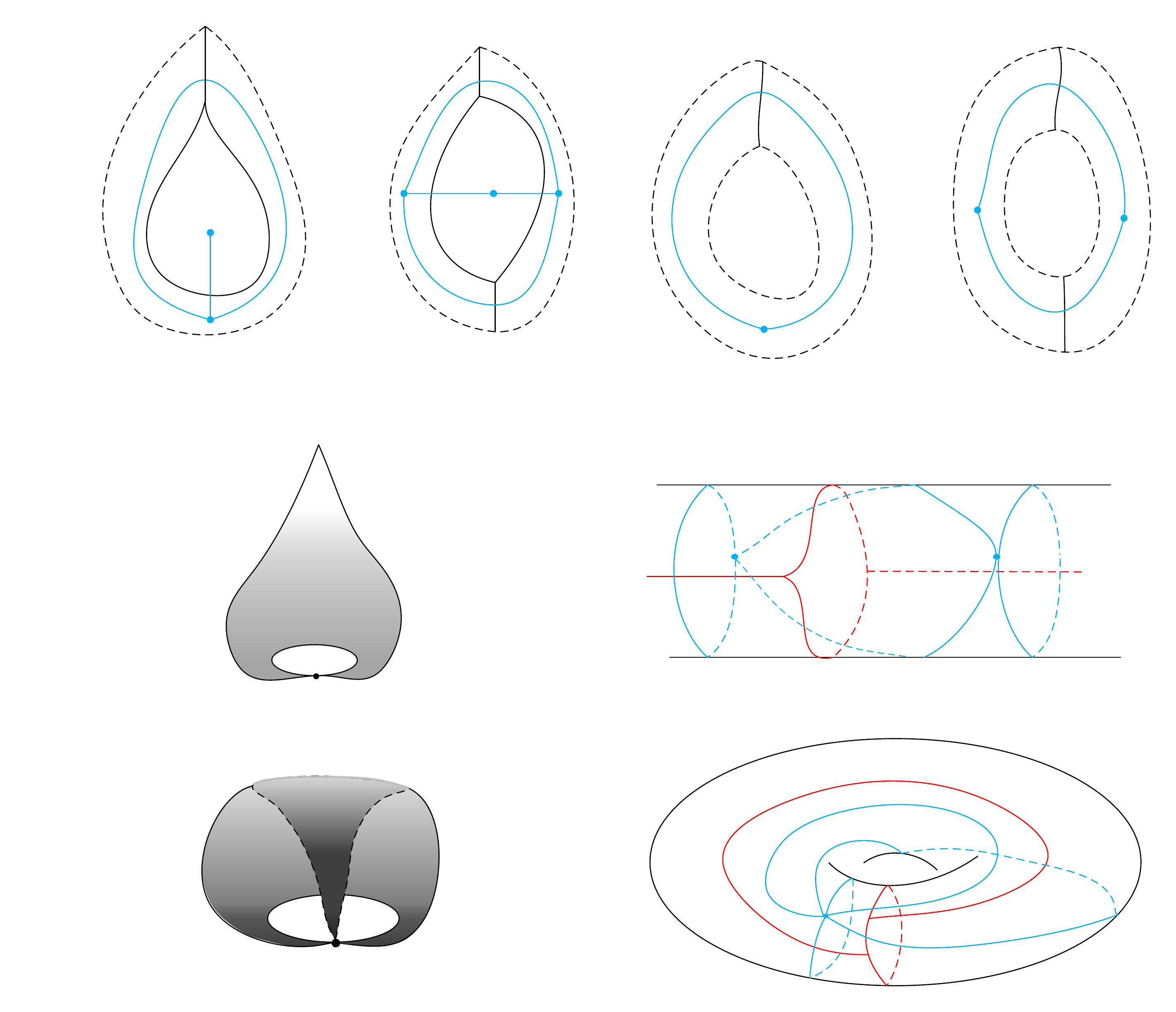' (pdf, eps, ps)
%%
%% To include the image in your LaTeX document, write
%%   \input{<filename>.pdf_tex}
%%  instead of
%%   \includegraphics{<filename>.pdf}
%% To scale the image, write
%%   \def\svgwidth{<desired width>}
%%   \input{<filename>.pdf_tex}
%%  instead of
%%   \includegraphics[width=<desired width>]{<filename>.pdf}
%%
%% Images with a different path to the parent latex file can
%% be accessed with the `import' package (which may need to be
%% installed) using
%%   \usepackage{import}
%% in the preamble, and then including the image with
%%   \import{<path to file>}{<filename>.pdf_tex}
%% Alternatively, one can specify
%%   \graphicspath{{<path to file>/}}
%% 
%% For more information, please see info/svg-inkscape on CTAN:
%%   http://tug.ctan.org/tex-archive/info/svg-inkscape
%%
\begingroup%
  \makeatletter%
  \providecommand\color[2][]{%
    \errmessage{(Inkscape) Color is used for the text in Inkscape, but the package 'color.sty' is not loaded}%
    \renewcommand\color[2][]{}%
  }%
  \providecommand\transparent[1]{%
    \errmessage{(Inkscape) Transparency is used (non-zero) for the text in Inkscape, but the package 'transparent.sty' is not loaded}%
    \renewcommand\transparent[1]{}%
  }%
  \providecommand\rotatebox[2]{#2}%
  \newcommand*\fsize{\dimexpr\f@size pt\relax}%
  \newcommand*\lineheight[1]{\fontsize{\fsize}{#1\fsize}\selectfont}%
  \ifx\svgwidth\undefined%
    \setlength{\unitlength}{819bp}%
    \ifx\svgscale\undefined%
      \relax%
    \else%
      \setlength{\unitlength}{\unitlength * \real{\svgscale}}%
    \fi%
  \else%
    \setlength{\unitlength}{\svgwidth}%
  \fi%
  \global\let\svgwidth\undefined%
  \global\let\svgscale\undefined%
  \makeatother%
  \begin{picture}(1,0.86080586)%
    \lineheight{1}%
    \setlength\tabcolsep{0pt}%
    \put(0,0){\includegraphics[width=\unitlength,page=1]{G0913.pdf}}%
    \put(0.16829015,0.52292461){\color[rgb]{0,0,0}\makebox(0,0)[lt]{\lineheight{1.25}\smash{\begin{tabular}[t]{l}(a)\end{tabular}}}}%
    \put(0.40944159,0.51647672){\color[rgb]{0,0,0}\makebox(0,0)[lt]{\lineheight{1.25}\smash{\begin{tabular}[t]{l}(b)\end{tabular}}}}%
    \put(0.63926328,0.50308867){\color[rgb]{0,0,0}\makebox(0,0)[lt]{\lineheight{1.25}\smash{\begin{tabular}[t]{l}(c)\end{tabular}}}}%
    \put(0.89798452,0.50567175){\color[rgb]{0,0,0}\makebox(0,0)[lt]{\lineheight{1.25}\smash{\begin{tabular}[t]{l}(d)\end{tabular}}}}%
    \put(0.73839577,0.25991513){\color[rgb]{0,0,0}\makebox(0,0)[lt]{\lineheight{1.25}\smash{\begin{tabular}[t]{l}(e)\end{tabular}}}}%
    \put(0.74481125,-0.01779095){\color[rgb]{0,0,0}\makebox(0,0)[lt]{\lineheight{1.25}\smash{\begin{tabular}[t]{l}(f)\end{tabular}}}}%
    \put(0.16172781,-0.01337252){\color[rgb]{0,0,0}\makebox(0,0)[lt]{\lineheight{1.25}\smash{\begin{tabular}[t]{l}Ideal triangles \end{tabular}}}}%
    \put(0,0){\includegraphics[width=\unitlength,page=2]{G0913.pdf}}%
  \end{picture}%
\endgroup%
 \caption{Ideal nerves.} \end{figure}

If there is a henagon 2-face of $Q$, then it gives a self-folded ideal triangle. For example, see the blue part of (a) in \hyperref[Figure-100]{Figure 11}.
In general, if there is a henagon 2-suborbifold in $Q$, then the nerve of associated faces may give some ideal triangles,  such as (e) and (f)  in \hyperref[Figure-100]{Figure 11}. In particular, the nerve of (f)   contains only one vertex and two ideal triangles glued along their three edges as shown in \hyperref[Figure-100]{Figure 11}.
All those cases agree with the definition of ideal triangulations in \cite[Definition 2.6]{FST08}.

\begin{lem}\label{Lemma-62}
Let $Q$ be a  $3$-handlebody with ideal nerve. Then $Q$ is very good if and only if it does not contain a henagon 2-suborbifold.
\end{lem}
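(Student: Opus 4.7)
My plan is to characterize very-goodness via Davis's basic construction \cite[Proposition 10.1.10]{D2}: a right-angled Coxeter orbifold $Q$ admits a finite manifold cover $\mathcal{U}(Q,(\mathbb{Z}_2)^k)$ for some characteristic map $\lambda:\mathcal{F}(Q)\to(\mathbb{Z}_2)^k$ precisely when, at every codimension-$d$ face $f$ of $Q$, the images under $\lambda$ of the $d$ local facets meeting at $f$ are linearly independent. A crucial subtlety in the ideal-nerve setting is that distinct local facets at $f$ may coincide as global faces of $Q$, and this is what will drive the obstruction.

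For the forward direction I would argue by contradiction. Suppose $F$ is a henagon $2$-suborbifold with unique codim-$3$ vertex $v$; then the single edge of $F$ is a loop at $v$, so two of the three local facets meeting at $v$ both belong to $F$ (the third being some other face $F'$). The natural map from the local group $G_v\cong(\mathbb{Z}_2)^3$ into $\pi_1^{\mathrm{orb}}(Q)$ then sends the three generating reflections to $s_F,s_F,s_{F'}$, and its image has order at most $4$. Hence this map has nontrivial kernel, and by orbifold covering theory every orbifold cover of $Q$ picks up a nontrivial local group above $v$; so $Q$ is not even good, let alone very good.

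For the converse I would take $m=|\mathcal{F}(Q)|$ and use the tautological coloring $\lambda(F_i)=e_i\in(\mathbb{Z}_2)^m$. Since $\{e_i\}$ is a basis, the required independence at a face $f$ reduces to the distinctness of the local facets at $f$ as global faces of $Q$. The assertion is that the no-henagon hypothesis guarantees this distinctness: if some global $2$-face $F$ appeared as two different local facets at a codim-$d$ stratum of $Q$, then analyzing the star of $F^*$ in $\mathcal{N}(Q)$ would produce a self-folded simplex which, dualized back to $Q$, carves out a $2$-suborbifold with a single codim-$3$ point — precisely a henagon. Consequently, under the no-henagon hypothesis, $\mathcal{U}(Q,(\mathbb{Z}_2)^m)$ is a finite manifold cover.

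The main delicacy lies in the combinatorial translation sketched above: making precise how a local self-coincidence of $F$ at a codim-$d$ stratum corresponds to a self-fold in the ideal triangulation $\mathcal{N}(Q)$, and how that self-fold cuts out a $2$-suborbifold with exactly one codim-$3$ point. The key input is that in a $3$-handlebody the link of every codim-$3$ point is a $2$-simplex of $\mathcal{N}(Q)$ with possibly self-identified vertices, so the only way its three corners can fail to encode three distinct global faces of $Q$ is via a self-fold that reads off a henagon.
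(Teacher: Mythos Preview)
Your forward direction is essentially right, though phrased for a henagon $2$-\emph{face} rather than a henagon $2$-\emph{suborbifold}: for a general henagon suborbifold $S$ the corner lies on a codimension-$2$ edge of $Q$, not a codimension-$3$ vertex. The cleanest version is simply that a henagon is an intrinsically bad $2$-orbifold, and the preimage of any suborbifold under a finite manifold cover is again a manifold; so a very good $Q$ can contain no bad $2$-suborbifold.

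The converse, however, has a genuine gap. Your plan hinges on the implication ``no henagon $2$-suborbifold $\Longrightarrow$ at every codimension-$d$ face the $d$ local facets are distinct global faces,'' but this implication is false for ideal-nerve handlebodies. A facet $F$ may self-intersect along a codimension-$2$ edge $e$ (case~(c) in the paper's Figure~11) without forcing a henagon suborbifold: the small transverse disk to $e$ is a \emph{digon} (two mirror arcs, two corners), which is a perfectly good $2$-orbifold, and the loop in $F$ that crosses $e$ exactly once need not bound a disk in the handlebody $|Q|$. In such a situation the tautological coloring sends both local facets at $e$ to the same basis vector, the independence condition fails, and $\mathcal U(Q,(\mathbb Z_2)^m)$ is not a manifold. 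The paper itself remarks (just after this lemma) that $Q$ is generally not ``nice'' in Davis's sense, so the basic construction of~(\ref{E1}) does not directly apply.

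The paper sidesteps this entirely by invoking the Morgan--Kato theorem \cite[Theorem~6.14]{KAP}: a compact locally reflective $3$-orbifold with no bad $2$-suborbifold is very good. In the right-angled setting the only bad $2$-suborbifolds are henagons, so the absence of henagon suborbifolds gives very-goodness in one stroke. To rescue your constructive approach you would first have to pass to a finite cover of $Q$ that is simple (resolving the self-intersections and multiple edges), which is exactly what the paper does in the paragraph following the lemma---but establishing the existence of such a cover is itself nontrivial and is not part of your sketch.
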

\begin{proof}
By applying a theorem of Morgan or Kato~\cite[Theorem 6.14]{KAP}, each compact locally reflective $3$-orbifold that contains no bad  $2$-suborbifolds is very good. This means that if there is no  henagon 2-suborbifold in $Q$, then $Q$ is very good. Conversely, if there is a henagon 2-suborbifold in $Q$, then it is obvious that $Q$ is bad.
\end{proof}
Hence, if there is a henagon 2-suborbifold of $Q$, then $Q$  cannot be hyperbolic.

\vskip .2cm
Suppose that $Q$ contains no henagon 2-suborbifolds. Then, by \autoref{Lemma-62}, $Q$ can be covered finitely by a closed $3$-manifold $M$.
In general, $Q$ is not nice in the sense of Davis \cite[Page 180]{D2}, thus there is no natural manifold double defined as in (\ref{E1}) for $Q$.
\vskip.2cm
A digon $2$-suborbifold in $Q$ is said to be
{\em essential} if its two vertices are not contained in a unique edge of $Q$.
If there is an {essential}
digon 2-suborbifold in $Q$, then its nerve $\mathcal{N}(Q)$ will  contain two simplices  with  common vertices. See (d) in \hyperref[Figure-100]{Figure 9}.

\begin{lem}
Let $Q$ be a  $3$-handlebody with ideal nerve. Assume that there is no henagon suborbifold in $Q$, and $M$ is a covering manifold over $Q$. If $Q$ contains an {essential}  digon suborbifold,   then $M$ is reducible.
\end{lem}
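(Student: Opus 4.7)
The plan is to construct an essential embedded $2$-sphere in $M$ by pulling back the essential digon $D$ along the covering map $\pi\colon M\to Q$.

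First, I would identify $D$ as a spherical $2$-orbifold. Denote by $F_1,F_2$ the two facets of $Q$ carrying the two edges of $D$. Because the inclusion $D\hookrightarrow Q$ preserves codimensions and $Q$ is right-angled Coxeter, each vertex of $D$ is a codimension-two point of $Q$ lying on $F_1\cap F_2$, with local group $(\mathbb{Z}_2)^2$ generated by the reflections across $F_1$ and $F_2$; the interior of each edge of $D$ is codimension-one in $Q$ with local group $\mathbb{Z}_2$. Hence $D$ is the right-angled spherical orbifold $S^2/(\mathbb{Z}_2\oplus\mathbb{Z}_2)$, where $(\mathbb{Z}_2)^2$ acts by two commuting reflections across orthogonal great circles. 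In particular, $\pi_1^{\mathrm{orb}}(D)\cong(\mathbb{Z}_2)^2$, and since its only torsion-free subgroup is trivial, every orbifold cover of $D$ by a closed manifold is a disjoint union of $2$-spheres.

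Applying this to $\pi$, the preimage $\pi^{-1}(D)\subset M$ is a closed embedded $2$-submanifold, each of whose connected components is an embedded $2$-sphere in $M$. It then suffices to show that some such component $S$ does not bound a $3$-ball in $M$. I argue by contradiction: suppose $S=\partial B$ for a $3$-ball $B\subset M$. The setwise stabilizer $G_S\le\mathrm{Deck}(\pi)$ of $S$ preserves $B$, since a homeomorphism cannot swap a $3$-ball with its non-ball complement (the exceptional case $M=S^3$ is handled separately, using that an essential digon forces $\pi_1^{\mathrm{orb}}(Q)$ to be infinite in the generic situation). Consequently $\pi(B)=B/G_S$ is a right-angled Coxeter sub-orbifold of $Q$ whose underlying space is a $3$-ball and whose non-mirror boundary is precisely $D$; its two mirror facets are disks inside $F_1$ and $F_2$, and these two facet-disks must meet along a single edge $\gamma$ of $\pi(B)$ joining the two vertices $v_1,v_2$ of $D$. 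Since $\gamma\subset F_1\cap F_2$, the vertices $v_1,v_2$ then lie in a common connected component of $F_1\cap F_2$, i.e.\ in a unique edge of $Q$, contradicting the essentiality of $D$. Hence $S$ does not bound, so $S$ is an essential $2$-sphere and $M$ is reducible.

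The main obstacle is the last step: verifying rigorously that $G_S$ preserves $B$ (including dispensing with the $M=S^3$ case), and that $B/G_S$ has exactly the asserted pillow combinatorics---a $3$-ball with two mirror facets from $F_1,F_2$ meeting along a single edge connecting $v_1$ to $v_2$, with no intervening facets or vertices. This requires a careful local analysis of the $G_S$-action on $B$ near the two corner points of $B$ lying above $v_1,v_2$, combined with the simplicity of $Q$ and the topological triviality of the $3$-ball $B$.
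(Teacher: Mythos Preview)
Your strategy---lift the digon to a union of $2$-spheres in $M$ and argue that one of them is essential---is sound and in fact more direct than the paper's route, which instead passes to the double cover $D_Q$ of $Q$ along the facet $F_1$, observes that the two copies of $F_2$ glue to an \emph{annulus facet} in $D_Q$, and then uses this annulus to exhibit the manifold double $M'$ of $D_Q$ as a nontrivial connected sum; reducibility of $Q$ (and hence of every manifold cover $M$) is then read off from $M'$. Your argument avoids this auxiliary cover entirely and works with the given $M$, which is conceptually cleaner.

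That said, two steps in your write-up are genuine gaps rather than routine details. First, you implicitly assume that $\pi(B)=B/G_S$, i.e.\ that translates $gB$ with $g\notin G_S$ are disjoint from $B$. This fails for an arbitrary component $S$: there may well be translates $gS$ strictly inside $B$, in which case $gB\subsetneq B$ and the quotient description breaks down. The standard fix is to assume, for contradiction, that \emph{every} component of $\pi^{-1}(D)$ bounds a ball, and then pass to an \emph{innermost} such ball (one containing no other translate $gS$ in its interior). Local finiteness of the deck action on the compact set $B$ guarantees this exists. For that innermost $B$ one does get $\pi|_B\colon B\to B/G_S$, and your pillow analysis goes through via the equivariant classification of $(\mathbb{Z}_2)^2$-actions on $B^3$.

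Second, the case $M=S^3$ cannot be dismissed as you do. If $M=S^3$ then $M$ is irreducible and the conclusion is simply false, so you must rule this out a priori. Your sentence ``an essential digon forces $\pi_1^{\mathrm{orb}}(Q)$ to be infinite in the generic situation'' is not an argument. One honest way to handle it: if $D$ is essential and separates $Q$ into $Q_1,Q_2$, neither piece can be the trivial pillow $B^3/(\mathbb{Z}_2)^2$ (else the central edge of the pillow would join $v_1$ to $v_2$ inside a single edge of $Q$, contradicting essentiality); hence each inclusion $(\mathbb{Z}_2)^2\cong\pi_1^{\mathrm{orb}}(D)\hookrightarrow\pi_1^{\mathrm{orb}}(Q_i)$ is proper, and the amalgamated product is infinite. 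If $D$ is nonseparating you get an HNN extension, automatically infinite. Either way $\pi_1^{\mathrm{orb}}(Q)$ is infinite, so no manifold cover is $S^3$. You should make this explicit rather than hedge.
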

\begin{proof}
Assume that two edges of a digon are contained in two 2-faces $F_1$ and $F_2$ of $Q$. Then we consider the double cover of $Q$, denoted by $D_Q$,  which is obtained by gluing two copies of $Q$ along $F_1$. At the same time, two copies of $F_2$
are also glued along $F_1\cap F_2$, giving  an annulus in $D_Q$.  Let $M'$ be a  manifold double over $D_Q$. Then $M'$ can be decomposed into the connected sum of some $3$-manifolds, which implies that $M'$ is reducible. Hence, $D_Q$ and $Q$ are reducible. So $M$ is reducible.
\end{proof}

 A digon 2-face of $Q$ will give an essential digon 2-suborbifold in $Q$ unless that $Q$ is a trihedron. Thus in this cases $Q$ is reducible as well.
Therefore, if there is a  henagon 2-suborbifold or an essential digon 2-suborbifold in $Q$, then $Q$ cannot be hyperbolic.

\vskip .2cm

Next, suppose that $Q$ is not a  trihedron and
contains no henagon and essential digon 2-suborbifolds. If there are   some 2-faces with self-intersection or
the intersection of two 2-faces is not connected,
then  we can always construct some simple orbifold covers of $Q$.
 In fact,  we can use some copies of $Q$ to construct a covering space of $Q$ as follows:  first we cut open each  of copies by using a fixed 2-suborbifold $B$, and then form a connected handlebody $\widehat{Q}$  by attaching them together along those new facets produced by  $B$.
 If necessary, we can choose enough copies of $Q$  so as to make sure that this connected handlebody is simple, and is exactly the required covering space of $Q$. Applying \hyperref[T1]{Theorem A} gives

\begin{cor}\label{bg}
A  $3$-handlebody with ideal nerve   is hyperbolic if and only if  it is not trihedron, tetrahedron and contains no $\triangle^2, \square$-belts and no henagon or essential digon 2-suborbifolds.
\end{cor}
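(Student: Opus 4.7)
The plan is to split into the two implications and lean on the results already established for simple $3$-handlebodies (Corollary~\ref{cct}(iii), together with Proposition~\ref{hc} and its corollary), using the simple orbifold cover $\widehat{Q}$ mentioned in the paragraph just preceding the statement. First I would set up notation: write $Q$ for a $3$-handlebody with ideal nerve, and denote by $M\to Q$ any finite manifold cover (which exists unless $Q$ contains a henagon suborbifold, by \autoref{Lemma-62}).

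For the necessity direction, assume $Q$ is hyperbolic. Then $Q$ is very good, so it contains no henagon $2$-suborbifold by \autoref{Lemma-62}. The manifold cover $M$ is a closed orientable hyperbolic $3$-manifold, hence irreducible and atoroidal, and $\pi_1(M)$ contains no $\mathbb{Z}\oplus\mathbb{Z}$ subgroup. An essential digon $2$-suborbifold would, by the preceding lemma, force $M$ to be reducible; so there is none. A $\Delta^2$-belt in $Q$ would pull back (via the manifold cover) to an essential $2$-sphere in $M$, contradicting irreducibility (this is the pullback argument of Remark~\ref{Thm2-rem}(b)). A $\square$-belt in $Q$ would similarly produce an essential $T^2$ in $M$, or equivalently, by \autoref{lemma-64} applied to any simple subcover, a $\mathbb{Z}\oplus\mathbb{Z}$ subgroup of the fundamental group, contradicting atoroidality. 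Finally, direct inspection (or Pogorelov's theorem for the polyhedral cases) rules out trihedra and tetrahedra, since neither supports a right-angled hyperbolic structure.

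For the sufficiency direction, assume that $Q$ is not a trihedron or tetrahedron, is free of henagon and essential digon suborbifolds, and contains no $\Delta^2$- or $\square$-belt. The strategy is to replace $Q$ by a finite simple orbifold cover $\widehat{Q}$ to which \autoref{hc} applies. Following the construction sketched just above the statement, one takes finitely many copies of $Q$ and glues them successively along the facets that either self-intersect or intersect another facet in a disconnected set; the absence of henagons and essential digons ensures this can be arranged so that the resulting $\widehat{Q}$ has a \emph{simplicial} nerve (equivalently, is a simple $3$-handlebody by \autoref{lemma-1}). The key observation is that the local combinatorial conditions (flagness and no $\square$-belt) pass from $Q$ to $\widehat{Q}$: any $\Delta^2$- or $\square$-belt in $\widehat{Q}$ would project to a belt of the same type, or to one of the excluded degenerate suborbifolds, in $Q$. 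Therefore $\widehat{Q}$ is a flag simple $3$-handlebody without $\square$-belt, and \autoref{hc} together with the subsequent corollary gives that $\widehat{Q}$ is right-angled hyperbolic. Since the covering $\widehat{Q}\to Q$ is finite and orbifold, Mostow rigidity (or simply pulling back the hyperbolic metric through the deck action) transfers the hyperbolic structure to $Q$.

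The main obstacle I expect is the construction and verification of the cover $\widehat{Q}$: one needs to check carefully that some finite sequence of gluings along ``bad'' $2$-suborbifolds actually produces a simple orbifold (rather than merely shrinking the pathology) and that belts of $\widehat{Q}$ descend to combinatorial objects of the listed types in $Q$. The henagon and essential-digon exclusions are precisely what is needed to avoid creating new $\Delta^2$- or $\square$-belts during the gluing process, and this bookkeeping is where the argument is most delicate; after that, the appeal to Proposition~\ref{hc} is immediate.
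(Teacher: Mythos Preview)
Your proposal is correct and follows essentially the same route as the paper: reduce to the simple case by constructing a finite simple orbifold cover $\widehat{Q}$, apply \autoref{hc} there, and then pass back to $Q$; the paper's ``proof'' is really just the paragraph preceding the corollary together with the words ``Applying \hyperref[DJS-small]{Theorem A} gives'', so your write-up is in fact more detailed than what appears in the paper. One small wording point: in the last step the hyperbolic structure \emph{descends} from $\widehat{Q}$ to $Q$ (the deck group acts by isometries, so the quotient inherits the metric), rather than being ``pulled back''; equivalently, one uses that a manifold cover of $\widehat{Q}$ is also a manifold cover of $Q$ and invokes the equivalence ``$M$ hyperbolic $\Leftrightarrow$ $Q$ hyperbolic'' from \cite{O98} exactly as the paper does after \autoref{hc}.
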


\begin{rem}\label{ideal nerve}
 Let $Q$ be a  $3$-handlebody with ideal nerve.
We can define henagon $2$-suborbifolds and  essential digons $2$-suborbifolds in $Q$ as $1$- and $2$-belts of $Q$, respectively.
 Then by \autoref{Lemma-62}, $Q$ is very good if and only if $Q$ contains no $1$-belts. An easy argument gives that a very good  $Q$ is  flag if and only if  it is not  trihedron and  tetrahedron  (i.e.,  $S^3/(\mathbb{Z}_2)^3$ and $ S^3/(\mathbb{Z}_2)^4$) and contains no $2$- and $3$-belts (i.e.,  $\pi_1$-injective $S^2/(\mathbb{Z}_2)^2$- and $S^2/(\mathbb{Z}_2)^3$-suborbifolds).
Furthermore, a very good flag $Q$ is hyperbolic if and only if it contains no $4$-belts (i.e., $\pi_1$-injective $T^2/(\mathbb{Z}_2)^2$-suborbifolds).
Thus, a right-angled Coxeter $3$-handlebody with ideal nerve except trihedron and  tetrahedron  is hyperbolic if and only if it contains no $1,2,3,4$-belts. %In other words, in this case the topological conditions in Hyperbolization Conjecture of $3$-orbifolds \cite[Conjecture 6.8]{BMP} can also be replaced by  combinatorial conditions.
\end{rem}
\subsubsection{Example of non-simple 3-handlebody}\label{s-64}
Let $P$ be the product of a pentagon and $[0,1]$.
Gluing two opposite pentagons of $P$ together such that its diagonal vertices coincide with each other gives a right-angled
Coxeter 3-orbifold with its underlying space as a solid torus, denoted by $Q$. Then  $Q$ is a Seifert $3$-orbifold. Thus it cannot be hyperbolic. This is because each embedding annulus 2-facet is an obstruction.
\subsection{Positive (scalar) curvature}
A simple polytope $P$ is {\em two-neighborly} if any two facets of $P$ has a nonempty intersection.
So it is clear that $P$ is two-neighborly if and only if it contains no any $I$-belt.
 In addition, we also know from \cite[Proposition 2.1]{SMY} that $P$ is two-neighborly if and only if its manifold double $M$ is simply connected.  Thus, we have that

\begin{lem} \label{neib} Let $P$ be a simple polytope. Then the following statements are equivalent.
\begin{itemize}
\item $P$ is two-neighborly;
\item $P$ contains no any $I$-belt;
\item its manifold double $M$ is simply connected.
\end{itemize}
\end{lem}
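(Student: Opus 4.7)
The plan is to verify the three implications by first disposing of the easy equivalence $(1) \Leftrightarrow (2)$ directly from the belt definition, and then to handle $(1) \Leftrightarrow (3)$ either by invoking \cite[Proposition 2.1]{SMY} verbatim or by giving a short group-theoretic argument based on the HNN/Coxeter presentation developed earlier.

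For $(1) \Leftrightarrow (2)$, I would go back to \autoref{Def-B}. An $I$-belt in $P$ is by definition an embedded simple $1$-suborbifold $i\colon I = [0,1] \hookrightarrow P$ preserving codimension, whose two endpoints (the facets of $I$) land in the interiors of two facets $F_1, F_2$ of $P$. Since $|P|$ is a $3$-ball (indeed a contractible ball in any dimension), the union $F_1 \cup F_2$ can always deformatively retract onto $i(I)$ in $|P|$, so the defining condition of a $B$-belt reduces exactly to $F_1 \cap F_2 = \emptyset$. Conversely, any two disjoint facets $F_1, F_2$ can be joined by an embedded arc transverse to the boundary, producing an $I$-belt. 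Thus $P$ has no $I$-belt iff any two facets meet, i.e., $P$ is two-neighborly.

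For $(1) \Leftrightarrow (3)$, the cleanest route is to use the short exact sequence
\[
1 \longrightarrow \pi_1(M) \longrightarrow \pi_1^{\text{orb}}(P) \overset{\lambda_*}{\longrightarrow} (\mathbb{Z}_2)^m \longrightarrow 1
\]
established in the preliminaries, where $\pi_1^{\text{orb}}(P) = W_P$ is the right-angled Coxeter group of $P$ with generators $\{s_F\}_{F \in \mathcal{F}(P)}$. The map $\lambda_*$ sends each $s_F$ to the corresponding standard generator of $(\mathbb{Z}_2)^m$, so its kernel $\pi_1(M)$ is exactly the normal subgroup of $W_P$ generated by the commutators $[s_F, s_{F'}]$ for $F, F' \in \mathcal{F}(P)$. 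Now in $W_P$, one already has the relation $(s_Fs_{F'})^2 = 1$, hence $[s_F, s_{F'}] = 1$, precisely when $F \cap F' \neq \emptyset$. Therefore $\pi_1(M)$ is trivial iff every pair $F, F'$ satisfies $F \cap F' \neq \emptyset$, which is exactly two-neighborliness.

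No step looks like a serious obstacle: the first equivalence is a definitional unpacking, and the second reduces to identifying the kernel generators of a Coxeter-group quotient, which is immediate from Tits' theorem (\autoref{Tits}) and the normal form for reduced words. The only subtlety worth flagging is the retraction clause in the definition of a $B$-belt, which is automatically harmless for a simple polytope but would have to be checked with care in the handlebody setting; for polytopes it reduces to the trivial statement that $F_1 \cup F_2$ retracts onto any arc joining them in the ball $|P|$.
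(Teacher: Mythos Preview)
Your proof is correct. For $(1)\Leftrightarrow(2)$ you and the paper do the same thing (the paper merely says ``it is clear'' and you spell out the belt definition). For $(1)\Leftrightarrow(3)$ the paper does not argue at all and simply cites \cite[Proposition~2.1]{SMY}, whereas you give a self-contained proof via the short exact sequence $1\to\pi_1(M)\to W_P\to(\mathbb{Z}_2)^m\to 1$: the kernel is the normal closure of the extra relators $(s_Fs_{F'})^2=[s_F,s_{F'}]$ for $F\cap F'=\emptyset$, and Tits' theorem guarantees each such commutator is nontrivial in $W_P$. Your route has the advantage of staying entirely within the machinery already set up in the paper; the paper's route is shorter but depends on an external reference. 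One cosmetic slip: you write ``$|P|$ is a $3$-ball'' before correcting to arbitrary dimension in the parenthetical---just drop the ``$3$''.
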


\begin{rem}
Similarly, for a simple handlebody $Q$, we may define it to be {\em two-neighborly} if any two vertices in its nerve $\mathcal{N}(Q)$ are connected by an edge.
However,  if the genus of $Q$ is greater than zero, unlike the case of simple polytopes,  the existence of $I$-belts can not be used to detect whether $Q$ is two-neighborly since
we can always find an $I$-belt between two facets with nonempty intersection such that this $I$-belt can not be deformed onto the intersection of two facets in $Q$.
\end{rem}

\begin{lem}
Let $N$ be a triangulable closed $n$-manifold with  $n>1$. If $\pi_1(N)$ is nontrivial, then the $1$-skeleton of any triangulation of $N$ cannot be a complete graph.
\end{lem}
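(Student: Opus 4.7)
The plan is to prove the contrapositive: if some triangulation $K$ of $N$ has complete $1$-skeleton $K^{(1)}$ on the vertex set $V=\{v_0,v_1,\ldots,v_k\}$, then $\pi_1(N)=1$. First I would reduce to the $2$-skeleton: since $n\geq 2$, attaching cells of dimension $\geq 3$ does not alter $\pi_1$, so $\pi_1(N)\cong\pi_1(K^{(2)},v_0)$, and it suffices to prove this is trivial.

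Next I would set up the simplicial edge-path presentation. Because $K^{(1)}$ is the complete graph $K_{k+1}$, the star at $v_0$ is already a spanning tree of $K^{(1)}$. Collapsing this tree, $\pi_1(K^{(2)},v_0)$ is presented with generators $g_{ij}=[v_iv_j]$ for $1\leq i<j\leq k$ and relations $g_{ij}=1$ for every $2$-simplex $\{v_0,v_i,v_j\}$ of $K$, together with $g_{ij}g_{jl}g_{li}=1$ for every $2$-simplex $\{v_i,v_j,v_l\}$ with $i,j,l\geq 1$. Geometrically, each $g_{ij}$ is the triangle loop $v_0v_iv_jv_0$, so the task is to show that every such loop bounds a simplicial disk in $K^{(2)}$.

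The manifold hypothesis is brought in through the link $\mathrm{lk}_K(v_0)$, which is a simplicial $(n-1)$-sphere whose vertex set is exactly $\{v_1,\ldots,v_k\}$ by completeness of $K^{(1)}$. For $n\geq 3$ this link is simply connected, and I would translate this into null-homotopies at $v_0$ by a coning argument: for any pair $v_i,v_j$, a simplicial edge-path $v_i=v_{i_0},v_{i_1},\ldots,v_{i_m}=v_j$ in $\mathrm{lk}(v_0)$ yields a fan of $2$-simplices $\{v_0,v_{i_p},v_{i_{p+1}}\}\subset K$ whose cone relations, stitched together with the equatorial triangle relations coming from $2$-simplices disjoint from $v_0$, should express $g_{ij}$ as a product of defining relations. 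The case $n=2$ requires additional use of the pseudomanifold structure around $v_0$, since there the link is only a cycle.

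I expect the chief obstacle to be precisely this last combinatorial step. Completeness of $K^{(1)}$ does not a priori force every triangle $\{v_0,v_i,v_j\}$ to be a $2$-simplex, so $g_{ij}$ will generally not bound a single coned triangle, and one has to combine several cone triangles along a path in $\mathrm{lk}(v_0)$ with the equatorial triangle relations in a way that correctly cancels the intermediate loops $v_0v_{i_p}v_{i_{p+1}}v_0$. Keeping track of orientations and of the particular path chosen in the link, and verifying that the intermediate ``book'' of cone triangles together with the equatorial fillings leaves only $g_{ij}$, is the technical heart of the argument and the most delicate piece to make rigorous.
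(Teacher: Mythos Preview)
Your instinct to be wary of the combinatorial step is well founded: the lemma as stated is \emph{false}. The minimal triangulation of the torus $T^2$ (the Cs\'asz\'ar torus) has $7$ vertices, $\binom{7}{2}=21$ edges, and $14$ triangles, so its $1$-skeleton is the complete graph $K_7$; yet $\pi_1(T^2)\cong\mathbb{Z}^2$. Hence no argument can succeed for $n=2$, and the ``additional use of the pseudomanifold structure around $v_0$'' that you hope for cannot exist.

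The step that fails in your outline---and equally in the paper's own proof---is exactly the one you flagged: completeness of $K^{(1)}$ does \emph{not} force every triple of vertices to span a $2$-simplex, equivalently does not force every edge to lie in the closed star $N(v_0)$. The paper asserts that $K^{(1)}$ is a subcomplex of $N(v_0)$ (equivalently that the link $\partial N(v_0)$ is itself $2$-neighborly), but in the Cs\'asz\'ar torus each vertex link is a $6$-cycle, so only $6$ of the $\binom{6}{2}=15$ edges among the remaining vertices lie in that star. Your proposed repair via an edge-path in $\mathrm{lk}(v_0)$ yields only the cone relations $g_{i_p i_{p+1}}=1$ along the path; to deduce $g_{ij}=1$ you would still need $2$-simplices relating the direct edge $\{v_i,v_j\}$ to that path, and nothing guarantees their existence. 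For $n\ge 3$ the same obstruction persists: $2$-neighborly triangulations of non-simply-connected $3$-manifolds (for instance a $9$-vertex triangulation of the twisted $S^2$-bundle over $S^1$) are known, so simple connectivity of the link does not rescue the argument either.
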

\begin{proof}
 Assume that $K$ is a triangulation of $N$ whose $1$-skeleton $K^1$ is a complete graph. Fixed a vertex $x$ in $K$, let $N(x)$ be the union of those $n$-simplices in $K$ which contain $x$. Then $N(x)\cong D^n$ whose boundary $\partial N(x)$ is a two-neighborly simplicial $(n-1)$-sphere.
\vskip.2cm
Let $\Delta^n$ be an  arbitrary simplex which does not contain $x$. Since $K^1$ is a complete graph, all vertices of $\Delta^n$ are contained in $\partial N(x)$. Hence, $K^1$ is a subcomplex of $N(x)$. So any closed loop in $K^1$ is contractible in $N(x)$.
This means that $N$ is simply connected, giving a contradiction.
%Then any three vertices in $K$ bound a $\Delta^2$ for $n>1$.
\end{proof}
\begin{cor}
A (flag) simple handlebody with genus $>0$ cannot be two-neighborly. In other words, a two-neighborly simple handlebody must be a two-neighborly simple polytope as a manifold with corners.
\end{cor}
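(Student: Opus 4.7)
The plan is to obtain this corollary as an immediate consequence of the preceding lemma. Suppose, for contradiction, that $Q$ is a two-neighborly simple $n$-handlebody of genus $\mathfrak{g}\geq 1$. Unpacking the definition of two-neighborly, every pair of vertices of $\mathcal{N}(Q)$ is joined by an edge, so the $1$-skeleton of $\mathcal{N}(Q)$ is a complete graph. Condition (b) in the definition of simple orbifold records that $\mathcal{N}(Q)$ is a triangulation of the closed $(n-1)$-manifold $\partial|Q|$, so the preceding lemma will apply as soon as $n\geq 3$ and $\pi_1(\partial|Q|)\neq 1$.

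The main substantive step is therefore to verify $\pi_1(\partial|Q|)\neq 1$ when $\mathfrak{g}\geq 1$ and $n\geq 3$. Since $|Q|$ is an $n$-handlebody of genus $\mathfrak{g}$, a standard handle-decomposition argument identifies $\partial|Q|$ with the connected sum $\#_{\mathfrak{g}}(S^1\times S^{n-2})$: for $n=3$ this is a closed orientable surface of genus $\mathfrak{g}$, whose fundamental group is nontrivial for $\mathfrak{g}\geq 1$, and for $n\geq 4$ a van Kampen argument yields $\pi_1(\partial|Q|)\cong F_{\mathfrak{g}}$, the free group of rank $\mathfrak{g}$, again nontrivial. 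Feeding this into the preceding lemma contradicts the completeness of the $1$-skeleton of $\mathcal{N}(Q)$, which forces $\mathfrak{g}=0$ and hence $Q$ to be a simple polytope as a manifold with corners.

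The argument is essentially a one-line invocation of the lemma, so no deep obstacle arises. The only mildly delicate point—and the place where I would pay attention—is the low-dimensional edge case $n=2$, where $\partial|Q|$ is $1$-dimensional and the preceding lemma does not directly apply; this is handled separately by observing that a triangulation of $S^1$ with complete $1$-skeleton has at most three vertices, from which positive genus is immediately excluded by simpleness.
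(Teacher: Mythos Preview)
Your proposal is correct and follows exactly the approach the paper intends: the corollary is stated immediately after the lemma with no separate proof, so the paper clearly means it as a direct consequence, and your write-up supplies the details (identifying $\partial|Q|\cong\#_{\mathfrak g}(S^1\times S^{n-2})$ and checking $\pi_1(\partial|Q|)\neq 1$) that the paper leaves implicit.

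One small correction in your edge case: for a $2$-dimensional handlebody of genus $\mathfrak g\geq 1$, the boundary $\partial|Q|$ is not a single $S^1$ but a disjoint union of at least two circles (e.g.\ an annulus for $\mathfrak g=1$). Your ``triangulation of $S^1$'' remark therefore does not literally apply; the cleaner observation is simply that a triangulation of a disconnected $1$-manifold cannot have complete $1$-skeleton, since vertices in different components are never joined by an edge. This fixes the $n=2$ case without further work.
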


%\begin{lem}
%A (flag) simple handlebody with genus $>0$ is not two-neighborly.
%\end{lem}
%\begin{proof}
%Sketch: The $1$-skeleton of any triangulation of $\partial Q$ cannot be a complete graph.
%\end{proof}

\begin{prop}[Hopf-Rinow, Myers, {\cite[Corollary 6.3.2 \& Theorem 6.3.3]{P16}}]\label{post-cur}
If an $n$-dimensional closed manifold $N$ admits a complete Riemannian metric of positive sectional curvature, then
$\pi_1(N)$ is finite. Specially, $\pi_1(N)$ is $0$ or $\mathbb{Z}_2$ if $n$ is even, and $N$ is orientable if $n$ is odd.
\vskip .1cm
If $N$ admits a complete Riemannian metric of positive Ricc curvature, then $\pi_1(N)$ is finite, too.
\end{prop}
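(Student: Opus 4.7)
The plan is to reduce everything to two classical tools built from the second variation formula for arc length: Myers' diameter estimate and Synge's parity argument. Both pieces of the proposition are standard, and the citation to \cite[Chapter 6]{P16} signals that the authors only need the statement. Still, here is how I would lay the argument out.

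First I would prove the Ricci statement, which is Myers' theorem. Suppose $\mathrm{Ric} \geq (n-1)\kappa > 0$ on the complete $n$-manifold $N$, and assume toward contradiction that some minimizing geodesic $\gamma\colon[0,L]\to N$ has length $L > \pi/\sqrt{\kappa}$. Choose an orthonormal parallel frame $E_1,\dots,E_{n-1}$ along $\gamma$ normal to $\dot\gamma$, and take the variational fields $V_i(t) = \sin(\pi t/L)\,E_i(t)$. Summing the index form gives
\[
\sum_{i=1}^{n-1} I(V_i,V_i) = \int_0^L \sin^2(\pi t/L)\Bigl((n-1)\tfrac{\pi^2}{L^2} - \mathrm{Ric}(\dot\gamma,\dot\gamma)\Bigr)\,dt < 0,
\]
contradicting minimality. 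Hence $\mathrm{diam}(N) \leq \pi/\sqrt{\kappa}$, so $N$ is compact by Hopf–Rinow. The universal cover $\widetilde{N}$ carries the same Ricci bound, so it is compact too; since $\widetilde{N}\to N$ is a covering of closed $n$-manifolds, it is finite-sheeted, and $\pi_1(N)$ is finite.

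Next, positive sectional curvature implies positive Ricci curvature, so finiteness of $\pi_1(N)$ follows immediately from the previous paragraph. For the refined parity statements I would invoke Synge's trick. Let $f\colon\widetilde{N}\to\widetilde{N}$ be a nontrivial deck transformation (an orientation-preserving isometry if $N$ is orientable, possibly orientation-reversing otherwise), and let $\gamma$ be a closed geodesic in $N$ obtained by minimizing length in the free homotopy class determined by $f$; such a minimizer exists because $N$ is compact and the class is nontrivial. Parallel transport $P\colon T_{\gamma(0)}N\to T_{\gamma(0)}N$ around $\gamma$ is an orthogonal map preserving $\dot\gamma(0)$, so it restricts to an orthogonal map on the $(n-1)$-dimensional normal space, whose determinant equals the orientation behavior of $f$. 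In the two cases of the proposition,
\begin{itemize}
\item[(i)] $n$ even, $f$ orientation-preserving: $P|_{\dot\gamma^\perp}\in SO(n-1)$ on odd-dimensional space, so has a $+1$ eigenvector;
\item[(ii)] $n$ odd, $f$ orientation-reversing: $P|_{\dot\gamma^\perp}\in O(n-1)\setminus SO(n-1)$ on even-dimensional space, so again has a $+1$ eigenvector.
\end{itemize}
Either way, extending this eigenvector to a parallel normal field $V$ along $\gamma$ yields
\[
I(V,V) = -\int_0^L K(V,\dot\gamma)\,dt < 0,
\]
so a short variation of $\gamma$ through closed loops shortens it, contradicting minimality. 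In case (i) this forbids any nontrivial deck transformation, giving $\pi_1(N)=0$; combined with the known $\pi_1(N)\leq\mathbb{Z}_2$ obtained by passing to the orientation double cover (which is even-dimensional and positively curved, hence simply connected), one concludes $\pi_1(N)\in\{0,\mathbb{Z}_2\}$. In case (ii) this forbids orientation-reversing deck transformations, so the orientation double cover of $N$ is disconnected, meaning $N$ itself is orientable.

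The main obstacle is purely organizational: cleanly producing the length-minimizing closed geodesic in the prescribed mapping class (a standard variational/compactness argument using the Arzel\`a–Ascoli theorem on the loop space), and then uniformly writing the two parity cases of Synge's argument so the eigenvector of parallel transport is extracted by the same linear-algebra fact $\det(P|_{\dot\gamma^\perp})=\pm 1$ with the sign dictated by the orientation behavior of $f$. Once these are in place, the index-form computation is a two-line calculation in each case.
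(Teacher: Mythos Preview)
Your argument is correct and is precisely the standard Myers--Synge proof one finds in the cited reference \cite[Chapter 6]{P16}. Note, however, that the paper does not supply any proof of this proposition: it is quoted as a classical result with attribution to Petersen's textbook and used as a black box. So there is nothing to compare your approach against; you have simply reconstructed the textbook proof that the citation points to.

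One minor expository point: in your case (i) you phrase the conclusion as ``forbids any nontrivial deck transformation, giving $\pi_1(N)=0$,'' but this step as written only applies when $N$ is already orientable (so that every loop is orientation-preserving and parallel transport lands in $SO(n)$). You then correctly handle the non-orientable case by passing to the orientation double cover. It would read more cleanly to separate these two subcases explicitly from the outset rather than folding them together.
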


 For a simple handlebody $Q$, by the short exact sequence,
$$1\longrightarrow \pi_1(M)\longrightarrow \pi_1^{orb}(Q)\overset{\lambda}{\longrightarrow} (\mathbb{Z}_2)^m \longrightarrow 1$$
if the genus of $Q$ is greater than zero, then any torsion-free generator $t$ determined by a cutting belt is  mapped to $1\in (\mathbb{Z}_2)^m$ via $\lambda$. Hence $t$ gives a torsion-free element in $\pi_1(M)$. So we have
\begin{lem}\label{lem63}
If the genus of $Q$ is greater than zero, then $\pi_1(M)$ is not finite.
\end{lem}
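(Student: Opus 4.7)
The plan is to exhibit an explicit element of infinite order sitting inside $\pi_1(M)$. Since $\mathfrak{g}>0$, there is at least one cutting belt $B$ used to cut $Q$ open to the simple polytope $P_Q$, and by \autoref{HNN} the corresponding symbol $t=t_B$ appears as a torsion-free generator of the iterative HNN-extension presentation of $\pi_1^{\text{orb}}(Q)$ over $W(P_Q,\mathcal{F}_B)$.

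First I would verify that $t$ actually has infinite order in $\pi_1^{\text{orb}}(Q)$. Indeed, for any $n\neq 0$ the word $t^n=t\cdot 1\cdot t\cdots 1\cdot t$ (or its inverse) is already written in the reduced normal form described in \autoref{TT2}, since it contains no consecutive subword of the form $t^{-1}gt$ or $tgt^{-1}$ with $g\in W_{B^\pm}$ (the intermediate letters are all the identity of $W(P_Q,\mathcal{F}_B)$). By the Normal Form Theorem for HNN extensions applied to the tower $G_0\hookrightarrow G_1\hookrightarrow\cdots\hookrightarrow G_{\mathfrak g}=\pi_1^{\text{orb}}(Q)$, this reduced form uniquely represents its element, so $t^n\neq 1$ for every $n\neq 0$.

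Next I would check that $t$ lies in the kernel of $\lambda:\pi_1^{\text{orb}}(Q)\to(\mathbb{Z}_2)^m$. As recalled in the proof of \autoref{or}, the coloring $\lambda'\colon \mathcal{F}(P_Q)\to(\mathbb{Z}_2)^m$ induced from the double coloring on $\mathcal{F}(Q)$ assigns the unit element ${\bf e}_0$ to each facet $B\in\mathcal{F}_B$, because $\text{Int}(B)\subset \text{Int}(Q)$ and thus $B$ is not a facet of $Q$ itself. Therefore $\lambda(t)={\bf e}_0$, and so $t\in\ker\lambda\cong\pi_1(M)$. Combining this with the previous paragraph, $\pi_1(M)$ contains the infinite cyclic subgroup $\langle t\rangle$, which is enough to conclude that $\pi_1(M)$ is infinite.

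The only point requiring any care is the normal-form argument for $t^n\neq 1$; this is completely routine once one appeals to \autoref{TT2}, and no other obstacle is anticipated.
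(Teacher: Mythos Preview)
Your proof is correct and follows essentially the same approach as the paper: exhibit the HNN stable letter $t=t_B$ as an infinite-order element lying in $\ker\lambda\cong\pi_1(M)$. The paper's own argument is terse and simply asserts that the torsion-free generator $t$ is mapped to $1\in(\mathbb{Z}_2)^m$ by $\lambda$, whereas you spell out both steps---the infinite order of $t$ via \autoref{TT2} and the vanishing $\lambda(t)=\mathbf{e}_0$ via the coloring convention from \autoref{or}---in more detail.
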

As a direct consequence of \autoref{post-cur}, \autoref{neib} and \autoref{lem63}, we have
\begin{cor}
If $M$ admits a complete Riemannian metric of positive sectional or Ricc curvature, then $Q$ must be a two-neighborly simple polytope, that is, there is no any $I$-belt in $Q$.
\end{cor}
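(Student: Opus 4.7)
The plan is to chain together the Hopf-Rinow/Myers finiteness result with the two preceding lemmas of this subsection, and then dispatch the residual case with a short direct argument in the right-angled Coxeter group $W_P$. First, by the cited theorem of Hopf-Rinow and Myers, a complete Riemannian metric of positive sectional (or Ricci) curvature on $M$ forces $\pi_1(M)$ to be finite. Combining this with the preceding lemma, which shows that $\pi_1(M)$ is infinite whenever the genus $\mathfrak{g}$ of $Q$ is positive (because each torsion-free generator $t_B$ associated with a cutting belt satisfies $\lambda(t_B)=0$, so sits in $\ker\lambda=\pi_1(M)$ with infinite order), we immediately conclude $\mathfrak{g}=0$. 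Thus $Q$ is already a simple polytope $P$.

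It remains to show that $P$ is two-neighborly, which by the equivalence stated in this subsection is the same as saying that $P$ contains no $I$-belt. Suppose, for contradiction, that there exist two facets $F_i, F_j$ of $P$ with $F_i \cap F_j = \emptyset$. In $\pi_1^{\text{orb}}(P) \cong W_P$, the defining presentation contains no braid relation $(s_i s_j)^2 = 1$; by Tits' theorem on reduced words in right-angled Coxeter groups, no sequence of elementary operations can reduce $(s_i s_j)^k$ for $k\geq 1$ to the identity, so $s_is_j$ has infinite order and $\langle s_i,s_j\rangle$ is the infinite dihedral group. Under the coloring homomorphism $\lambda: W_P \longrightarrow (\mathbb{Z}_2)^m$ sending $s_F$ to $e_F$, one has $\lambda((s_is_j)^{2k}) = 2k(e_i+e_j) = 0$ in $(\mathbb{Z}_2)^m$ for every $k$, so the infinite cyclic subgroup $\langle (s_is_j)^2\rangle$ injects into $\pi_1(M) = \ker\lambda$. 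This contradicts the finiteness of $\pi_1(M)$, completing the proof.

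The whole argument is a direct assembly of results already proved in this subsection; the only genuinely new ingredient is the remark that any $I$-belt in a simple polytope produces an infinite cyclic subgroup of $\pi_1(M)$, and this is routine from Tits' theorem. I do not anticipate any serious obstacle — the main thing to be careful about is simply tracking that $\lambda$ kills even powers of $s_is_j$, so that the infinite cyclic group sits inside the kernel rather than surjecting onto a factor of $(\mathbb{Z}_2)^m$.
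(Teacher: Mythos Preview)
Your argument is correct and follows the same overall route as the paper: invoke Myers to get $\pi_1(M)$ finite, apply the lemma on positive genus to force $\mathfrak{g}=0$, and then rule out $I$-belts. The only difference is in the last step: the paper simply cites the equivalence ``two-neighborly $\Leftrightarrow$ no $I$-belt $\Leftrightarrow$ $M$ simply connected'' together with Myers, whereas you give a direct argument that an $I$-belt produces an infinite cyclic subgroup $\langle (s_is_j)^2\rangle\subset\ker\lambda=\pi_1(M)$. Your version is in fact slightly more careful, since the cited equivalence only yields $\pi_1(M)\neq 1$ from a failure of two-neighborliness, not $\pi_1(M)$ infinite; your Tits-theorem computation closes that gap explicitly and is the natural way to make the deduction airtight.
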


%By a classical theorem of Bonnet and Myers, any compact Riemannian manifold with positive Ricci curvature must have finite fundamental group (see Chapter 6 of [26]).

%\begin{cor}
%Let $Q$ be a simple handlebody and $M$ be the manifold double over $Q$. If $\pi_1(M)$ is finite, then $Q$ is a two-neighborly simple polytope.
%\end{cor}
Conversely, the existence of positive sectional curvature and positive Ricc curvature of a closed (or compact) $n$-manifold is a very hard question, which is involved in many conjectures and open questions. For example, we know that the real moment angle manifold over $\Delta^2\times \Delta^2$ is $S^2\times S^2$. However, it is well-known that the  existence of positive sectional curvature on $S^2\times S^2$ is just as Hopf Conjecture.

\vskip.2cm

On the other hand, there are some results about the existence of  positive scalar curvature. One can refer to some works of Gromov-Lawson, Schoen-Yau, Stolz \cite{SY,SY2,GL,GL2,GL3,S}.
%\section{Positive scalar curvature and $\triangle$-belts}
%The (vi) in \hyperref[proposition-61]{Proposition 6.1} is generalization of a result of Wu-Yu\cite{WY} which is based on the works of Schoen-Yau\cite{SY}. %Further discussion follows in left part.
By Gromov-Lawson \cite{GL3},  a compact manifold of nonpositive sectional curvature cannot carry a metric of positive sectional curvature. So
\begin{itemize}
\item If $M$ admits a positive scalar curvature, then $Q$ is not flag.
\end{itemize}
Moreover, it is reasonable to conjecture that:
\begin{itemize}
\item[{\textcolor{red}{$\star$}}] {\em If a simple polytope $Q$ is two-neighborly, then $M$ admits a positive scalar curvature.}
\end{itemize}

In 3-dimensional case, Wu-Yu in \cite{WY} gave a combinatorial description for the case of  real moment-angled manifolds with positive scalar curvature over  simple $3$-polytopes.
\begin{prop}[{\cite[Corollary 4.10]{WY}}]\label{PSC}
A  real moment-angle manifold (or small cover) over a simple
3-polytope $P$ can admit a Riemannian metric with positive scalar curvature if and only if
$P$ is combinatorially equivalent to a polytope obtained from $\Delta^3$ by a sequence of vertex
cuts.
\end{prop}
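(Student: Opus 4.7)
The plan is to factor the proposition through \autoref{cct}(iv), which already identifies the existence of a positive scalar curvature metric on $M_P$ with the belt condition that every $2$-belt of $P$ is a $\Delta^2$-belt, or $P=\Delta^3$. What remains is the purely combinatorial equivalence
\[
P \text{ is obtained from } \Delta^3 \text{ by iterated vertex cuts} \iff \text{every $2$-belt of } P \text{ is a $\Delta^2$-belt, or } P = \Delta^3,
\]
or equivalently, that $\mathcal{N}(P)$ is a \emph{stacked $2$-sphere}, i.e.\ is built from $\partial\Delta^3$ by iterated stellar subdivisions of triangles. The dictionary between vertex cuts on $P$ and stellar subdivisions of $\mathcal{N}(P)$ is routine: cutting a vertex $v=F_1\cap F_2\cap F_3$ of $P$ replaces the $2$-simplex $\{F_1,F_2,F_3\}$ of $\mathcal{N}(P)$ by a cone over its boundary, creating a new degree-$3$ vertex corresponding to the new triangular facet.

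For the implication ``stacked $\Rightarrow$ no long $2$-belts,'' I would induct on the number of stellar subdivisions. The base $\partial\Delta^3$ has no $2$-belt. In the inductive step, a stellar subdivision introduces a new triangular facet $F_v$ whose only neighbors in the nerve are the three vertices of the subdivided triangle; any $2$-belt of the new polytope that meets $F_v$ must cycle through these three neighbors alone, forcing length exactly $3$, while a $2$-belt avoiding $F_v$ is inherited as a $\Delta^2$-belt from the previous stage.

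For the harder direction ``no long $2$-belts $\Rightarrow$ stacked,'' I would rephrase the hypothesis as the chordality condition \emph{every cycle of length $\geq 4$ in the $1$-skeleton of $\mathcal{N}(P)$ admits a chord}. Dirac's theorem on chordal graphs then provides a simplicial vertex $T$ of $\mathcal{N}(P)$; since the link of a vertex in a triangulation of $S^2$ is a cycle, the clique condition forces this cycle to be a triangle, so $T$ has degree $3$ and corresponds to a triangular facet of $P$ with exactly three neighbors $F_1, F_2, F_3$. Reversing the stellar subdivision at $T$---that is, collapsing $T$ to a point in $P$---produces a simple $3$-polytope $P_0$ with one fewer facet whose nerve inherits the chordality condition, and induction on the facet count reconstructs $P$ from $\Delta^3$ by a sequence of vertex cuts.

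The main obstacle is verifying that the reverse stellar subdivision always yields a bona fide simple convex $3$-polytope. One must check that the triangle $\{F_1, F_2, F_3\}$ (the link of $T$) is not already a $2$-simplex of $\mathcal{N}(P)$, for otherwise $\mathcal{N}(P)=\partial\Delta^3$ and $P=\Delta^3$ already, and that no two neighbors $F_i, F_j$ meet in a second edge outside of $T$. The latter is precisely excluded by the no-$\square$-belt hypothesis: an extra edge in $F_i\cap F_j$ together with the edge of $T$ in $F_i\cap F_j$, joined by arcs across $F_i$ and $F_j$, would form a quadrilateral $2$-belt. Once these combinatorial constraints are verified, Steinitz's theorem realizes the resulting abstract simple polytope as a convex one, closing the induction.
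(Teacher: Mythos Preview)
Your proposal is circular. In this paper, \autoref{PSC} is not proved but \emph{cited} from \cite{WY} as an external input; the paper then derives \autoref{cct}(iv) (equivalently \autoref{psc}) \emph{from} \autoref{PSC} by exactly the combinatorial equivalence you describe. So \autoref{cct}(iv) is not an independent fact available to you---it is a restatement of \autoref{PSC} obtained via the vertex-cut $\Leftrightarrow$ only-$\Delta^2$-belts dictionary. Invoking \autoref{cct}(iv) to deduce \autoref{PSC} just runs the paper's implication backwards and assumes what is to be proved. A genuine proof of \autoref{PSC} needs the geometric input from \cite{WY} (surgery/classification arguments for positive scalar curvature in dimension~3), which your outline does not supply.

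Your combinatorial argument for ``stacked $\Leftrightarrow$ every $2$-belt is a triangle'' is correct and is essentially the paper's reasoning, though the paper phrases the harder direction as ``connected sum of copies of $\Delta^3$'' rather than via Dirac's theorem on chordal graphs. One minor point: for a simple \emph{convex} $3$-polytope, two facets meet in at most one edge automatically, so your check~(2) in the last paragraph is vacuous; the only nontrivial verification before reversing the stellar subdivision is that $\{F_1,F_2,F_3\}$ is not already a $2$-face of $\mathcal{N}(P)$ (forcing $P=\Delta^3$), after which Steinitz's theorem applies as you say.
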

Let $P$ be a simple polytope obtained from $\Delta^3$ by a sequence of vertex cuts. Then except for $P=\Delta^3$, any $2$-dimensional belt in $P$ is an $\Delta^2$-belt. Conversely, assume that every 2-dimensional belt is only $\Delta^2$-belt  in a simple polytope $P$, then it is easy to see that $P$ is a tetrahedron or $P$ can be decomposed into the connected sum of some $\Delta^3$s. This is equivalent to saying that $P$ can be obtained from $\Delta^3$ by a sequence of vertex cuts. Hence, we get an equivalent description of \autoref{PSC} in terms of $\Delta^2$-belts and $\Delta^3$-belts.
\begin{cor}\label{psc}
Let $P$ be a simple $3$-polytope. Then its manifold double $M$ admit a Riemannian metric with positive scalar curvature if and only if every $2$-dimensional belt in $P$ is $\Delta^2$, or $P$ is just a tetrahedron. \end{cor}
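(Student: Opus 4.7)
The proof reduces to a purely combinatorial equivalence. Indeed, \autoref{PSC} of Wu-Yu already characterizes ``$M_P$ admits a Riemannian metric with positive scalar curvature'' as ``$P$ is combinatorially equivalent to a polytope obtained from $\Delta^3$ by a finite sequence of vertex cuts''. Hence the plan is to establish
\[ (\star)\quad P=\Delta^3 \text{ or is obtained from } \Delta^3 \text{ by vertex cuts}\Longleftrightarrow P=\Delta^3 \text{ or every $2$-belt of } P \text{ is } \Delta^2, \]
and then $(\star)$ together with \autoref{PSC} immediately gives \autoref{psc}.

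The key observation is that a vertex cut on a simple $3$-polytope $P'$ at a vertex $v$ is combinatorially the inverse of a connected sum with a tetrahedron along a triangular facet: if $T$ denotes the new triangular facet of the cut polytope $P$, then gluing $\Delta^3$ back to $P$ along $T$ recovers $P'$. Thus the left side of $(\star)$ is equivalent to saying that $P$ is an iterated connected sum of tetrahedra along triangular facets. For the forward implication of $(\star)$, I would induct on the number of tetrahedral summands. The base case $P=\Delta^3$ holds vacuously because any three of its four facets share a common vertex, so $\Delta^3$ contains no $2$-belt. For the inductive step, write $P=P'\#_T\Delta^3$ and consider an arbitrary $2$-belt $B$ of $P$. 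Either $B$ lies entirely on the $P'$-side and restricts to a $2$-belt of $P'$, which is $\Delta^2$ by induction; or $B$ interacts with the attached tetrahedron, in which case the only belt that can arise is the $\Delta^2$-belt corresponding to $\partial T$. No $k$-belt with $k\geq 4$ can be produced, because the newly created triangular facet has only three edges to offer.

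For the reverse direction of $(\star)$, I would induct on the number of facets of $P$. If $P=\Delta^3$ there is nothing to prove. If $P\neq\Delta^3$, the simplicial dual $P^*$ is a simplicial $2$-sphere with at least five vertices, and in particular contains a pair of non-adjacent vertices (since $K_5$ is not planar). Any shortest separating cycle in the $1$-skeleton of $P^*$ has length $\geq 3$ and yields a prismatic circuit in $P$, hence a $2$-belt; by hypothesis this $2$-belt is a $\Delta^2$-belt $B_0$. Cutting $P$ along $B_0$ then gives a decomposition $P=P_1\#_T P_2$, where each $P_i$ is a simple $3$-polytope with strictly fewer facets than $P$.

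The hard part of the argument is verifying that each summand $P_i$ inherits the hypothesis ``every $2$-belt is $\Delta^2$'', so that the induction closes. A $2$-belt of $P_i$ that avoids the new triangular facet $T_i$ (the copy of $B_0$) extends to a $2$-belt of $P$ on the $P_i$-side, and is therefore $\Delta^2$ by assumption. The subtle case is a $2$-belt of $P_i$ that uses $T_i$ as one of its boundary facets: its boundary arc on $T_i$ must be matched with a controlled disk in $P_{3-i}$ via a transversality argument in the spirit of \autoref{Lemma-34}, thereby producing a $2$-belt of $P$ whose combinatorial type dominates that of the original belt in $P_i$. The hypothesis on $P$, together with an exhaustive case analysis of how belts may cross $T_i$, then forces the original belt in $P_i$ to be $\Delta^2$ as well. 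Once the inheritance is established, applying the inductive hypothesis to $P_1$ and $P_2$ and reassembling them along $T$---using that connected sum along a triangular facet is a vertex cut when one summand is $\Delta^3$ and recursively so in general---produces the desired vertex-cut decomposition of $P$, completing $(\star)$ and hence \autoref{psc}. \hfill$\Box$
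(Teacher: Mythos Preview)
Your reduction to the combinatorial equivalence $(\star)$ and the two-sided induction is exactly the route the paper takes; the paper merely asserts $(\star)$ in one paragraph without details, so in spirit you are filling in what the authors left implicit.

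There is, however, a genuine confusion in your reverse direction. When you write $P=P_1\#_T P_2$ with each $P_i$ a simple $3$-polytope having strictly fewer facets, you are using the connected-sum (equivalently, inverse vertex-cut) model. In that model the summand $P_i$ has \emph{no} new triangular facet $T_i$: the three facets $F_1,F_2,F_3$ carrying the edges of the $\Delta^2$-belt are instead extended in $P_i$ so that they meet at a single new vertex $v_i$. Consequently your ``subtle case'' --- a $2$-belt of $P_i$ using $T_i$ --- simply does not exist, and the transversality/case-analysis you outline is unnecessary.

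Once this is straightened out the inheritance is immediate. Passing from $P$ to $P_i$ changes the face-intersection data in exactly one place: $F_1\cap F_2\cap F_3$ is empty in $P$ but equals $\{v_i\}$ in $P_i$; all other pairwise and triple intersections among facets on the $P_i$-side are unchanged. Hence a $2$-belt of $P_i$ cannot use all three of $F_1,F_2,F_3$ (they now meet at $v_i$, violating the belt condition), and any other $2$-belt of $P_i$ is literally a $2$-belt of $P$, so it is $\Delta^2$ by hypothesis. The induction therefore closes with no further work. (Incidentally, the existence of a $2$-belt in any simple $3$-polytope $P\neq\Delta^3$ is already recorded in the paper just before \autoref{Def-T}, so your $K_5$/shortest-cycle argument can be replaced by a citation.)
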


 \subsection{}
We summarize what we have discussed in the below table.
\begin{table}[h]
    \centering
    \begin{tabular}{|p{2.4cm}| p{2.5cm}p{1cm}p{2cm}p{6.5cm}|}
        \hline
\diagbox[innerwidth=2.4cm]{$M$}{$Q$} & $2$-Neighborly &Flag&Pogorelov  & Description  \\
        \hline
\vskip 0pt Sec$<0$   &{\Large \begin{turn}{-90}$\Lsh$\end{turn}} {\tiny not} &{\Large \begin{turn}{-90}$\Lsh$\end{turn}}&   {\Large \begin{turn}{-90}$\Lsh$\end{turn}} {\Large \begin{turn}{180}$\Rsh$\end{turn}}{\tiny $\dim 3$} & %(ii) in
 \autoref{nc}.        \\[12pt]
\vskip 0pt  Hyperbolic   &{\Large \begin{turn}{-90}$\Lsh$\end{turn}} {\tiny not}        &{\Large \begin{turn}{-90}$\Lsh$\end{turn}}&{\Large \begin{turn}{-90}$\Lsh$\end{turn}} {\Large \begin{turn}{180}$\Rsh$\end{turn}}{\tiny $\dim 3$}          & $\dim 3$: \autoref{hc};          $\dim 4$: Not clear; $\dim \geq 5$: None.\\[12pt]
\vskip 0pt Sec$\leq 0$, NPC   & {\Large \begin{turn}{-90}$\Lsh$\end{turn}} {\tiny not}    &{\Large \begin{turn}{-90}$\Lsh$\end{turn}}   {\Large \begin{turn}{180}$\Rsh$\end{turn}}     &{\Large \begin{turn}{180}$\Rsh$\end{turn}}
      &  %(i) in
      \autoref{npc}       \\[12pt]
\vskip 0pt Flat    &{\Large \begin{turn}{-90}$\Lsh$\end{turn}} {\tiny not}      &{\Large \begin{turn}{-90}$\Lsh$\end{turn}}&{\Large \begin{turn}{-90}$\Lsh$\end{turn}} {\tiny not}          & \cite[Theorem 1.2]{SMY}.           \\[12pt]
\vskip 0pt Spherical    & {\Large \begin{turn}{-90}$\Lsh$\end{turn}}     &{\Large \begin{turn}{-90}$\Lsh$\end{turn}} {\tiny not} &{\Large \begin{turn}{-90}$\Lsh$\end{turn}} {\tiny not}            &   \cite[Theorem 1.2]{SMY}. \\[12pt]
\vskip 0pt Sec, Ric $>0$     &{\Large \begin{turn}{-90}$\Lsh$\end{turn}}       &{\Large \begin{turn}{-90}$\Lsh$\end{turn}}{\tiny not} &{\Large \begin{turn}{-90}$\Lsh$\end{turn}} {\tiny not}           & Not clear (e.g. Hopf Conjecture). \\[12pt]
\vskip 0pt Scalar$>0$   & {{\Large \begin{turn}{180}\color{red}$\Rsh$\end{turn}} {\tiny Conjecture~ {\color{red}$\star$}} }    &{\Large \begin{turn}{-90}$\Lsh$\end{turn}} {\tiny not} & {\Large \begin{turn}{-90}$\Lsh$\end{turn}} {\tiny not}         & Not clear except $\dim 3$.      \\
        \hline
    \end{tabular}
    \caption{A simple handlebody is called {\em Pogorelov} if it is flag and contains no $\square$-belt.}
\end{table}

%\appendix

%\newpage

\end{document}